\documentclass[reqno,11pt]{amsart}
\usepackage[utf8]{inputenc}
\usepackage{amssymb,soul}
\usepackage[normalem]{ulem}

\usepackage{bbm}
\usepackage{amsthm}
\usepackage{hyperref}
\usepackage{amsmath}
\usepackage[dvips]{epsfig}
\usepackage{color,xcolor}
\usepackage{url}
\usepackage{caption}
\usepackage{subcaption}
\usepackage{epstopdf}
\usepackage{mathrsfs}

\usepackage{mathtools}
\mathtoolsset{showonlyrefs}

\makeatletter\@addtoreset {equation}{section}\makeatother

\usepackage[margin=0.95in]{geometry}

\theoremstyle{plain}
\newtheorem{definition}{Definition}[section]

\newtheorem{theorem}[definition]{Theorem}

\newtheorem{lemma}[definition]{Lemma}

\newtheorem{remark}[definition]{Remark}
\newtheorem{proposition}[definition]{Proposition}
\DeclareMathOperator{\rest}{{\rm Rest}}
\newcommand{\lr}[1]{\langle #1 \rangle}
\newcommand{\distF}{\widetilde{\mathcal{F}}}
\newcommand{\normm}[2]{\left\|#1\right\|_{L^2_{#2}}}
\newcommand{\R}{\mathbb R}

\newcommand{\setsign}{\{+,-\}}
\newcommand{\Suml}[4]{\sum_{\ell_1,\ell_2,\ell_3,\ell_4\in \setsign}}
\newcommand{\wt}[1]{\widetilde{#1}}
\newcommand{\threef}[9]{\wt{#1}^{(#2)}(#3) \wt{#4}^{(#5)}(#6) \wt{#7}^{(#8)}(#9)}
\newcommand{\onef}[3]{\wt{#1}^{(#2)}(#3)}
\newcommand{\threefct}[9]{{#1}^{(#2)}(#3) {#4}^{(#5)}(#6) {#7}^{(#8)}(#9)}
\newcommand{\onefct}[3]{{#1}^{(#2)}(#3)}

\newcommand{\firstpsi}[7]{\overline{{#1}(#2,#3)} {#4}^{(#5)}(#6,#7)}
\newcommand{\scdpsi}[8]{{#1}^{(#2)}(#3,#4) {#5}^{(#6)}(#7,#8)}
\newcommand{\al}{\alpha}

\renewcommand{\Re}{\mathfrak{Re}}
\renewcommand{\Im}{\mathfrak{Im}}
\newcommand{\la}{\langle}
\newcommand{\ra}{\rangle}
\newcommand{\abs}[1]{\left|#1\right|}
\newcommand{\norm}[1]{\left\|#1\right\|}
\newcommand{\Rho}{\rho_{\Gamma_0 }}

\definecolor{bluegreen}{rgb}{0.0, 0.3, 0.9}

\newcommand{\gc}[1]{\color{teal}{#1} \color{black}}
\newcommand{\fp}[1]{\color{bluegreen}{#1} \color{black}}

\newcommand{\cfp}[1]{\color{bluegreen}  {\tt [FP: #1]} \color{black}  }
\newcommand{\cgd}[1]{\color{blue}  {\tt [GD: #1]} \color{black}  }

\title[Quartic KG with an internal mode]{Global dynamics for the 1D quartic Klein-Gordon equation 
\\ with an internal mode}

\author{Gong Chen}

\author{Gael Y. Diebou}

\author{Adilbek Kairzhan}

\author{Fabio Pusateri}

\address[Chen]{School of Mathematics, Georgia Institute of Technology. 686 Cherry Street,
Atlanta, GA}
\email{gc@math.gatech.edu}
\address[Diebou]{Department of Mathematics, University of Toronto, 40 St. George Street, Toronto, Canada}
\email{gaely.diebou@utoronto.ca}

\address[Kairzhan]{School of Computing and Artificial Intelligence, Nazarbayev University, Astana, Kazakhstan}
\email{akairzhan@nu.edu.kz}
\address[Pusateri]{Department of Mathematics, University of Toronto, 40 St. George Street, Toronto, Canada}
\email{fabiop@math.toronto.edu}

\begin{document}

\begin{abstract}

\normalsize 
We study a one dimensional nonlinear Klein–Gordon equation with a potential,
a localized quadratic nonlinearity, and a non-localized quartic nonlinearity.
We assume that the linearized operator 
has a single discrete eigenvalue below the continuous spectrum,
corresponding to an ``internal mode''.
This eigenvalue generates localized, time-periodic solutions for the linear equation.
Assuming the natural Fermi Golden Rule,
we give a global description of the dynamics of the amplitude of the internal mode
through the radiation damping mechanism, and prove scattering for the radiation.
Our approach is based on the distorted Fourier transform
and a collection of refined 
dispersive decay and smoothing estimates. 

The corresponding cubic problem is closely related to the question of 
global-in-space dispersive asymptotics for perturbations of kink solutions in the classical $\phi^4$
model, which remains a challenging open question. 
From the perspective of nonlinear estimates, a quartic (non-localized) interaction is essentially sharp for our analysis, 
and closing the estimates with such a low degree nonlinear term requires 
a careful and delicate treatment. 

As a direct application of our analysis, we prove a result on the asymptotic stability of kinks 
for classes of scalar field models that can be seen as perturbations of the $\phi^4$ model. 
\end{abstract}
\maketitle
\tableofcontents 
\section{Introduction}

\medskip
\subsection{The model}
We consider the following nonlinear Klein-Gordon equation with a potential
\begin{equation}
\label{nlkg-quartic}
\partial_t^2 u - \partial_x^2 u + u + V(x) u = \beta(x) u^2 + u^4, 
\end{equation}
where $(t,x) \in \mathbb{R} \times \mathbb{R}$ are time and space variables, 
$u(t,x) \in \mathbb{R}$ is the unknown solution, $\beta: \mathbb{R} \to \mathbb{R}$ 
is a real-valued function that is sufficiently smooth and decaying as $|x| \to \infty$
together with its derivatives, 
and $V: \mathbb{R} \to \mathbb{R}$ is a real-valued  external potential 
sufficiently smooth and decaying. 
The addition of a localized cubic term $\beta_2(x)u^3$, or quintic and 
higher order nonlinear terms $u^p (p \geq 5)$ does not bring further complications in the analysis; 
hence, we restrict our discussion to the model \eqref{nlkg-quartic} with a localized quadratic 
term and a quartic term $u^4$ for concreteness. 
The corresponding Cauchy problem is subject  
to the initial conditions $u(0)=u_0$ and $\partial_tu(0)=u_1$. 

The Hamiltonian, or the energy, associated with the model \eqref{nlkg-quartic} is 
\begin{equation}
\label{hamiltonian}
\mathcal{H} (u, \partial_t u) := \frac{1}{2} \int_{\mathbb{R}} \left[ (\partial_t u)^2 + (\partial_x u)^2 + u^2 + V(x) u^2 \right] dx - \int_{\mathbb{R}} \left( \frac{1}{3} \beta u^3 + \frac{1}{5} u^5 \right) dx.
\end{equation}

We assume that $V$ is a fast decaying potential and that the spectrum of the linearized operator
\begin{equation}
\label{operator-L-defn}
H+1:= -\partial_x^2 + 1 + V(x)
\end{equation} 
consists of purely absolutely continuous spectrum $[1,\infty)$ 
and a single positive discrete eigenvalue $\lambda^2$, such that 
\begin{equation}\label{eq:lambdaintro}
\left(-\partial_x^2 +1 +V(x)\right) \varphi (x) = \lambda^2 \varphi(x), \quad \|\varphi\|_{L^2_x} = 1
\end{equation}
where $\varphi(x)$ is a spatially localized eigenfunction that decays exponentially as $|x|\to \infty$. 
In addition, we assume that no resonance occurs at the edge of the continuous spectrum, which means that there is no nonzero bounded $\zeta$ such that  $H \zeta =0$ (equivalently, $(H+1)\zeta=\zeta$).
This condition is equivalent to the assumption that $V$ is a \textit{generic} potential.

The presence of a positive eigenvalue $\lambda^2\in (0,1)$ of $H+1=L^2$ in \eqref{operator-L-defn} 
gives rise to a family of spatially localized and time-periodic solutions 
\begin{equation}
\label{time-periodic-linear-solutions}
\phi_{A, \theta} (t, x) = A \cos(\lambda t + \theta) \varphi(x), \quad A, \theta \in \mathbb{R}
\end{equation}
to the linear equation associated with \eqref{nlkg-quartic}, namely
\begin{equation}
\label{linear-nlkg}
\partial_t^2 \phi - \partial_x^2 \phi + \phi + V(x) \phi = 0.
\end{equation}
Such solutions $\phi_{A, \theta}$ are known as \textit{bound states}. 
Upon adding nonlinear terms to \eqref{linear-nlkg}, 
the question of the persistence of small amplitude bound states arises.
In this paper, we answer the question for the nonlinear model \eqref{nlkg-quartic}. 
In particular, we prove that for 
initial data 
that are sufficiently small in a suitable weighted Sobolev space, 
solutions to \eqref{nlkg-quartic} decay over time and, 
consequently, bound states \eqref{time-periodic-linear-solutions} do not persist under the nonlinear flow.

\medskip
\subsection{Motivation and difficulties}\label{ssecmotiv}
Nonlinear equations featuring an external potential result from perturbing nonlinear systems near special solutions.  
Our work is mainly inspired by the question of asymptotic stability of the kink solution $K(x) = \tanh (x/\sqrt{2})$ for the one-dimensional cubic wave equation 
\begin{equation}
\label{cubic-wave-pde}
\partial_t^2 \phi - \partial_x^2 \phi = \phi - \phi^3,    
\end{equation}
also known as the $\phi^4$-model. Considering a perturbed kink $\phi(t,x) = K(x) + u(t,x)$, the problem reduces to understanding global-in-time dynamics of solutions to 
\begin{equation}
\label{perturbed-cubic-wave-pde}
\partial_t^2 u + (-\partial_x^2 + 2 - 3 {\rm sech}^2(x/\sqrt{2})) u = -3K(x) u^2 - u^3
\end{equation}
with an initial data of small size $\varepsilon_0$. The spectrum of its linear operator $\mathcal{L} = -\partial_x^2 + 2 - 3 {\rm sech}^2(x/\sqrt{2})$ consists of discrete eigenvalues $\{0, 3/2\}$ and a continuous spectrum $[2, \infty)$. The presence of the positive eigenvalue produces solutions as in \eqref{time-periodic-linear-solutions}.

To rule out the persistence of non-dispersing solutions in the nonlinear flow, one needs to perform a careful analysis of the interactions 
between the internal mode and the orthogonal radiation field,
and rely on a non-degeneracy coupling condition known as the Fermi Golden Rule (FGR).
The analysis can be quite complicated due to slow ($\sim t^{-1/2}$) point-wise decay rate of linear solutions to the $1$d (perturbed) Klein-Gordon equation, and the singular forcing from the coupling to the discrete spectrum.
The main difficulty in \eqref{perturbed-cubic-wave-pde} is that the radiation, which is strongly affected by the internal mode,
interacts with itself through the cubic nonlinearity,
and such coupling does not generate efficient decay.  
In our model \eqref{nlkg-quartic}, the internal mode affects 
the radiation field in the same way, but the radiation's
self-interaction is quartic;
while this interaction is also problematic, we show that it is sufficiently perturbative to propagate decay and control the whole nonlinear solution.

In the seminal work of  Kowalczyk, Martel and Mu\~noz in \cite{KMM17},  the authors proved asymptotic stability 
of the kink in \eqref{cubic-wave-pde} under odd perturbations in the energy space 
with the error term measured on a compact set. 
Given a localized perturbation, the pointwise global behavior of the error term is open. This question was addressed
by Delort and Masmoudi in \cite{DM22}, who provided explicit decay rates 
of solutions for times of order $O (\varepsilon_0^{-4+})$. 
Both of the papers cited above consider \textit{odd} perturbations of the kink to 
avoid the zero eigenvalue and the resonance in the spectrum of $\mathcal{L}$, which present 
substantial additional challenges. 
See, for example, the recent work of the first author and L\"uhrmann in \cite{CL24}
addressing the zero eigenvalue in the context of the sine-Gordon equation.


Although the construction of global dispersive solutions  to 
\eqref{perturbed-cubic-wave-pde} seems to be out of reach at the moment, 
the quartic model \eqref{nlkg-quartic} considered here
serves as a valuable intermediate step to illustrate the difficulties encountered when dealing with
an internal mode and a low pure power nonlinearity.
We remark that our main result in Theorem \ref{thm:main-theorem} improves  on the previous findings of  Komech-Kopylova in \cite{KK11}, 
where the authors establish global decay for solutions to  
equations of the type \eqref{perturbed-cubic-wave-pde} 
where the nonlinearities are localized in space up to order $12$.
Furthermore, our approach, and in particular the `good-bad' decomposition,
combined with the normal form arguments in Germain-Pusateri \cite{GP22-mathpi}, 
is likely to yield a result analogous to that of \cite{DM22}.
Finally, in Section \ref{sec:application}, we also show applications of our result to perturbations 
of the $\phi^4$ model where the error term satisfies an equation 
with localized quadratic and cubic terms but a non-localized quartic term.


\subsection{Related literature}
Internal modes are known to appear when investigating 
questions of stability of special solutions to many dispersive PDEs. 
Restricting our attention to $1d$ models, in \cite{PKA98} Pelinovsky-Kivshar-Afanasjev provide a class of Schr\"odinger-type models whose linearization around their solitary wave solutions possesses an internal mode. 
For the nonlinear Schr\"odinger equation with a nonlinearity vanishing to order at least $9$,
the full asymptotic stability of the solitary wave was proven by Buslaev and Sulem in \cite{BS03} under a certain symmetry condition. 
A more recent paper by Martel \cite{M24}
treats a cubic-quintic Schr\"odinger 
equation, while Rialland \cite{R25} considers perturbations of the cubic NLS in the finite energy setting. 
We refer to the review papers \cite{KMM17-review} and \cite{G25} for more details about the latest developments 
on the asymptotic stability of nonlinear waves in dispersive PDE models.

Internal modes in three spatial dimensions 
have been considered by several authors. 
The first rigorous result was obtained for the cubic Klein-Gordon
$\partial_t^2u + (-\Delta + V(x) +m^2) u = u^3$ 
in the seminal paper by Soffer and Weinstein \cite{SW99}. 
It was shown that the internal mode component decays under the assumption of a Fermi Golden Rule (FGR), 
that is a nonlinear resonance condition introduced by Sigal in \cite{S93}. 
In fact, the FGR plays a crucial role in the damping mechanism of the internal mode component;
the internal mode turns out to have a very slow time-decay rate (meta-stability) 
which is essentially due to a resonant nonlinear interaction between the discrete modes and the radiation term. 
The presence of multiple internal modes in the cubic Klein-Gordon equation 
was studied by Bambusi and Cuccagna in \cite{BC11}.
These results were further extended and complemented in recent important works by
Lei-Liu-Yang \cite{LLY22,LLY23}. 

A quadratic Klein-Gordon equation with an internal mode in 2d arises in the study of the stability of the one-vortex for the Abelian Yang-Mills-Higgs model, and has been treated very recently by L\"{u}hrmann- Palacios-Pusateri-Schlag-Shahshahani \cite{LPPSS1}-\cite{LPPSS2}-\cite{LPPSS3}.

We also mention that the asymptotic stability of $1$d kinks in the $3d$ version of the $\phi^4$-model 
was proven by Cuccagna in \cite{C08} for a class of compactly supported perturbations.
The results of Soffer and Weinstein in \cite{SW99} were further extended to 
the quadratic Klein-Gordon equation by L\'eger and Pusateri in \cite{LP21, LP22}.   
The problem of meta-stability for a class of nonlinear Schr\"odinger equations was studied by
Tsai-Yau in \cite{TY02}; see also the survey by Cuccagna-Maeda \cite{CM20} 
for a more thorough discussion of recent advances in the NLS case.

\medskip 
\subsection{Informal statement of results}\label{sec:simplified_MR}
Here we state a simplified version of the main result of this paper on the long-time dynamics of small solutions to \eqref{nlkg-quartic}-\eqref{eq:lambdaintro}.
We first briefly introduce some notations, state the main spectral assumptions, and refer to Subsection \ref{ssec:notations} for a more complete set of notations.
Denote $L:=\sqrt{H+1}$,  $P_c$ the projection onto the continuous spectral subspace of $L^2(\R)$ relative to $H$ and $(\cdot,\cdot)$ the scalar product on $L^2(\R)$. 

\subsubsection*{Assumptions}
We assume that the real-valued potential $V$ and the coefficient $\beta$
belong to the class of Schwartz functions (for simplicity, as these assumptions can easily be relaxed
to finite smoothness and algebraic decay of the functions and their derivatives).
In addition, we impose the following conditions:

\setlength{\leftmargini}{2em}

\smallskip
\begin{itemize}
\item (Genericity of $V$) The potential $V$ is generic, that is, $0$ is neither an eigenvalue nor a resonance of $H$.

\smallskip
\item  (Internal mode-continuous spectrum interaction)    
  $\lambda\in (1/2,1)$ and $2\lambda \in \sigma_{c}(L)$ (or, equivalently 
    $4\lambda^2 \in \sigma_{c}(H+1)$).

\smallskip
\item  (The Fermi Golden Rule) The following nondegeneracy condition holds:
\begin{equation}\label{Fermi-Gordon}
\Gamma := \frac{\pi}{2\lambda} \left( \theta, \delta(L-2\lambda) \theta \right) > 0, \, \quad \theta := P_c(\beta\varphi^2).
\end{equation}
\end{itemize}
We are now in a position to state a simplified version of our main result.

\begin{theorem}\label{thm:main-theorem}
Assume that the above spectral assumptions are satisfied,
and that the initial data
$(u(t=0), \partial_t u(t=0))$ is sufficiently small in a weighted Sobolev space
of sufficiently high regularity $N$ and satisfies 
the ``resonance dominated'' condition from \eqref{mainthmdata}.
Then, there exists a unique solution $u\in C(\R,H^N_x)\cap C^1(\R,H^{N-1}_x)$
of \eqref{nlkg-quartic}
such that 
 \begin{equation}\label{eq:decompu}
u(t,x) = a(t) \varphi (x) + v(t,x), \quad (v(t), \varphi)=0
\end{equation}
with
\begin{equation}
|a(t)| \approx \varepsilon_0 (1+\varepsilon_0^2 t)^{-1/2},\quad t>0
\qquad \mbox{and} \qquad 
\label{v-est-mainthm}
\|v(t)\|_{L^\infty_x}\lesssim \frac {\log(2+\varepsilon_0^2 t)}{\lr{t}^{1/2}}, 
\quad t\geq\varepsilon_0^{-2},
\end{equation}
where $\varepsilon_0$ is as in \eqref{mainthmdata}.
Moreover, the radiation (or field) component $v$ scatters to a solution of 
the associated linear equation in $H^{N-1}_x$.
\end{theorem}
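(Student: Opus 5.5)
The proof will be a bootstrap argument built on the decomposition \eqref{eq:decompu} and the distorted Fourier transform associated with $H$. Write $u=a(t)\varphi+v$ with $v=P_cu$. Projecting \eqref{nlkg-quartic} gives
\begin{equation}
\ddot a+\lambda^2 a=(\beta u^2+u^4,\varphi),\qquad \partial_t^2v+L^2v=P_c(\beta u^2+u^4).
\end{equation}
We first diagonalize. Set $A=\tfrac12(a-i\dot a/\lambda)$, so that $\dot A+i\lambda A=\dots$, and set $w=(\partial_t-iL)v$, or rather its profile $f=e^{itL}w$ written in distorted Fourier variables. The dominant forcing of $v$ is $a^2\beta\varphi^2$, which contains the frequencies $\pm2\lambda$ and $0$. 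Since $2\lambda\in\sigma_c(L)$, the $e^{\mp2i\lambda t}$ part is resonant. We therefore split $v=v_{\rm bad}+v_{\rm good}$, where
\begin{equation}
v_{\rm bad}\approx A^2(L^2-4\lambda^2-i0)^{-1}\theta+\text{c.c.}+|A|^2L^{-2}\theta .
\end{equation}
This is the explicit, non-decaying-in-space response to the internal mode; it should be defined with time-dependent amplitudes via a normal form. Substituting $v_{\rm bad}$ back into the equation for $A$ produces $\dot A=-i\lambda A+c|A|^2A+\dots$ with $\Re c=-\Gamma<0$ by \eqref{Fermi-Gordon}. Integrating gives $|A|^2\approx\varepsilon_0^2(1+2\Gamma\varepsilon_0^2t)^{-1}$. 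The remaining cubic terms in $A,\bar A$ are non-resonant, because $\lambda\neq3\lambda$ etc. and $3\lambda$ is handled by the same structure, and they are removed by a further normal form. The ``resonance dominated'' data condition keeps $|a|$ comparable to $\varepsilon_0(1+\varepsilon_0^2t)^{-1/2}$ from below.

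Second, we set up bootstrap norms for the good part. These are $\|a\|$ as above, together with the following bounds on the profile $g$ of $v_{\rm good}$:
\begin{equation}
\|g\|_{H^N}\lesssim\varepsilon_0\lr{t}^{\delta},\qquad \|\partial_\xi\widetilde g\|_{L^2}\lesssim\varepsilon_0\lr{t}^{1/2-\delta'},\qquad \|\widetilde g\|_{L^\infty_\xi}\lesssim\varepsilon_0 .
\end{equation}
By the linear dispersive estimate for $e^{itL}P_c$ in the generic case, these give $\|v_{\rm good}\|_{L^\infty}\lesssim\varepsilon_0\lr t^{-1/2}$. Local decay/smoothing estimates $\|\lr x^{-1}e^{itL}P_c f\|\lesssim\lr t^{-3/2}$ control localized terms. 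The bad part satisfies $\|v_{\rm bad}\|_{L^\infty}\lesssim|a|^2\lesssim\varepsilon_0^2(1+\varepsilon_0^2t)^{-1}$. More precisely, the actual radiating piece of $v_{\rm bad}$, obtained by Duhamel integration of $a^2$ against the outgoing resolvent, decays like $\int_0^t|a(s)|^2(t-s)^{-1/2}ds$. This is the source of the $\log(2+\varepsilon_0^2t)\lr t^{-1/2}$ bound in \eqref{v-est-mainthm}.

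The weighted energy estimates come next. Localized quadratic terms $\beta v^2$ and $\beta av$ are handled with local decay, and the $\beta av$ term needs a normal form to avoid losses at the frequency $\lambda$. The main obstacle is the non-localized quartic term $u^4$, especially the $\partial_\xi$ (i.e.\ $x$-weighted) estimate. Its contributions $v^4$, $v^3a\varphi$, etc.\ have $L^2$ size $\|v\|_{L^\infty}^3\|v\|_{L^2}\sim\varepsilon_0^4\lr t^{-3/2}(\log)^3$. The weighted norm, however, gains a factor $t$, which leaves $t^{-1/2}$ growth. This is exactly borderline, so the loss must stay below $\lr t^{1/2-\delta'}$ and cannot accumulate. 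We would first try to use the $\log$-loss structure, keeping $(\log t)^3$ against $\lr t^{\delta'}$ room. Failing that, we would exploit that $v_{\rm bad}$ is essentially localized times $a^2$, so any quartic term containing a $v_{\rm bad}$ factor is localized-like with extra $\varepsilon_0^2$ smallness. For $v_{\rm good}^4$ we would integrate by parts in time in distorted Fourier space. This uses the fact that four-wave Klein--Gordon phases $\pm\lr\xi\pm\lr\eta\pm\lr\sigma\pm\lr\zeta\pm\lr\mu$ with an odd number of terms never vanish. The resulting quintic terms are then estimated directly, with the delta/p.v.\ parts of the distorted nonlinear spectral distribution handled as in the flat case.

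Finally, closing the bootstrap yields global existence in $C(\R,H^N)$ by local well-posedness and continuation. For scattering, the same estimates show that $\partial_t f$ is integrable in $H^{N-1}$: the bad forcing is $\lesssim|a|^2$ in localized norms, and its oscillating resolvent part converges since $\int|a|^2\cdot$(oscillation) converges after one integration by parts in time. Hence $e^{itL}w(t)$ converges in $H^{N-1}_x$.
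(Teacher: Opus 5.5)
The radiation bootstrap does not close, and it fails at the step you flag as the main obstacle.

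\textbf{The bootstrap norms are inconsistent.} Your good-part bounds do not give the decay you claim. With $\|\partial_\xi\wt g\|_{L^2}\lesssim\varepsilon_0\lr{t}^{1/2-\delta'}$ and $\|\wt g\|_{L^\infty_\xi}\lesssim\varepsilon_0$, the linear estimate only yields $\lr{t}^{-1/2}\|\wt g\|_{L^\infty}+\lr{t}^{-3/4}\|\partial_\xi\wt g\|_{L^2}\sim\lr{t}^{-1/4-\delta'}$. The loss is real: a frequency packet of width $t^{-1/3}$ that refocuses at time $t$ meets your bounds and has $L^\infty$ norm $\approx t^{-1/3}$. Then $\|v\|_{L^\infty}^3$ is not integrable and every quartic estimate fails. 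The $\sqrt t$ growth has to be confined to the explicit resonant piece. In the paper that piece is the frequency-localized Duhamel integral \eqref{bad-g-defn}, which, unlike $A^2(L^2-4\lambda^2-i0)^{-1}\theta$, lies in $L^2$; with your choice, $v-v_{\rm bad}\notin H^N$. The paper keeps $\|\partial_k\wt h\|_{L^2}\lesssim\varepsilon_0^\alpha\lr{t}^{\delta_1}$ and proves pointwise decay Duhamel term by Duhamel term, using \eqref{lininterpest} on the dyadic pieces $Q_m$ for the quartic part.

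\textbf{None of your three remedies removes the naive count} $\int^t s\cdot s^{-3/2}\,ds\sim t^{1/2}$. First, powers of $\log t$ cannot bring $t^{1/2}$ below $t^{1/2-\delta'}$. Second, $v_{\rm bad}$ is not localized: as you say yourself, the resonant response is an outgoing wave that does not decay in $x$. Third, the five-term phases $-\lr{k}+\sum_{j=1}^4\ell_j\lr{\xi_j}$ do vanish on the momentum-conserving set. For signs $(+,+,-,-)$ with $k=\xi_3=\xi_4=0$ and $\xi_1=-\xi_2=\sqrt5/2$ one gets $-1+\tfrac32+\tfrac32-1-1=0$; this is $2\to3$ kinematics. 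Non-resonance holds only for three-term (quadratic) phases, so there is no normal form for $v_{\rm good}^4$.

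\textbf{The missing idea is the commutation (Lorentz-boost) identity of Lemma \ref{partial-k-I}.} On the singular part of the nonlinear spectral distribution, $\lr{k}\partial_k$ of $e^{is\Upsilon}\widehat{\Lambda_\pm}(\zeta)$ is traded for a combination $\sum_j\pm\lr{\xi_j}\partial_{\xi_j}$ of the same expression, up to two error terms. The first is $-is\,s_0\,\zeta\widehat{\Lambda_\pm}(\zeta)e^{is\Upsilon}$. It is localized because $\zeta\widehat{\Lambda_\pm}=\widehat{\Theta}_\pm$ is Schwartz (Lemma \ref{lem:FT-varLambda}), so it costs only $s\|v\|_{L^\infty}^4$. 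The second is $-s_0\Upsilon e^{is\Upsilon}\partial_\zeta\widehat{\Lambda_\pm}$, which is removed by integrating by parts in $s$ without ever dividing by $\Upsilon$.

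Integrating by parts in $\xi_j$ then puts the derivative on a single input. Each time block is bounded by $2^m\|\wt f\|_{H^1_k}\|v\|_{L^\infty}^3\lesssim2^m\cdot2^{m/2}\cdot2^{-3m/2+6\delta m}$. This is only $t^{O(\delta)}$ growth, even though $\wt f$ contains the bad part with $\|\partial_k\wt g\|_{L^2}\sim\sqrt t$. The regular part of the distribution is handled by local decay. This $t^{8\delta}$ bound is also what controls the feedback $R_Q$ of the quartic radiation into the ODE.

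\textbf{Your scattering step also needs a different tool.} You have $\|\partial_tf\|_{L^2}\gtrsim|a|^2\sim t^{-1}$, which is not integrable. Integrating by parts in time breaks down at $\lr{k}=2\lambda$: there, pointwise in $k$, $\int^tA^2\,ds$ does not converge, because of the logarithmic phase of $A$. The paper instead uses the smoothing estimate of Lemma \ref{lem:smoothing-effect}, $\|\int_{t_1}^{t_2}e^{-isL}a^2\theta\,ds\|_{H^{N-1}}\lesssim\|a^2\|_{L^2_s([t_1,t_2])}\lesssim\rho_{\Gamma_0}^{1/2}(t_1)$. This needs only square integrability of $|a|^2$.
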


\smallskip
Here we opted for a simple statement of the results and postponed 
the more complete formulation to Section \ref{sec:eqs-mr} and Theorem \ref{thm:mainthmdetail}.
In particular, we will also derive more precise information about the (local) decay of solutions,
provide precise asymptotics for the amplitude of the internal mode
and prove lower bounds on a weighted norm of the radiation profile.


\medskip
\subsection{Main challenges and strategies of the proof}
The study of the long-time dynamics of solutions to dispersive equations in one spatial dimension, especially with 
low-power polynomial nonlinearities and in the presence of internal modes, 
presents well-known significant difficulties. 
Compared to the existing results on internal mode analysis, 
studying the internal mode equation in our setting is significantly more involved because of the slow time decay
and the low power, non-localized, quartic nonlinearity.
In this subsection, we highlight the difficulties encountered in our analysis and discuss 
some of the ideas used to address them.

\subsubsection*{Slow decay of the amplitude of the internal mode} 
Our starting point is the decomposition of $u$ as 
\begin{equation}
u(t)=a(t)\varphi+v(t),\quad (v(t),\varphi)=0,
\end{equation}
where $a(t)$ is the amplitude of the internal mode or the discrete component, 
while $v(t) = P_c u$ is the continuous component (or radiation field). 
These components satisfy a strongly coupled nonlinear quartic ODE$-$PDE system
\begin{equation}
\label{ode-pde-system-initial}
\begin{cases}
\begin{aligned}
& \ddot a + \lambda^2 a = \left( \beta (a \varphi + v)^2, \varphi \right) + \left( (a \varphi + v)^4, \varphi \right) \\
& \partial_t^2 v + L^2 v = P_c \left[ \beta (a\varphi + v)^2+  (a\varphi + v)^4 \right].
\end{aligned}
\end{cases}
\end{equation}
Controlling the dynamics of this system for large times is a nontrivial task.
Starting with the ODE, we introduce the slowly varying amplitude 
\begin{align}\label{slow-variable}
\begin{split}
A(t) & := \frac{e^{-i\lambda t}}{2i\lambda}\big( \dot{a}(t) + i \lambda a(t) \big),
\quad a(t) = A(t) e^{i\lambda t} + \overline A(t) e^{-i\lambda t} \quad
  \big( \dot{A}(t) e^{i\lambda t} + \dot{\overline A}(t) e^{-i\lambda t} = 0 \big).   
\end{split}
\end{align}
The dynamics of $A$ is governed by the equation
\begin{equation}
\label{evolution-A-cs}
\begin{aligned}
\dot A(t) & = \frac{1}{2i\lambda} e^{-i\lambda t} \left[ \left( \beta (a\varphi + v)^2, \varphi \right) 
  +  \left( (a\varphi + v)^4, \varphi \right) \right]
\\
& = \frac{1}{2i\lambda} e^{-i\lambda t} a^2(t) \int_\R \beta\varphi^3 dx 
+ \frac{1}{i\lambda} e^{-i\lambda t} a(t) \int_\R \beta \varphi^2v(t) dx 
+\cdots
\end{aligned}
\end{equation}
where the remaining terms are not explicitly written, as they do not contribute to the main discussion here. 
The second term on the right-hand side of \eqref{evolution-A-cs} 
is the one that produces the resonant damping contributions,
as we will explain below. 

To analyze the solution of the PDE,
we consider $q$ solving $\partial_t^2q+L^2q=a^2(t)P_c(\beta\varphi^2)$ with $q(0)=0=q_t(0)$; 
this isolates the quadratic forcing generated by the discrete component.
Using \eqref{slow-variable} and then integration by parts in $s$ with Sokhotsky’s formula (see \eqref{Sokhotsky}), we express
\begin{align}
q(t,x) & = 
 L^{-1} \Im \int_0^te^{i(t-s)L}a^2(s)ds\, P_c(\beta \varphi^2)
 \\
 & = L^{-1} \frac{1}{2i}e^{itL} \int_0^te^{-is(L-2\lambda)} A^2(s)ds\, P_c(\beta \varphi^2)  +\cdots
 \\
 & = \frac{1}{2}e^{2i\lambda t} A(t)^2\left[L^{-1}P.V.\frac{1}{L-2\lambda}P_c(\beta \varphi^2)-i\pi L^{-1}\delta(L-2\lambda)P_c(\beta \varphi^2)\right] +\cdots
\end{align}
in the sense of distributions, with complex conjugates and 
  more regular and faster decaying terms left out. 
After substituting $q$ for $v$ in the ODE \eqref{evolution-A-cs}, we obtain the
equation 
\begin{equation}
 \dot{A}(t)=\frac{1} {2i\lambda} |A(t)|^2A(t)\int_\R \left(L^{-1}P.V.\frac{1}{L-2\lambda}P_c(\beta \varphi^2)
 -\frac{i\pi}{2\lambda} \delta(L-2\lambda)P_c(\beta \varphi^2)\right)P_c(\beta \varphi^2)   dx+\cdots.
\end{equation}
Letting $y=|A|^2$ leads to an ODE of the form $\dot{y}= -(\Gamma/\lambda) y^2+R$
with $\Gamma > 0$ under the Fermi Golden Rule \eqref{Fermi-Gordon}.
 The remainder $R$ consists of many nonlinear terms in $a$ and $v$ (projected along $\varphi$), 
 some of which decay quite slowly; therefore, a standard perturbative approach does not suffice
 and we need to rely, among other things, on refined smoothing estimates
 (see for example Lemma \ref{lem:local-estimate-time-cut-off})
 and on multilinear (distorted) Fourier analysis to control the contributions to
 $v$ appearing on the right-hand side of \eqref{evolution-A-cs}.
 


\medskip
\subsubsection*{Bootstrap setup} A major step in the analysis of the equation governing the behavior of the continuous component $v$ is the choice of suitable norms that can be propagated globally in time. 
In the presence of an external potential, the distorted Fourier transform appears to be a natural tool to use. 
To obtain dispersive estimates, we choose to control  $\partial_k(\wt{\mathcal {F}}f)(t,k)$ in $L^2_k$,
where $f$ is the interaction variable associated with $v$: 
\begin{equation}\label{relation-fwv}
f := e^{-itL}w, \qquad w:=(\partial_t+iL)v, \qquad v=L^{-1}\Im\, w,
\end{equation} 
and $\wt{\mathcal{F}}$ denotes the distorted Fourier transform adapted to the Schr\"odinger operator $H$ 
(see Subsection \ref{ssec:notations} for the definition). 
Using Duhamel's formula for $f$ and taking the distorted Fourier transform, we write (with $\wt{f}:=\wt{\mathcal{F}}f$)
\begin{align}\label{evolution-f-in}
\begin{aligned}
\wt{f}(t,k) &= \wt{f}(0,k) + \int_0^t e^{-is\lr{k}} \wt{\mathcal{F}}\big[P_c \big( \beta (a(s) \varphi + v(s))^2 + (a(s) \varphi + v(s))^4\big) \big](k) ds 
  \\
&=\int_0^te^{-is(\lr{k}-2\lambda)}A^2(s)ds\, \widetilde{\beta \varphi^2}(k) + F(t,k)+\cdots
\end{aligned}
\end{align}
where we are neglecting many contributions for the sake of explanation,  and have defined
\begin{align}
\begin{aligned}\label{F-mu}
 F(t,k) &:=\frac{1}{2^4}\int_0^t
  \iiiint_{\R^4}e^{is(-\lr{k}+\sum_{j=1}^4\lr{k_j})}
  \wt{f}(s,k_1)\wt{f}(s,k_2)\wt{f}(s,k_3)\wt{f}(s,k_4)
  \\
  & \qquad \qquad \qquad \qquad \qquad \qquad \times \frac{\mu(k,k_1,k_2,k_3,k_4)}{\lr{k_1}\lr{k_2}\lr{k_3}\lr{k_4}}dk_1dk_2dk_3dk_4ds, 
  \\
& \mu(k,k_1,k_2,k_3,k_4)= \int_{\R}\overline{\psi(x,k)}\psi(x,k_1)\psi(x,k_2)\psi(x,k_3)\psi(x,k_4)dx
 \end{aligned}
\end{align}
where $\psi(x,k)$ is the distorted planar wave that diagonalizes the Schr\"odinger operator $H$, 
see \eqref{matK} for details.

The main difficulty we encounter here is due to the feedback of the internal mode $a(t)$ 
into the PDE satisfied by $v$ (see \eqref{ode-pde-system-initial}). 
This generates a singular (resonant) contribution in the (distorted) frequency space, 
that is, the first integral term in \eqref{evolution-f-in}. 
Following \cite{LP21}, we localize this term around the resonant
frequency so that the interaction variable $f$ can be split 
as follows:\footnote{Technically, in the definition of $g$ we do not use 
the profile $A$ but a renormalized version, which we call $B$; see the 
exact definition in \eqref{bad-g-defn}. Here, for the sake of explanation 
we gloss over this distinction.}
\begin{equation}\label{profile-decomposition}
\wt{f} = \wt{h} - \wt{g} ,
  \quad \wt{g}(t,k) = - \chi(|\lr{k}-2\lambda|)\int_0^te^{-is(\lr{k}-2\lambda)}A^2(s)ds
  \,\wt{\mathcal{F}}P_c(\beta \varphi^2)
\end{equation}
where $\chi$ is a bump function with small support near the origin. 
We think of $h$ as a `good' component and of $g$ as a `bad' (singular or Fermi) component.
Indeed, one can see that 
\begin{align}\label{introdkwtg}
{\| \partial_k\wt{g}(t,\cdot) \|}_{L^2_k} \lesssim \sqrt{t}, \,  \qquad t\gg1,
\end{align}
by a direct application of the smoothing estimate \eqref{smooth-est}, 
and that this upper bound is sharp (see Lemma \ref{app:growth-f}).
Thus, propagating the bound for $\partial_k(\wt{\mathcal {F}}f)(t,k)$ over time is a nontrivial task in light of the quartic term $F(t,k)$ in \eqref{F-mu}.

After removing the singular term, we look at the component $\wt{h}$ 
and aim to prove that it satisfies an acceptable estimate with slow growth of the form
\begin{align}\label{introdkwth}
{\big\| \partial_k\wt{h}(t) \big\|}_{L^2_k} \lesssim \varepsilon_0^{\alpha}t^{\delta_1}, \qquad t\gg 1,
\end{align}
for some small constants $\alpha, \delta_1 > 0$.  This plays a crucial role in 
the control of the additional natural quantities that we bootstrap, that is:
\begin{enumerate}
    \item the linear decay type norm: $\displaystyle \sup_{0\leq t\leq T}t^{1/2-\delta}\|v(t)\|_{L^\infty_x}$,
    \item the local decay norm: $\displaystyle \sup_{0\leq t\leq T}t^{1-\delta_2}\|\lr{x}^{-2}v(t)\|_{L^\infty_x}$,
    \item the energy norm: $\displaystyle \sup_{0\leq t\leq T}\|v(t)\|_{H^N_x}$,
\end{enumerate}
for some small constants $0<\delta<\delta_2$. 
Note that we do not propagate the sharp $t^{-1/2}$ decay rate for the nonlinear solution.
This may be possible, but would require several additional arguments.

For the first two norms, the terms in the equation of $v$ (see \eqref{eq-v}) 
can be controlled by relying on the linear dispersive decay estimates except for the quartic contribution which requires a more careful and technical analysis.
For the Sobolev norm, the slow decay in one dimension necessitates
a combination of smoothing estimates and weighted Gagliardo–Nirenberg-type inequalities. 

\subsubsection*{The quartic term} 
Our approach to the most delicate estimates for the quartic nonlinearity is based on multilinear analysis in
frequency space.
The main quantity to estimate is 
the $L^2$-norm of $\partial_k F(t,k)$ for $F$ as in \eqref{F-mu}.
After applying $\partial_k$ to \eqref{F-mu}, we try to integrate by parts 
to exploit the oscillations of the phase.
We then need to pay attention to the regularity of the integrand and in particular 
to the regularity of the nonlinear `spectral distribution' $\mu$. 
The decomposition of eigenfunctions into singular and regular
components (see \eqref{psi-singular-part}-\eqref{psi-regular-part}) 
induces an analogous splitting of the nonlinear distribution $\mu:=\mu_S+\mu_R$. 
Let us denote by $F_S$ the singular part of $F$ corresponding to $\mu_S$, 
and concentrate solely on this in the following idealized discussion.

Differentiating in $k$, and using commutation identities or, equivalently, integration by parts, we have
\begin{align}\label{sing-terms}
\begin{aligned}
\partial_k F_S(t,k) \approx \frac{2^{-4}}{\lr{k}}
  \int_0^t\iiiint_{\R^4}e^{is(-\lr{k}+\sum_{j=1}^4\lr{k_j})}
  \lr{k_1}
  \partial_{k_1}\wt{f}(s,k_1)\wt{f}(s,k_2)\wt{f}(s,k_3)
  \\ \times \wt{f}(s,k_4)\frac{T(q)}{\lr{k_1}\lr{k_2}\lr{k_3}\lr{k_4}}dk_{1234}ds+\cdots
\end{aligned}
\end{align}
where $T$ is a $\delta$ distribution and $q=k-k_1-k_2-k_3-k_4$;
note that we are suppressing many other terms, including 
the contributions from other sign combinations,
terms with a $\mathrm{p.v.}$ distribution instead of a $\delta$, 
and we are disregarding the presence of the scattering coefficients.
Switching back to physical space by means of the (flat) inverse Fourier transform,
we schematically obtain, up to similar or lower order terms, 
\begin{equation}\label{fW}
  {\big\| \partial_k F_S(t,\cdot) \big\|}_{L^2_k} \lesssim 
  \int_0^t {\big\|\wt{f}(s)\big\|}_{H^1_k} 
 {\| v(s) \|}^3_{L^\infty_x} \, ds.
\end{equation}
Using our a priori assumptions (see the statement of Theorem \ref{thm:mainthmdetail})
we can then estimate
\begin{equation}\label{fW'}
{\big\| \partial_k F_S(t,\cdot) \big\|}_{L^2_k} \lesssim 
  \int_0^t \lr{s}^{1/2} \big(\varepsilon_0^\alpha \lr{s}^{-1/2+\delta} \big)^3 \, ds
  \lesssim \varepsilon_0^{3\alpha} \lr{t}^{3\delta};
\end{equation}
the actual proof requires relaxing these bounds even further, but \eqref{fW'} 
already suffices for this explanation.
From \eqref{fW'} one can sees that while this suffices to recover the a priori assumption \eqref{introdkwth},
it is not enough on its own to recover the $L^\infty_x$ decay assumption
(through a standard linear estimate). This is one of the main sources of difficulties in this problem.

Further technical analysis is then needed, and we are eventually 
able to obtain an acceptable bound 
 also using 
 an argument based on dyadic decomposition in time and interpolation. 
 As a consequence we can recover the main bootstrap assumption \eqref{introdkwth} 
 and obtain pointwise decay bounds for the flow of the good part, 
 $e^{itL}h$, 
 as well as improved local decay through standard linear estimates. 
 Similar decay estimates can then be obtained for $e^{itL}f$
 eventually closing the full bootstrap argument for the radiation field $v$.

\medskip
\subsection{Structure of the paper}
The paper is organized as follows.
In Section \ref{sec:eqs-mr} we set up the equations for the nonlinear analysis,
explain the damping mechanism, introduce all necessary notation,
and give a precise statement of our main results in Theorem \ref{thm:mainthmdetail}.
In Section \ref{sec:Linear} we give the main supporting results concerning the distorted Fourier transform,
and then use this in Section \ref{sec:linear-decay} to prove a variety of 
linear dispersive and smoothing estimates.
In Section \ref{sec:ODE} we analyze the ODE and derive 
upper and lower bounds for the amplitude of the internal mode.
Sections \ref{sec:PDE-part0}-\ref{sec:PDE-part2} are dedicated to analyzing the PDE for the radiation
and closing the bootstrap assumptions. Section \ref{sec:PDE-part2} is 
the main one dedicated to the quartic terms.
In Section \ref{app:asy} we prove the asymptotic statements on the internal mode
and the radiation, as well as the lower bound on the weighted norm of the radiation's profile.
Finally, in Section \ref{sec:application} we show how our results can be applied to
obtain asymptotic stability on the line for odd perturbations of kink solutions
in models close to the $\phi^4$ equation.

\medskip
\subsection{Conventions and notations}\label{ssec:notations}
We denote by $C$ a generic positive constant 
whose value may vary from line to line and will be specified when needed. 
For non-negative numbers $N,M$, we write $N\lesssim M$ to mean $N\leq CM$ and $N\approx M$ if $N\lesssim M\lesssim N$. 
In addition, for any $R \in \mathbb{C}$, we write $R=O(N)$ if $|R|\leq CN$.
The implicit constant $C$ in the above notation
may depend on the fixed equation and bootstrap parameters,
but not on time or the small data size, unless otherwise stated.
Given a real number $N$, $N+$ (resp. $N-$) denotes a number $M>N$ (resp. $M<N$) that can be arbitrarily close to $N$.

Throughout the manuscript, we use the following bracket notation
\[\lr{x}:=(1+x^2)^{1/2}.\]
We denote by $\Re$ and $\Im$ the real and imaginary parts of a complex number.
If $I\subset \R$ is an interval, we denote its characteristic function by $\mathbbm{1}_I$. 

For the Fourier transform (resp.  the inverse Fourier transform) 
of a Schwartz function $f$, we use the notation $\widehat{f}$ and $\mathcal{F}f$ 
(resp.  $\check{f}$ and  $\mathcal{F}^{-1}f$) interchangeably with
\begin{equation*}
 \begin{aligned}
\mathcal{F}f(k):=  \frac{1}{\sqrt{2\pi}}\int_\R e^{-ixk} f(x)dx   
\\
\mathcal{F}^{-1}f(x):= \frac{1}{\sqrt{2\pi}}\int_\R e^{ixk} f(k)dk.    
 \end{aligned}   
\end{equation*}

Standard notation is used for cutoffs. Let $\phi:\R \rightarrow [0,1]$ be a smooth bump function supported in $[-8/5,8/5]$ and $\phi=1$ on $[-5/4,5/4]$. For $\ell\in \mathbb Z$, define  $\phi_\ell(k)=\phi(2^{-\ell}k)-\phi(2^{-\ell+1}k)$, $k\in \R$. The family $(\phi_{\ell})_{\ell\in \mathbb Z}$ forms a partition of unity, $\sum_{\ell\in \mathbb Z}\phi_\ell(k)=1$ for $k\neq 0$. For a given interval $I\subset \R$, denote
\begin{equation*}
\phi_{I}(k):=\sum_{\ell\in \mathbb Z\cap I}\phi_{\ell}(k),\quad \phi_{\leq r}(k):=\phi_{(-\infty,r]}(k).    
\end{equation*}

We denote by $P_\ell$, $\ell\in \mathbb Z$, 
the Littlewood-Paley projection adapted to the distorted Fourier transform and defined by 
\begin{equation}
   \widetilde{P_\ell f}(k) = \phi_{\ell}(k)\wt{f}(k), \quad    
   \widetilde{P_I f}(k) = \phi_{I}(k)\wt{f}(k).
\end{equation}
We denote by $T$ a positive time and use the interval $[0,T]$ to state  our bootstrap estimates. When decomposing time integrals  (see, for example, \eqref{reductionestQm}) for any $t\in [0,T]$, we decompose the indicator function $\mathbbm{1}_{[0,t]}$ considering the family of functions $(\tau_n)_{n=0}^{L+1}$ with $|L-\log_2(t+2)|<2$ and $\tau_n :\R \rightarrow [0,1]$ and $L\in \mathbb N$ such that
\begin{equation}\label{timeloc}
 \begin{aligned}
 \sum_{n=0}^{L+1}   \tau_n(s)=\mathbbm{1}_{[0,t]},\quad \mbox{supp}\, \tau_0\subset [0,2],\quad \mbox{supp}\,\tau_{L+1}\subset [t/4,t]\\
 \mbox{and}\quad \mbox{supp}\,\tau_n\subset [2^{n-1},2^{n+1}],\quad |\tau'_n(s)|\lesssim 2^{-n} \quad \mbox{for}\quad n=1,...,L.
 \end{aligned}
\end{equation}

\medskip
\subsection{Acknowledgments}
G.C. was partially supported by NSF Grants DMS-2350301 and CAREER-DMS-2540992, by the Simons Foundation MP-TSM-00002258, and by the AMS Stefan Bergman Fellowship. A. K. was supported by the Nazarbayev University Faculty Development Competitive Research Grants Program 040225FD4702.
F. P. and G.Y.D. were supported in part by a start-up grant from the University of Toronto, 
and NSERC grants RGPIN-2018-06487 and RGPIN-2025-06419.



\bigskip
\section{Setup and main results}\label{sec:eqs-mr}
In this section, we first derive equations for the discrete and continuous components of the solutions
and isolate the resonant interactions responsible for radiation damping.
We then state the complete main result of this paper in Theorem \ref{thm:mainthmdetail},
including all the details on the asymptotic behavior which 
were omitted in our earlier simplified Theorem \ref{thm:main-theorem}.

\subsection{Derivation of the equations}
Recall the amplitude $A$ defined in \eqref{slow-variable}:
\begin{equation}
\label{a-in-A-defn}
a(t) = A(t) e^{i\lambda t} + \overline A(t) e^{-i\lambda t} \quad \text{with} \quad \dot A(t) e^{i\lambda t}
+ \dot{\overline A}(t) e^{-i\lambda t} = 0.
\end{equation}
Substituting the decomposition \eqref{eq:decompu} into \eqref{nlkg-quartic}, 
we obtain the equation for the internal mode
\begin{equation}
\label{ode}
\begin{aligned}
& \ddot a + \lambda^2 a = \left( \beta (a \varphi + v)^2, \varphi \right) + \left( (a \varphi + v)^4, \varphi \right).
\end{aligned}
\end{equation}
From \eqref{a-in-A-defn}, the evolution of the profile $A(t)$ is governed by the equation 
\begin{equation}
\label{evolution-A-initial}
\begin{aligned}
\dot A(t) = &~ \frac{1}{2i\lambda} e^{-i\lambda t} \left[ \left( \beta (a\varphi + v)^2, \varphi \right) +  \left( (a\varphi + v)^4, \varphi \right) \right].
\end{aligned}
\end{equation}
Before proceeding we also record (cf. \eqref{relation-fwv}) the equation for the continuous part $v$ of the solution:
\begin{align}\label{eq-v}
\begin{split}
v(t) & = L^{-1} \Im\big[ e^{itL}\big(\partial_tv(0)+iLv(0)\big) \big]
  \\
  & + \int_0^t L^{-1}\Im \Big[ e^{i(t-s)L} P_c \left[ \beta (a(s) \varphi + v(s))^2 
  + (a(s) \varphi + v(s))^4 \right] \Big] ds.
\end{split}
\end{align}

To capture the leading order dynamics in \eqref{evolution-A-initial}, 
we need to isolate the terms responsible for the leading order behavior of the radiation field
$v$, or equivalently, of $w =\left( \partial_t + iL \right) v$. 
This is done in the following lemma, which also organizes some of the nonlinear terms 
according to their features and the way we are going to treat them in the proofs to come.

\begin{lemma}\label{lemevolw}
Let 
$w:=\left( \partial_t + iL \right) v$, where $v$ is the solution of \eqref{ode-pde-system-initial}.
Define $\theta := P_c \left( \beta \varphi^2 \right)$. We have the following decomposition
\begin{align}\label{lemevolw1}
\begin{split}
w(t) = e^{itL}w(0) + i e^{2i\lambda t} A^2(t) {\rm P.V.} \left( \frac{1}{L-2\lambda} \right) \theta + e^{2i\lambda t} A^2(t) \pi \delta(L-2\lambda) \theta
\\
  + e^{itL} R_w(t) + \rest(t)
\end{split}
\end{align}
where the first term on the right-hand side of \eqref{lemevolw1} is the linear evolution,
the second and third terms are the leading order `singular' contributions
and the other terms are:

\setlength{\leftmargini}{2em}
\begin{itemize}

\item Remainder terms from quadratic forcing: 
\begin{align}\label{Rwextra}
R_w(t) := \lim\limits_{\eta \to 0^+} & \Big[ - 
  \frac{i A^2(0)}{L-2\lambda + i\eta} \theta 
  - 2i 
  \int_0^t A(s) \dot{A} (s) \frac{e^{-is (L-2\lambda)} e^{\eta s} }{L-2\lambda + i\eta} \theta~ds \Big]
\\
\label{Rw}
& + 2 \int_0^t |A(s)|^2e^{-isL}  \theta~ds + \int_0^t \overline{A(s)}^2 e^{-is(L+2\lambda)} \theta ds
 \qquad \big( \theta = P_c (\beta \varphi^2 ) \big); 
\end{align}
(The resolvent limits above are understood in the sense of distributions, for each fixed $t$.)

\item Other quadratic terms from discrete-continuous interactions,
the continuous-continuous interactions, and all the quartic interactions:
\begin{align}\label{Rest}
\begin{split}
\rest(t) =  2 \int_0^t e^{i(t-s)L} a(s) 
  P_c  \left( \beta \varphi v(s) \right) ds + \int_0^t e^{i(t-s)L} P_c  
  \left( \beta v^2(s) \right) ds \\ + \int_0^t e^{i(t-s)L} P_c  \left( (a(s) \varphi + v(s))^4 \right) ds.  
\end{split}
\end{align}

\end{itemize}
 
\end{lemma}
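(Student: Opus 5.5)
We derive \eqref{lemevolw1} from Duhamel's formula for $w$. Since $v=P_cu$ satisfies $\partial_t^2v+L^2v=P_c N$ with $N:=\beta(a\varphi+v)^2+(a\varphi+v)^4$, the variable $w=(\partial_t+iL)v$ satisfies $(\partial_t-iL)w=P_cN$. Hence
\begin{equation}
w(t)=e^{itL}w(0)+\int_0^t e^{i(t-s)L}P_cN(s)\,ds .
\end{equation}
We expand $N$ by the binomial formula, $\beta(a\varphi+v)^2=a^2\beta\varphi^2+2a\beta\varphi v+\beta v^2$. The last two of these terms, together with the full quartic term, give exactly $\rest(t)$ in \eqref{Rest}. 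It remains to rewrite the term $\int_0^t e^{i(t-s)L}a^2(s)\,\theta\,ds$, with $\theta=P_c(\beta\varphi^2)$.

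Next we insert $a=Ae^{i\lambda s}+\overline A e^{-i\lambda s}$, so that $a^2=A^2e^{2i\lambda s}+2|A|^2+\overline A^2e^{-2i\lambda s}$. The last two pieces give $e^{itL}\big[2\int_0^t|A|^2e^{-isL}\theta\,ds+\int_0^t\overline A^2e^{-is(L+2\lambda)}\theta\,ds\big]$, which are the final two terms of $R_w$ in \eqref{Rw}. These are nonresonant, since $L+2\lambda>0$ and $L\ge1$, so no singular denominator arises. The resonant piece is $e^{itL}\int_0^tA^2(s)e^{-is(L-2\lambda)}\theta\,ds$.

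To treat the resonant piece we regularize: for $\eta>0$ we replace the integrand by $A^2(s)e^{-is(L-2\lambda)}e^{\eta s}\theta$. This is justified by dominated convergence, as distributions in the spectral variable for each fixed $t$, since $s$ ranges over the compact set $[0,t]$. We then integrate by parts in $s$ using
\begin{equation}
e^{-is(L-2\lambda)+\eta s}=\frac{i}{L-2\lambda+i\eta}\,\partial_s e^{-is(L-2\lambda)+\eta s}.
\end{equation}
This produces a boundary term at $s=t$, namely $iA^2(t)e^{-it(L-2\lambda)}e^{\eta t}(L-2\lambda+i\eta)^{-1}\theta$; a boundary term at $s=0$, namely $-iA^2(0)(L-2\lambda+i\eta)^{-1}\theta$; and the bulk term $-2i\int_0^tA\dot A\,e^{-is(L-2\lambda)}e^{\eta s}(L-2\lambda+i\eta)^{-1}\theta\,ds$. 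The last two are exactly the bracket in \eqref{Rwextra}, and after applying $e^{itL}$ they give part of $e^{itL}R_w$.

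For the $s=t$ boundary term, $e^{itL}$ cancels $e^{-itL}$ and leaves $e^{2i\lambda t}e^{\eta t}\,iA^2(t)(L-2\lambda+i\eta)^{-1}\theta$. As $\eta\to0^+$ we apply Sokhotsky's formula \eqref{Sokhotsky}:
\begin{equation}
(L-2\lambda+i0)^{-1}={\rm P.V.}(L-2\lambda)^{-1}-i\pi\delta(L-2\lambda).
\end{equation}
This gives $ie^{2i\lambda t}A^2{\rm P.V.}(L-2\lambda)^{-1}\theta+\pi e^{2i\lambda t}A^2\delta(L-2\lambda)\theta$, which are the two singular terms in \eqref{lemevolw1}.

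The only delicate step is making the $\eta$-limits rigorous, that is, interpreting $(L-2\lambda+i\eta)^{-1}\theta$ via the distorted Fourier transform as multiplication by $(\lr{k}-2\lambda+i\eta)^{-1}\wt\theta(k)$. The point $2\lambda$ lies in the interior of the continuous spectrum because $\lambda>1/2$, so $\lr{k}=2\lambda$ occurs at $k=\pm k_*\neq0$, where $\lr{\cdot}$ has nonzero derivative. Since $\wt\theta$ is smooth, the one-dimensional Sokhotsky formula then applies in $k$ near $\pm k_*$. Because the limit of the sum exists termwise, the limits of the individual pieces are well defined as distributions.
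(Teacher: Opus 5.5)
Your proposal is correct and follows essentially the same route as the paper. You write Duhamel's formula for $w$, move the $2a\beta\varphi v$, $\beta v^2$ and quartic contributions into $\rest$, and split $a^2$ so the nonresonant pieces go into $R_w$. You then regularize the resonant term by $e^{\eta s}$, integrate by parts in $s$, and apply Sokhotsky's formula to the boundary term at $s=t$. Your added justification of the $\eta\to0^+$ limits via the distorted Fourier variable near $\pm k_*$ is slightly more explicit than the paper's, but it is the same argument.
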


\begin{proof}
Recall that  $f:=e^{-itL}w$ so that \eqref{ode-pde-system-initial} gives
\begin{equation}
\label{f-evolution}
\partial_t f = e^{-itL}P_c \left[ \beta (a\varphi + v)^2+  (a\varphi + v)^4 \right],
\end{equation}
and Duhamel's formula reads 
\begin{equation}\label{evolution-f}
f(t) = f(0) + \int_0^t e^{-isL} P_c \left[ \beta (a(s) \varphi + v(s))^2 + (a(s) \varphi + v(s))^4 \right] ds. 
\end{equation}
In terms of $w$, we can write
\begin{equation}
\label{duhamel-for-w}
\begin{aligned}
w(t) & = e^{itL}w(0) +  \int_0^t e^{i(t-s)L} P_c \left[ \beta (a(s) \varphi + v(s))^2 + (a(s) \varphi + v(s))^4 \right] ds\\
& :=  e^{itL}w(0) + \int_0^t e^{i(t-s)L} a^2(s) P_c  \left( \beta \varphi^2 \right) ds + \rest. 
\end{aligned}
\end{equation}
with $\rest$ as in \eqref{Rest}.
We investigate the second term on the right-hand side of \eqref{duhamel-for-w} in more detail. 
To this end, with $\theta= P_c \left( \beta \varphi^2 \right)$ and using \eqref{a-in-A-defn}, we write
\begin{equation}
\label{second-term-w-duhamel}
\begin{aligned}
\int_0^t e^{i(t-s)L} a^2(s)  \theta ds 
& := e^{itL} \int_0^t e^{-is(L-2\lambda)} A^2(s) \theta~ds + e^{itL} R_w',
\end{aligned}
\end{equation}
where $R_w'$ is the sum of the last two integrals appearing in \eqref{Rw}.
Next, regularizing the first term on the right-hand side of \eqref{second-term-w-duhamel}
and integrating by parts in $s$ yields 
\begin{equation}
\label{first-term-integral-Asquared}
\begin{aligned}
& e^{itL} \int_0^t e^{-is(L-2\lambda)} A^2(s) \theta~ds 
\\ & =
  e^{itL} \lim\limits_{\eta \to 0^+} \int_0^t e^{-is(L-2\lambda + i\eta)} A^2(s) \theta~ds 
\\
& = \lim\limits_{\eta \to 0^+} \Bigg[ \frac{i e^{2i \lambda t} e^{\eta t}}{L-2\lambda + i\eta} A^2(t) \theta 
  - i e^{itL} \frac{A^2(0)}{L-2\lambda + i\eta} \theta 
  - 2ie^{itL} \int_0^t \frac{e^{-is (L-2\lambda)} e^{\eta s} }{L-2\lambda + i\eta} A(s) \dot{A} (s) \theta~ds \Bigg].
\end{aligned}
\end{equation}
Using the distributional identity 
\begin{equation}\label{Sokhotsky}
\frac{1}{x \pm i 0^+} = \lim\limits_{\eta \to 0^+} \frac{1}{x \pm i\eta} = {\rm P.V.} \frac{1}{x} \mp i\pi \delta,
\end{equation}
we obtain
\begin{equation*}
\lim\limits_{\eta \to 0^+} \frac{i e^{2i \lambda t} e^{\eta t}}{L-2\lambda + i\eta} A^2(t) \theta 
= i e^{2i\lambda t} A^2(t) {\rm P.V.} \left( \frac{1}{L-2\lambda} \right) \theta + e^{2i\lambda t} A^2(t) \pi \delta(L-2\lambda) \theta. 
\end{equation*}
This is the expression appearing on the right-hand side of \eqref{lemevolw1};
the other two terms on the right-hand side of \eqref{first-term-integral-Asquared}
give the remaining terms in $e^{itL}R_w$ according to the definition in \eqref{Rwextra}.
This concludes the proof of the lemma.
\end{proof}

Returning to the evolution equation for $A$ in \eqref{evolution-A-initial}, 
we can now rewrite and organize the nonlinear terms as follows:

\begin{lemma}
\label{lemevolA}
Let $A$ be a solution of \eqref{evolution-A-initial}. Then we can write 
\begin{equation}\label{evolution-A}
\begin{aligned}
\dot{A}(t) = P(t) + \frac{1}{2i \lambda} (\Lambda - i \Gamma) |A|^2(t) A(t) + R(t),
\end{aligned}
\end{equation}
where:

\setlength{\leftmargini}{2em}

\begin{itemize}
\item $P$ collects the quadratic self-interactions:
\begin{equation}
\label{P-defn}
P(t) := \frac{1}{2i\lambda} e^{-i\lambda t} (A e^{i\lambda t} 
  + \overline{A} e^{-i \lambda t})^2 \int_\R \beta 
  \varphi^3 dx;
\end{equation}

\item The real constants $\Lambda$ and $\Gamma$ are defined by
\begin{equation}\label{Fermi-Gordonnum}
\Lambda := \left( \theta, \frac{1}{L} {\rm P.V.} \frac{1}{L-2\lambda} \theta \right), 
  \qquad \Gamma := \frac{\pi}{2\lambda} \left( \theta, \delta(L-2\lambda) \theta \right),
  \qquad \theta := P_c \left( \beta \varphi^2 \right);
\end{equation}
\item The remainder term can be split as
\begin{align}\label{RsplitDF}
R = R_D + R_F,
\end{align}
with
\begin{equation}    
\label{R_D-defn}
R_D(t) := \frac{1}{2i\lambda} \left( e^{2i\lambda t} (\Lambda - i \Gamma) A^3
  +(\Lambda + i \Gamma)
  \left( e^{-4i\lambda t} \overline{A}^3 + e^{-2i\lambda t} |A|^2 \overline{A}  \right)\right)
\end{equation}
giving the `non-resonant' cubic self-interactions, and
\begin{align}\label{R_F-defn}
\begin{split}
R_F(t) & := \frac{1}{2i \lambda} e^{-i\lambda t} 
  \Bigg[ \int_\R 
    v^2 \, \beta \varphi \, dx
\\ & + 2a(t) \int_\R \beta \varphi^2 \frac{1}{L} \Im \left( e^{itL} w(0) + e^{itL} R_w + \rest \right) dx 
+ \int (a\varphi + v)^4 \varphi dx \Bigg]
\end{split}
\end{align}
giving all the other terms.
\end{itemize}

\smallskip
\noindent
For later purposes we also single out the feedback of the quartic terms in $v$ from \eqref{R_F-defn} by letting
\begin{align}\label{RFsplitEQ}
R_E := R_F - R_Q,
\end{align}
with (cf. the last term in \eqref{Rest})
\begin{align}\label{R_Q-defn}
\begin{split}
R_Q(t) := \frac{1}{i \lambda} e^{-i\lambda t} a(t) \int_\R \beta \varphi^2 \frac{1}{L} 
  \Im \left( \int_0^t e^{i(t-s)L} P_c v(s)^4 ds \right) dx. 
\end{split}
\end{align}

\end{lemma}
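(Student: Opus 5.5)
This is an algebraic rearrangement of \eqref{evolution-A-initial}. The plan is to expand the right-hand side, substitute the representation of $w$ from Lemma \ref{lemevolw}, and sort the resulting terms.

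First expand the quadratic part:
\[
(\beta(a\varphi+v)^2,\varphi) = a^2\int_\R\beta\varphi^3\,dx + 2a\int_\R\beta\varphi^2 v\,dx + \int_\R\beta\varphi v^2\,dx .
\]
After multiplying by $\frac{1}{2i\lambda}e^{-i\lambda t}$:
\begin{itemize}
\item the first term is exactly $P(t)$ in \eqref{P-defn};
\item the third term is the first term in $R_F$;
\item the quartic term $((a\varphi+v)^4,\varphi)$ is the last term in $R_F$.
\end{itemize}

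Next, for the middle term use \eqref{relation-fwv}, that is $v=L^{-1}\Im w$, and insert \eqref{lemevolw1}. The contributions of $e^{itL}w(0)$, $e^{itL}R_w$ and $\rest$ give precisely the second line of \eqref{R_F-defn}. It remains to compute the contribution of the two singular terms. Set
\[
Z := e^{2i\lambda t}A^2\big(i\,{\rm P.V.}(L-2\lambda)^{-1}\theta + \pi\delta(L-2\lambda)\theta\big),
\]
which is $i e^{2i\lambda t}A^2 (L-2\lambda+i0)^{-1}\theta$ by \eqref{Sokhotsky}. Since $L$ is self-adjoint and $\theta$ is real, I use $\int\beta\varphi^2 L^{-1}\Im Z\,dx = (\theta, L^{-1}\Im Z)$; here the $P_c$ is absorbed because $L^{-1}\Im Z$ lies in the continuous subspace. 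Writing $\Im Z = (Z-\overline Z)/(2i)$ and using the fact that real functions of $L$ commute with conjugation, one gets
\[
(\theta, L^{-1} Z) = e^{2i\lambda t}A^2\big(i\Lambda + 2\lambda\Gamma\cdot\tfrac{1}{L}\text{-weight}\big).
\]
To get this precisely, note that $\delta(L-2\lambda)$ localizes at $L=2\lambda$, so $(\theta, L^{-1}\pi\delta(L-2\lambda)\theta) = \frac{\pi}{2\lambda}(\theta,\delta(L-2\lambda)\theta) = \Gamma$. Hence $(\theta,L^{-1}Z) = e^{2i\lambda t}A^2(i\Lambda+\Gamma) = i e^{2i\lambda t}A^2(\Lambda - i\Gamma)$, and therefore
\[
(\theta,L^{-1}\Im Z) = \tfrac12\big[e^{2i\lambda t}A^2(\Lambda-i\Gamma) + e^{-2i\lambda t}\overline A^2(\Lambda+i\Gamma)\big].
\]

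Multiply by $\frac{1}{2i\lambda}e^{-i\lambda t}\cdot 2a$ with $a = Ae^{i\lambda t}+\overline Ae^{-i\lambda t}$ and expand. This gives four products:
\begin{itemize}
\item $\frac{1}{2i\lambda}(\Lambda-i\Gamma)|A|^2A$, the resonant term in \eqref{evolution-A};
\item $\frac{1}{2i\lambda}e^{2i\lambda t}(\Lambda-i\Gamma)A^3$;
\item $\frac{1}{2i\lambda}(\Lambda+i\Gamma)e^{-2i\lambda t}|A|^2\overline A$;
\item $\frac{1}{2i\lambda}(\Lambda+i\Gamma)e^{-4i\lambda t}\overline A^3$.
\end{itemize}
The last three are exactly $R_D$ in \eqref{R_D-defn}. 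This yields \eqref{evolution-A}–\eqref{R_F-defn}.

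The splitting \eqref{RFsplitEQ}–\eqref{R_Q-defn} is a definition. Its consistency check is that $R_Q$ is the part of the $2a\int\beta\varphi^2L^{-1}\Im\rest$ term coming from the $v^4$ piece of the last integral in \eqref{Rest}. The prefactor $\frac{1}{2i\lambda}\cdot 2 = \frac{1}{i\lambda}$ matches.

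The only delicate step is the distributional computation with $Z$: justifying the pairing of $\theta$ against ${\rm P.V.}$ and $\delta$ of $L$ (via the distorted Fourier transform, for smooth decaying $\theta$), and tracking the signs when taking the imaginary part. Everything else is bookkeeping.
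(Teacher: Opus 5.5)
Your proposal is correct and follows the paper's own route. You expand the right-hand side of \eqref{evolution-A-initial}, insert \eqref{lemevolw1} into $v=L^{-1}\Im w$, and match terms; unlike the paper, you also carry out the cubic computation it leaves to the reader, and your identity $(\theta,L^{-1}\Im Z)=\Re\big(e^{2i\lambda t}A^2(\Lambda-i\Gamma)\big)$ together with the four resulting products is right. Delete the placeholder display containing the ``$\tfrac{1}{L}$-weight''. It is not meaningful, and your precise computation $(\theta,L^{-1}Z)=ie^{2i\lambda t}A^2(\Lambda-i\Gamma)$ supersedes it.
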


\begin{proof}
Expanding the right-hand side of \eqref{evolution-A-initial}, and recalling that $v = L^{-1} \Im w$,
$\theta = P_c \left( \beta \varphi^2 \right)$, we write
\begin{align}
\dot A(t) & = \frac{1}{2i\lambda} e^{-i\lambda t} 
  \left[ a^2(t) \left(\beta, \varphi^3 \right) 
  + \left( v^2, \beta \varphi \right) + 2 a(t) \left(v, \beta \varphi^2 \right) 
  + \left( (a(t)\varphi + v)^4, \varphi \right) \right].
\end{align}
Then, from the definition \eqref{P-defn},
using the formula for $w$ in \eqref{lemevolw1}, and comparing with \eqref{R_F-defn}, we have
\begin{align}\label{lemevolApr2}
\begin{split}
\dot A(t) & = P(t) + R_F(t) 
  \\
  & + \frac{ e^{-i\lambda t} }{i\lambda}a(t) 
  \left( L^{-1}
  \Im\Big(i e^{2i\lambda t} A^2(t) {\rm P.V.} \left( \frac{1}{L-2\lambda} \right) \theta + e^{2i\lambda t} A^2(t) \pi \delta(L-2\lambda) \theta\Big)
  , \beta \varphi^2 \right) .
\end{split}
\end{align}
To conclude the proof of the lemma it suffices to
check that the second line in \eqref{lemevolApr2} matches the cubic terms in $A$
from \eqref{evolution-A} and \eqref{R_D-defn};
this is a direct calculation which is left to the reader.
\end{proof}

\smallskip
Note that the Fermi Golden Rule \eqref{Fermi-Gordon} 
gives the nonlinear damping term $-(\Gamma/2\lambda) |A|^2A$ in \eqref{evolution-A}.

\medskip
\subsection{Main result}
Recall our assumptions on the regularity and genericity of the external potential 
$V$ from Section \ref{sec:simplified_MR}. Recall also that the solution $u$ of \eqref{nlkg-quartic} is constructed as the sum of the discrete and continuous components:
\begin{equation}\label{u-decomposed}
u(t)=a(t)\varphi +v(t),\quad\quad (v(t),\varphi)=0.    
\end{equation}
For definiteness we fix the following parameters in the bootstrap argument:
\begin{equation}\label{choice-of-csts}
 \al=10^{-11},
\quad \delta = 10^{-10},\quad \delta_2= 10^{-9}\quad \mbox{and}\quad \delta_1=10^{-2},
  \quad N=10^{5} + 1.
\end{equation}
These parameters are not optimized. 

\begin{theorem}\label{thm:mainthmdetail}
Under the spectral assumptions of Subsection \ref{sec:simplified_MR},
there exists a sufficiently small number 
$\overline{\varepsilon_0}>0$ such that,
for all $0 < \varepsilon_0 \leq \overline{\varepsilon_0}$ and all real-valued initial 
data $(u_0,u_1) = (u(t=0),u_t(t=0))$ with 
\begin{align}\label{mainthmdata}
\begin{split}
& |(\varphi,u_0)|+|(\varphi,u_1)| = \varepsilon_0, 
\\
&\| L \,P_c u_0\|_{H^{N-1}_x}+\|P_c u_1\|_{H^{N-1}_x}
  + \|\lr{x} L \,P_c u_0\|_{H^2_x} + \|\lr{x} P_c u_1\|_{H^2_x} \leq \varepsilon_0,
\end{split}
\end{align}
the nonlinear  Klein-Gordon equation \eqref{nlkg-quartic} admits 
a unique solution $u \in C(\R,H^{N}) \cap C^1(\R,H^{N-1})$ 
satisfying the decomposition \eqref{u-decomposed} and all the following properties:

\setlength{\leftmargini}{1.5em}
\begin{itemize}

\smallskip
\item Decay and Sobolev bounds for the internal mode and radiation:
\begin{align}
&  |A(t)| = \frac{1}{2\lambda} \sqrt{\dot{a}^2(t) + \lambda^2 a^2(t)} 
  \approx \varepsilon_0(1+\varepsilon^2_0|t|)^{-1/2},
\end{align}
and
\begin{align}
& \langle t \rangle^{1/2-\delta}
  {\|v(t)\|}_{L^\infty_x} 
  + \lr{t}^{1-\delta_2} {\|\lr{x}^{-2}v(t)\|}_{L^\infty_x} \lesssim \varepsilon_0^\alpha,
  \qquad {\|v(t)\|}_{H^N_x} \lesssim \varepsilon_0,
\end{align}
for all $t \in \R$, with
\begin{align}
{\|v(t)\|}_{L^\infty_x}  \lesssim \frac{\log(2+\varepsilon_0^2 \lr{t} )}{\lr{t}^{1/2}}, 
  \qquad t \gtrsim \varepsilon_0^{-2}.
\end{align}

\smallskip
\item Weighted estimates for the profile $f=h - g$
(see the definitions in \eqref{bad-g-defn}): 
\begin{equation}\label{growth-f}
\|\partial_k\wt{g}(t)\|_{L^2_k}\gtrsim\sqrt{t},\quad\quad t \gtrsim \varepsilon_0^{-2}.    
\end{equation}    
and
\begin{equation}
\sup_{ t \in \R} \, \lr{t}^{-\delta_1}\|\partial_k\wt{h}(t)\|_{L^2_k}\lesssim \varepsilon_0^\alpha. 
\end{equation}

\smallskip
\item Asymptotics for the internal mode:
there exists a real phase $\psi_\infty$ depending on the initial data,
and constants $c_2, \beta_1 \in \R$ depending only on the equation 
(see Theorem \ref{thm:asympA} for the exact expressions) such that
\begin{equation}\label{internal-mode-asymp}
A(t) = \exp\Big[ i \Big(\psi_\infty 
  - (c_2 - \beta_1) \tfrac{\lambda}{\Gamma}\log \big(1+\tfrac{\Gamma}{\lambda} \rho_0 t\big) \Big) \Big]
  \sqrt{\frac{\lambda}{\Gamma \, t}}
  + O\big( t^{-1/2-\gamma} \big), \qquad   t \gg \varepsilon_0^{-2}
\end{equation}
for some small $\gamma >0$, where $\rho_0 = |A(0)|^2$. 
(The remainder is understood as $t\rightarrow \infty$ for fixed data.)
 
\smallskip
\item Scattering to the linear KG flow with potential: there exists $\Phi_+\in P_c H^{N-1}_x$ such that
\begin{align}\label{scattering-main}
{\big\| 
  (\partial_t + iL) v - e^{itL}\Phi_+ \big\|}_{H^{N-1}_x} 
    \lesssim \varepsilon_0 (1+\varepsilon_0^2 t)^{-\sigma}, \quad t>0
\end{align}
for some fixed $\sigma >0$.
Analogous asymptotics as $t \rightarrow -\infty$ can be obtained by time-reversal symmetry.  

\end{itemize}

\end{theorem}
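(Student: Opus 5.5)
The proof is a continuity (bootstrap) argument on $[0,T]$, built on the ODE--PDE reformulation of Lemmas~\ref{lemevolw} and \ref{lemevolA}. Local well-posedness in $H^N\times H^{N-1}$ and the Hamiltonian structure give a local solution. We split the profile as $\wt f=\wt h-\wt g$, where $\wt g$ is the localized resonant part \eqref{profile-decomposition} (with the renormalized amplitude $B$). We then assume on $[0,T]$ the a priori bounds
\[
|A(t)|\le 2C\varepsilon_0(1+\varepsilon_0^2t)^{-1/2},\qquad \lr{t}^{1/2-\delta}\|v\|_{L^\infty_x}+\lr{t}^{1-\delta_2}\|\lr{x}^{-2}v\|_{L^\infty_x}\le 2\varepsilon_0^{\alpha},
\]
\[
\|v\|_{H^N_x}\le 2C\varepsilon_0,\qquad \lr{t}^{-\delta_1}\|\partial_k\wt h\|_{L^2_k}\le 2\varepsilon_0^{\alpha},
\]
and improve each constant by a factor of $2$. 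The improvements proceed in order.

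\textbf{(i) The ODE.} From \eqref{evolution-A}, set $y=|A|^2$. Then $\dot y=-(\Gamma/\lambda)y^2+2\Re(\overline A(P+R))$. The quadratic term $P$ and the nonresonant cubic terms $R_D$ are oscillatory, so we remove them by a normal form: we pass to $B=A+$ (explicit quadratic and cubic corrections). This produces quartic errors and cubic resonant corrections that are absorbed into $\Gamma$. The remaining terms, namely $R_F$ and in particular $R_E,R_Q$, must be bounded by $\varepsilon_0^{4+}(1+\varepsilon_0^2t)^{-3/2-}$ after integration. For this we use the local decay bootstrap of $v$ and the smoothing estimates (Lemma~\ref{lem:local-estimate-time-cut-off}) for $e^{itL}R_w$ and $\rest$. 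The hardest piece is $R_Q$: it is localized by $\beta\varphi^2$, so the local decay of the quartic Duhamel term is enough, giving roughly $\lr{s}^{-4(1/2-\delta)}$. A Riccati comparison argument then gives $y\approx\varepsilon_0^2(1+\varepsilon_0^2t)^{-1}$, with upper and lower bounds.

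\textbf{(ii) The radiation.} The Duhamel formula \eqref{eq-v} together with the linear dispersive estimates of Section~\ref{sec:linear-decay} give decay for $e^{itL}h$, of size $\lr{t}^{-1/2}(\|\wt h\|_{L^\infty}+t^{-1/4}\|\partial_k\wt h\|_{L^2})$. The bad part $e^{itL}g$ is handled directly from its explicit formula, using stationary phase near $\lr{k}=2\lambda$; this produces the $\log(2+\varepsilon_0^2t)\lr{t}^{-1/2}$ rate through $\int|A|^2\sim\log$. Local decay follows by the same method, using the improved local decay estimates available for generic $V$. The energy bound follows from energy estimates in which the quadratic terms $\beta(a\varphi+v)^2$ are integrable in time through local decay, and $\|v\|_{L^\infty}^3\lesssim \varepsilon_0^{3\alpha}\lr{t}^{-3/2+3\delta}$ is integrable.

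\textbf{(iii) The weighted norm of $\wt h$, the main obstacle.} We take $\partial_k$ of the Duhamel formula for $\wt h$. The localized quadratic terms and $R_w$ are treated by smoothing estimates, using $|\dot A|\lesssim|A|^3$ after the normal form. For the quartic term $F$ in \eqref{F-mu}, we split $\mu=\mu_S+\mu_R$. The regular part $\mu_R$ is localized and is bounded by local decay. For the singular part, we move $\partial_k$ onto one input as in \eqref{sing-terms} and return to physical space, which gives \eqref{fW}. If the input carrying the derivative is $\wt g$, we instead use its explicit structure. Combined with a dyadic time decomposition \eqref{timeloc} and interpolation, this gives growth $\lr{t}^{3\delta+}\ll t^{\delta_1}$.

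\textbf{(iv) Asymptotics and scattering.} For the asymptotics \eqref{internal-mode-asymp}, we solve the phase equation $\Im$-part of the $B$-ODE, obtaining the logarithmic phase correction $(c_2-\beta_1)\frac{\lambda}{\Gamma}\log(\cdot)$ and a convergent remainder. Scattering \eqref{scattering-main} follows by showing that $f(t)$ is Cauchy in $H^{N-1}$. The bad part converges because $\int^\infty|A|^2\,|\dot{}|$-type terms are integrable in $H^{N-1}$ after the integration by parts of Lemma~\ref{lemevolw}, with residual size $\lesssim|A(t)|^{2\sigma}$. The lower bound \eqref{growth-f} comes from the explicit $\wt g$, using $\partial_k$ hitting the phase $e^{-is(\lr{k}-2\lambda)}$ and the lower bound on $|A|$. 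Global existence then follows by continuity.
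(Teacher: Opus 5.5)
Your overall architecture matches the paper's: a Riccati-type comparison for $\rho=|A|^2$, a good/bad splitting of the profile, bootstraps on decay, local decay, $H^N$ and $\langle t\rangle^{-\delta_1}\|\partial_k\wt h\|_{L^2}$, and multilinear analysis of $v^4$. However, step (i) fails at exactly the term you call hardest, $R_Q$.

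You claim that because the output is paired with the localized $\beta\varphi^2$, the quartic Duhamel term $\int_0^t e^{i(t-s)L}P_cv^4\,ds$ has local decay of order $\langle s\rangle^{-2+4\delta}$. This does not follow, because the source $v^4$ is not localized. Every local decay estimate available (e.g. \eqref{local-l-infty-decay}, \eqref{local-decay-faster}) charges weighted norms of the source such as $\|\langle x\rangle v^4\|_{L^2}$. These are not controlled by $\|v\|_{L^\infty}^4$, since $v(s)$ is spread over $|x|\lesssim s$.

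The local decay you can actually prove for $L^{-1}e^{itL}Q$ comes from the weighted bound on $\wt Q$ in your step (iii). It is $\varepsilon_0^{3\alpha}\langle t\rangle^{-1+C\delta}$ ($C=8$ in the paper), consistent with your own bootstrap rate $t^{-1+\delta_2}$. Then $|R_Q\bar A|\lesssim\varepsilon_0^{3\alpha}\rho(t)\langle t\rangle^{-1+C\delta}$, so $\int_{t_1}^{t_2}|R_Q\bar A|\,ds$ has size $\varepsilon_0^{3\alpha}t_1^{-1+C\delta}$. For large $t_1$ this is not $\le c_0\rho(t_1)\approx c_0t_1^{-1}$, so the comparison argument misses by a factor $t^{C\delta}$. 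The same overestimate is behind your $|\dot B|\lesssim|A|^3$: the true bound has an extra $\varepsilon_0^{3\alpha}\rho^{1/2}\langle t\rangle^{-1+8\delta}$, which is harmless in (iii) but fatal here.

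The missing idea is to exploit time oscillations. Write $e^{-i\lambda s}a\bar A=|A|^2+e^{-2i\lambda s}\bar A^2$.
\begin{itemize}
\item The $|A|^2$ piece carries $e^{\pm isL}$, whose phase never vanishes. Integrate by parts in $s$, using $|\dot A|\lesssim\rho$, $\partial_sQ=e^{-isL}P_cv^4$ and $\|v^4\|_{L^2}\lesssim\varepsilon_0^{1+3\alpha}s^{-3/2+3\delta}$.
\item The $\bar A^2$ piece contains $e^{is(L-2\lambda)}$, which is resonant at $L=2\lambda$. Away from the resonance, integrate by parts in $s$. Near it, on each block $s\approx2^m$, split $|L-2\lambda|$ at scale $2^{-m/4}$. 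Above that scale, integrate by parts; the factor $(L-2\lambda)^{-1}$ costs at most $2^{m/4}$. Below it, use the refined estimate \eqref{local-estimate-time-cut-off-refined}, whose gain $2^{n/2}\le2^{-m/8}$ beats the $2^{8\delta m}$ loss.
\end{itemize}
This yields $|\int_{t_1}^{t_2}R_Q\bar A\,ds|\lesssim\rho_{\Gamma_0}^{1+\delta_0}(t_1)$, which is what the comparison lemma needs.

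Three further points need repair.

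First, the resonant forcing $a^2\theta$, with $\theta=P_c(\beta\varphi^2)$, satisfies $\int_0^\infty\rho\,dt=\infty$, so time integrability is unavailable.
\begin{itemize}
\item In (iv), the integration by parts of Lemma \ref{lemevolw} produces $(L-2\lambda+i0)^{-1}\theta$. This is a $\mathrm{p.v.}$ plus $\delta(L-2\lambda)$ distribution, not an $L^2$ function, so it cannot show that $f(t)$ is Cauchy in $H^{N-1}$.
\item In (ii), pairing the forcing with local decay of $\partial_tv$ at rate $\varepsilon_0^{\alpha}t^{-1+\delta_2}$ raises the energy by about $\varepsilon_0^{2+\alpha-2\delta_2}$. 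This is not $O(\varepsilon_0^2)$, because your scheme forces $\alpha<2\delta_2$.
\end{itemize}
In both places, use the smoothing estimate of Lemma \ref{lem:smoothing-effect}: $\|\int_{t_1}^{t_2}e^{-isL}a^2(s)L^{N-1}\theta\,ds\|_{L^2}\lesssim\|\rho\|_{L^2([t_1,t_2])}\lesssim\rho^{1/2}(t_1)$.

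Second, your decay bound $\langle t\rangle^{-1/2}\|\wt h\|_{L^\infty_k}$ requires control of $\|\wt h\|_{L^\infty_k}$, which you never establish. For the quartic piece, direct bounds give only $\|\wt Q\|_{L^\infty_k}\lesssim t^{2\delta}$, or $t^{4\delta}$ by interpolation with $\partial_k\wt Q$. Neither recovers the $t^{-1/2+\delta}$ rate. The paper splits $Q=\sum_mQ_m$ over dyadic time blocks and uses the smallness $\|L^{-1}Q_m\|_{H^4}\lesssim\varepsilon_0^{1+3\alpha}2^{-m/2+3\delta m}$. It interpolates this with $\|\partial_k\wt{Q_m}\|_{L^2}\lesssim\varepsilon_0^{3\alpha}2^{8\delta m}$ via Lemma \ref{lemlininterp}.

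Third, the cubic resonant terms produced by the normal form on $P$ are purely imaginary. They shift the phase (the $5c^2/(3\lambda^3)$ in $c_2$) and are not absorbed into $\Gamma$.
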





\bigskip
\section{Linear theory}\label{sec:Linear}
\subsection{Decomposition of generalized eigenfunctions}
Consider the Jost functions $f_+(x, k)$ and $f_-(x, k)$ defined as solutions to the differential equation
\begin{equation}
\label{Jost-eq}
H f_\pm (x, k) = k^2 f_\pm (x, k)
\end{equation}
with the asymptotic boundary conditions
\begin{equation}
\label{bc}
\lim_{x \to \infty} |e^{-ikx} f_+(x, k) - 1| = 0 \quad \text{and} \quad \lim_{x \to -\infty} |e^{ikx} f_-(x, k) - 1| = 0.
\end{equation}
Also, define \begin{equation}\label{m+-}m_\pm (x, k) := e^{\mp ikx} f_\pm (x, k),\end{equation}
which satisfies the equation 
\begin{equation}
 \partial_x^2m_{\pm}\pm2ik\partial_xm_{\pm}-Vm_{\pm}=0, 
\end{equation}
subject to the boundary conditions  $m_{\pm}\rightarrow 1$  as $x\rightarrow \pm\infty$. 
The decay/growth properties of $m_{\pm}$ and its derivatives in $x$ and $k$ are given below. 
See, for example,  Deift-Trubowitz \cite{DT} and Weder \cite{Wed} for the proofs of the lemmas below.

\begin{lemma}
\label{lem:oscillatory-Jost}
For every $s = 0,1,2$ and every non-negative integer $N$, we have

\begin{equation}
\label{mixed-der-bds}
\begin{aligned}
&\left|\partial_x^N \partial_k^s (m_\pm (x, k)-1)\right| \lesssim \lr{x}^{-12} \lr{k}^{-1+N}, \qquad    \pm x \geq -1, \\
&\left|\partial_x^N \partial_k^s (m_\pm (x, k)-1)\right| \lesssim \lr{x}^{s+1} \lr{k}^{-1+N}, \qquad \pm x \leq 1.
\end{aligned}
\end{equation}
\end{lemma}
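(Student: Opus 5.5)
We prove the bounds for $m_+$; the case of $m_-$ follows by the reflection $x\mapsto -x$, which exchanges $V(x)$ and $V(-x)$. The starting point is the Volterra integral equation equivalent to the ODE for $m_+$ with its boundary condition at $+\infty$:
\begin{equation*}
m_+(x,k) = 1 + \int_x^\infty D_k(y-x)\, V(y)\, m_+(y,k)\, dy, \qquad D_k(z) := \frac{e^{2ikz}-1}{2ik} = \int_0^z e^{2ik\tau}\,d\tau .
\end{equation*}
The kernel satisfies $|D_k(z)|\lesssim \min(|z|, \lr{k}^{-1})$ for $z\geq 0$. Each $\partial_k$ applied to $D_k(z)$ produces at most one extra factor of $z$. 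We take $V$ Schwartz, so any power of $\lr{y}$ multiplying $V(y)$ is integrable.

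\textbf{Region $x\geq -1$, case $N=0$.} Iterate the Volterra equation. The standard Neumann series estimate, using $|D_k(y-x)|\lesssim |y-x|$, gives $|m_+|\lesssim 1$ uniformly in $k$, with bound $\exp\big(\int_{-1}^\infty \lr{y}|V|\,dy\big)$. Substituting back and using $|D_k|\lesssim \lr{k}^{-1}$ in the large-$k$ regime yields
\begin{equation*}
|m_+(x,k)-1| \lesssim \lr{k}^{-1}\int_x^\infty |V(y)|\,dy \lesssim \lr{k}^{-1}\lr{x}^{-12}.
\end{equation*}
For $|k|\lesssim 1$ we instead use $|D_k|\lesssim |y-x|$, giving $\int_x^\infty \lr{y}|V|\,dy\lesssim \lr{x}^{-12}$, and $\lr{k}^{-1}\approx 1$ there. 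For $s=1,2$, differentiate the integral equation in $k$. The derivatives $\partial_k^j D_k(y-x)$ are bounded by $|y-x|^{j+1}$ for small $k$, and by $\lr{k}^{-1}|y-x|^{j}$ for large $k$; the latter comes from writing $D_k=(e^{2ikz}-1)/(2ik)$ and differentiating. The equation for $\partial_k^s m_+$ is again a Volterra equation, with the same kernel and a forcing built from lower-order derivatives. The extra powers $|y-x|^s\lesssim \lr{y}^s$ (valid since $y\geq x\geq -1$) are absorbed by the decay of $V$. This gives $\lr{x}^{-12}\lr{k}^{-1}$.

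\textbf{Region $x\geq -1$, case $N\geq 1$.} Differentiate in $x$ the rewritten form $m_+ - 1 = \int_0^\infty D_k(z) V(x+z) m_+(x+z,k)\,dz$. This moves the derivatives onto $V$ and $m_+$. For $N=1$ one also uses the direct formula $\partial_x m_+ = -\int_x^\infty e^{2ik(y-x)}V m_+\,dy$, which gives $\lr{x}^{-12}$ with no gain in $k$. Higher $N$ follow by induction from the ODE $\partial_x^2 m_+ = -2ik\partial_x m_+ + Vm_+$. Each step loses at most one power of $\lr{k}$, consistent with $\lr{k}^{-1+N}$. Mixed $k$-derivatives are handled the same way, differentiating this identity in $k$.

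\textbf{Region $x\leq 1$.} Here the decay is replaced by growth. We use the Volterra equation from $x$ up to $1$, with initial data at $x=1$ given by the first region. The bound $|D_k(y-x)|\lesssim |y-x|$ leads to a Gronwall estimate. Naively this gives growth $\lr{x}$ for $s=0$, and each $k$-derivative costs one more power of $|y-x|$, hence of $\lr{x}$. Altogether this gives $\lr{x}^{s+1}$, with $\lr{k}^{-1}$ recovered for large $k$ via $|D_k|\lesssim \lr{k}^{-1}$.

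\textbf{Main obstacle.} The delicate step is getting both $\lr{k}^{-1}$ and the small-$k$ uniformity simultaneously for $s=2$ and $x\leq 1$. The natural route is the one in Deift–Trubowitz \cite{DT} and Weder \cite{Wed}: express $m_+$ through the transmission and reflection coefficients, $T m_+ = R_- e^{-2ikx} m_- + m_-$ on the left. Genericity of $V$ ensures $T(0)=0$ with $T,R_\pm$ smooth. This controls the growth for $x\to -\infty$ uniformly in $k$ without losing extra powers of $\lr{x}$, and it is what we would rely on, citing those references for the remaining routine verifications.
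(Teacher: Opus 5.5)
The paper does not prove this lemma; it only cites Deift--Trubowitz \cite{DT} and Weder \cite{Wed}. Your Volterra argument is the standard argument behind those references: the kernel bounds $|\partial_k^j D_k(z)|\lesssim \min(|z|^{j+1},\lr{k}^{-1}|z|^j)$, polynomially weighted Gronwall, and the recursion $\partial_x^2 m_+=-2ik\partial_x m_+ + V m_+$ for $x$-derivatives. The body of your proposal already handles the case you single out as delicate.

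For $x\le 1$, $|k|\le 1$, $s=2$, use the $s=0,1$ bounds $|m_+(y,k)|\lesssim\lr{y}$ and $|\partial_k m_+(y,k)|\lesssim\lr{y}^2$, together with $|y-x|\lesssim \lr{x}\lr{y}$. Then the forcing in the Volterra equation for $\partial_k^2 m_+$ is bounded by $\int_x^\infty (|y-x|^3\lr{y}+|y-x|^2\lr{y}^2)|V(y)|\,dy\lesssim \lr{x}^3$. Hence $h:=\lr{x}^{-3}|\partial_k^2 m_+|$ satisfies $h(x)\le C+\int_x^\infty \lr{y}^4|V(y)|h(y)\,dy$, and Gronwall gives the bound $\lr{x}^3$. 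For $|k|\ge 1$, the bounds $|D_k|\le |k|^{-1}$ and $|\partial_k^j D_k(z)|\lesssim |k|^{-1}|z|^j$ give $\lr{k}^{-1}\lr{x}^2$ in the same way. So nothing remains to be done there. You should also run the $x$-derivative recursion in the region $x\le 1$ (you only did it for $x\ge -1$); it works identically.

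You should drop your final paragraph, because its reasoning is wrong on two counts.
\begin{itemize}
\item The identity $T(k)m_+(x,k)=R_-(k)e^{-2ikx}m_-(x,k)+m_-(x,-k)$ controls $Tm_+$ (essentially $\psi$), not $m_+$. To recover $m_+$ you must divide by $T(k)$, which vanishes at $k=0$ precisely because $V$ is generic. Resolving the resulting $0/0$ costs an extra $k$-derivative of $R_-$ and $m_-$, and reintroduces a factor $x$ from $e^{-2ikx}$.
\item No argument can bound $m_+$ as $x\to-\infty$ ``without losing extra powers of $\lr{x}$''. For generic $V$, the zero-energy solution $m_+(x,0)=f_+(x,0)$ grows linearly as $x\to-\infty$. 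Indeed, the Wronskian $W$ of $f_+(\cdot,0)$ and $f_-(\cdot,0)$ is a nonzero constant, while $f_-(x,0)\to 1$ and $\partial_x f_-(x,0)\to 0$ rapidly; this forces $|\partial_x f_+(x,0)|\to |W|\neq 0$.
\end{itemize}
Thus the $\lr{x}^{s+1}$ growth in the statement is genuine (sharp at $s=0$), and it is exactly what the crude weighted Gronwall bound produces.
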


Denote by $T(k)$ and $R_\pm(k)$ the  transmission and reflection attached to the potential $V$. They are defined through the formulas
\begin{equation}
\label{jost-TR}
\begin{aligned}
T(k) f_+(x,k) & = R_-(k) f_-(x,k) + f_-(x, -k),
\\
T(k) f_-(x,k) & = R_+(k) f_+(x,k) + f_+(x, -k),
\end{aligned}
\end{equation}
and by the boundary conditions \eqref{bc}, we obtain 
\begin{equation}
\begin{aligned}
T(k) f_+(x,k) & \sim R_-(k) e^{-ikx} + e^{ikx} \quad \text{as } x\to -\infty
\\
T(k) f_-(x,k) & \sim R_+(k) e^{ikx} + e^{-ikx} \quad \text{as } x\to \infty.
\end{aligned}
\end{equation}
The coefficients $T(k)$ and $R_\pm(k)$ can be expressed explicitly through ($k\neq0$)
\begin{equation}
\label{explitcit-T-R-coeffs}
\begin{aligned}
\frac{1}{T(k)} & = 1 - \frac{1}{2ik} \int V(x) m_\pm(x,k) dx, 
\\
\frac{R_\pm(k)}{T(k)} & = \frac{1}{2ik} \int e^{\mp 2ikx} V(x) m_\mp(x,k) dx,
\end{aligned}
\end{equation}
and satisfy the following relations at every $k$:
\begin{equation*}
\begin{aligned}
& T(-k) = \overline{T(k)}, \quad R_\pm (-k) = \overline{R_\pm(k)}, 
\\
& |T(k)|^2 + |R_\pm(k)|^2 = 1, \quad T(k) \overline{R_-(k)} + \overline{T(k)} R_+(k) = 0.
\end{aligned}
\end{equation*}

\begin{lemma}
\label{lem:T-R-prop}
Let $V$ be a generic potential. 
The coefficients $T(k)$ and $R_\pm(k)$ satisfy
\begin{itemize}
\item As $k\to 0$, 
\begin{equation}
\label{T-R-near-zero}
\begin{aligned}
T(k) & = \alpha k + O (k^2) \quad \text{for some } \alpha \neq 0,
\\
R_\pm(k)+1 & = \alpha_\pm k + o(k),
\end{aligned}
\end{equation}
so that, in particular $T(0) = 0$ and $R_\pm (0) = -1$.

\item For every $k\in \mathbb{R}$ we have
\begin{equation}
\label{uniform-estimate-T-R}
\begin{aligned}
|T(k)| + |R_\pm(k)| &\lesssim 1,\\
|\partial_k T(k)| + |\partial_k R_\pm(k)|& \lesssim \lr{k}^{-1}.
\end{aligned}
\end{equation}
\item For every $k \in \R$ we have 
\begin{equation}
\label{ptwise-bd-T}
\left| k^{-1}T(k) \right| + \lr{k} \left| \partial_k \left(k^{-1}T(k)\right) \right| \lesssim \lr{k}^{-1},
\end{equation}
and 
\begin{equation}
\label{bound-on-R-over-k}
\left| k^{-1}(R_\pm(k)+1) \right| + \left| \partial_k \left(k^{-1}(1+R_\pm(k))\right) \right| 
  \lesssim \lr{k}^{-1}.
\end{equation}
\end{itemize}
\end{lemma}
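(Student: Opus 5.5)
The plan is to read everything off the explicit formulas \eqref{explitcit-T-R-coeffs} together with the Jost function bounds of Lemma \ref{lem:oscillatory-Jost}. I will introduce the Wronskian-type quantities
\begin{equation*}
W(k) := 2ik - \int V(x) m_\pm(x,k)\,dx, \qquad \Sigma_\pm(k) := \int e^{\mp 2ikx} V(x) m_\mp(x,k)\,dx,
\end{equation*}
so that $T(k) = 2ik/W(k)$ and $R_\pm(k) = \Sigma_\pm(k)/W(k)$. By Lemma \ref{lem:oscillatory-Jost} and the fast decay of $V$ (the $\lr{x}^{s+1}$ growth of $\partial_k^s m_\pm$ on the bad half-line is absorbed by $V$), the functions $W$ and $\Sigma_\pm$ are $C^2$ in $k$. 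Moreover $\partial_k^s\big(W(k)-2ik\big)$ and $\partial_k^s\Sigma_\pm$ are bounded, with the $e^{\mp 2ikx}$ factor costing only powers of $\lr{x}$, which are again absorbed by $V$.

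\textbf{Step 1: nonvanishing of $W$.} For $k\neq 0$ real, $W(k)$ is (up to a constant) the Wronskian of $f_+(\cdot,k)$ and $f_-(\cdot,k)$, and it cannot vanish since there are no embedded eigenvalues or real-$k$ bound states. At $k=0$, genericity means exactly that $f_+(\cdot,0)$ and $f_-(\cdot,0)$ are linearly independent, since otherwise a bounded solution of $H\zeta=0$ would exist; hence $W(0) = -\int V m_\pm(x,0)\,dx \neq 0$. For large $|k|$ we have $W(k) = 2ik + O(1)$, so $|W(k)| \gtrsim \lr{k}$ uniformly on $\R$.

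\textbf{Step 2: the bounds \eqref{uniform-estimate-T-R}.} Since $|W|\gtrsim\lr{k}$ and $\Sigma_\pm$ is bounded, both $T = 2ik/W$ and $R_\pm=\Sigma_\pm/W$ are bounded. For the derivative estimate I differentiate the quotients and use $|\partial_k W|\lesssim 1$, which gives $|\partial_k T|\lesssim \lr{k}^{-1}$. The same argument gives $|\partial_k R_\pm| \lesssim \lr{k}^{-1}$, provided $\partial_k\Sigma_\pm$ is bounded, which holds by the remark above.

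\textbf{Step 3: behaviour at zero, \eqref{T-R-near-zero}.} A Taylor expansion of $W$ at $0$ gives $T(k) = \frac{2i}{W(0)}k + O(k^2)$, so $\alpha = 2i/W(0) \neq 0$. For the reflection coefficients, $R_\pm(0)+1 = \big(\Sigma_\pm(0)+W(0)\big)/W(0)$, and
\begin{equation*}
\Sigma_\pm(0)+W(0) = \int V\,(m_\mp(x,0) - m_\pm(x,0))\,dx .
\end{equation*}
I expect this to be the main obstacle: one must show that this quantity vanishes. My approach is to use the unitarity relation $|T|^2+|R_\pm|^2=1$ together with $T(0)=0$, which give $|R_\pm(0)|=1$. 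In addition, $R_\pm(0)$ is real because $R_\pm(-k)=\overline{R_\pm(k)}$. The relation \eqref{jost-TR} at $k=0$ reads $0 = (R_-(0)+1) f_-(x,0)$, and this forces $R_-(0) = -1$ since $f_-(\cdot,0)\not\equiv 0$; the same argument gives $R_+(0)=-1$. The $C^1$ regularity of $R_\pm$ then yields $R_\pm(k)+1 = \alpha_\pm k + o(k)$.

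\textbf{Step 4: the quotient bounds \eqref{ptwise-bd-T}–\eqref{bound-on-R-over-k}.} Here $k^{-1}T = 2i/W$, which is $\lesssim \lr{k}^{-1}$, and its derivative is bounded by $|\partial_k W|/|W|^2 \lesssim \lr{k}^{-2}$. Similarly $k^{-1}(R_\pm+1) = k^{-1}(\Sigma_\pm+W)/W$. For $|k|\ge1$ the bound follows directly from Step 2. For $|k|\le 1$, Step 3 gives $(\Sigma_\pm+W)(0)=0$, and since $\Sigma_\pm+W \in C^2$, the Taylor remainder formula shows that $k^{-1}(\Sigma_\pm+W)$ is $C^1$ with bounded derivative.
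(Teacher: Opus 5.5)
Your argument is correct and is the standard one. The paper gives no proof of its own and defers to Deift--Trubowitz and Weder, where these properties are derived from the same representation $T=2ik/W$, $R_\pm=\Sigma_\pm/W$, with $W$ the Wronskian of $f_-$ and $f_+$.

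Two places can be tightened.

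First, nonvanishing of $W(k)$ for real $k\neq 0$ is best justified by $|T|\le 1$, which follows from $|T|^2+|R_\pm|^2=1$ and gives $|W(k)|\ge 2|k|$. Appealing to the absence of embedded eigenvalues is looser, since a vanishing Wronskian would not produce an $L^2$ eigenfunction.

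Second, the step you flag as the main obstacle is immediate. Formula \eqref{explitcit-T-R-coeffs} holds with either sign, so $W(k)=2ik-\int V m_\mp(x,k)\,dx$. Hence $\Sigma_\pm(0)=\int V m_\mp(x,0)\,dx=-W(0)$. Your limiting argument in \eqref{jost-TR} at $k=0$ also works on its own, so the unitarity and reality remarks are not needed.
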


\subsection{Distorted Fourier transform}
We recall some basic properties of the distorted Fourier transform
with respect to the perturbed Schr\"odinger operator.
First, recall that the standard Fourier transform is defined, for $f\in L^2$, as
\[
\mathcal{F}\left[f\right](k):=\hat{f}(k)=\frac{1}{\sqrt{2\pi}}\int e^{-ik x}f(x)\,dx
\]
with its inverse as
\[
\mathcal{F}^{-1}\left[f\right](x):=\frac{1}{\sqrt{2\pi}}\int e^{ikx}f(k)\,dk.
\]

\begin{definition}
Given the Jost functions $f_{\pm}$ from \eqref{Jost-eq}, we set
\begin{align}\label{matK}
\psi(x,k):=\frac{1}{\sqrt{2\pi}}
\begin{cases}
T(k) f_+(x,k) & k\geq0
\\
T(-k) f_-(x,-k) & k<0
\end{cases},
\end{align}
and define the ``distorted Fourier transform'' for $f\in \mathcal{S}$ by
\begin{align}\label{tildeF}
\wt{\mathcal F}\left[f\right](k) = \widetilde{f}(k) := \int \overline{\psi(x,k)} f(x)\,dx.
\end{align}
\end{definition}

By the definition of the generalized eigenfunctions and $m_{\pm}$, we have  
\begin{equation}
\label{eigenfunctions}
\begin{aligned}
& \sqrt{2\pi} \psi(x, k) = T(k) f_+(x, k) = T(k) m_+(x, k) e^{ikx} \quad \text{for} \quad k\geq 0\\[2pt] 
& \sqrt{2\pi} \psi(x, k) = T(-k) f_-(x, -k) = T(-k) m_-(x, -k) e^{ikx} \quad \text{for} \quad k<0.
\end{aligned}
\end{equation}
Let $\varphi$ be a smooth, nonnegative function that is equal to $1$ in a neighborhood of $0$, vanishes outside of $[-2,2]$, and satisfies $\int \varphi ~dx = 1$. 
Define the functions $\sigma_+$ and $\sigma_-$, by
\begin{equation}
\label{sigma-functions}
\sigma_+(x) = \int_{-\infty}^x \varphi(y) dy, \quad \text{ and } \quad \sigma_+(x) + \sigma_-(x) = 1.
\end{equation}
Using \eqref{jost-TR} we can write
\begin{equation*}
\begin{aligned}
&\sqrt{2\pi} \psi(x, k) = \sigma_+(x) T(k) f_+(x, k) + \sigma_-(x) \left[ f_-(x, -k) +R_-(k) f_-(x,k) \right] \quad\text{for}\quad k\geq0  \\[2pt] 
& \sqrt{2\pi} \psi(x, k) = \sigma_-(x) T(-k) f_-(x, -k) + \sigma_+(x) \left[ f_+(x, k) +R_+(-k) f_+(x,-k) \right]\quad\text{for} \quad k<0,
\end{aligned}
\end{equation*}
which in terms of $m_\pm (x,k)$, through \eqref{m+-}, read \begin{align}
\label{eigenfunction-exp}
&\sqrt{2\pi} \psi(x, k) =  \sigma_+(x) T(k) m_+(x, k) e^{ikx}  + \sigma_-(x) \left( m_-(x, -k) e^{ikx} + R_-(k) m_-(x,k) e^{-ikx} \right)\,\, \text{for}\,\, k\geq0,\\
&\sqrt{2\pi} \psi(x, k) = \sigma_-(x) T(-k) m_-(x, -k) e^{ikx} 
+ \sigma_+(x) \left( m_+(x, k)e^{ikx} +R_+(-k) m_+(x,-k) e^{-ikx} \right)\,\,  \text{for} \,\, k<0.
\end{align}

\begin{lemma}\label{lemma-distorted-fourier}
In the above setting, $\widetilde{\mathcal{F}}: P_c L^2_x \to L^2_k$ is unitary,
\begin{equation}
\label{isometry}
\|\distF h \|_{L^2_k} = \|P_c h \|_{L^2_x},\quad \text{for all  } h \in L^2.
\end{equation}
The inverse of the distorted Fourier transform is given by
$$
\distF^{-1} [g] (x) = \int \psi(x,k) g (k) dk.
$$
Setting $\widetilde{D} := \distF^{-1}k \distF$ on $P_cL^2$ or ($\widetilde{D}^2 := P_c H$),
we have 
$$
m(\widetilde D)P_c = \distF^{-1} m(k) \distF.
$$
Moreover, the distorted Fourier transform satisfies the following properties:
\begin{itemize}
\item $\distF \distF^{-1} = \text{\rm Id}_{L_k^2}$ and $\distF^{-1} \distF = P_c$;

\item If $h \in L^1 \cap L^2$, and since $V$ is generic,
then $\widetilde{h}$ is a continuous, bounded function with $\widetilde{h}(0) = 0$;

\item We have
\begin{equation}
\label{derivative-in-fourier-bound}
{\big\| \widetilde{h} \big\|}_{L^2_k}
 + {\big\|\partial_k \widetilde{h} \big\|}_{L^2_k} \approx \|\lr{x} P_c h\|_{L^2_x},
\end{equation}
and, for $s \geq 0$\begin{equation}
\label{k-s-h-tilde-bound}
{\big\| \lr{k}^s \widetilde{h} \big\|}_{L^2_k} \approx \|P_c{h}\|_{H_x^s} 
\end{equation}
\end{itemize}
\end{lemma}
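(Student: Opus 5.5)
The plan is to build everything from the Jost function representation \eqref{eigenfunction-exp} and the bounds of Lemmas \ref{lem:oscillatory-Jost}--\ref{lem:T-R-prop}, following the classical approach of Weder and Deift--Trubowitz. Unitarity \eqref{isometry} and the inversion formula come from the spectral theorem for $H$. The resolvent kernel is
\begin{equation*}
(H-z^2)^{-1}(x,y)=\frac{f_+(x,z)f_-(y,z)}{W(z)}, \qquad x>y,
\end{equation*}
with Wronskian $W(k)=-2ik/T(k)$. Stone's formula writes the spectral measure of $H$ on $(0,\infty)$ as the jump of the resolvent across the real axis. The identities \eqref{jost-TR} and $|T|^2+|R_\pm|^2=1$ then turn this jump into
\begin{equation*}
\int_{\R}\psi(x,k)\overline{\psi(y,k)}\,\mathbbm 1_{k^2\in I}\,dk .
\end{equation*}
This gives $\distF^{-1}\distF=P_c$ (no embedded eigenvalues, and genericity means no contribution from zero energy). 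It also gives $\|\distF h\|_{L^2_k}=\|P_ch\|_{L^2_x}$. Surjectivity onto $L^2_k$, i.e.\ $\distF\distF^{-1}=\mathrm{Id}$, follows from orthogonality of the $\psi(\cdot,k)$ as distributions, or from the completeness argument in \cite{Wed}. Since $H\psi(\cdot,k)=k^2\psi(\cdot,k)$, we get $\distF(Hh)=k^2\distF h$. The functional calculus $m(\widetilde D)P_c=\distF^{-1}m(k)\distF$ follows, first for functions of $k^2$ and then for general $m$ by the definition of $\widetilde D$.

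For continuity and boundedness of $\widetilde h$ when $h\in L^1\cap L^2$, it suffices that $\sup_{x,k}|\psi(x,k)|<\infty$ and that $\psi$ is continuous in $k$. By Lemma \ref{lem:oscillatory-Jost}, $m_\pm$ is bounded on the half-line where it is normalized. On the other half-line we use \eqref{eigenfunction-exp}, which expresses $\psi$ through $T,R_\pm$ and the well-normalized $m_\mp$, all bounded by \eqref{uniform-estimate-T-R}. Continuity at $k=0$, where the two branches of \eqref{matK} meet, comes from $T(0)=0$ and $R_\pm(0)=-1$ in \eqref{T-R-near-zero}. Both branches reduce to $\sigma_\mp(m_\mp(x,0)e^{0}-m_\mp(x,0))=0$, so $\psi(x,0)=0$ and hence $\widetilde h(0)=0$.

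The weighted estimate \eqref{derivative-in-fourier-bound} is the main obstacle. The approach is to differentiate \eqref{eigenfunction-exp} in $k$ and sort the terms.
\begin{itemize}
\item When $\partial_k$ falls on $e^{\pm ikx}$, it produces $\pm ix$ times a bounded oscillatory kernel. Modulo the smooth factors $\sigma_\pm m_\pm$, the resulting operator is a flat Fourier transform applied to $xh$ (split by $\sigma_\pm$). These terms are bounded by $\|\lr{x}h\|_{L^2}$ using Plancherel and \eqref{uniform-estimate-T-R}.
\item When $\partial_k$ falls on $T$ or $R_\pm$, the bounds \eqref{uniform-estimate-T-R} leave us with $\widetilde{\mathcal F}$-type operators of the same structure, which are $L^2$ bounded.
\item When $\partial_k$ falls on $m_\pm$, the growth $\lr{x}^{s+1}$ on the wrong half-line in Lemma \ref{lem:oscillatory-Jost} is dangerous. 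This is handled because $\sigma_\pm$ restricts each $m_\pm$ to its good half-line, where $\partial_k m_\pm$ decays like $\lr{x}^{-12}$. That kernel is Hilbert--Schmidt, so it is trivially bounded.
\end{itemize}
The delicate point is $k$ near $0$, where the derivative of the expansion must be taken across the branch change at $k=0$. There one uses \eqref{ptwise-bd-T}--\eqref{bound-on-R-over-k} to write $T(k)=k\cdot(k^{-1}T)$ and $R_\pm+1=k\cdot(k^{-1}(R_\pm+1))$. This shows the combinations $T m_+ e^{ikx}$ and $m_-(x,-k)e^{ikx}+R_-m_-(x,k)e^{-ikx}$ have $k$-derivatives bounded by $\lr{x}$ uniformly near $0$.

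The reverse inequality $\|\lr{x}P_ch\|\lesssim\|\widetilde h\|_{H^1_k}$ is obtained by the same decomposition applied to $\distF^{-1}$. One writes $x\,\distF^{-1}g$ via $x e^{ikx}=-i\partial_k e^{ikx}$, integrates by parts in $k$ (no boundary term at $0$ since $\psi(x,0)=0$), and repeats the above kernel bounds.

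Finally, \eqref{k-s-h-tilde-bound} follows from the intertwining $\distF H=k^2\distF$. It gives $\|\lr{k}^{2j}\widetilde h\|=\|(H+1)^jP_ch\|_{L^2}$, which is equivalent to $\|P_ch\|_{H^{2j}}$ by elliptic regularity, since $V$ is Schwartz and $(H+1)^{-1}$ maps $H^s\to H^{s+2}$ on the continuous subspace. Intermediate $s$ follow by complex interpolation.
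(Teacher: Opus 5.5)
Your sketch (Stone's formula for unitarity and inversion, uniform boundedness of $\psi$ via \eqref{eigenfunction-exp}, the $\sigma_\pm$ splitting that keeps each $m_\pm$ on its good half-line for $\|\partial_k\widetilde h\|_{L^2_k}\lesssim\|\lr{x}h\|_{L^2_x}$, and elliptic regularity plus interpolation for \eqref{k-s-h-tilde-bound}) is the standard argument behind the references the paper cites; the paper gives no proof of its own.

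There is, however, a genuine gap in the converse bound $\|\lr{x}P_ch\|_{L^2_x}\lesssim\|\widetilde h\|_{H^1_k}$. The boundary term at $k=0$ does \emph{not} vanish because $\psi(x,0)=0$.

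On $k>0$, the $\sigma_-$ piece of \eqref{eigenfunction-exp} contains two waves, $m_-(x,-k)e^{ikx}$ and $R_-(k)m_-(x,k)e^{-ikx}$. You must trade $x$ for $-i\partial_k$ on the first but for $+i\partial_k$ on the second. The boundary combination is therefore $m_-(x,0)(1-R_-(0))=2m_-(x,0)$, not $m_-(x,0)(1+R_-(0))=0$. The same happens for the $\sigma_+$ piece on $k<0$; only the $T$-pieces give zero, since $T(0)=0$. The integration by parts actually yields
\begin{equation*}
x\,P_ch(x)=\frac{2i}{\sqrt{2\pi}}\big(\sigma_-(x)m_-(x,0)-\sigma_+(x)m_+(x,0)\big)\widetilde h(0)+\{\text{terms bounded in } L^2_x \text{ by } \|\widetilde h\|_{H^1_k}\},
\end{equation*}
and the first term tends to $\mp 2i\widetilde h(0)/\sqrt{2\pi}$ as $x\to\pm\infty$.

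So the converse inequality is false unless $\widetilde h(0)=0$. Take $g\in C^\infty_c$ with $g(0)=1$ and set $h:=\distF^{-1}g$. Then $\widetilde h=g\in H^1_k$. Yet $\lr{x}h\in L^2$ would force $h\in L^1\cap L^2$, and hence $\widetilde h(0)=0$ by the second bullet of the lemma.

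The missing ingredient is exactly the condition $\widetilde h(0)=0$. The equivalence \eqref{derivative-in-fourier-bound} has to be read for such $h$. This condition is automatic when $\lr{x}h\in L^2$, and it is how the paper uses the bound: for $Q_m$, since $v^4\in L^1$, and for $P_\ell f$, whose transform vanishes near $k=0$. With $\widetilde h$ continuous and $\widetilde h(0)=0$, the boundary term disappears and your kernel bounds handle the remaining terms.

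Relatedly, the ``delicate point'' you flag in the forward direction is not really there. $\partial_kT$ and $\partial_kR_\pm$ are bounded by \eqref{uniform-estimate-T-R}. All that is needed at $k=0$ is continuity of $\widetilde h$, so that no $\delta_0$ appears in its distributional derivative. The factorization $T=k\cdot(k^{-1}T)$ plays no role there. The real difficulty at $k=0$ is the one described above.
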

The proofs of these properties 
can be found in Agmon \cite{A75}, Dunford-Schwartz \cite{DS88}, 
Germain-Pusateri-Rousset \cite{GPR18} and Yafaev \cite{Y10}.

\begin{definition}[The wave operator]\label{def:waveoper}
    Let $V$ be a generic potential and denote by $H_0=-\partial_x^2$ and $H=-\partial_x^2+V$, the flat and perturbed Schr\"{o}dinger operators. The wave operator is defined by the strong limit in $L^2$
    \begin{align}
     \mathcal{W}=\lim_{t\rightarrow  \infty}   e^{itH}\,e^{-itH_0}.
    \end{align}
\end{definition}

\begin{lemma}\label{lem:waveoper}
  The wave operator $\mathcal{W}: L^2 \rightarrow P_cL^2$ is unitary, 
  $\mathcal{W}\mathcal{W}^\ast = \mathrm{id}$ and $\mathcal{W}^\ast\mathcal{W}=P_c$, 
  and satisfies the following properties:
  \begin{itemize}
  \item $\mathcal{W}=\wt{\mathcal{F}}^{-1}\widehat{\mathcal{F}}$ and $\mathcal{W}^{-1}=\mathcal{W}^{\ast}=\widehat{\mathcal{F}}^{-1}\wt{\mathcal{F}}$,\\
  \item For all bounded Borel function $m$, 
\begin{equation}\label{intertwining-prop}
m(H)=\mathcal{W}m(H_0)\mathcal{W}^{\ast}.
\end{equation}
  \item $\mathcal{W}$ and $\mathcal{W}^\ast$ 
  extend to bounded operators on the Sobolev space $W^{s,p}$ for any $s\in \mathbb Z^+$ and $p\in (1,\infty)$.  
\end{itemize}
\end{lemma}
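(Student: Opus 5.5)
The plan is to reduce all three properties of $\mathcal{W}$ in Lemma \ref{lem:waveoper} to the spectral facts about $\wt{\mathcal F}$ collected in Lemma \ref{lemma-distorted-fourier}. The first step is to identify the strong limit. For Schwartz $f$ with $\hat f$ supported away from $0$, write $e^{-itH_0}f=\mathcal F^{-1}e^{-itk^2}\hat f$. By stationary phase, $e^{-itH_0}f$ concentrates for large $t$ in the region $x\approx 2kt$. On the support of $\sigma_\pm(x)$ there we have $\psi(x,k)\approx (2\pi)^{-1/2}e^{ikx}$ up to terms that decay in $x$, coming from $m_\pm-1$ (Lemma \ref{lem:oscillatory-Jost}), and up to reflected or transmitted waves $e^{-ikx}$ whose phase is non-stationary in that region.

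It follows that $\wt{\mathcal F}e^{-itH_0}f - e^{-itk^2}\hat f\to0$ in $L^2_k$. Since $\wt{\mathcal F}e^{itH}=e^{itk^2}\wt{\mathcal F}$ on $P_cL^2$ and $e^{-itH_0}f$ becomes asymptotically orthogonal to the bound state $\varphi$ (local decay), we get $e^{itH}e^{-itH_0}f\to\wt{\mathcal F}^{-1}\hat f$. By density and uniform boundedness of the operators, $\mathcal W=\wt{\mathcal F}^{-1}\mathcal F$.

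With this representation, the algebraic statements are immediate from Lemma \ref{lemma-distorted-fourier}. Since $\mathcal F$ is unitary on $L^2$ and $\wt{\mathcal F}\wt{\mathcal F}^{-1}=\mathrm{Id}$, $\wt{\mathcal F}^{-1}\wt{\mathcal F}=P_c$, we obtain $\mathcal W^*=\mathcal F^{-1}\wt{\mathcal F}$, $\mathcal W^*\mathcal W=\mathrm{id}_{L^2}$ and $\mathcal W\mathcal W^*=P_c$. These are the correct identities for an isometry of $L^2$ onto $P_cL^2$; the paper's labelling of the two products should be read in that sense. Intertwining follows from $m(H)P_c=\wt{\mathcal F}^{-1}m(k^2)\wt{\mathcal F}$ and $m(H_0)=\mathcal F^{-1}m(k^2)\mathcal F$, so that
\[
\mathcal W m(H_0)\mathcal W^*=\wt{\mathcal F}^{-1}m(k^2)\wt{\mathcal F}=m(H)P_c .
\]
This is the statement \eqref{intertwining-prop} understood on the continuous subspace.

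The main obstacle is the $W^{s,p}$ boundedness for $1<p<\infty$. This is the Weder / Yajima theorem, which I would quote under the genericity hypothesis. If instead I had to prove it, the plan would be as follows. Write the kernel $\mathcal W(x,y)=\int\psi(x,k)e^{-iky}\,dk/\sqrt{2\pi}$ and insert the decomposition \eqref{eigenfunction-exp}. The leading terms $\sigma_\pm(x)e^{ikx}$ give $\sigma_\pm(x)\delta(x-y)$. The reflected terms $\sigma_\mp(x)R_\mp(\pm k)e^{-ikx}$ give $\sigma_\mp(x)\,\check R(x+y)$-type operators; their reflection composed with a Fourier multiplier is bounded on $L^p$ provided $R$ and $T$ are Mikhlin multipliers. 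That Mikhlin property follows from \eqref{uniform-estimate-T-R}, together with the absence of a zero resonance, which ensures the behaviour \eqref{T-R-near-zero} is smooth at $k=0$.

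The $(m_\pm-1)$ parts produce kernels bounded by $\lr{x}^{-12}$ times an $L^1$-in-$(x-y)$ kernel, and these are treated by Schur's test. For $s\ge1$, I would commute $\partial_x$ through using $\partial_x\mathcal W=\mathcal W\partial_x+$ (terms with kernels of the same type) and induct on $s$. The same argument applies to $\mathcal W^*$ by duality.
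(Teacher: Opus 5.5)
Your first step fails as written, and the failure comes from the sign of $t$, not from the method. With the paper's normalization \eqref{eigenfunction-exp}, $\sqrt{2\pi}\,\psi(x,k)$ is asymptotic to two different expressions on the two sides:
\begin{itemize}
\item on the \emph{incoming} side $\mathrm{sign}\,x=-\mathrm{sign}\,k$, it is $e^{ikx}+R\,e^{-ikx}$;
\item on the side $\mathrm{sign}\,x=\mathrm{sign}\,k$, it is only the transmitted wave $T\,e^{ikx}$.
\end{itemize}
Here $R=R_-(k)$, $T=T(k)$ for $k>0$, and $R=R_+(-k)$, $T=T(-k)$ for $k<0$. For $t\to+\infty$, as in Definition \ref{def:waveoper}, the free packet sits at $x\approx 2kt$, which is the transmitted side. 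There $\psi\not\approx(2\pi)^{-1/2}e^{ikx}$. Pairing $\overline{\psi(\cdot,k)}$ against $e^{-itH_0}f$, the roles in your stationary-phase argument are exactly reversed: the incoming term is the one whose stationary point lies outside its cutoff $\sigma_{-\mathrm{sign}\,k}$. The transmitted term, and the reflected term (stationary at $x=-2kt$), have their stationary points inside their cutoffs.

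Concretely, take $\widehat f$ supported in $(0,\infty)$. Then $e^{itk^2}\wt{\mathcal F}e^{-itH_0}f\to\overline{T(k)}\widehat f(k)$ for $k>0$, and $\to\overline{R_+(-k)}\widehat f(-k)$ for $k<0$. So $\wt{\mathcal F}e^{-itH_0}f-e^{-itk^2}\widehat f\not\to 0$ unless $T\equiv1$.

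Your argument does go through essentially verbatim for $t\to-\infty$. There the packet lies on the incoming side, and every term other than the incoming plane wave has its stationary point outside the support of its cutoff. So what it actually proves is $\wt{\mathcal F}^{-1}\widehat{\mathcal F}=\lim_{t\to-\infty}e^{itH}e^{-itH_0}$. The $t\to+\infty$ limit is the analogous operator built on the outgoing eigenfunctions $\overline{\psi(x,-k)}$; it differs from $\wt{\mathcal F}^{-1}\widehat{\mathcal F}$ by the scattering matrix. In other words, the first bullet is inconsistent with Definition \ref{def:waveoper} as stated. This is the same kind of slip as the swapped $\mathcal W\mathcal W^*$ and $\mathcal W^*\mathcal W$, which you correctly caught. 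The fix is to flip the sign of $t$ in the definition, not to force the stationary-phase picture. The paper gives no argument here, only the citations \cite{GY}, \cite{Wed}, \cite{GP22-mathpi}.

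Once the identification is corrected, the rest of your proposal is sound and more self-contained than the paper's citation. The identities $\mathcal W^*\mathcal W=\mathrm{id}$, $\mathcal W\mathcal W^*=P_c$ and $m(H)P_c=\mathcal W m(H_0)\mathcal W^*$ follow from Lemma \ref{lemma-distorted-fourier} exactly as you say. They, like the $W^{s,p}$ bound, hold equally for the operator built on $\overline{\psi(x,-k)}$, so nothing used later (e.g.\ in Lemma \ref{lem:BoundednessW}) is affected. For $W^{s,p}$ you rely on the same references as the paper.

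One inaccuracy in your sketch there: the singular part of $\psi$ does not give $\sigma_\pm(x)\delta(x-y)$. It gives $\sigma_\pm(x)$ times the half-line multipliers $A_{\pm,\pm}(D)$ from \eqref{A-functions}, composed with $x\mapsto -x$ for the $e^{-ikx}$ pieces. These multipliers jump at $k=0$, which is harmless for the one-dimensional Mikhlin theorem, so their $L^p$ boundedness follows from \eqref{uniform-estimate-T-R}. No smoothness at $k=0$, and hence no appeal to the absence of a zero resonance, is needed for that piece.
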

The proofs of the properties listed in this lemma are standard. See for instance Artbazar-Yajima \cite{GY}, 
Weder \cite{Wed} and  Germain-Pusateri \cite{GP22-mathpi}.

\subsubsection{Decompositions}
Given the above localization properties of $m_{\pm}-1$ and their derivatives, 
we further decompose $\psi$ into singular and regular parts. 
We denote by 
$\mathbbm{1}_+$ and $\mathbbm{1}_-$ the indicator functions $\mathbbm{1}_{[0, \infty)}$ 
and $\mathbbm{1}_{(-\infty, 0)}$, respectively.
We can then write
\begin{equation}\label{generalized-eigenf-decomp}
\sqrt{2\pi} \, \psi(x,k) = \psi_{S}(x,k) + \psi_{R}(x,k),
\end{equation}
where the singular part is defined as
\begin{equation}
\label{psi-singular-part}
\psi_S(x,k) := \sigma_+(x) \psi_{S, +}(x,k) + \sigma_-(x) \psi_{S, -}(x,k),
\end{equation}where
\begin{equation}
\label{singular-part-pieces}
\left\{ \begin{array}{l}
\psi_{S,+}(x,k) := \mathbbm{1}_+(k) T(k) e^{ikx} + \mathbbm{1}_-(k) \left[ e^{ikx} + R_+(-k) e^{-ikx} \right],\\[4pt]
\psi_{S,-}(x,k) := \mathbbm{1}_+(k) \left[ e^{ikx} + R_-(k) e^{-ikx} \right] + \mathbbm{1}_-(k) T(-k) e^{ikx}.
\end{array} \right.
\end{equation}
Note that $\psi_S$ 
is a linear combination of oscillatory terms given by exponential functions with bounded coefficients. 
The regular part is then given by
\begin{equation}
\label{psi-regular-part}
\psi_R(x,k) := \left\{ \begin{array}{l}
\begin{aligned}
 &\sigma_+(x) T(k) (m_+(x, k)-1) e^{ikx}\\[-2pt]
&+ \sigma_-(x) [ (m_-(x, -k)-1) e^{ikx} + R_-(k) (m_-(x,k)-1) e^{-ikx} ] \quad \mbox{for}\,\, k\geq0
\end{aligned}\\
\phantom{t}\\
\begin{aligned}
& \sigma_-(x) T(-k) (m_-(x, -k)-1) e^{ikx} \\[-2pt]
&+ \sigma_+(x) [ (m_+(x, k)-1) e^{ikx} +R_+(-k) (m_+(x,-k)-1) e^{-ikx} ] \,\,\mbox{for}\,\, k<0.
\end{aligned}
\end{array} 
\right.
\end{equation}
Note that $\psi_R$ is localized around $x=0$ due to \eqref{mixed-der-bds}.
It is also useful to further decompose the singular part in \eqref{singular-part-pieces} by rewriting
\begin{equation}
\label{decomposition-sing-part}
\left\{ \begin{array}{l}
\psi_{S,+}(x,k) := A_{+,+}(k) e^{ikx} + A_{+,-}(k) e^{-ikx},
\\[4pt]
\psi_{S,-}(x,k) := A_{-,+}(k) e^{ikx} + A_{-,-}(k) e^{-ikx},
\end{array} \right.
\end{equation}
where 
\begin{equation}
\label{A-functions}
\begin{aligned}
& A_{+,+}(k) := \mathbbm{1}_+(k) T(k) + \mathbbm{1}_-(k), \qquad A_{+,-}(k) := \mathbbm{1}_-(k) R_+(-k),
\\
& A_{-,+}(k) := \mathbbm{1}_+(k) + \mathbbm{1}_-(k) T(-k), \qquad A_{-,-}(k) := \mathbbm{1}_+(k) R_-(k).
\end{aligned}
\end{equation}

Finally, we introduce the functions 
\begin{align}\label{psi-sing}
\begin{aligned}
\psi_{+,+}(x,k):=A_{+,+}(k)e^{ixk}    ,\quad \psi_{+,-}(x,k):=A_{+,-}(k)e^{-ixk}
\\
\psi_{-,+}(x,k):=A_{-,+}(k)e^{ixk}    ,\quad \psi_{-,-}(x,k):=A_{-,-}(k)e^{-ixk},
\end{aligned}
\end{align}
so that
\begin{align}
\psi_{S,+}(x,k) = \psi_{+,+}(x,k) + \psi_{+,-}(x,k) \qquad
\psi_{S,-}(x,k) = \psi_{-,+}(x,k) + \psi_{-,-}(x,k).
\end{align}
Consistently, for any given Schwartz function $\phi$, we can write
\begin{align}\label{phi-decomp}
P_c\phi = \phi_S + \phi_R, \qquad  
   \phi_{E} = \frac{1}{\sqrt{2\pi}} \int \psi_{E}(x,k)\wt{\phi}(k)dk,\quad E \in \{S,R\},
\end{align}
and will adopt a similar notation  

\begin{align}\label{phi-decomp'}
\phi_{S,\ast}(x) = \frac{1}{\sqrt{2\pi}} 
\int \psi_{S,\ast}(x,k)\wt{\phi}(k)dk,\quad \ast \in \{+,-\},
\end{align}
so that 
\begin{align}\label{phi-decomp''}
\phi_{S,\ast} = \phi_{\ast,+} + \phi_{\ast,-},
 \qquad \phi_{S}(x) = \sigma_+(x)\big( \phi_{+,+} + \phi_{+,-} \big)
  + \sigma_-(x)\big( \phi_{-,+} + \phi_{-,-} \big).
\end{align}


\bigskip
\section{Linear estimates and local decay}\label{sec:linear-decay}
In this section, we provide linear and local decay estimates for the linear operator 
\begin{equation}
\label{g-profile-linear-term}
\lr{\wt{D}}^{-1} e^{\pm it \lr{\wt{D}}} P_c, 
  \qquad \lr{\wt{D}}P_c := LP_c := \sqrt{HP_c+1} = \distF^{-1} \lr{k} \distF,
\end{equation}
cf. the notation in Proposition \ref{lemma-distorted-fourier}.

\medskip
\subsection{Basic linear decay estimates}
\begin{lemma}
\label{lem:dispersive-estimates}
Under the assumption that $V$ is generic and decays sufficiently fast, for every $t\geq 0$ we have

\begin{equation}
\label{dispersive-estimates0}
\left\| L^{-1} e^{\pm it L} P_c f \right\|_{L_x^\infty} \lesssim \lr{t}^{-\frac{1}{2}} 
  \sum_{\ell\in \mathbb{Z}} (1+2^{\ell}) {\| P_\ell f \|}_{L^1}
\end{equation}
where we are using the notation $P_\ell f := \distF^{-1}(\phi_\ell \wt{f})$ for Littlewood-Paley
projectors (see \S \ref{ssec:notations})
and
\begin{equation}
\label{dispersive-estimates}
\left\|
L^{-1} e^{\pm it L} P_c f \right\|_{L_x^\infty} \lesssim \lr{t}^{-\frac{1}{2}} 
  \left( \|\partial_k \widetilde f\|_{L^2_k} + \|\lr{k}^2 \widetilde f\|_{L^2_k} \right).
\end{equation}
\end{lemma}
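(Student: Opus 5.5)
Both bounds will come from a single frequency-localized kernel estimate followed by summation over dyadic pieces. By the spectral calculus in Lemma \ref{lemma-distorted-fourier},
\[
L^{-1}e^{\pm itL}P_\ell f(x)=\int K_\ell(t,x,y)f(y)\,dy,\qquad K_\ell(t,x,y)=\int e^{\pm it\lr{k}}\frac{\phi_\ell(k)}{\lr{k}}\psi(x,k)\overline{\psi(y,k)}\,dk .
\]
Estimate \eqref{dispersive-estimates0} follows from the kernel bound
\begin{equation}\label{kernel-bd-plan}
\sup_{x,y}|K_\ell(t,x,y)|\lesssim \lr{t}^{-1/2}(1+2^\ell),
\end{equation}
applied with a slightly fattened cutoff $\tilde\phi_\ell$ acting on $P_\ell f$ (so $P_\ell f=\tilde P_\ell P_\ell f$), and then summed over $\ell$.

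To prove \eqref{kernel-bd-plan}, assume by symmetry $x\ge0$ or $x\le 0$ and similarly for $y$, and expand $\psi(x,k)\overline{\psi(y,k)}$ using \eqref{eigenfunction-exp}. This gives a finite sum of terms of the form
\[
e^{ik(\pm x\pm y)}\,a(k)\,m_\pm(x,\pm k)\,\overline{m_\pm(y,\pm k)},
\]
where $a$ is a product of $T$, $R_\pm$ and indicators of $k>0$ and $k<0$. Each term becomes an oscillatory integral
\[
\int e^{i(\pm t\lr{k}+kz)}\,b(k)\,dk .
\]
The amplitude $b$ has $\|b\|_{L^\infty}+\|\partial_k b\|_{L^1}\lesssim 1$ on the support of $\phi_\ell$. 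This uses Lemma \ref{lem:oscillatory-Jost}, taking $m_\pm$ on the side where it is bounded (the choice of Jost function is made according to the signs of $x,y$), together with Lemma \ref{lem:T-R-prop}. Since $T(0)=0$ and $R_\pm(0)=-1$, the half-line pieces glue continuously at $k=0$, so the indicator jumps cost nothing. Van der Corput's lemma then applies because $|\partial_k^2\lr{k}|=\lr{k}^{-3}$. It gives the bound $t^{-1/2}\lr{2^\ell}^{3/2}$, and after the factor $\lr{k}^{-1}$ this becomes $t^{-1/2}\lr{2^\ell}^{1/2}$, which is within \eqref{kernel-bd-plan}. For $t\le1$ the trivial bound $\int\phi_\ell\lr{k}^{-1}dk\lesssim 2^\ell$ suffices.

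\textbf{Main obstacle.} The delicate step is the Jost-function bookkeeping when $x$ and $y$ have opposite signs or lie near the origin. There $m_\pm(x,\cdot)$ may only be polynomially bounded, per the second line of \eqref{mixed-der-bds}. The fix is to switch between the representations \eqref{jost-TR} so that every Jost factor is evaluated on its good half-line, and to handle $|x|,|y|\lesssim1$ directly since the relevant factors are then bounded. The derivative bounds on $T$ and $R$, especially near $k=0$, are supplied by genericity.

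For \eqref{dispersive-estimates} I would not sum dyadically. Instead I would write
\[
L^{-1}e^{\pm itL}P_cf(x)=\int e^{\pm it\lr{k}}\lr{k}^{-1}\psi(x,k)\tilde f(k)\,dk
\]
and split $\psi=\psi_S+\psi_R$. On the piece containing $\psi_S$, this is a flat oscillatory integral $\int e^{i(\pm t\lr{k}+kx)}\lr{k}^{-1}A(k)\tilde f(k)\,dk$. I would split it at the stationary point $k_0$, with cutoff scale $|k-k_0|\lesssim t^{-1/2}\lr{k_0}^{3/2}$. Near $k_0$ the contribution is bounded by $\|\tilde f\|_{L^\infty}$ times the length of the window, and $\|\tilde f\|_{L^\infty}\lesssim\|\tilde f\|_{H^1}$. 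Away from $k_0$ I would integrate by parts once; the resulting terms are controlled by Cauchy–Schwarz using $\|\partial_k\tilde f\|_{L^2}$ and the weights $\lr{k}^2\tilde f$, which absorb the loss $\lr{k}^{3/2}$ coming from the curvature. Jump discontinuities of $A$ at $k=0$ are harmless because $\tilde f(0)=0$. The $\psi_R$ piece is handled the same way, since its amplitude is bounded together with its $k$-derivative by Lemma \ref{lem:oscillatory-Jost}.
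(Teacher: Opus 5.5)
Your proof of \eqref{dispersive-estimates0} is fine. It is the standard van der Corput argument the paper cites, and on $\mathrm{supp}\,\phi_\ell$ it actually gives $|K_\ell|\lesssim\lr{t}^{-1/2}\lr{2^\ell}^{1/2}$. One correction: the individual terms do \emph{not} glue continuously at $k=0$. For $x,y\ge 0$, the coefficient of $e^{ik(x-y)}$ is $|T|^2m_+\overline{m_+}\to 0$ as $k\to 0^+$, but $m_+\overline{m_+}\neq 0$ as $k\to 0^-$. This is harmless, since $\phi_\ell$ vanishes near $k=0$ and van der Corput holds on half-lines anyway.

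The gap is in your direct stationary-phase proof of \eqref{dispersive-estimates}, in the high-frequency bookkeeping. The stationary point, given by $k_0/\lr{k_0}=\mp x/t$, is unbounded as $|x|/t\to 1$.
\begin{itemize}
\item Near $k_0$, your bound is $t^{-1/2}\lr{k_0}^{3/2}\cdot\lr{k_0}^{-1}\|\widetilde f\|_{L^\infty}=t^{-1/2}\lr{k_0}^{1/2}\|\widetilde f\|_{L^\infty}$, which is not uniform in $x$. Instead of $\|\widetilde f\|_{L^\infty}\lesssim\|\widetilde f\|_{H^1}$ you need $\sup_k\lr{k}|\widetilde f(k)|\lesssim\|\lr{k}^2\widetilde f\|_{L^2}+\|\partial_k\widetilde f\|_{L^2}$. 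This follows from $\lr{k}^2|\widetilde f(k)|^2\le 2\int_{|k'|\ge|k|}\lr{k'}^2|\widetilde f||\partial_{k'}\widetilde f|\,dk'$.
\item Away from $k_0$, the term where $\partial_k$ falls on $\widetilde f$ carries no weight. Take $\rho=t^{-1/2}\lr{k_0}^{3/2}<|k-k_0|<k_0/2$, where $|\Phi'(k)|\approx t|k-k_0|\lr{k_0}^{-3}$. Cauchy--Schwarz then gives only $t^{-3/4}\lr{k_0}^{5/4}\|\partial_k\widetilde f\|_{L^2}$. This is worse than $t^{-1/2}$ once $\lr{k_0}\gg t^{1/5}$.
\end{itemize}
The $\lr{k}^2$ weight sits on $\widetilde f$ and cannot be moved onto $\partial_k\widetilde f$. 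You must first discard $|k|\gtrsim t^{1/5}$ using $\int_{|k|\gtrsim t^{1/5}}\lr{k}^{-1}|\widetilde f|\,dk\lesssim t^{-1/2}\|\lr{k}^2\widetilde f\|_{L^2}$, and only then run stationary phase on the remaining frequencies.

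A simpler repair avoids redoing stationary phase, because your own dyadic kernel bound already implies \eqref{dispersive-estimates}. Use $\|P_\ell f\|_{L^1}\lesssim\|P_\ell f\|_{L^2}^{1/2}\|xP_\ell f\|_{L^2}^{1/2}$, together with \eqref{derivative-in-fourier-bound} and $\widetilde f(0)=0$, exactly as in the proof of Lemma \ref{lemlininterp}. This gives:
\begin{itemize}
\item $\|P_\ell f\|_{L^1}\lesssim 2^{-\ell}\big(\|\lr{k}^2\widetilde f\|_{L^2}+\|\partial_k\widetilde f\|_{L^2}\big)$ for $\ell\ge 0$;
\item $\|P_\ell f\|_{L^1}\lesssim 2^{\ell/2}\|\partial_k\widetilde f\|_{L^2}$ for $\ell<0$.
\end{itemize}
Hence $\lr{t}^{-1/2}\sum_\ell\lr{2^\ell}^{1/2}\|P_\ell f\|_{L^1}$ converges to the right-hand side of \eqref{dispersive-estimates}. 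This route needs the sharper weight $\lr{2^\ell}^{1/2}$ that you derived. With the weight $1+2^\ell$ stated in \eqref{dispersive-estimates0}, the sum over $\ell\ge 0$ diverges logarithmically, which is why Lemma \ref{lemlininterp} carries a factor $\log(2+t)$.
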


The proof of Lemma \ref{lem:dispersive-estimates} 
is standard: \eqref{dispersive-estimates0} is the $L^1$--$L^\infty$ linear decay estimate
obtained by applying the stationary phase lemma, and
\eqref{dispersive-estimates} follows from similar (in fact easier) arguments 
to those in \cite[Proposition 3.11]{GP22-mathpi}.

\begin{remark}\label{rmk:Linfty-flat-linear-decay}
Lemma \ref{lem:dispersive-estimates} can be repeated for $V=0$. 
In such a case, $\wt{D}$ is replaced by $D$ and the distorted Fourier 
transform becomes the standard Fourier transform. 
The estimate corresponding to \eqref{dispersive-estimates} is
\begin{equation}
\label{disp-est-stand-fourier}
\left\| 
\lr{D}^{-1} e^{\pm it \lr{D}} f \right\|_{L_x^\infty} \lesssim \lr{t}^{-1/2} \left( \|\partial_k \widehat f\|_{L^2_k} + \|\lr{k}^2 \widehat f\|_{L^2_k} \right).
\end{equation}

\end{remark}

\smallskip
Next, we state the local 
decay estimates which will be used later.
We start with the estimates for the standard Fourier transform under an additional assumption at $k=0$. 
\begin{lemma}\label{lem:local-l-infty-decay-standard-Fourier}
Suppose $\hat f \in H_k^1$ and $\hat f (0) = 0$, then for every $t \geq 0$ we have

\begin{equation}
\label{local-l-infty-decay-standard-Fourier}
\left\| \lr{x}^{-1} \lr{D}^{-1} e^{\pm it \lr{D}} f \right\|_{L^\infty_x} 
  \lesssim \lr{t}^{-3/4} \| \widehat{f} \|_{H^1_k}. 
\end{equation}
\end{lemma}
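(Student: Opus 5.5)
We write the quantity to be estimated as an oscillatory integral,
\begin{equation*}
\lr{x}^{-1}\lr{D}^{-1} e^{\pm it\lr{D}} f(x) = \frac{\lr{x}^{-1}}{\sqrt{2\pi}} \int_\R e^{i(xk \pm t\lr{k})} \frac{\widehat f(k)}{\lr{k}}\,dk ,
\end{equation*}
and split the frequencies with the cutoffs $\phi_\ell$ from \S\ref{ssec:notations} into a low region $|k|\lesssim \lr{t}^{-1/2}$ and a high region. For $t\lesssim 1$ the bound follows from $\|\widehat f/\lr{k}\|_{L^1_k}\lesssim \|\widehat f\|_{H^1_k}$, using the one-dimensional embedding $H^1_k\subset L^\infty_k$ together with the weight $\lr{k}^{-1}$ (note that $\widehat f\in L^2$ and $\lr{k}^{-1}\in L^2$). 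So assume $t\geq 1$.

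\textbf{Low frequencies.} The vanishing condition $\widehat f(0)=0$ gives the Hardy-type bound
\begin{equation*}
|\widehat f(k)|\le |k|^{1/2}\|\partial_k\widehat f\|_{L^2}.
\end{equation*}
Integrating this over $|k|\le \lr{t}^{-1/2}$ yields a contribution of size $\lr{t}^{-3/4}\|\widehat f\|_{H^1}$, with no use of the weight in $x$. This is the source of the rate $3/4$, and it explains why the hypothesis $\widehat f(0)=0$ is needed.

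\textbf{Intermediate and high frequencies.} On $|k|\gtrsim \lr{t}^{-1/2}$ we integrate by parts once in $k$ with the vector field adapted to the phase $\Phi=xk\pm t\lr{k}$, namely $\partial_k\Phi=x\pm tk/\lr{k}$. The cleaner route is to exploit the weight $\lr{x}^{-1}$.
\begin{itemize}
\item If $|x|\le c\,t|k|/\lr{k}$, then $|\partial_k\Phi|\gtrsim t|k|/\lr{k}$.
\item If $|x|\gtrsim t|k|/\lr{k}$, then the weight supplies $\lr{x}^{-1}\lesssim \lr{k}/(t|k|)$.
\end{itemize}
Either way we gain a factor $\lr{k}/(t|k|)$. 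When the derivative falls on $\widehat f$, Cauchy--Schwarz gives
\begin{equation*}
t^{-1}\big\|\,|k|^{-1}\mathbbm 1_{|k|\gtrsim t^{-1/2}}\big\|_{L^2}\|\partial_k\widehat f\|_{L^2}\lesssim t^{-1}t^{1/4}\|\partial_k\widehat f\|_{L^2}=t^{-3/4}\|\partial_k\widehat f\|_{L^2}.
\end{equation*}
When the derivative falls on the symbol $\lr{k}^{-1}\cdot\lr{k}/(t k)$, or on the cutoff (of size $t^{1/2}$ on a set of length $t^{-1/2}$), we reuse $|\widehat f(k)|\le|k|^{1/2}\|\widehat f\|_{H^1}$. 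This produces the integral
\begin{equation*}
t^{-1}\int_{|k|\gtrsim t^{-1/2}}|k|^{-3/2}\,dk\approx t^{-3/4},
\end{equation*}
and the boundary/cutoff terms are of the same size. The large-$|k|$ tail converges thanks to the $\lr{k}^{-1}$ factor and $\widehat f\in L^2$.

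\textbf{Main obstacle.} The delicate step is organizing the stationary-point region $x\approx \mp tk/\lr{k}$ without losing more than $t^{1/4}$. I would handle it by partitioning into $|x|$ dyadic versus $t|k|/\lr{k}$ dyadic pieces and choosing, in each piece, between integration by parts and the weight gain. If the pointwise Hardy bound turns out to be insufficient, the fallback is a stationary-phase estimate of $t^{-1/2}$ times the smallness $|\widehat f(k_0)|\lesssim|k_0|^{1/2}$ at the stationary point $k_0$ (with $|k_0|\sim|x|/t$), combined with $\lr{x}^{-1}$. This gives $t^{-1/2}\min(1,(|x|/t)^{1/2})\lr{x}^{-1}\lesssim t^{-3/4}$ after optimizing in $|x|$.
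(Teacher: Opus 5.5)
Your proof is correct and, apart from one routine term noted below, complete. The paper gives no argument of its own here; it only refers to the Schr\"odinger analogue in \cite{CP22}.

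\textbf{Comparison with the standard route.} The standard way to prove such bounds isolates the value of $\widehat f$ at the stationary point. One writes the integral as $t^{-1/2}$ times $\widehat f(k_0)$, up to harmless factors, where $|k_0|/\lr{k_0}=|x|/t$, plus an error $O(t^{-3/4}\|\widehat f\|_{H^1_k})$. Then $\widehat f(0)=0$ gives $|\widehat f(k_0)|\lesssim |k_0|^{1/2}\|\partial_k\widehat f\|_{L^2}$, and the weight makes the main term $O(t^{-1})$. This is your fallback. Note that your expression $t^{-1/2}\min(1,(|x|/t)^{1/2})\lr{x}^{-1}$ is in fact $O(t^{-1})$, so on that route the $t^{-3/4}$ comes only from the error term.

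\textbf{What your route buys.} Your dichotomy avoids stationary phase altogether. You integrate by parts where $|x|\le c\,t|k|/\lr{k}$, and elsewhere use $\lr{x}^{-1}\lesssim \lr{k}/(t|k|)$. This needs only a lower bound on $|\partial_k\Phi|$ and the identity $|\partial_k^2\Phi|=t\lr{k}^{-3}$, so the degenerating curvature of $\lr{k}$ at high frequency plays no role. It also needs no factorization of the propagator, which is unavailable for Klein--Gordon. The stationary-point route, by contrast, gives a finer pointwise picture. It shows the loss to $t^{-3/4}$ is due only to the limited regularity $\widehat f\in H^1_k$. In your proof the same rate appears through the frequencies $|k|\lesssim t^{-1/2}$ and the integration-by-parts terms near $|k|\sim t^{-1/2}$.

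\textbf{The obstacle you flag does not arise.} The stationary set $|x|=t|k|/\lr{k}$ lies inside the region $|x|\ge \tfrac{c}{2}\,t|k|/\lr{k}$, where you use only the weight. There you get directly $t^{-1}\int |\widehat f(k)|\,|k|^{-1}\,dk\lesssim t^{-1}\|\widehat f\|_{H^1_k}$. So neither the dyadic decomposition in $x$ nor the stationary-phase fallback is needed.

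\textbf{The one term you did not write out.} This is where $\partial_k$ falls on the $x$-dependent cutoff $\chi\big(|x|\lr{k}/(c\,t|k|)\big)$ separating the two regions. We have $|\partial_k(\lr{k}/|k|)|=|k|^{-2}\lr{k}^{-1}$, and $|x|/t\approx |k|/\lr{k}$ on the transition set. Hence this derivative is $O(|k|^{-1}\lr{k}^{-2})$. Since $|\partial_k\Phi|\gtrsim t|k|/\lr{k}$ on the whole support of $\chi$, the term is bounded by $t^{-1}\int_{|k|\gtrsim t^{-1/2}}|k|^{-3/2}\lr{k}^{-2}\,dk\lesssim t^{-3/4}$. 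This is the same size as the term where the derivative hits $1/\partial_k\Phi$.
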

The proof of \eqref{local-l-infty-decay-standard-Fourier} is standard, and can be found in \cite{CP22} in the completely analogous case of the Schr\"odinger equation.

Estimates for the distorted Fourier transform have better decay rates under the genericity assumption on $V$.

\begin{lemma}
\label{lem:local-l-infty-decay-lemma}
Suppose $V$ is generic and decays sufficiently fast. Then, for every $t \geq 0$, we have
\begin{equation}
\label{local-l-infty-decay}
\left\| \lr{x}^{-2} L^{-1} e^{it L}P_c f \right\|_{L^\infty_x} 
  \lesssim \lr{t}^{-1} \| \widetilde{f} \|_{H^1_k}. 
\end{equation}
\end{lemma}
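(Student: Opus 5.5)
We write the solution in distorted Fourier variables. For fixed $x$,
\[
\lr{x}^{-2}L^{-1}e^{itL}P_cf(x)=\lr{x}^{-2}\int_\R e^{it\lr{k}}\,\psi(x,k)\,\frac{\wt f(k)}{\lr{k}}\,dk,
\]
and the aim is a $t^{-1}$ bound uniform in $x$. For $|t|\lesssim 1$ the claim follows from Cauchy--Schwarz, since $|\psi(x,k)|\lesssim 1$ and $\lr{k}^{-1}\in L^2$. For $t\geq1$ we integrate by parts once in $k$, using $e^{it\lr{k}}=\frac{\lr{k}}{itk}\partial_k e^{it\lr{k}}$. This requires control of the integrand near $k=0$, where the factor $1/k$ appears.

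This is where genericity enters. By Lemma~\ref{lem:T-R-prop}, $T(0)=0$ and $R_\pm(0)=-1$, so $\psi(x,0)=0$ for every $x$. More precisely, from \eqref{eigenfunction-exp}, \eqref{ptwise-bd-T}, \eqref{bound-on-R-over-k} and Lemma~\ref{lem:oscillatory-Jost},
\[
|\psi(x,k)|\lesssim \min(1,|k|\lr{x}),\qquad |\partial_k\psi(x,k)|\lesssim \lr{x}^{2}.
\]
For $k\geq0$ and $x\leq0$, for example, we write $m_-(x,-k)e^{ikx}+R_-(k)m_-(x,k)e^{-ikx}$ as
\[
\big[m_-(x,-k)e^{ikx}-m_-(x,k)e^{-ikx}\big]+(1+R_-(k))\,m_-(x,k)e^{-ikx}.
\]
The first bracket vanishes at $k=0$ and has $k$-derivative bounded by $\lr{x}^{2}$, by the growth bound $\lr{x}^{s+1}$ of \eqref{mixed-der-bds}. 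The second term is $O(k)$ by \eqref{bound-on-R-over-k}. The case $x\geq 0$ is simpler, since there the term is $T(k)m_+$ with $T(k)=O(k)$. The weight $\lr{x}^{-2}$ absorbs these polynomial losses.

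Integration by parts then gives
\[
\frac{1}{it}\int e^{it\lr{k}}\,\partial_k\Big[\frac{\psi(x,k)}{k}\,\wt f(k)\Big]dk .
\]
The boundary terms at $k=0^\pm$ vanish, because $\psi(x,k)/k$ stays bounded while $\wt f(0)=0$, which holds here since $\wt f\in H^1$ is continuous with $\wt f(0)=0$ by Lemma~\ref{lemma-distorted-fourier}. We expand the derivative.
\begin{itemize}
\item The term $\frac{\psi}{k}\partial_k\wt f$ is bounded by $\lr{x}\min(|k|^{-1},\lr{x})\,|\partial_k\wt f|$. This function of $k$ is in $L^2$ with norm at most $\lr{x}^{3/2}$, so the term is $\lesssim \lr{x}^{3/2}\|\wt f\|_{H^1}$.
\item The term $\partial_k\!\big(\psi/k\big)\,\wt f$ is controlled using the bound on $\partial_k(\psi/k)$ together with the Hardy-type estimate $\|\wt f/k\|_{L^2}\lesssim\|\partial_k\wt f\|_{L^2}$, which again uses $\wt f(0)=0$.
\end{itemize}
For large $|k|$ the factor $\lr{k}^{-1}$ gives integrability, and the jump of $\psi$ at $k=0$ (the two branches of \eqref{matK}) is handled by integrating by parts on each half-line separately.

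The main obstacle is the bound on $\partial_k(\psi/k)$ near $k=0$. Naively it behaves like $k^{-2}\cdot O(k^2)$, which requires second-order vanishing information. To obtain it, we Taylor expand $\psi(x,k)=k\,\partial_k\psi(x,0)+O(k^2\lr{x}^3)$ using $s=2$ in \eqref{mixed-der-bds}. This gives $|\partial_k(\psi/k)|\lesssim\lr{x}^3$ near $0$, hence a loss of $\lr{x}^3$ and $\lr{x}^{-2}$-weight insufficiency. If this loss is too large, we will instead split $|k|\lessgtr t^{-1/2}$: on the small region we estimate the original integral directly by
\[
\int_{|k|<t^{-1/2}}|k|\lr{x}\,|\wt f|\,dk\lesssim \lr{x}\,t^{-1}\,\|\partial_k\wt f\|_{L^2},
\]
since $|\wt f(k)|\lesssim|k|^{1/2}\|\partial_k\wt f\|_{L^2}$. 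On the complementary region we integrate by parts, and the boundary terms at $|k|=t^{-1/2}$ are of the same size $t^{-1}$. Finally, a more careful treatment of the $\lr{x}$ powers uses the first-order $\lr{x}^{2}$ derivative bound together with the exponential localization coming from the $\sigma_\pm$ cutoffs. This should bring the total loss down to $\lr{x}^2$, which is absorbed by the weight in \eqref{local-l-infty-decay}.
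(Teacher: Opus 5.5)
Your overall strategy is the standard one: integrate by parts once using $e^{it\lr{k}}=\frac{\lr{k}}{itk}\partial_k e^{it\lr{k}}$, drop the boundary terms at $k=0$ because $\wt f(0)=0$ and $\psi/k$ is bounded, and absorb the polynomial losses in $x$ into the weight. The paper defers to \cite{GPZ23}, but the same computation appears in the proof of \eqref{local-estimate-time-cut-off}, see \eqref{letc-pr1}.

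\textbf{The gap.} As written, the proposal does not close the decisive step. By your own accounting the term $\partial_k(\psi/k)\,\wt f$ costs $\lr{x}^3$, which the weight $\lr{x}^{-2}$ does not absorb. Neither remedy is carried out. The split $|k|\lessgtr t^{-1/2}$ does not remove the obstacle, because on $|k|>t^{-1/2}$ you still integrate by parts and still need a bound on $\partial_k(\psi/k)$ there. The final sentence (``should bring the total loss down'') is an expectation, not an argument.

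\textbf{Why the $\lr{x}^3$ is spurious.} It comes from using the growth branch of \eqref{mixed-der-bds} where it does not apply. In your $\sigma_\pm$ splitting, $m_-$ is only evaluated on $\mathrm{supp}\,\sigma_-\subset(-\infty,2]$. There \eqref{mixed-der-bds} gives $|\partial_k^s m_-(x,\pm k)|\lesssim 1$ for $s\le 2$: the decay branch holds for $x\le 1$, and $x$ is bounded on $[1,2]$. The same holds for $m_+$ on $\mathrm{supp}\,\sigma_+$. So the only growth in the second derivative of your odd bracket $g(k)=m_-(x,-k)e^{ikx}-m_-(x,k)e^{-ikx}$ comes from $\partial_k^2 e^{\pm ikx}$. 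Since $g(0)=0$, you get $|\partial_k(g/k)|=|kg'-g|/k^2\le\frac12\sup|g''|\lesssim\lr{x}^2$, which suffices. Your first-order claim for this bracket is likewise justified by the decay branch, not by the growth bound.

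\textbf{Simpler route.} The argument in \eqref{letc-pr1} and Lemma \ref{lem:stronger-local-decay} avoids the $\sigma_\pm$ splitting altogether. For $k\ge0$ write $\sqrt{2\pi}\,\psi(x,k)=T(k)m_+(x,k)e^{ikx}$ for all $x$. Then $\partial_k(\psi/k)$ is a combination of $\partial_k(T/k)\,m_+$, $(T/k)\,\partial_k m_+$ and $(T/k)\,x\,m_+$, each times $e^{ikx}$.
\begin{itemize}
\item All second-order vanishing at $k=0$ is contained in the scalar bound \eqref{ptwise-bd-T}.
\item Only one $k$-derivative ever falls on the Jost function, so \eqref{mixed-der-bds} with $s\le1$ suffices: $|m_+|\lesssim\lr{x}$ and $|\partial_k m_+|\lesssim\lr{x}^2$ for $x\le1$.
\item This gives $|\psi/k|\lesssim\lr{x}\lr{k}^{-1}$ and $|\partial_k(\psi/k)|\lesssim\lr{x}^2\lr{k}^{-1}$.
\end{itemize}
Cauchy--Schwarz then yields the bound $\lr{x}^2\,t^{-1}\|\wt f\|_{H^1_k}$.

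\textbf{Using your own Hardy bullet.} That bullet already suffices with first-order information only. For $|k|\le1$ you have $|\partial_k(\psi/k)|\le|\partial_k\psi|/|k|+|\psi|/k^2\lesssim\lr{x}^2/|k|$. Then $\int_{|k|\le1}|k|^{-1}|\wt f|\,dk\lesssim\|\wt f/k\|_{L^2_k}\lesssim\|\partial_k\wt f\|_{L^2_k}$, with no second-order expansion needed.
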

The proof of this result can be found in \cite{GPZ23}. 
We will also need stronger decay rates (under stronger assumptions) 
in our proofs in Section \ref{sec:ODE}, so we include the following: 

\begin{lemma}\label{lem:stationary-phase}
Given $X \in \mathbb{R}, x\in \mathbb{R}$ and $t \geq 0$. Consider the integral
\begin{equation}
\label{oscillatory-integral-generic}
\mathcal{I}_{\iota, \nu} (t, X, x) = \int_{\mathbb{R}_\iota} e^{it (\nu \lr{k} - kX)} a(x,k) f(k) dk, \qquad \iota, \nu \in \{+, -\}
\end{equation}
and let 
\begin{equation}\label{cond-kernel-a}
\sup\limits_{x\in \mathbb{R}, k \in \mathbb{R}_\iota} \left( |a(x,k)| + \lr{k} |\partial_k a(x,k)| \right) \lesssim \lr{k}^{-1}.
\end{equation}
Then, we have 
\begin{equation*}
\left| \mathcal{I}_{\iota, \nu} (t, X, x) \right| \lesssim \lr{t}^{-1/2} \left( \normm{\partial_k f}{k} + \normm{\lr{k}^2 f}{k} \right). 
\end{equation*}
\end{lemma}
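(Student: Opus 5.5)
The plan is a standard one-dimensional stationary phase argument, uniform in the parameters $X$ and $x$. Since $x$ enters only through the amplitude $a(x,k)$, and all bounds in \eqref{cond-kernel-a} are uniform in $x$, we fix $x$ and drop it from the notation. We may take $t\geq 1$: for $t\leq 1$ we bound $|\mathcal I|\le \int \lr{k}^{-1}|f|\,dk\lesssim \|\lr{k}^2 f\|_{L^2_k}$ by Cauchy--Schwarz.

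The phase is $\Phi(k)=\nu\lr{k}-kX$, with $\Phi'(k)=\nu k/\lr{k}-X$ and $\Phi''(k)=\nu\lr{k}^{-3}$. There is at most one stationary point $k_0$, where $\nu k_0/\lr{k_0}=X$; it exists only if $|X|<1$ and it lies in $\mathbb R_\iota$ for the appropriate sign. If there is no stationary point in $\mathbb R_\iota$, or it is far away, a simpler version of the argument below applies. We split the integral with smooth cutoffs $\chi(\cdot)$ into:
\begin{itemize}
\item a region $|k-k_0|\lesssim t^{-1/2}\lr{k_0}^{3/2}$ near $k_0$;
\item the complement, where we integrate by parts using $e^{it\Phi}=(it\Phi')^{-1}\partial_k e^{it\Phi}$.
\end{itemize}

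The near region is the main obstacle, because the curvature $\Phi''=\lr{k}^{-3}$ degenerates as $|k|\to\infty$. This is where the weight $\lr{k}^{-1}$ on $a$ and the $\lr{k}^2$ weight on $f$ enter.

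\emph{Near region.} Estimate trivially, using $|f(k)|\le |f(k_0)|+|k-k_0|^{1/2}\|\partial_k f\|_{L^2}$. This gives a contribution
\[
\lesssim t^{-1/2}\lr{k_0}^{3/2}\lr{k_0}^{-1}\,|f(k_0)| .
\]
Now use the weighted Sobolev bound $\lr{k_0}^{1/2}|f(k_0)|\lesssim \|\lr{k}f\|_{L^2}^{1/2}\|\partial_k f\|_{L^2}^{1/2}+\|\lr{k}f\|_{L^2}$, which is acceptable.

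\emph{Complement.} On the complement we have $|\Phi'(k)|\gtrsim |k-k_0|\min(\lr{k},\lr{k_0})^{-3}$. After one integration by parts, the resulting terms are:
\begin{itemize}
\item the boundary term at $k=0$, present when $\mathbb R_\iota$ is a half-line;
\item terms where the derivative falls on $\chi$, on $a$, or on $1/\Phi'$;
\item the term $t^{-1}\int |\partial_k f|\,\lr{k}^{-1}|\Phi'|^{-1}$.
\end{itemize}
For the last term we use Cauchy--Schwarz, giving $t^{-1}\|\partial_kf\|_{L^2}\,\|\lr{k}^{-1}(\Phi')^{-1}\|_{L^2(|k-k_0|\gtrsim t^{-1/2}\lr{k_0}^{3/2})}$. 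The $L^2$ norm behaves like $(t^{-1/2})^{-1/2}$ up to powers of $\lr{k}$, which are absorbed by splitting further into $|k|\lesssim\lr{k_0}$ and $|k|\gg\lr{k_0}$. In the large-$k$ region, $|\Phi'|\gtrsim \lr{k}^{-2}$ when $|X|<1$ is near the velocity of $k_0$. Here the loss of powers of $\lr{k}$ is paid by the $\lr{k}^2 f$ norm, by integrating by parts only where this is favorable and otherwise bounding trivially, since $\int_{|k|>K}\lr{k}^{-1}|f|\lesssim K^{-3/2}\|\lr{k}^2f\|_{L^2}$. Optimizing $K$ against $t$ yields $t^{-1/2}$.

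\emph{Boundary term.} At $k=0$ we have $\Phi'(0)=-X$, so the boundary term is $t^{-1}|a(0)f(0)|/|X|$. When $|X|\gtrsim t^{-1/2}$ this is at most $t^{-1/2}\|f\|_{L^\infty}$. When $|X|\lesssim t^{-1/2}$, the point $k_0\approx \nu X$ is within $t^{-1/2}$ of $0$, so the boundary falls inside the near region and no integration by parts is performed there.

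\emph{Case $|X|\ge1$.} Here $|\Phi'|\gtrsim \lr{k}^{-2}$ uniformly, and the same integration by parts argument applies without a near region.

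Collecting all contributions gives $|\mathcal I_{\iota,\nu}|\lesssim t^{-1/2}\big(\|\partial_kf\|_{L^2_k}+\|\lr{k}^2f\|_{L^2_k}\big)$.
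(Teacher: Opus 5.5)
\emph{There is a genuine gap: your argument does not close when the stationary point $k_0$ sits at high frequency.} Your overall strategy is the standard one the paper defers to: a stationary window around $k_0$, one integration by parts elsewhere, and a high-frequency cutoff paid for by $\|\lr{k}^2 f\|_{L^2_k}$. But the regime $\lr{k_0}$ large is exactly where the weights $\lr{k}^{-1}$ on $a$ and $\lr{k}^{2}$ on $f$ are needed at full strength. On $t^{-1/2}\lr{k_0}^{3/2}<|k-k_0|<\lr{k_0}/2$ one has $\lr{k}\approx\lr{k_0}$, $|a|\lesssim\lr{k_0}^{-1}$ and $|\Phi'(k)|\approx|k-k_0|\,\lr{k_0}^{-3}$. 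So your Cauchy--Schwarz bound for the term in which $\partial_k$ falls on $f$ is
\[
\frac{\lr{k_0}^{2}}{t}\,\big\| |k-k_0|^{-1}\big\|_{L^2(|k-k_0|>t^{-1/2}\lr{k_0}^{3/2})}\,\|\partial_k f\|_{L^2_k}\approx t^{-3/4}\lr{k_0}^{5/4}\,\|\partial_k f\|_{L^2_k},
\]
which is not $O(t^{-1/2})$ once $\lr{k_0}\gg t^{1/5}$.

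This loss sits at $|k|\approx\lr{k_0}$, so splitting further into $|k|\lesssim\lr{k_0}$ and $|k|\gg\lr{k_0}$ does not touch it, and $\partial_k f$ carries no weight that could absorb it. It is also the contribution you silently dropped in the near region. The increment $|k-k_0|^{1/2}\|\partial_kf\|_{L^2_k}$, integrated against $\lr{k}^{-1}$ over the window, gives the same $t^{-3/4}\lr{k_0}^{5/4}\|\partial_kf\|_{L^2_k}$. There it is avoidable: use $\lr{k}|f(k)|^2\le 2\|\lr{k}f\|_{L^2_k}\|\partial_kf\|_{L^2_k}$ at every point of the window instead of expanding around $f(k_0)$. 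Finally, your tail bound $K^{-3/2}\|\lr{k}^2 f\|_{L^2_k}$ does not rescue this regime. It is $\lesssim t^{-1/2}$ only for $K\gtrsim t^{1/3}$, so stationary points with $t^{1/5}\ll\lr{k_0}\ll t^{1/3}$ are covered by neither mechanism.

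\emph{The fix.} You need the sharp tail estimate, together with the decision not to do stationary phase at all when $k_0$ is large. Since $|a|\lesssim\lr{k}^{-1}$,
\[
\int_{|k|>K}|a|\,|f|\,dk\le\big\|\lr{k}^{-3}\big\|_{L^2(|k|>K)}\,\|\lr{k}^2 f\|_{L^2_k}\lesssim K^{-5/2}\,\|\lr{k}^2 f\|_{L^2_k}.
\]
Take $K=t^{1/5}$ and split into two cases.

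If $\lr{k_0}\geq 8K$, bound $|k|\geq K$ this way, which gives exactly $t^{-1/2}$. On $|k|\leq 2K$ there is no stationary point and $|\Phi'|\gtrsim\lr{k}^{-2}$. One integration by parts then costs $t^{-1}K^{3/2}\|\partial_k f\|_{L^2_k}=t^{-7/10}\|\partial_k f\|_{L^2_k}$, plus terms bounded by $t^{-1}(\|\lr{k}f\|_{L^2_k}+|f(0)|)$. The boundary term at $k=0$ is harmless since $|\Phi'(0)|=|X|\approx 1$.

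If $\lr{k_0}\leq 8K$, your window argument goes through. In that case $t^{-3/4}\lr{k_0}^{5/4}\lesssim t^{-1/2}$ and the window is much narrower than $\lr{k_0}$.

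Note the exact balance $K^{-5/2}=t^{-3/4}K^{5/4}=t^{-1/2}$ at $K=t^{1/5}$; with your exponent $3/2$ the two regimes do not overlap.

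A smaller correction: the bound $|\Phi'(k)|\gtrsim|k-k_0|\min(\lr{k},\lr{k_0})^{-3}$ is false. For $k\gg k_0\gg 1$ one has $|\Phi'(k)|\approx\lr{k_0}^{-2}$. What holds is $|\Phi'(k)|\geq|k-k_0|\max(\lr{k},\lr{k_0})^{-3}$, which follows from $\Phi'(k)-\Phi'(k_0)=\nu\int_{k_0}^{k}\lr{s}^{-3}\,ds$.
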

The proof follows from the arguments in \cite{GP22-mathpi}. 
An analogous estimate for the Schr\"odinger flow is given in \cite{GPR18}.
Based on this result, we can obtain the local decay rate of $\lr{t}^{-3/2}$. 

\begin{lemma}\label{lem:stronger-local-decay}
It suffices to look at $t\geq 1$.
Under the assumption that $V$ is generic and decays sufficiently fast, we have 
\begin{equation}
\label{local-decay-faster}
\begin{aligned}
\left\| \lr{x}^{-3} L^{-1} e^{it L} P_c f \right\|_{L^\infty_x} & \lesssim \lr{t}^{-3/2} \left( 
\| \partial_k^2 \widetilde{f} \|_{L^2_k} + \| \lr{k}^2 \partial_k \widetilde{f} \|_{L^2_k} + \| \lr{k}^2 \widetilde{f} \|_{L^2_k} \right) \\
& \lesssim \lr{t}^{-3/2} \left(\| \lr{x}^2 f \|_{L^2_x} + 
  \| \lr{x} f \|_{H^2_x} \right).
\end{aligned}
\end{equation}
\end{lemma}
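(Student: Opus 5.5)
We write the quantity pointwise in $x$ via the distorted Fourier representation,
\[
\lr{x}^{-3} L^{-1} e^{itL}P_c f(x) = \lr{x}^{-3}\int_\R \psi(x,k)\,\frac{e^{it\lr{k}}}{\lr{k}}\,\wt f(k)\,dk ,
\]
and split according to \eqref{generalized-eigenf-decomp} into the singular part $\psi_S$ and the regular part $\psi_R$. For each piece we also split $k\geq 0$ and $k<0$. Using \eqref{eigenfunction-exp}, every contribution has the form $\sigma_\pm(x)\, e^{\pm ikx} c(x,k)$, where $c$ is one of $T(\pm k)m_\pm$, $m_\pm$, or $R_\pm m_\pm$. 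Lemma \ref{lem:oscillatory-Jost} and Lemma \ref{lem:T-R-prop} control $c$ and its first two $k$-derivatives. On the supports of $\sigma_\pm$ these bounds lose at most $\lr{x}^{s+1}$, and only in the region $|x|\lesssim 1$ where the cutoff is harmless. We therefore reduce to oscillatory integrals of the type \eqref{oscillatory-integral-generic} with $X=x/t$, amplitude $a(x,k)=c(x,k)\lr{k}^{-1}$, and an explicit polynomial weight in $x$ which the prefactor $\lr{x}^{-3}$ will absorb.

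The key step is a single integration by parts in $k$, which gains $t^{-1}$ from the phase $t\lr{k}$ at the expense of one $k$-derivative. Since the phase $\nu\lr{k}\mp kX$ may be stationary, we do not divide by it. Instead we write
\[
e^{it\lr{k}} = \frac{\lr{k}}{itk}\,\partial_k e^{it\lr{k}}
\]
and move the factor $e^{\pm ikx}$ into the amplitude, so that each $k$-derivative landing on it costs a factor of $x$, again absorbed by $\lr{x}^{-3}$. This produces a boundary term at $k=0$, from the half-line splitting and from the factor $1/k$. Here genericity is used.

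1. By Lemma \ref{lemma-distorted-fourier}, $\wt f(0)=0$.
2. By Lemma \ref{lem:T-R-prop}, $T(k)=\alpha k+O(k^2)$ and $R_\pm+1=O(k)$. The total contribution of $\psi$ at $k=0^\pm$ matches because $\psi(x,0^+)=\psi(x,0^-)=0$.

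It follows that $\wt f(k)/k$ is controlled by Hardy's inequality, $\|\wt f/k\|_{L^2}\lesssim \|\partial_k\wt f\|_{L^2}$. Also $\partial_k(\wt f/k)$ is controlled in $L^2$ by $\|\partial_k^2\wt f\|_{L^2}$, via a second Hardy-type estimate using $\wt f(0)=0$. The combinations $T(k)/k$ and $(R_\pm+1)/k$ are smooth with bounds \eqref{ptwise-bd-T}--\eqref{bound-on-R-over-k}.

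After the integration by parts, the integrand is
\[
e^{it(\nu\lr{k}-kX)}\, \tilde a(x,k)\, \partial_k\Big(\frac{\lr{k}}{k}\,\wt f\Big)
\]
plus terms where the derivative falls on the amplitude, with an extra factor $t^{-1}$ in front. Applying Lemma \ref{lem:stationary-phase} to this new integral gives another $t^{-1/2}$, for a total of $t^{-3/2}$. The resulting norms are $\|\partial_k(\text{new density})\|_{L^2}$, which involves $\partial_k^2\wt f$ and, by the Hardy step, lower-order terms, together with $\|\lr{k}^2\cdot\text{new density}\|_{L^2}$, which involves $\lr{k}^2\partial_k\wt f$ and $\lr{k}^2\wt f$. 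These are exactly the norms on the right of \eqref{local-decay-faster}.

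The main obstacle is verifying that the new amplitudes still satisfy \eqref{cond-kernel-a} uniformly after the factors of $x$ are absorbed, and handling the region near $k=0$. In that region we split further with a cutoff $\chi(k)$:
1. On $\chi(k)$, use the vanishing of $\psi$ at $k=0$, i.e. factor $k$ out of $T$ and $R_\pm+1$, to cancel the $1/k$.
2. Away from zero, integrate by parts directly.

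If the $1/k$ derivative losses are too strong near zero, we would instead fall back on the cruder $L^1_k$ bound $\int|k|^{-1/2}\ldots$ with the stationary-phase scaling.

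Finally, the second inequality in \eqref{local-decay-faster} follows from the weighted analogue of \eqref{derivative-in-fourier-bound}, namely $\|\partial_k^2\wt f\|_{L^2}\lesssim\|\lr{x}^2f\|_{L^2}$ and $\|\lr{k}^2\partial_k\wt f\|_{L^2}\lesssim\|\lr{x}f\|_{H^2}$. These are proved by the same commutator argument as in Lemma \ref{lemma-distorted-fourier}, using $\partial_k\psi = ix\psi + (\text{localized terms})$.
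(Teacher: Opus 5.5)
\textbf{The way you handle the factor $1/k$ does not work.} Your skeleton is the paper's: one integration by parts through $e^{it\lr{k}}=\frac{\lr{k}}{itk}\partial_k e^{it\lr{k}}$, then Lemma \ref{lem:stationary-phase} for the extra $t^{-1/2}$. But you put the $1/k$ on $\wt f$, so the new density is $\partial_k(\lr{k}\wt f/k)$. A second application of Lemma \ref{lem:stationary-phase} then needs $\partial_k^2(\wt f/k)=\wt f''/k-2\wt f'/k^2+2\wt f/k^3$ in $L^2$ near $k=0$. Your two Hardy steps control only $\wt f/k$ and $\partial_k(\wt f/k)$. The second derivative is not controlled by $\|\partial_k^2\wt f\|_{L^2}$: for $\wt f=k^{9/4}$ near $0^+$ one has $\partial_k^2\wt f\in L^2$, but $\partial_k^2(\wt f/k)\sim k^{-3/4}\notin L^2$.

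This is not just bookkeeping. The single-exponential pieces you isolate from \eqref{eigenfunction-exp} are $\sigma_-(x)m_-(x,-k)e^{ikx}$ and $\sigma_-(x)R_-(k)m_-(x,k)e^{-ikx}$ for $k\ge0$, plus their mirror images on $\sigma_+$ for $k<0$. Their amplitudes do not vanish at $k=0$, and for such a piece the $t^{-3/2}$ bound is false. With $\wt f(k)=k\chi(k)$ one gets $\int_0^\infty e^{it\lr{k}}\frac{k}{\lr{k}}\chi(k)\,dk=-\frac{e^{it}}{it}+O(t^{-\infty})$, which is exactly of size $t^{-1}$. So the cancellation coming from $\psi(x,0)=0$ must be used inside the integrals, not only to kill boundary terms. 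Your step ``factor $k$ out of $T$ and $R_\pm+1$'' does not do this for these pieces. After extracting $(1+R_-(k))m_-(x,k)e^{-ikx}$, what remains is $m_-(x,-k)e^{ikx}-m_-(x,k)e^{-ikx}$, which vanishes at $k=0$ only as a difference of two distinct oscillations. The $L^1_k$ fallback is too vague to assess, and it cannot beat the $t^{-1}$ example either.

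\textbf{How the paper avoids this, and how to repair your version.} The paper does not split with $\sigma_\pm$. It uses the representation \eqref{eigenfunctions}, $\sqrt{2\pi}\psi=T(k)m_+(x,k)e^{ikx}$ for $k\geq0$ (and similarly for $k<0$), which is valid for all $x$. The $1/k$ therefore falls on $T(k)$, and by \eqref{ptwise-bd-T} both $T/k$ and $\partial_k(T/k)$ are symbols. The four resulting integrals $I_{+,1},\dots,I_{+,4}$ then have densities $\wt f$ or $\partial_k\wt f$ only. Lemma \ref{lem:stationary-phase} returns exactly $\|\partial_k^2\wt f\|_{L^2_k}$, $\|\lr{k}^2\partial_k\wt f\|_{L^2_k}$ and $\|\lr{k}^2\wt f\|_{L^2_k}$. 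The price is that $m_+$ is evaluated on $x<0$, where $|\partial_k^s m_+|\lesssim\lr{x}^{s+1}$. That growth, together with the factor $x$ from $\partial_k e^{ikx}$, is what $\lr{x}^{-3}$ absorbs.

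If you want to keep your $\sigma_\pm$ decomposition (its advantage is that the Jost functions are only used where they are bounded), keep the incident and reflected terms on each side together as one amplitude containing both oscillations. Write it as $k\,b(x,k)$, where $\partial_k^j b$ grows like $\lr{x}^{j+1}$ on the support of $\sigma_-$ (respectively $\sigma_+$), and put the $1/k$ on $b$. You need $j\le2$, which matches the weight $\lr{x}^{-3}$, and the argument then closes as in the paper.

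For the second inequality, your heuristic $\partial_k\psi=ix\psi+(\text{localized})$ is not accurate. Non-localized terms such as $T'(k)e^{ikx}$ appear, and the reflected wave carries $-ix$ rather than $ix$. The bound instead follows by differentiating the pieces of $\psi_S$ and $\psi_R$ separately, as in \eqref{partphiSL2} and Lemma \ref{lem:nonconv-bds}.
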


\begin{proof}
We first rewrite, using Lemma \ref{lemma-distorted-fourier} and \eqref{eigenfunctions},
\begin{equation*}
\begin{aligned}
L^{-1} e^{it L} P_c f (x) & = \frac{1}{\sqrt{2\pi}}\int_0^\infty \lr{k}^{-1} e^{it\lr{k}} 
T(k) m_+(x,k) e^{ikx} \widetilde{f}(k) dk \\
& \qquad \qquad + \frac{1}{\sqrt{2\pi}}\int_{-\infty}^0 \lr{k}^{-1} 
e^{it\lr{k}} T(-k) m_-(x,-k) e^{ikx} \widetilde{f}(k) dk 
\\
& = I_+ + I_-. 
\end{aligned}
\end{equation*}
Concentrating on $I_+$ only (since $I_-$ can be treated similarly), 
we integrate by parts using $\partial_k e^{it \lr{k}} = it k\lr{k}^{-1} e^{it\lr{k}}$ 
to get
\begin{equation*}
\begin{aligned}
i \sqrt{2\pi} I_+ & = \frac{1}{t} \int_0^\infty 
\partial_k \left( \frac{T(k)}{k} \right) m_+(x,k) e^{ikx} e^{it\lr{k}} \widetilde f (k) dk \\
& \qquad + \frac{1}{t} \int_0^\infty \frac{T(k)}{k} \partial_k m_+(x,k) e^{ikx} e^{it\lr{k}} \widetilde f (k) dk \\
& \qquad + \frac{1}{t} \int_0^\infty \frac{T(k)}{k} m_+(x,k) (ix) e^{ikx} e^{it\lr{k}} \widetilde f (k) dk \\
& \qquad + \frac{1}{t} \int_0^\infty \frac{T(k)}{k} m_+(x,k) e^{ikx} e^{it\lr{k}} \partial_k \widetilde f (k) dk 
  = I_{+,1} + I_{+,2} + I_{+,3} + I_{+,4},
\end{aligned}
\end{equation*}
where the boundary terms in the process canceled since $\widetilde{f}(0) = 0$ 
and we may assume that the integrand vanishes at infinity. 
Then, the bound \eqref{local-decay-faster} is obtained by applying Lemma 
\ref{lem:stationary-phase} to each of $I_{+,j}$ above.  

We note that the localization term $\lr{x}^{-3}$ in the statement
\eqref{local-decay-faster} is needed to compensate the growth of $\partial_k^2 m_+ (x,k)$ for 
large negative $x$ and the factor $x$ appearing in $I_{+, 3}$. 
\end{proof}

\smallskip
\begin{remark}\label{rmk:r-s-local-decay}We can analogously derive the following estimates for the regular and singular parts of 
the flow $L^{-1} e^{\pm it L} P_c$:
\begin{align}
\label{local-l-infty-decay-sing}
\left\| \lr{x}^{-2} \left( L^{-1} e^{\pm it L} P_c  h\right)_S\right\|_{L^\infty_x} 
  &\lesssim \lr{t}^{-1} \| \widetilde{h} \|_{H^1_k}, 
  \\    
\label{local-l-infty-decay-reg}
\left\| \lr{x}^{10} \left(L^{-1} e^{\pm it L} P_c h\right)_R \right\|_{L^\infty_x} 
 & \lesssim \lr{t}^{-1} \| \widetilde{h} \|_{H^1_k},\\  
  \label{improved-local-decay-sing/reg}
\left\| \lr{x}^{-2} (L^{-1} e^{\pm it L} P_c f)_{E} \right\|_{L^\infty_x}  &\lesssim \lr{t}^{-3/2} \left( 
\| \partial_k^2 \widetilde{f} \|_{L^2_k} + \| \lr{k}^2 \partial_k \widetilde{f} \|_{L^2_k} + \| \lr{k}^2 \widetilde{f} \|_{L^2_k} \right)
\end{align}
where $(\cdot)_E$ is defined via \eqref{phi-decomp} and $\psi_{E}$, 
$E\in \{S,R\}$, is given in \eqref{psi-singular-part}-\eqref{psi-regular-part}.  Moreover, we have 
\begin{equation}\label{local-l-infty-decay-sing'}
    \left\| \lr{x}^{-2} \left( L^{-1} e^{\pm it L} P_c  h\right)_{S,\ast}\right\|_{L^\infty_x} 
  \lesssim \lr{t}^{-1} \| \widetilde{h} \|_{H^1_k}, 
\end{equation}
where the notation $(\cdot)_{E,\ast}$ is defined through \eqref{phi-decomp'}.
\end{remark}

\smallskip
\begin{lemma}\label{lemlocal-decay-0-plus}
Under the assumption that $V$ is generic and decays sufficiently fast, we have
\begin{equation}
\label{local-decay-0-plus}
\left\| h(x) \frac{e^{it L}}{L-2\lambda + i0^+} P_c f \right\|_{L^1_x} \lesssim \lr{t}^{-3/2}  
  \| \lr{x}^5 h \|_{L^2_x} \| \lr{k}^3 \widetilde{f} \|_{H^3_k}. 
\end{equation}
\end{lemma}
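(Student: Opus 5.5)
We aim to prove \eqref{local-decay-0-plus} by duality: for $\|\rho\|_{L^\infty}\le 1$, write the pairing as $\int \rho(x)h(x)\,\big(e^{itL}(L-2\lambda+i0^+)^{-1}P_cf\big)(x)\,dx$. We then expand this in distorted Fourier variables, obtaining an oscillatory integral in $k$ with kernel
\[
K(x,k)=\frac{e^{it\lr{k}}}{\lr{k}-2\lambda+i0^+}\,\psi(x,k)\,\wt f(k),
\]
integrated against $\rho h$. The first reduction is to separate the resonant region from the rest. Let $\chi$ be a cutoff to $|\lr{k}-2\lambda|\ll 1$, so that $k$ is near $\pm k_0$ with $\lr{k_0}=2\lambda$ and $k_0\neq 0$ because $\lambda>1/2$.

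On the nonresonant part $(1-\chi)$, the multiplier $(\lr{k}-2\lambda)^{-1}$ is smooth and bounded, with all derivatives decaying like $\lr{k}^{-1-j}$. We absorb it into $\wt f$ and then apply the argument of Lemma \ref{lem:stronger-local-decay}. This means integrating by parts twice in $k$ using $\partial_k e^{it\lr k}=it\frac{k}{\lr k}e^{it\lr k}$, which gives a factor $t^{-1}$ after the first integration and $t^{-1/2}$ from the final stationary-phase bound of Lemma \ref{lem:stationary-phase}. The derivatives falling on $\psi$ produce polynomial growth in $x$, of order at most $\lr{x}^{3}$ from the $\partial_k^2 m_\pm$ bounds in Lemma \ref{lem:oscillatory-Jost} together with the factors $ix$. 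These are controlled by $\|\lr{x}^5h\|_{L^2}$ via Cauchy--Schwarz in $x$, with room to spare. The point $k=0$ is harmless, since $T(k)/k$ is regular by \eqref{ptwise-bd-T} and $\wt f(0)=0$, so no boundary terms arise. This yields $t^{-3/2}\|\lr{x}^5h\|_{L^2}\|\lr{k}^3\wt f\|_{H^3_k}$ for this part.

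On the resonant part, which is the main obstacle, we cannot integrate by parts naively because of the distribution $(\lr{k}-2\lambda+i0^+)^{-1}$. Instead we use the representation
\[
\frac{1}{\lr{k}-2\lambda+i0^+}=-i\int_0^\infty e^{-is(\lr{k}-2\lambda)}\,ds,
\]
which turns the resonant part into
\[
-i\int_0^\infty e^{2i\lambda s}\,\big(e^{i(t-s)L}\chi(L)P_cf\big)\,ds.
\]
Here $\chi(L)P_cf$ is localized to a compact frequency set away from $0$, so $k/\lr{k}$ is bounded below there and repeated integration by parts is clean. The pointwise local decay $|\lr{x}^{-3}e^{i\tau L}\chi(L)f|\lesssim\lr{\tau}^{-3/2}$ from Lemma \ref{lem:stronger-local-decay}, however, makes $\int_0^\infty\lr{t-s}^{-3/2}ds$ only $O(1)$, not decaying in $t$. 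So the time-domain route loses, and we must work in frequency after all.

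We therefore change variables $\eta=\lr{k}-2\lambda$ near $\pm k_0$, which is a smooth diffeomorphism there since $k_0\ne0$. The resonant integral becomes
\[
\int e^{it\eta}\,\frac{G_x(\eta)}{\eta+i0^+}\,d\eta,
\qquad G_x(\eta)=\text{(Jacobian)}\cdot\chi\,\psi(x,k(\eta))\,\wt f(k(\eta)).
\]
Writing $G_x(\eta)=G_x(0)+\eta\,\widetilde G_x(\eta)$, the second term is smooth and compactly supported, and three integrations by parts in $\eta$ give $t^{-3}$, hence certainly $t^{-3/2}$. Three derivatives of $\psi$ cost $\lr{x}^3$ (Lemma \ref{lem:oscillatory-Jost} extended to $s=3$, which is where the $H^3_k$ norm enters). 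The first term equals
\[
G_x(0)\int e^{it\eta}\frac{\chi(\eta)}{\eta+i0^+}\,d\eta = G_x(0)\cdot\big(-2\pi i\,\mathbbm{1}_{t<0}+O(t^{-N})\big),
\]
since the Fourier transform of $(\eta+i0)^{-1}$ is supported on one half-line. For $t>0$ this is rapidly decaying; we must check the sign convention of $e^{itL}$ against $+i0^+$ carefully, as this is exactly where the outgoing choice is essential. The $x$-growth is once more absorbed by $\lr{x}^5h$ in $L^2$, with $\lr{x}^{-1}\in L^2$ giving the $L^1_x$ bound. Summing the two regions proves the lemma; the delicate step is this resonant analysis together with the sign check.
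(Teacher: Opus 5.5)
Your final argument is sound. It amounts to a self-contained proof of what the paper only asserts by reference to \cite[Lemma 2.7]{LP21}. Away from the resonance you reduce to Lemma~\ref{lem:stronger-local-decay}. Near $\pm k_0$, the substitution $\eta=\lr{k}-2\lambda$ together with the half-line support of the Fourier transform of $(\eta+i0)^{-1}$ gives rapid decay for $t>0$.

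However, the step you flag as delicate and defer is exactly where your write-up contradicts itself. The representation you use, $-i\int_0^\infty e^{-is\eta}\,ds$, equals $-(\eta-i0)^{-1}$. The correct one is
\[
\frac{1}{\eta+i0}=-i\int_0^\infty e^{is\eta}\,ds,
\]
which gives $\int e^{it\eta}(\eta+i0)^{-1}\,d\eta=-2\pi i\int_0^\infty\delta(t+s)\,ds=-2\pi i\,\mathbbm{1}_{t<0}$. Equivalently, the pole is at $\eta=-i0$, and for $t>0$ you close the contour in the upper half-plane. This confirms the formula you state. With your own representation, though, you would get $\mathbbm{1}_{t>0}$ instead, i.e.\ a non-decaying term $-2\pi i\,e^{2i\lambda t}G_x(0)$, and the lemma would appear false. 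Since this sign is the entire content of the outgoing choice, it has to be verified, not left as a check.

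The same correction also rescues the time-domain route you discarded. The resonant piece is $-i\int_0^\infty e^{-2i\lambda s}e^{i(t+s)L}\chi(L-2\lambda)P_cf\,ds$. Because $\chi(L-2\lambda)$ keeps $k$ away from $0$, three integrations by parts in $k$ (there is no stationary point) give the pointwise bound $\lesssim\lr{\sigma}^{-3}\lr{x}^{4}\|\wt f\|_{H^3_k}$ for $e^{i\sigma L}\chi(L-2\lambda)P_cf$. Integrating in $s$ then yields $\lr{t}^{-2}$. Your ``$O(1)$'' came only from the spurious $e^{i(t-s)L}$ produced by the sign error.

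Two smaller repairs are needed in the resonant analysis.
\begin{itemize}
\item Writing $G_x=\chi\,g_x$, you should subtract $G_x(0)\chi(\eta)$ rather than $G_x(0)$; otherwise the remainder is not compactly supported. The correct remainder is $\wt G_x=\chi(\eta)\,(g_x(\eta)-g_x(0))/\eta$.
\item Three integrations by parts against this difference quotient would need $g_x^{(4)}$, i.e.\ four $k$-derivatives of $\wt f$, which $\|\lr{k}^3\wt f\|_{H^3_k}$ does not supply. Two are enough: $\partial_\eta^2\wt G_x$ involves at most three derivatives of $g_x$, through $\int_0^1\theta^2g_x'''(\theta\eta)\,d\theta$ and lower-order terms. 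They give $t^{-2}\lesssim\lr{t}^{-3/2}$, with polynomial $x$-growth that $\|\lr{x}^5h\|_{L^2}$ absorbs.
\end{itemize}
The non-resonant part is fine as written.
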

\begin{proof}
The estimate can be derived similarly as in Lemma 2.7 of \cite{LP21}.    
\end{proof}

We will also use the following:

\begin{lemma}\label{lem:nonconv-bds}
For $\gamma=0,1$, we have
\begin{equation}
\bigg\|\int \partial^{\gamma}_k\overline{\psi_R(x,k)}f(x)dx\bigg\|_{L^2_k}  \lesssim \|\lr{\cdot}^{-11+\gamma}f\|_{L^2_x}.
\end{equation}    
\end{lemma}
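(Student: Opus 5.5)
The plan is to write $\psi_R$ explicitly via \eqref{psi-regular-part} and treat each of its pieces separately. Take $k\ge 0$; the case $k<0$ is symmetric. A typical piece is
\[
\sigma_+(x)\,T(k)\,(m_+(x,k)-1)\,e^{ikx}.
\]
The other pieces replace $T(k)$ by $1$ or $R_-(k)$, replace $m_+(x,k)$ by $m_-(x,\pm k)$, replace $\sigma_+$ by $\sigma_-$, and replace $e^{ikx}$ by $e^{-ikx}$. For each piece, consider
\[
\int \overline{\partial_k^\gamma[\,\cdot\,]}\,f(x)\,dx .
\]
The strategy is to treat it as an oscillatory integral of the form $\int e^{\mp ikx}\,G(x,k)\,dx$ and use Plancherel in $k$. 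The only obstruction to applying Plancherel directly is that the amplitude depends on $k$.

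For $\gamma=0$ I will use the bound in Lemma \ref{lem:oscillatory-Jost}. On the support of $\sigma_+$, which lies in $x\ge -2$, we have $|m_+(x,k)-1|\lesssim \lr{x}^{-12}\lr{k}^{-1}$. (On the overlap $-2\le x\le 1$ the second bound of that lemma gives the same control, since $\lr{x}$ is bounded there.) Combined with $|T|+|R_\pm|\lesssim 1$, this means the amplitude satisfies
\[
|G(x,k)|\lesssim \lr{x}^{-12}\lr{k}^{-1}.
\]
A crude Schur or Cauchy--Schwarz estimate then closes the case:
\[
\Big|\int G\,e^{-ikx}\bar f\,dx\Big|\lesssim \lr{k}^{-1}\,\|\lr{x}^{-12}f\|_{L^1_x}\lesssim \lr{k}^{-1}\,\|\lr{x}^{-11}f\|_{L^2_x}.
\]
Since $\lr{k}^{-1}\in L^2_k$, this gives the $\gamma=0$ bound. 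No Plancherel is needed here because the amplitude already decays in $k$.

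For $\gamma=1$ the derivative falls on one of three factors. If it hits $T(k)$ or $R_\pm(k)$, we get $|\partial_k T|\lesssim\lr{k}^{-1}$ from \eqref{uniform-estimate-T-R}, and the previous argument applies unchanged. If it hits $m_\pm-1$, Lemma \ref{lem:oscillatory-Jost} with $s=1$ keeps the bound $\lr{x}^{-12}\lr{k}^{-1}$ on the relevant half-line, and again the argument applies. If it hits the exponential, it produces a factor $ix$. The amplitude is then bounded by $\lr{x}^{-11}\lr{k}^{-1}$, which is exactly where the weight $\lr{\cdot}^{-11+\gamma}$ with $\gamma=1$ comes from, and we conclude with
\[
\|\lr{x}^{-11}f\|_{L^1}\lesssim\|\lr{x}^{-10}f\|_{L^2}.
\]

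The main point to check is the behavior at $k=0$. The definitions switch at $k=0$, for instance $R_-(k)$ appears only for $k\ge0$, so the relevant objects are the one-sided derivatives on each half-line. The $L^2_k$ norm is insensitive to this because no delta mass arises when the distributional derivative is interpreted piecewise. I will state explicitly that $\partial_k$ is understood separately on $k>0$ and $k<0$, as is done throughout the decomposition. The second point to check is the cutoffs: $\sigma_\pm$ localize to half-lines on which the good decay bound of Lemma \ref{lem:oscillatory-Jost} holds, up to the compact overlap $[-2,2]$. On that overlap the growing bound $\lr{x}^{s+1}$ is harmless.
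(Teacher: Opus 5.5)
Your argument is correct and is essentially the proof the paper has in mind when it appeals to the spatial localization of $\psi_R$: you combine Lemmas \ref{lem:oscillatory-Jost}--\ref{lem:T-R-prop} into the pointwise kernel bound $|\partial_k^\gamma\psi_R(x,k)|\lesssim \lr{x}^{-12+\gamma}\lr{k}^{-1}$, then apply Cauchy--Schwarz in $x$ and use $\lr{k}^{-1}\in L^2_k$, so Plancherel is never actually needed. At $k=0$ there is a cleaner justification than reading the derivative piecewise: $T(0)=0$ and $R_\pm(0)=-1$ give $\psi_R(x,0^\pm)=0$, so $\psi_R$ is continuous in $k$, and its a.e.\ derivative therefore coincides with its distributional derivative.
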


\begin{proof}
The proof is straightforward using the localization in $x$ of $\psi_R$ in \eqref{psi-regular-part}.
See Section 2 of \cite{CP22} for the proof and also Remark 2.4 in \cite{KP23}.  
\end{proof}

We conclude this subsection with a useful linear decay estimate
which will be used in Section \ref{sec:PDE-part2}.

\begin{lemma}\label{lemlininterp}
Under the same assumption on $L=(-\partial_x^2 + V+1)^{1/2}$ above, for any $N\geq 4$ we have
\begin{align}\label{lininterpest}
{\big\| e^{itL} P_c f \big\|}_{L^\infty} \lesssim
  \lr{t}^{-1/2} \log(2+t) \, \Big( {\big\| f \big\|}_{H^4}^{1/2} 
  \, {\big\| \partial_k \wt{f} \big\|}_{L^2}^{1/2} 
  + {\big\| f \big\|}_{H^N} \Big).
\end{align}
\end{lemma}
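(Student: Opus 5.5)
We prove \eqref{lininterpest} with a Littlewood--Paley decomposition in the distorted frequency $k$, treating low, intermediate and high frequencies separately. The only interpolation is an interpolated $L^\infty$ bound for a single dyadic piece, and the logarithm comes from the number of dyadic pieces. Write $e^{itL}P_c f = \sum_{\ell} e^{itL}P_\ell f$ using the projectors $P_\ell$ from \S\ref{ssec:notations}, and assume $t\ge 2$, since for $t\lesssim 1$ the bound follows from Sobolev embedding and $\|f\|_{H^N}$.

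\textbf{High frequencies, $2^\ell \gtrsim t^{1/2}$.} We use the crude bound
\begin{equation*}
\|e^{itL}P_\ell f\|_{L^\infty}\lesssim \|\wt{P_\ell f}\|_{L^1_k}\lesssim 2^{\ell/2}\|\phi_\ell\wt f\|_{L^2_k}\lesssim 2^{\ell/2-N\ell}\|f\|_{H^N}.
\end{equation*}
This holds because $\psi(x,k)$ is bounded uniformly in $x,k$, and the last step uses \eqref{k-s-h-tilde-bound}. Summing over $2^\ell\gtrsim t^{1/2}$ gives a bound far better than $t^{-1/2}\|f\|_{H^N}$.

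\textbf{Very low frequencies, $2^\ell \le t^{-1/2}$.} We argue the same way. Since $\wt f(0)=0$ (Lemma \ref{lemma-distorted-fourier}), Cauchy--Schwarz gives $|\wt f(k)|\le |k|^{1/2}\|\partial_k\wt f\|_{L^2}$. Then $\|\phi_\ell \wt f\|_{L^1}\lesssim 2^{3\ell/2}\|\partial_k \wt f\|_{L^2}$, and the sum over this range is $\lesssim t^{-3/4}\|\partial_k\wt f\|_{L^2}$. This is still not of the interpolated form required by \eqref{lininterpest}. To fix it, interpolate with $\|\phi_\ell\wt f\|_{L^1}\lesssim 2^{\ell/2}\|\wt f\|_{L^2}$ to obtain $2^{\ell}\|\wt f\|_{L^2}^{1/2}\|\partial_k\wt f\|_{L^2}^{1/2}$. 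Summed over $2^\ell\le t^{-1/2}$ this gives $t^{-1/2}\|f\|_{H^4}^{1/2}\|\partial_k\wt f\|_{L^2}^{1/2}$, which is acceptable.

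\textbf{Intermediate frequencies, $t^{-1/2}\le 2^\ell\le t^{1/2}$.} There are $O(\log(2+t))$ such $\ell$, so it suffices to prove for each one the bound
\begin{equation}\label{eq:plan-dyadic}
\|e^{itL}P_\ell f\|_{L^\infty}\lesssim t^{-1/2}\Big(\|\phi_\ell \wt f\|_{L^2}^{1/2}\|\partial_k(\phi_\ell\wt f)\|_{L^2}^{1/2}\lr{2^\ell}^{2}+\text{acceptable}\Big).
\end{equation}
To prove \eqref{eq:plan-dyadic}, decompose $\psi=\psi_S+\psi_R$ as in \eqref{generalized-eigenf-decomp}. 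For the singular part this reduces to flat oscillatory integrals $\int e^{it\lr{k}\pm ikx}A_{\pm,\pm}(k)\phi_\ell(k)\wt f(k)\,dk$, which we split into two regions.
\begin{itemize}
\item Near the stationary point $k_0$, in a window $|k-k_0|\le \rho$: the contribution is bounded by $\rho^{1/2}\|\phi_\ell\wt f\|_{L^\infty}$.
\item Away from it: one integration by parts, using $|\partial_k\Phi|\gtrsim t\,|k-k_0|\,\lr{k}^{-3}$, gives a bound of size $(t\rho)^{-1/2}\lr{2^\ell}^3\|\partial_k(\phi_\ell \wt f)\|_{L^2}$.
\end{itemize}
Optimizing in $\rho$ gives the stationary-phase bound $t^{-1/2}\lr{2^\ell}^{3/2}\|\phi_\ell\wt f\|_{L^\infty}$ plus a comparable derivative term. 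Then apply the Gagliardo--Nirenberg inequality $\|g\|_{L^\infty}\lesssim \|g\|_{L^2}^{1/2}\|\partial_k g\|_{L^2}^{1/2}$ to $g=\phi_\ell\wt f$. The derivative term can be handled in the same interpolated way, or one can use that $\partial_k\phi_\ell\sim 2^{-\ell}$ is harmless once $2^\ell\ge t^{-1/2}$.

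The regular part $\psi_R$ is localized in $x$, and this is where we use Lemma \ref{lem:stationary-phase}. Apply it with $a=\lr{k}^{-1}$ times the $m_\pm -1$ coefficients, and run the same window argument with $\phi_\ell\wt f$ in place of $f$.

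\textbf{Main obstacle.} The hard step is obtaining the square-root form $\|\wt f\|^{1/2}\|\partial_k\wt f\|^{1/2}$ rather than the full $\|\partial_k \wt f\|$ of \eqref{dispersive-estimates}, uniformly in $\ell$ with only polynomial losses in $2^\ell$. We expect the cleanest route is an $L^\infty_k$ stationary-phase estimate with error controlled by $t^{-3/4}\|\partial_k\wt f\|$, followed by Gagliardo--Nirenberg. The error terms in $\partial_k\phi_\ell$ near $k=0$ need care, and this is exactly why the sum is restricted to $2^\ell\ge t^{-1/2}$, with the very low frequencies handled separately as above. Finally, the polynomial losses $\lr{2^\ell}^{2}$ are absorbed into $\|f\|_{H^4}$ by the bound $\|\lr{k}^4\wt f\|_{L^2}\approx\|P_c f\|_{H^4}$ from \eqref{k-s-h-tilde-bound}.
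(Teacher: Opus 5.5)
\textbf{Verdict: genuine gap.} Your frequency splitting is sound: the crude $L^1_k$ bound for $2^\ell\ge t^{1/2}$, the Hardy-plus-geometric-mean bound for $2^\ell\le t^{-1/2}$, and $O(\log t)$ intermediate blocks. But the per-block estimate in the intermediate range, which is the real content of the lemma, is not established.

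Set $g:=A_{\pm,\pm}\phi_\ell\wt f$ and $C_\ell:=\lr{2^\ell}^3$. The window $|k-k_0|\le\rho$ contributes $\min(\rho\|g\|_{L^\infty},\rho^{1/2}\|g\|_{L^2})$, not $\rho^{1/2}\|g\|_{L^\infty}$. One integration by parts outside the window gives $C_\ell t^{-1}\big(\rho^{-1/2}\|\partial_k g\|_{L^2}+\rho^{-1}\|g\|_{L^\infty}\big)$, not $(t\rho)^{-1/2}$; you also dropped the boundary and $\partial_k(1/\partial_k\Phi)$ terms. With $\rho=(C_\ell/t)^{1/2}$ this yields $t^{-1/2}C_\ell^{1/2}\|g\|_{L^\infty}+t^{-3/4}C_\ell^{3/4}\|\partial_k g\|_{L^2}$.

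Gagliardo--Nirenberg handles the first term. The second is linear in $\|\partial_k g\|_{L^2}$ and cannot be bounded by the right-hand side of \eqref{lininterpest}, which contains only $\|\partial_k\wt f\|_{L^2}^{1/2}$. It fails whenever $\|\partial_k\wt f\|_{L^2}\gg t^{1/2}\|f\|_{H^N}$. So the route you call cleanest (an $L^\infty_k$ stationary-phase bound with a $t^{-3/4}\|\partial_k\wt f\|_{L^2}$ error, then Gagliardo--Nirenberg) fails as stated. ``Handled in the same interpolated way'' is exactly the missing step.

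The fix is to choose the window adaptively. Let $\Lambda:=\|\partial_k g\|_{L^2}/\|g\|_{L^2}$.
\begin{itemize}
\item If $\Lambda\le (t/C_\ell)^{1/2}$, keep $\rho=(C_\ell/t)^{1/2}$. Then $t^{-3/4}C_\ell^{3/4}\|\partial_k g\|_{L^2}\le (C_\ell/t)^{1/2}\|g\|_{L^2}^{1/2}\|\partial_k g\|_{L^2}^{1/2}$.
\item Otherwise take $\rho=C_\ell\Lambda/t$ and use the $L^2$ bound on the window. All three terms are then $\lesssim (C_\ell/t)^{1/2}\|g\|_{L^2}^{1/2}\|\partial_k g\|_{L^2}^{1/2}$.
\end{itemize}
Nonstationary configurations (no $k_0$ near $\mathrm{supp}\,\phi_\ell$) follow by taking the geometric mean of the trivial $L^1_k$ bound and the integration-by-parts bound. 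The regular part $\psi_R$ is treated the same way.

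Your treatment of the term where $\partial_k$ hits $\phi_\ell$ is also wrong. Knowing only $2^\ell\ge t^{-1/2}$ gives $2^{-\ell}\|\phi_{[\ell-2,\ell+2]}\wt f\|_{L^2}\le t^{1/2}\|\wt f\|_{L^2}$, which costs $t^{1/4}$ after the square root. What you need is the $t$-independent bound $2^{-\ell}\|\phi_{[\ell-2,\ell+2]}\wt f\|_{L^2}\lesssim\|\partial_k\wt f\|_{L^2}$. This follows from $\wt f(0)=0$ by the same Hardy argument as your low-frequency step; it is \eqref{interpestpr4} in the paper.

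For comparison, the paper does no stationary phase here. It applies the $L^1\to L^\infty$ estimate \eqref{dispersive-estimates0} to each block with $\lr{t}^{-2}\le 2^\ell\le\lr{t}^{2/N}$, and a Bernstein-type bound to the rest. The square-root structure comes from the physical-space inequality $\|P_\ell f\|_{L^1}\lesssim\|P_\ell f\|_{L^2}^{1/2}\|xP_\ell f\|_{L^2}^{1/2}$. It then converts $\|xP_\ell f\|_{L^2}$ into $\|\partial_k\wt f\|_{L^2}$ via \eqref{derivative-in-fourier-bound} and the Hardy bound. Your adaptive window is the frequency-side counterpart of that inequality. Once repaired your argument works, but it is longer, since it must redo separately for $\psi_S$ and $\psi_R$ the stationary-phase analysis that the paper uses as a black box.
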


\begin{proof}
For $|t|\leq 1$ the statement follows from Sobolev's embedding so we consider $|t| > 1$
in what follows.
We use the notation for Littlewood-Paley projections from \S\ref{ssec:notations}
and begin by bounding low and high frequencies:
\begin{align}\label{interpestpr0}
{\big\| e^{itL} P_\ell f \big\|}_{L^\infty} \lesssim 2^{\ell/2} {\big\| P_\ell f \big\|}_{L^2} 
  \lesssim \min \big(2^{\ell/2} {\| f \|}_{L^2}, 2^{(1/2-N)\ell } {\| f \|}_{H^N} \big).
\end{align}
Using \eqref{interpestpr0} and the linear estimate \eqref{dispersive-estimates0} we have
\begin{align}\label{interpestpr1}
\begin{split} 
{\big\| e^{itL} f \big\|}_{L^\infty} 
  & \lesssim \sum_{\lr{t}^{-2} \leq 2^\ell \leq \lr{t}^{2/N}}  
  {\big\| e^{itL} P_\ell f \big\|}_{L^\infty} 
  + \sum_{2^\ell \geq \lr{t}^{2/N}} {\big\| e^{itL} P_\ell f \big\|}_{L^\infty} 
  + \sum_{2^\ell \leq \lr{t}^{-2} } {\big\| e^{itL} P_\ell f \big\|}_{L^\infty}  
  \\
  & \lesssim \frac{1}{\lr{t}^{1/2}}\log(2+t) \cdot \sup_{\ell} (1+2^{2\ell}) 
  {\big\| P_\ell f \big\|}_{L^1} 
  + \frac{1}{\lr{t}} {\big\| f \big\|}_{H^N}.
\end{split}
\end{align}
The last term on the right-hand side of \eqref{interpestpr1} is acceptable for \eqref{lininterpest}
so we concentrate on the first and estimate the $L^1$-norm 
using the basic interpolation inequality
\begin{align}\label{interpestpr2}
{\big\| P_\ell f \big\|}_{L^1} \lesssim
   {\big\| P_\ell f \big\|}_{L^2}^{1/2} {\big\| x P_\ell f \big\|}_{L^2}^{1/2}
   \lesssim (1+2^{2\ell})^{-1}{\big\| f \big\|}_{H^4}^{1/2} 
   \, {\big\| x P_\ell f \big\|}_{L^2}^{1/2}.
\end{align}
Using \eqref{derivative-in-fourier-bound} we then observe that 
\begin{align}\label{interpestpr3}
\begin{split}
{\big\| x P_\ell f \big\|}_{L^2} & \lesssim {\| f \|}_{L^2} + {\big\| P_\ell x f \big\|}_{L^2} 
  + 2^{-\ell} {\big\| P_{[\ell-2,\ell+2]} f \big\|}_{L^2} 
  \\
  & \lesssim {\| f \|}_{L^2} + {\big\| \partial_k \wt{f} \big\|}_{L^2} 
  + 2^{-\ell} {\big\| \phi_{[\ell-2\ell+2]} \wt{f} \big\|}_{L^2} 
\end{split}
\end{align}
having used \eqref{derivative-in-fourier-bound}.
To obtain \eqref{lininterpest} it then suffices to show that
\begin{align}\label{interpestpr4}
2^{-\ell} {\big\| \phi_{[\ell-2,\ell+2]} \wt{f} \big\|}_{L^2}  
  \lesssim {\big\| \partial_k \wt{f} \big\|}_{L^2}.
\end{align}
Using $\wt{f}(0)=0$ we can bound
\begin{align}\label{interpestpr5}
 |\phi_{[\ell-2,\ell+2]}(k) \wt{f}(k)|
  \leq \phi_{[\ell-2,\ell+2]}(k) \int_0^k \big|\partial_{k'} \wt{f}(k') \big| \, dk'
  \leq  |k|^{1/2}\phi_{[\ell-2,\ell+2]}(k)  {\| \partial_{k} \wt{f} \|}_{L^2},
\end{align}
so that 
\begin{align}\label{interpestpr6}
 {\|\phi}_{[\ell-2,\ell+2]} \wt{f}\|_{L^2}
 \lesssim 2^{\ell/2} {\|\phi_{[\ell-2,\ell+2]}(k)\|}_{L^2} {\| \partial_{k} \wt{f} \|}_{L^2}
 \lesssim 2^{\ell}{\| \partial_{k} \wt{f} \|}_{L^2},
\end{align}
which is \eqref{interpestpr4}.
\end{proof}

\medskip
\subsection{Improved local decay and smoothing-type estimates}
We will also need some local decay estimates for the case in which the 
linear operator is spectrally localized. 
These estimates will be essential for controlling certain growing terms in the nonlinear analysis.
Particularly useful is the scale-refined local decay estimate
\eqref{local-estimate-time-cut-off-refined} which quantifies the gain obtained
by localizing near the bad/resonant frequency $|k|$ with $\lr{k}=2\lambda$.

Fix a small constant $0 < r \ll 2\lambda-1$ and denote by $\chi_r$ a nonnegative bump function
supported in an interval around zero with size $r$.   
\begin{lemma}
\label{lem:local-estimate-time-cut-off}
For all $t>0$ we have the estimates  
\begin{align}
\label{local-estimate-time-cut-off}
\left\| \lr{x}^{-2} L^{-1} e^{itL} \chi_{r}(L-2\lambda) P_c h \right\|_{L_x^\infty} 
  & \lesssim \lr{t}^{-1} {\big\| \wt{h} \big\|}_{H^1_k},
  \\
\label{local-estimate-time-cut-off-internal-mode}
\left\| \lr{x}^{-2} \frac{(1-\chi_{r}(L-2\lambda))}{L(L-2\lambda)} e^{it(L-2\lambda)}  P_c h \right\|_{L_x^\infty} 
  & \lesssim \lr{t}^{-1} {\big\|\widetilde{h} \big\|}_{H_k^1}.
\end{align}
Finally, the refined version of \eqref{local-estimate-time-cut-off} holds true:
for all $n\in\mathbb{Z}$ such that $2^n \ll 2\lambda-1$
\begin{equation}
\label{local-estimate-time-cut-off-refined}
\left\| \lr{x}^{-2}  L^{-1} e^{it(L-2\lambda)} \phi_{\leq n}(L-2\lambda) P_c h 
  \right\|_{L_x^\infty} \lesssim \Big( \frac{2^{n/2}}{\lr{t}} + \frac{2^{-n}}{\lr{t}^2} \Big) 
  {\big\| \wt{h} \big\|}_{H^1_k}
\end{equation}
where $\phi_{\leq n}$ is a Littlewood-Paley cutoff at scale below $2^{n+1}$ 
as defined in \ref{ssec:notations}. The estimate \eqref{local-estimate-time-cut-off-refined}
also holds true if we replace $\phi_{\leq n}$ with $\phi_{n}$
\end{lemma}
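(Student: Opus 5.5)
We reduce all three estimates to oscillatory integrals in $k$ and integrate by parts once in $k$, as in the proof of Lemma \ref{lem:stronger-local-decay}. By Lemma \ref{lemma-distorted-fourier} and \eqref{eigenfunctions},
\begin{equation*}
\lr{x}^{-2}L^{-1}e^{itL}\chi_r(L-2\lambda)P_c h(x) = \frac{\lr{x}^{-2}}{\sqrt{2\pi}}\int_0^\infty \frac{e^{it\lr{k}}}{\lr{k}}\chi_r(\lr{k}-2\lambda)T(k)m_+(x,k)e^{ikx}\wt{h}(k)\,dk + (k<0\ \text{term}).
\end{equation*}
For \eqref{local-estimate-time-cut-off}, the key point is that the support of $\chi_r(\lr{k}-2\lambda)$ lies in $|k|\approx \sqrt{4\lambda^2-1}>0$, since $r\ll 2\lambda-1$. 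There the phase satisfies $\partial_k\lr{k}=k/\lr{k}\gtrsim 1$, and $k=0$ is avoided, so no vanishing of $\wt h(0)$ is needed. We write $e^{it\lr{k}}=(it)^{-1}\frac{\lr{k}}{k}\partial_k e^{it\lr{k}}$ and integrate by parts; there are no boundary terms because of the compact support of the cutoff. The derivative then falls on one of three places: on the smooth compactly supported symbol $\chi_r\lr{k}^{-1}T(k)/k$, which is bounded by Lemma \ref{lem:T-R-prop}; on $m_+(x,k)$, where $|\partial_k m_+|\lesssim \lr{x}^2$ by Lemma \ref{lem:oscillatory-Jost}; or on $e^{ikx}$, which produces a factor $x$. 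The weight $\lr{x}^{-2}$ absorbs the growth in the last two cases. Cauchy–Schwarz on the compact $k$-support, together with the bound $\|\wt h\|_{L^2}+\|\partial_k\wt h\|_{L^2}$, gives $t^{-1}\|\wt h\|_{H^1}$. For $t\le 1$ the bound follows trivially from $|\psi|\lesssim\lr{x}$ and Cauchy–Schwarz, after inserting $\lr{k}^{-1}$.

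For \eqref{local-estimate-time-cut-off-internal-mode}, the multiplier $(1-\chi_r(\lr{k}-2\lambda))/(\lr{k}(\lr{k}-2\lambda))$ is smooth and bounded by $\lr{k}^{-2}$, with derivative of size $\lr{k}^{-2}$, because we are away from the resonance. The extra phase $e^{-2it\lambda}$ is harmless. We then repeat the argument of Lemma \ref{lem:stronger-local-decay} with one integration by parts, now including $k=0$. The boundary term at $k=0$ vanishes because $T(0)=0$, and $T(k)/k$ is controlled by \eqref{ptwise-bd-T}; alternatively we use $\wt h(0)=0$. Equivalently, we may split into the part near the resonance, which has a bounded smooth symbol and is handled as above, and the remaining part, which is Lemma \ref{lem:local-l-infty-decay-lemma} applied to $\frac{1-\chi_r}{L-2\lambda}h$. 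For the latter we need $\|\partial_k(m(k)\wt h)\|_{L^2}\lesssim\|\wt h\|_{H^1}$, which holds since the symbol $m$ is smooth and bounded with bounded derivative.

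The refined estimate \eqref{local-estimate-time-cut-off-refined} is the main step. We denote the cutoff by $\phi_{\le n}(\lr{k}-2\lambda)$; its support has width about $2^n$ near $\pm k_*$, and its derivative is of size $2^{-n}$. A single integration by parts has the same three outcomes as before, plus the case where the derivative hits the cutoff. When $\partial_k$ hits $\wt h$, $m_+$ or $e^{ikx}$, we apply Cauchy–Schwarz over a set of measure about $2^n$. This gives $t^{-1}2^{n/2}\|\wt h\|_{H^1}$, where the $\wt h$-terms use $\|\wt h\|_{L^\infty}\lesssim\|\wt h\|_{H^1}$ or directly $\|\partial_k\wt h\|_{L^2}$. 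When $\partial_k$ hits the cutoff, we get the bad factor $2^{-n}$. For this term we integrate by parts a second time using the factor $e^{it(\lr{k}-2\lambda)}$, and obtain $t^{-2}2^{-n}\int|\partial_k(\phi'_{\le n}\cdots\wt h)|$. The new derivative costs either another $2^{-n}$ (from $\phi''$) or falls on $\wt h$.

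To avoid a $2^{-2n}t^{-2}$ loss, we interpolate instead. For the cutoff-derivative term we bound directly by $t^{-1}2^{-n}\cdot 2^{n}\|\wt h\|_{L^\infty}$. This is not good enough when $2^n$ is small, so we compare with the trivial bound $2^{n/2}\|\wt h\|_{L^2}$, which requires no integration by parts, and choose the better of the two depending on whether $2^n\gtrless t^{-1}$.

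Concretely, the plan is as follows. If $2^n\le t^{-1}$, the trivial bound $2^{n/2}\|\wt h\|_{L^\infty}$ is at most $2^{-n}t^{-2}$, so it is acceptable. If $2^n\ge t^{-1}$, we integrate by parts once. The cutoff-derivative term, $t^{-1}2^{-n}\int_{|\cdot|\sim 2^n}|\wt h|$, should be handled by writing $\wt h(k)=\wt h(k_*)+O(|k-k_*|^{1/2}\|\partial_k\wt h\|_{L^2})$. The fluctuation gives $t^{-1}2^{n/2}$. The constant part gives $\wt h(k_*)\,t^{-1}2^{-n}\int\phi'_{\le n}e^{it(\cdot)}$, and a further integration by parts in this explicit integral yields $t^{-2}2^{-n}$.

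This bookkeeping of the constant-plus-fluctuation split is where I expect the main difficulty. The statement with $\phi_n$ in place of $\phi_{\le n}$ follows by the same argument, or by taking the difference of two consecutive $\phi_{\le}$ cutoffs.
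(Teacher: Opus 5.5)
Your treatment of \eqref{local-estimate-time-cut-off} and \eqref{local-estimate-time-cut-off-internal-mode} is the paper's argument. You integrate by parts once in $k$, and the weight $\lr{x}^{-2}$ absorbs both the factor $x$ and the growth of $\partial_k m_+$. One correction: the boundary term at $k=0$ in \eqref{local-estimate-time-cut-off-internal-mode} vanishes because $\widetilde{h}(0)=0$, not directly because $T(0)=0$. After the integration by parts the coefficient there is $T(k)/k\to\alpha\neq 0$. Your alternative is a valid shortcut: apply Lemma \ref{lem:local-l-infty-decay-lemma} to $\widetilde{f}=m(k)\widetilde{h}$ with $m=(1-\chi_r(\lr{k}-2\lambda))/(\lr{k}-2\lambda)$. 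This works because $m$ is smooth with bounded derivative and $\widetilde{f}(0)=0$. Taking $m=\chi_r(\lr{k}-2\lambda)$ instead gives \eqref{local-estimate-time-cut-off} the same way.

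For \eqref{local-estimate-time-cut-off-refined} your route works, but it is more complicated than needed, and the reason you give for it is a miscount. The paper simply integrates by parts a second time in the term where $\partial_k$ fell on the cutoff. The worst resulting piece is $t^{-2}2^{-2n}\int|\phi''(2^{-n}(\lr{k}-2\lambda))|\,|\widetilde{h}(k)|\,dk$. Its integrand is supported on a set of measure $\approx 2^n$, so it is $\lesssim t^{-2}2^{-n}\|\widetilde{h}\|_{L^\infty}$. The piece where $\partial_k$ falls on $\widetilde{h}$ gives $t^{-2}2^{-n}\cdot 2^{n/2}\|\partial_k\widetilde{h}\|_{L^2}$, and the remaining pieces are smaller. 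So there is no $2^{-2n}t^{-2}$ loss.

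You in fact use exactly this count for the constant piece $\widetilde{h}(k_*)$. That shows the case split $2^n\gtrless t^{-1}$ and the constant-plus-fluctuation decomposition are unnecessary. They also buy nothing extra, since they use the same information, namely $\|\partial_k\widetilde{h}\|_{L^2}$, here through H\"older-$1/2$ continuity.

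If you keep your version, fix one exponent. For $2^n\le t^{-1}$ the trivial bound is $2^n\|\widetilde{h}\|_{L^\infty}$ (measure of the support times the supremum). This is $\le 2^{-n}t^{-2}\|\widetilde{h}\|_{L^\infty}$ exactly when $2^n\le t^{-1}$. The bound $2^{n/2}\|\widetilde{h}\|$ you wrote is not: at $2^n=t^{-1}$ it is $t^{-1/2}$ against $t^{-1}$. With that fix, together with your regime $2^n\ge t^{-1}$ (fluctuation $t^{-1}2^{n/2}$, constant part $t^{-2}2^{-n}$ after the second integration by parts), the argument is complete.
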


\begin{proof}
For $t\leq 1$ the estimates follow directly by Cauchy-Schwarz so we only consider $t>1$ in 
the proofs below.

\medskip
{\it Proof of \eqref{local-estimate-time-cut-off}.}
To prove \eqref{local-estimate-time-cut-off}, we use integration
by parts via $e^{it \lr{k}} = (itk)^{-1} \lr{k} \partial_k e^{it \lr{k}}$ and write
\begin{align}\label{letc-pr1}
\begin{split}
I_+ & = -\frac{1}{it }
\int_{ \mathbb{R}^+} \partial_k (k^{-1} T(k) ) \chi_{r}(\lr{k}-2\lambda) e^{ixk} m_+(x, k) e^{it\lr{k}} \widetilde{h}(k)dk\\[2pt]
& \quad - \frac{x}{t }  
\int_{ \mathbb{R}^+} k^{-1} T(k) e^{ixk} m_+(x, k) \chi_{r}(\lr{k}-2\lambda) \widetilde h(k) e^{it\lr{k}}dk\\[2pt]
& \quad - 
\frac{1}{it}\int_{ \mathbb{R}^+} k^{-1} T(k) e^{ixk} \partial_k m_+(x, k) \chi_{r}(\lr{k}-2\lambda) \tilde h(k) e^{it\lr{k}}dk\\[2pt]
& \quad - 
\frac{1}{it}\int_{ \mathbb{R}^+} k^{-1} T(k) e^{ixk} m_+(x, k) \chi_{r}(\lr{k}-2\lambda) \partial_k\tilde h(k) e^{it\lr{k}}dk\\[2pt]
& \quad - 
\frac{1}{it}\int_{ \mathbb{R}^+} k^{-1} T(k) e^{ixk} 
  m_+(x, k) r^{-1} \chi'_{r}(\lr{k}-2\lambda) \frac{k}{\lr{k}} \tilde h(k) e^{it\lr{k}}dk
 \\
 & = : I_{+,6} + I_{+,7} + I_{+,8} + I_{+,9}+I_{+,10}.
\end{split}
\end{align}
By Lemmas \ref{lem:oscillatory-Jost} and \ref{lem:T-R-prop}, 
and applying H\"older's inequality, and Sobolev's embedding, 
we directly obtain, for all $j=6,\dots ,10$, 
\begin{equation*}
\lr{x}^{-2}|I_{+,j}| 
  \lesssim 
  t^{-1} \|\widetilde h\|_{H_k^1}. 
\end{equation*}

\medskip
{\it Proof of \eqref{local-estimate-time-cut-off-internal-mode}.}
We use a similar strategy. Set $\chi^c_r=1-\chi_r$ and write 
\begin{equation}
 L^{-1}(L-2\lambda)^{-1} e^{it(L-2\lambda)} \chi^c_r(L-2\lambda) P_c h(x) = \int \lr{k}^{-1}(\lr{k}-2\lambda)^{-1}e^{it(\lr{k}-2\lambda)}\chi^c_r(\lr{k}-2\lambda) \psi(x, k)  \widetilde h(k) dk.
\end{equation}
The desired estimate clearly holds for $0< t\lesssim 1$ using Cauchy-Schwarz.
For $t\gtrsim 1$, we have 
\begin{align*}
\int \lr{k}^{-1}(\lr{k}-2\lambda)^{-1}e^{it(\lr{k}-2\lambda)} \chi^c_r(\lr{k}-2\lambda)\psi(x, k) \widetilde h(k) dk= \bigg(\int_{\R^+}+\int_{\R^-}\bigg)\cdots dk=L_++L_-.   
\end{align*}
We will only focus our attention on $L_+$ since $L_-$ can be treated similarly.
By \eqref{eigenfunctions} and integration by parts in $k$, we find \begin{equation*}
\begin{aligned}
L_+ & = -\frac{1}{it }
\int_{\mathbb{R}^+} \partial_k (k^{-1} T(k) )(\lr{k}-2\lambda)^{-1} m_+(x, k) e^{ixk}e^{it(\lr{k}-2\lambda)}\chi^c_r(\lr{k}-2\lambda) \widetilde{h}(k)dk\\[2pt]
& \quad + 
\frac{1}{it}\int_{ \mathbb{R}^+}  T(k) \lr{k}^{-1}(\lr{k}-2\lambda)^{-2} m_+(x, k)e^{ixk} e^{it(\lr{k}-2\lambda)}\chi^c_r(\lr{k}-2\lambda) \widetilde{h}(k)dk\\[2pt]
& \quad - 
\frac{1}{it}\int_{ \mathbb{R}^+} k^{-1} T(k)  (\lr{k}-2\lambda)^{-1}\partial_k m_+(x, k) e^{ixk}e^{it(\lr{k}-2\lambda)}\chi^c_r(\lr{k}-2\lambda) \widetilde{h}(k)dk\\[2pt]
& \quad - \frac{x}{t }  
\int_{ \mathbb{R}^+} k^{-1} T(k) (\lr{k}-2\lambda)^{-1} m_+(x, k)e^{ixk} e^{it(\lr{k}-2\lambda)}\chi^c_r(\lr{k}-2\lambda) \widetilde{h}(k)dk\\[2pt]
& \quad - 
\frac{1}{it}\int_{ \mathbb{R}^+} \lr{k}^{-1} T(k) (\lr{k}-2\lambda)^{-1}e^{ixk}
  m_+(x, k) e^{it(\lr{k}-2\lambda)} \partial_k \chi^c_r(\lr{k}-2\lambda) \widetilde{h}(k)dk\\[2pt]
&\quad - 
\frac{1}{it}\int_{ \mathbb{R}^+} k^{-1} T(k) (\lr{k}-2\lambda)^{-1}e^{ixk} m_+(x, k) e^{it(\lr{k}-2\lambda)}\chi^c_r(\lr{k}-2\lambda) \partial_k\widetilde{h}(k)dk\\[2pt]
& = : L_{+,1} + L_{+,2} + L_{+,3} + L_{+,4}+L_{+,5}+L_{+,6}.
\end{aligned}
\end{equation*}
Note that boundary terms do not appear since $\wt{h}(0)=0$.
As above, we rely on Lemmas \ref{lem:oscillatory-Jost} and \ref{lem:T-R-prop}, and can estimate
\begin{align*}
|\lr{x}^{-2}L_{1,+}|&\lesssim t^{-1}  \int_{\mathbb{R}^+} |\partial_k (k^{-1} T(k))||\lr{k}-2\lambda|^{-1} |\lr{x}^{-2}m_+(x, k)|\chi^c_{r}(\lr{k}-2\lambda) |\widetilde{h}(k)|dk  \\
&\lesssim t^{-1}\bigg(\int_{\R^+}\lr{k}^{-4} |\lr{k}-2\lambda|^{-2}(\chi^c_r)^2(\lr{k}-2\lambda)dk\bigg)^{1/2}\|\wt{h}\|_{L^2_k}\lesssim t^{-1}\|\wt{h}\|_{L^2_k}.
\end{align*}
Similarly, we can obtain 
\begin{equation*}
\lr{x}^{-2}\big( |L_{+,2}| + |L_{+,3}|+|L_{+,4}| + |L_{+,6}| \big) \lesssim 
  t^{-1}\|\wt{h}\|_{H^1_k}.
\end{equation*}
Finally, for the penultimate term, using that $|\lr{k} - 2\lambda| \approx r$ we have
\begin{align*}
|\lr{x}^{-2}L_{+,5}|&\lesssim t^{-1}  \int_{\mathbb{R}^+} \lr{k}^{-1}|T(k)||\lr{k}-2\lambda|^{-1} |\lr{x}^{-2}m_+(x, k)| |\widetilde{h}(k)\left|\left(\chi^c_r\right)'(\lr{k}-2\lambda)\right|dk  
\\
&\lesssim t^{-1} 
\|\wt{h}(t)\|_{L^2_k}.
\end{align*}

\medskip
{\it Proof of \eqref{local-estimate-time-cut-off-refined}.}
To obtain this more refined version of \eqref{local-estimate-time-cut-off}, 
we use again integration by parts in $k$ and reduce matters 
to estimating the same terms in \eqref{letc-pr1}.
Using again Lemmas \ref{lem:oscillatory-Jost} and \ref{lem:T-R-prop}, 
and applying H\"older's inequality we can immediately see that all the terms except the last one satisfy
the desired bound:
\begin{equation*}
\lr{x}^{-2}|I_{+,j}|
  \lesssim 2^{n/2} t^{-1} \|\widetilde h\|_{H_k^1} ,\quad j=6,7,8,9.
\end{equation*}
For the last term we have
\begin{align}\label{letc-pr2}
|I_{+,10}(t,x)| \lesssim \frac{1}{t \cdot 2^n}
  \Big| \int_{ \mathbb{R}^+} k^{-1} T(k) e^{ixk} m_+(x, k)
  \phi'_{\leq n}(\lr{k}-2\lambda) \tilde h(k) e^{it\lr{k}} dk \Big|,
\end{align}
and we need to integrate by parts once more. Noticing that $|k|$ is
uniformly bounded away from zero on the support of the integrand in \eqref{letc-pr2}, 
we see that all the terms arising from the integration by parts can be estimated
as before, yielding acceptable bounds of the order of $t^{-2}2^{-n/2} {\| \wt{h}\|}_{H^1_k}$. 
with the exception of the term where $\partial_k$ hits the cutoff; this latter contributes
\begin{align*}
\lr{x}^{-2} \frac{1}{t^2 \cdot 2^{2n}} 
\Big| \int_{ \mathbb{R}^+} k^{-1} T(k) e^{ixk} m_+(x, k) \phi''_{\leq n}(\lr{k}-2\lambda) 
  \tilde h(k) e^{it\lr{k}}dk \Big| \lesssim
 \frac{1}{t^2 \cdot 2^{n}} {\| \tilde h(k) \|}_{L^\infty_k}.
\end{align*}
This is consistent with the right-hand side of \eqref{local-estimate-time-cut-off-refined}
and completes the proof of the lemma.
\end{proof}

\medskip
Finally, we record the following smoothing-type estimate. 
\begin{lemma}\label{lem:smoothing-effect}
Let $\mathcal{Q}:\mathbb R_x\times \mathbb R_k\rightarrow \mathbb C$
and $\Psi:\mathbb R \rightarrow \mathbb C$ be such that
\begin{equation*}
\sup_{x,k}|\mathcal{Q}(x,k)|<\infty, 
\quad\quad|\Psi(k)|\leq |k|/\lr{k}.
\end{equation*}
Let $1< p\leq 2$. Then, for all $t>0$, we have 
\begin{equation}\label{smooth-est}
\left\|\int_0^t\int_{\mathbb R}e^{-is\lr{k}}\overline{\Psi}(k)\overline{\mathcal{Q}}(x,k)
  F(s,x)dxds\right\|_{L^{p'}_k}
  \lesssim \|F\|_{L^1_x L^p_s([0,t])}.    
\end{equation}
\end{lemma}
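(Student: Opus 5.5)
Write $G(k):=\int_0^t\int_{\mathbb R}e^{-is\lr{k}}\overline{\Psi}(k)\overline{\mathcal{Q}}(x,k)F(s,x)\,dx\,ds$. The plan is to reduce \eqref{smooth-est} to a one–dimensional Hausdorff–Young estimate in the time variable. The change of variables $\omega=\lr{k}$ turns $e^{-is\lr{k}}$ into a flat Fourier phase, and the weight $\Psi$ is exactly the Jacobian factor needed.

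\textbf{Step 1: Minkowski in $x$.} Since $|\Psi|\le |k|/\lr{k}$ and $\mathcal{Q}$ is bounded, we have
\[
\|G\|_{L^{p'}_k}\le \int_{\mathbb R}\Big\|\overline{\Psi}(k)\overline{\mathcal{Q}}(x,k)\int_0^t e^{-is\lr{k}}F(s,x)\,ds\Big\|_{L^{p'}_k}dx .
\]
The multiplier $\overline{\mathcal Q}(x,k)$ depends only on $x$ and $k$, so for fixed $x$ it is a bounded pointwise factor in $k$. It can therefore be pulled out in $L^\infty$ after we take the $L^{p'}_k$ norm for that fixed $x$. This leaves, for each fixed $x$, the task of bounding
\[
\Big\|\frac{|k|}{\lr{k}}\,\widehat{F_x}(\lr{k})\Big\|_{L^{p'}_k},\qquad \widehat{F_x}(\omega):=\int_{\mathbb R}e^{-is\omega}\mathbbm{1}_{[0,t]}(s)F(s,x)\,ds,
\]
by $\|F(\cdot,x)\|_{L^p_s([0,t])}$. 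Integrating in $x$ then gives exactly $\|F\|_{L^1_xL^p_s([0,t])}$.

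\textbf{Step 2: change of variables.} Split $k>0$ and $k<0$; by symmetry it suffices to treat $k>0$. Set $\omega=\lr{k}\in(1,\infty)$. Then $d\omega=(k/\lr{k})\,dk$, so $dk=(\lr{k}/k)\,d\omega$, and
\[
\int_0^\infty \Big(\frac{k}{\lr{k}}\Big)^{p'}|\widehat{F_x}(\lr{k})|^{p'}dk=\int_1^\infty\Big(\frac{k}{\lr{k}}\Big)^{p'-1}|\widehat{F_x}(\omega)|^{p'}d\omega\le \int_{\mathbb R}|\widehat{F_x}(\omega)|^{p'}d\omega .
\]
Here we used $p'\ge 2>1$ and $k/\lr{k}\le1$. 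This is the only point where the hypothesis on $\Psi$ enters: it cancels the singular Jacobian at $k=0$. That Jacobian blowup is the one genuine obstacle, and without the vanishing factor the estimate would fail near $k=0$, where $\lr{k}$ is stationary.

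\textbf{Step 3: Hausdorff–Young.} For $1<p\le2$ the flat Hausdorff–Young inequality in the single variable $s$ gives
\[
\|\widehat{F_x}\|_{L^{p'}_\omega}\lesssim \|\mathbbm{1}_{[0,t]}F(\cdot,x)\|_{L^p_s}.
\]
The constant is independent of $t$ and $x$. Combining Steps 1–3 proves \eqref{smooth-est}, with the implicit constant depending only on $\sup|\mathcal Q|$ and $p$. For $p=2$ one may use Plancherel instead, and the endpoint is harmless since $p>1$ is assumed.
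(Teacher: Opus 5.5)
Your proof is correct and uses the same two ingredients as the paper: the substitution $\omega=\lr{k}$, with the factor $|\Psi|\le |k|/\lr{k}$ absorbing the singular Jacobian $\lr{k}/|k|$ at $k=0$, followed by the one-dimensional Hausdorff--Young inequality in the time variable. The only difference is organizational: the paper first dualizes to the adjoint bound $\big\|\int \mathcal{Q}(x,k)\Psi(k)e^{is\lr{k}}f(k)\,dk\big\|_{L^\infty_xL^{p'}_s}\lesssim\|f\|_{L^p_k}$ and applies Hausdorff--Young for each fixed $x$, whereas you argue directly via Minkowski in $x$, which removes the $k$-dependence of $\mathcal{Q}$ by a pointwise bound before any Fourier analysis.
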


A similar estimate has been proved in \cite{KP23}, see also \cite{CP24} in the Schr\"odinger setting.
We provide the details  following the same argument using the Hausdorff-Young inequality. 

\begin{proof}[Proof of Lemma \ref{lem:smoothing-effect}]
Let $p\in (1,2]$. By duality, it suffices to establish the estimate  
\begin{equation}
  \bigg\|\int_{\R} \mathcal{Q} (x,k)\Psi(k)e^{is\lr{k}}f(k)dk\bigg\|_{L^{\infty}_xL^{p'}_s}\lesssim \|f\|_{p}.  
\end{equation}
We set $\lr{k}=\lambda$ and assume $k=\sqrt{\lambda^2-1}$, since the case $k<0$ can treated similarly.  We have 
\begin{align*}
I(x,s)&=\int_{\R} \mathcal{Q} (x,k)\Psi(k)e^{is\lr{k}}f(k)dk
    \\
 &=\int_{1}^{\infty} e^{is\lambda}\mathcal{Q} (x,\sqrt{\lambda^2-1})\Psi(\sqrt{\lambda^2-1})f(\sqrt{\lambda^2-1})\frac{\lambda}{\sqrt{\lambda^2-1}}d\lambda  \\
 &=\widehat{\mathcal{F}}^{-1}(\lambda \mathbbm{1}_{[1,\infty)}(\lambda)\Gamma(x,\sqrt{\lambda^2-1}))(s)
\end{align*}
where 
$\Gamma(x,\lambda)= \mathcal{Q} (x,y)\Psi(y)\frac{f(y)}{y}$. Thus, 
applying the Hausdorff-Young inequality and the hypotheses of the lemma
we can estimate
\begin{align*}
 {\|I(x,s)\|}^p_{L^{\infty}_xL^{p'}_s} & \lesssim \sup_x \int_1^{\infty}\bigg|\mathcal{Q} (x,\sqrt{\lambda^2-1})\Psi(\sqrt{\lambda^2-1})f(\sqrt{\lambda^2-1})\frac{\lambda}{\sqrt{\lambda^2-1}}\bigg|^pd\lambda\\
 &\lesssim \int_1^{\infty}\bigg|f(\sqrt{\lambda^2-1})\bigg|^pd\lambda\lesssim \|f\|_{L^p}^{p}
\end{align*}
which is the desired conclusion.
\end{proof}


\begin{remark}\label{rem:smoothing-effect}
Throughout the manuscript, this lemma will often be applied  with the choice of
\begin{align*}
\mathcal{Q}(x,k)= \lr{x}^{-1} \cdot \begin{cases}
   m_+(x,k)e^{ixk} \quad\,\, \mbox{if}\,\,\, k\geq 0\\ 
   m_-(x,-k)e^{ixk} \,\,\, \mbox{if}\,\,\, k<0
\end{cases}   \quad \mbox{and}\quad  \Psi(k)=T(k)
\end{align*}
both of which satisfy the required assumptions according to Lemma \ref{lem:oscillatory-Jost} and Lemma \ref{lem:T-R-prop} or similar ones.
With the above choice we will need to put an $\lr{x}$ weight on the argument $F$ 
\end{remark}



\medskip
\section{Bootstrap for the discrete component}\label{sec:ODE}
In this section, we analyze the decay property of the discrete component $a(t)$.

\subsection{Main proposition and a useful lemma}
To start, we introduce the notation 
\begin{equation}
\label{rho-defn}
\rho(t) := |A(t)|^2, \quad \rho_0 := \rho(0) \approx \varepsilon_0^2, 
\end{equation}
and recall that $A$ satisfies the ODE \eqref{evolution-A}.
The main result of this section is the following bootstrap proposition:

\begin{proposition}
\label{prop:bootstrap-on-rho}
Assume that for $T>0$ we have 
\begin{equation}
\label{rho-assumption-bootstrap}
\sup\limits_{t \in [0, T]} \left| \left(1 + \rho_0 \frac{\Gamma}{\lambda} t \right) \frac{\rho(t)}{\rho_0} - 1 \right| < \frac{1}{2},
\end{equation}
and, recalling also \eqref{choice-of-csts} and the notation \eqref{phi-decomp}, assume 

\begin{equation}
\label{w-assumptions-bootstrap}
\begin{aligned}
&\sup\limits_{t \in [0, T]} \lr{t}^{-\delta_1} \|\partial_k \wt{h}(t)\|_{L^2_k} 
  \leq 2\varepsilon_0^{\alpha} ,
  \\
&\sup\limits_{t \in [0, T]} 
\lr{t}^{1/2-\delta}
\|L^{-1} w(t)\|_{L_x^\infty}  \leq 2\varepsilon_0^{\alpha}, 
\\
&\sup\limits_{t \in [0, T]} \|L^{-1} w(t)\|_{H_x^N}  \leq C\varepsilon_0, 
\\
& \sup\limits_{t \in [0, T]}
\lr{t}^{1-\delta_2} \left(\|\lr{x}^{-2} (L^{-1} e^{\pm itL})_Sf(t)\|_{L_x^\infty}
  + \|\lr{x}^{-2} (L^{-1} e^{\pm itL})_Rf(t)\|_{L_x^\infty} \right) 
  \leq 2\varepsilon_0^{\alpha}
\end{aligned}
\end{equation}
for some large constant $C>1$.
Then, there exists $\overline{\varepsilon}_0>0$ such that for 
$\varepsilon_0\leq \overline{\varepsilon}_0$  we have
\begin{equation}
\label{rho-conclusion-bootstrap}
\sup\limits_{t \in [0, T]} \left| \left(1 + \rho_0 \frac{\Gamma}{\lambda} t \right) \frac{\rho(t)}{\rho_0} - 1 \right| < \frac{1}{4}.
\end{equation}
\end{proposition}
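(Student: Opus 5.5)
Starting from \eqref{evolution-A}, we compute $\dot\rho = 2\Re(\overline{A}\dot A)$. The cubic resonant term contributes exactly $-\frac{\Gamma}{\lambda}\rho^2$, since $\Re\big(\frac{1}{2i\lambda}(\Lambda-i\Gamma)\big)=-\frac{\Gamma}{2\lambda}$. This gives
\begin{equation*}
\dot\rho = -\tfrac{\Gamma}{\lambda}\rho^2 + 2\Re(\overline{A}P) + 2\Re(\overline{A}R_D) + 2\Re(\overline{A}R_F).
\end{equation*}
It is convenient to work with $y:=1/\rho$, which satisfies $\dot y = \frac{\Gamma}{\lambda} - \rho^{-2}\,2\Re\big(\overline{A}(P+R_D+R_F)\big)$. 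The quantity controlled in \eqref{rho-assumption-bootstrap} is $\frac{\rho_0^{-1}+\frac{\Gamma}{\lambda}t}{y(t)}-1$. It therefore suffices to show that
\begin{equation*}
\Big|\int_0^t \rho^{-2}\,\Re\big(\overline{A}(P+R_D+R_F)\big)\,ds\Big| \ll \rho_0^{-1}+\tfrac{\Gamma}{\lambda}t ,
\end{equation*}
say bounded by $C\varepsilon_0^{c}(\rho_0^{-1}+t)$ for some $c>0$. The bootstrap assumption gives $\rho(s)\approx \rho_0(1+\rho_0 s)^{-1}$, which we use throughout.

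\textbf{Non-resonant terms.} The terms $\overline{A}P$ and $\overline{A}R_D$ are sums of $A^j\overline{A}^{m}e^{i\omega s}$ with $\omega\in\{\pm\lambda,\pm3\lambda,\pm2\lambda,\pm4\lambda\}\neq0$. We integrate by parts in $s$: each integration costs $\dot A = O(|A|^2)$, and the boundary terms are controlled by $\rho^{-2}|A|^3 \lesssim \rho^{-1/2}$. For $P$ one integration gives quartic terms; their integrand is $\rho^{-2}|A|^4 = O(1)$, still too large, so we do a second normal form (or equivalently absorb $P$ via a change of variable $\tilde A = A + \text{quadratic}$). After this, the cubic self-interactions are again non-resonant except for the $|A|^2A$ piece that is already accounted for. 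The result is an error of size $\rho^{-1/2}+\int_0^t \rho^{-2}\rho^{5/2}\,ds \lesssim \varepsilon_0^{-1}+\varepsilon_0(1+\rho_0 t)^{1/2}\rho_0^{-1}$, which is small relative to $\rho_0^{-1}+t$.

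\textbf{The remainder $R_F$.} We need $\int_0^t \rho^{-3/2}|R_F|\,ds$ (after possibly integrating by parts in oscillating pieces) to be $o(\rho_0^{-1}+t)$. It is enough to show that $|R_F(s)| \lesssim \varepsilon_0^{c}\,\rho(s)^{3/2}$ up to terms that are integrable after a normal form. We treat the pieces of \eqref{R_F-defn} separately:
\begin{itemize}
\item $\int v^2\beta\varphi$ and the quartic terms are bounded by local decay, $\lr{s}^{-2+2\delta_2}\varepsilon_0^{2\alpha}$. This is integrable against $\rho^{-3/2}\lesssim \varepsilon_0^{-3}(1+\varepsilon_0^2 s)^{3/2}$ only after we split $s\lesssim\varepsilon_0^{-2}$ from $s\gtrsim \varepsilon_0^{-2}$, and there it gives $\varepsilon_0^{-2+}$. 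That is acceptable.
\item The linear term $a\,(\beta\varphi^2, L^{-1}\Im e^{isL}w(0))$ uses Lemma \ref{lem:stronger-local-decay}, which gives $\lr{s}^{-3/2}\varepsilon_0$.
\item In $R_w$, the $A\dot A$ integral is handled with Lemma \ref{lemlocal-decay-0-plus}, the non-resonant integrals with $e^{-is(L\pm2\lambda)}$ with \eqref{local-estimate-time-cut-off-internal-mode}, and near-resonant frequencies with \eqref{local-estimate-time-cut-off-refined}. We pick $2^n$ optimally in terms of $s$.
\item The $\rest$ terms: the discrete--continuous part $a\beta\varphi v$ is bounded using local decay of $v$; the continuous--continuous and quartic parts use \eqref{w-assumptions-bootstrap} together with the smoothing estimate, Lemma \ref{lem:smoothing-effect}.
\end{itemize}

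The main obstacle is the term $R_Q$, where the quartic nonlocal term $v^4$ is fed back through $\int_0^t e^{i(t-s)L}$. The crude bound $\|v^4\|$ is only $\lr{s}^{-2+4\delta}$ in $L^1$, and it is not localized. My plan is to write $R_Q$ through $f$, split $f=h-g$, and dyadically decompose in time as in \eqref{timeloc}. For the $h$-contributions we use the weighted local-decay estimate \eqref{local-l-infty-decay} with $\|\partial_k\wt h\|\lesssim \varepsilon_0^\alpha \lr{s}^{\delta_1}$. For the $g$-contributions we use the explicit resonant structure and \eqref{introdkwtg}-type $\sqrt{s}$ growth, compensated by the $t^{-1}$ gain of the cutoff estimate \eqref{local-estimate-time-cut-off}. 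The target is $|R_Q(t)|\lesssim \varepsilon_0^{c}\,\varepsilon_0^3(1+\varepsilon_0^2t)^{-3/2}+\varepsilon_0^{c}\lr{t}^{-1-}$. If that bound fails, I would integrate by parts in time to exploit the $e^{-i\lambda t}$ factor.
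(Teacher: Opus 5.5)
Your reduction to $y=1/\rho$ is legitimate: it reduces the claim to $|\int_0^t\rho^{-2}\Re(\overline{A}R)\,ds|\ll\rho_0^{-1}+t$. However, your plan for $R_F$ misses a term of cubic order that no absolute-value estimate can control. In $R_{E,2}$, which is the feedback of $R_w$, consider the non-resonant Duhamel integrals $2\int_0^t|A|^2e^{-isL}\theta\,ds$ and $\int_0^t\overline{A}^2e^{-is(L+2\lambda)}\theta\,ds$. After applying $e^{itL}$ they do not decay. Integrating by parts in $s$ leaves the local boundary term $III(t)$ of \eqref{Rw2split}, so that $T[III]\overline{A}=J_0+J_1+J_2+J_3$ with $J_0=i\beta_1|A|^4$ and $J_1,J_2,J_3$ oscillating. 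All of these have size $\rho^2$, with no $\varepsilon_0^c$ gain. Local decay estimates, as you propose, give at best $|R_{E,2}|\lesssim\rho^{3/2}$. This yields a contribution of order $t$ to $\int_0^t\rho^{-2}\Re(\overline{A}R_F)\,ds$, the same order as $\frac{\Gamma}{\lambda}t$, so the bootstrap does not close. The missing observation is that $\beta_1\in\R$, because $L^{-2}$ and $L^{-1}(L+2\lambda)^{-1}$ map real functions to real functions. Hence $\Re J_0=0$, while $J_1,J_2,J_3$ are removed by integrating by parts in time; this is why \eqref{ODEdampestRE2} controls only the real part. A similar cancellation is hidden in your second normal form for $P$. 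Contrary to your claim, it does produce a new resonant term $-i\frac{5c^2}{3\lambda^3}|A|^2A$, harmless only because its coefficient is imaginary. The paper avoids this with the exact identity $\Re(P\overline{A})=\frac{c}{12\lambda^2}\partial_t a^3$ and the comparison Lemma \ref{lemODEdamp}. That lemma also spares you from integrating against the weight $\rho^{-2}$, whose derivative is only $O(\rho^{-3/2})$ since $\dot\rho=O(\rho^{3/2})$.

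Second, the plan for $R_Q$ cannot work pointwise. A remainder $\varepsilon_0^c\lr{t}^{-1-}$, as in your target, contributes $\int^t\rho^{-3/2}s^{-1-}\,ds\approx t^{3/2-}$, which is not $o(t)$. The input actually available is $\|\partial_k\wt{Q}(s)\|_{L^2_k}\lesssim\varepsilon_0^{3\alpha}\lr{s}^{8\delta}$ from \eqref{Q-tildeL2}. It comes from the multilinear analysis of Section \ref{sec:PDE-part2}, not from a linear splitting $f=h-g$. Via local decay it gives only $|R_Q|\lesssim\varepsilon_0^{3\alpha}\rho^{1/2}\lr{t}^{-1+8\delta}$, i.e.\ an error $\varepsilon_0^{3\alpha}t^{1+8\delta}$. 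So time oscillations must be used. But in the $e^{-2i\lambda s}\overline{A}^2$ part the phase is $e^{is(L-2\lambda)}$, which vanishes at the resonant frequency $\lr{k}=2\lambda$, so your fallback of integrating by parts in time fails there. The paper splits each dyadic block $s\approx 2^m$ into two modulation regimes:
\begin{itemize}
\item for $|L-2\lambda|\lesssim 2^{-m/4}$, \eqref{local-estimate-time-cut-off-refined} gains $2^{-m/8}$;
\item for $|L-2\lambda|\gtrsim 2^{-m/4}$, integration by parts in $s$ costs only $2^{m/4}$.
\end{itemize}
Together these give $|\int_{t_1}^{t_2}R_Q\overline{A}\,ds|\lesssim\rho_{\Gamma_0}^{1+\delta_0}(t_1)$.

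Two minor points. Lemma \ref{lem:stronger-local-decay} needs two weights on the data, which \eqref{mainthmdata} does not provide; however, \eqref{local-l-infty-decay} already suffices for $R_{E,0}$. For $R_{E,1}$ at times $s\lesssim\varepsilon_0^{-1}$ you need the Sobolev bound $|R_{E,1}|\lesssim\varepsilon_0^2$, since local decay alone gives a total of order $\varepsilon_0^{-3+2\alpha}$.
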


\smallskip
\begin{remark}\label{remSobGagNir}
Using Sobolev-Gagliardo-Nirenberg to interpolate between the 
high Sobolev norm and the local decay a priori bounds in \eqref{w-assumptions-bootstrap}, we can estimate,
with $\theta = 2/N$
\begin{align}\label{a prioriSGN}
\begin{split}
\| \lr{x}^{-3} L^{-1} w(t) \|_{H^2_x} \lesssim \| \lr{x}^{-3} L^{-1} w(t) \|_{L_x^2}^{1-\theta} 
  \| L^{-1} w(t) \|_{H_x^N}^{\theta} \leq \min(\varepsilon^{\tilde{\alpha}} 
  \lr{t}^{-1+\widetilde{\delta_2}},\varepsilon_0)
\end{split}
\end{align}
where $\tilde{\alpha} := \alpha(1-2/N) + 2/N$ and $\widetilde{\delta_2}: = (-1+\delta_2)(1-2/N) + 1$.
\end{remark}




\medskip
To prove Proposition \ref{prop:bootstrap-on-rho}
we are going to make use of Lemma \ref{lemODEdamp} below, which gives sufficient conditions on 
the terms on the right-hand side of the ODE \eqref{evolution-A} so that they can be considered 
remainder with respect to the leading damping dynamics given by the ODE 
$$\dot{A}(t) \approx -\frac{\Gamma}{2\lambda} |A|^2(t) A(t) \qquad \Longrightarrow 
\qquad \rho(t) \approx \frac{\rho_0}{1+\frac{\Gamma}{\lambda} \rho_0 t}. $$

\begin{lemma}\label{lemODEdamp}
Let $z > 0$ be a solution of the ODE
\begin{align}\label{lemODEdampeq}
\dot{z}(t) = -\Gamma_0 z^2(t) + R(t), \qquad z(0) = \rho_0, 
  \qquad \Gamma_0 > 0,
\end{align}for $t\in[0,T]$. 
Define, for any $\Gamma >0$
\begin{align}\label{defzG}
z_{\Gamma}(t) = \dfrac{\rho_0}{1+\rho_0 \Gamma t} \approx_\Gamma \min\big( \rho_0, t^{-1} \big).
\end{align} 
Assume that the term $R$ in \eqref{lemODEdampeq} 
can be written as the sum
\begin{align}\label{lemODEdampR}
R = R_a + R_h 
\end{align} 
with the following properties:

\begin{itemize}

\smallskip
\item For short times we assume
\begin{align}\label{lemODEdampR0}
|R(t)| \leq  \rho_0^{3/2},\quad t \in[0, c\rho_0^{-1/2}]
\end{align} 
where $c$ is a sufficiently small absolute constant.

\smallskip
\item For larger times we assume
\begin{align}\label{lemODEdampRa}
\begin{split}
& \Big| \int_{t_1}^{t_2} R_a(s) ds \Big| \leq c_0 z_{\Gamma_0}(t_1), 
  \qquad t_1\leq t_2\in[c\rho_0^{-1/2},T],
\end{split}
\end{align}
where $c_0>0$ is a small enough absolute constant, 
and 
\begin{align} \label{lemODEdampRh}
& R_h = \frac{d}{dt}h, \qquad 
| h(t) | \lesssim z_{\Gamma_0}^{3/2}(t).
\end{align}\end{itemize}
Then, if $\rho_0$ is small enough, for all $t\in [0,T]$,
\begin{align}\label{lemODEdampconc}
\Big| \frac{z(t)}{z_{\Gamma_0}(t)} - 1 \Big| < \frac{1}{4}.
\end{align}

\end{lemma}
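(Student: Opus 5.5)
We argue by a continuity (bootstrap) argument in $t$, comparing $z$ with the explicit solution $z_{\Gamma_0}$ of $\dot y=-\Gamma_0 y^2$, $y(0)=\rho_0$. On the short interval $[0,c\rho_0^{-1/2}]$ we use only \eqref{lemODEdampR0}. As long as $z\le 2\rho_0$ we have $|\dot z|\le \Gamma_0 (2\rho_0)^2+\rho_0^{3/2}\lesssim \rho_0^{3/2}$, so $|z(t)-\rho_0|\lesssim c\rho_0$. Since also $|z_{\Gamma_0}(t)-\rho_0|\lesssim \rho_0^2 t\le c\rho_0^{3/2}$, the ratio $z/z_{\Gamma_0}$ stays within $O(c)$ of $1$, hence within $1/8$ for $c$ small. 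This gives \eqref{lemODEdampconc} (with $1/8$) up to time $t_0:=c\rho_0^{-1/2}$.

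For $t\ge t_0$ we linearize around $z_{\Gamma_0}$ in the natural variable $1/z$. Since $\frac{d}{dt}(1/z)=\Gamma_0 - R/z^2$, we get
\[
\frac{1}{z(t)}=\frac{1}{z(t_0)}+\Gamma_0(t-t_0)-\int_{t_0}^t \frac{R(s)}{z(s)^2}\,ds .
\]
The main term $\frac1{z(t_0)}+\Gamma_0(t-t_0)$ equals $\frac1{z_{\Gamma_0}(t)}(1+O(c))$ by the first step, because $1/z_{\Gamma_0}(t)=1/\rho_0+\Gamma_0 t$. So it suffices to show
\[
\Big|\int_{t_0}^t R(s)z(s)^{-2}\,ds\Big|\le \tfrac1{10}\, z_{\Gamma_0}(t)^{-1},
\]
under the bootstrap hypothesis $z\approx z_{\Gamma_0}$ (ratio in $[3/4,5/4]$) on $[t_0,t]$. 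We then close the bootstrap with constant $1/4$ by continuity.

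For $R_h=\dot h$ we integrate by parts:
\[
\int_{t_0}^t \dot h\, z^{-2}\,ds=\big[h z^{-2}\big]_{t_0}^t+2\int_{t_0}^t h\,z^{-3}\dot z\,ds .
\]
The boundary terms are $\lesssim z_{\Gamma_0}^{3/2}z_{\Gamma_0}^{-2}=z_{\Gamma_0}^{-1/2}$, which is $\ll z_{\Gamma_0}^{-1}$ because $z_{\Gamma_0}\le\rho_0$ is small. In the integral we substitute $\dot z=-\Gamma_0z^2+R$. The $-\Gamma_0 z^2$ part gives $\int z_{\Gamma_0}^{1/2}\lesssim \int (\rho_0^{-1}+s)^{-1/2}ds\lesssim t^{1/2}$ whenever $t\gtrsim\rho_0^{-1}$, and this is $\ll t\approx z_{\Gamma_0}^{-1}$. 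The leftover product $hz^{-3}R$ is handled by splitting $R=R_a+R_h$ again. The $R_h$ part is $hz^{-3}\dot h=\frac12\frac{d}{dt}(h^2)z^{-3}$, which integrates by parts into terms of size $z^{3}z^{-3}=O(1)$ plus lower-order pieces. The $R_a$ part is treated as below, with the weight $hz^{-3}\lesssim z_{\Gamma_0}^{-3/2}$.

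The main obstacle is $R_a$, since only its time integrals are controlled, by \eqref{lemODEdampRa}. We set $G(s):=\int_s^{t}R_a$, so that $|G(s)|\le c_0 z_{\Gamma_0}(s)$, and integrate by parts:
\[
\int_{t_0}^t R_a z^{-2}=G(t_0)z(t_0)^{-2}-2\int_{t_0}^t G\,z^{-3}\dot z\,ds .
\]
The boundary term is $\lesssim c_0 z_{\Gamma_0}(t_0)^{-1}\le c_0 z_{\Gamma_0}(t)^{-1}$. Writing $\dot z=-\Gamma_0z^2+R$, the first piece gives $\lesssim c_0\int_{t_0}^t \Gamma_0\,ds\le c_0 z_{\Gamma_0}(t)^{-1}$, which is acceptable since $c_0$ is small. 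The remaining piece $\int G z^{-3}R$ is of higher order (an extra factor $c_0$ times the original quantity), so we iterate the same decomposition, or absorb it, using again $|G|\lesssim c_0 z$. Choosing $c_0$ small relative to $\Gamma_0$ makes the total bounded by $\frac1{10}z_{\Gamma_0}(t)^{-1}$. The delicate point is tracking these nested integrations by parts without losing the smallness; if the iteration is awkward, we would instead introduce the corrected variable $1/z+\int_t^T R_a z^{-2}$ on dyadic time blocks.
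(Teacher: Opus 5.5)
There is a genuine gap in the large-time step. Passing to $1/z$ makes the remainder enter as $\int_{t_0}^t R\,z^{-2}\,ds$, weighted by a power of the rough solution itself. Every integration by parts then brings in $\dot z=-\Gamma_0z^2+R$, and the $R$ inside it produces cross terms between the two pieces of the remainder: $2\int h\,z^{-3}R_a$ from your $R_h$ step, and $-2\int G\,z^{-3}\dot h$ from your $R_a$ step.

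These are not higher order in any usable sense. The hypotheses give only unweighted time integrals of $R_a$ and a pointwise bound on $h$, with nothing on $\dot h$. So you cannot pull the factor $Gz^{-1}=O(c_0)$ out of $\int Gz^{-3}R$, since $R$ has neither a pointwise bound nor a sign. The proposed iteration is also circular: integrating $\int Gz^{-3}\dot h$ by parts gives back $\int R_a h\,z^{-3}$, and vice versa.

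In fact each cross term separately is unbounded under the hypotheses. Take $R_a\approx\tfrac{c_0}{10}z_{\Gamma_0}\,\omega\cos(\omega s)$ and $h=z_{\Gamma_0}^{3/2}\cos(\omega s)$, switched on smoothly after $t_0$. These satisfy \eqref{lemODEdampRa}--\eqref{lemODEdampRh}, yet both cross terms grow linearly in $\omega$. Only their sum, $-2\int z^{-3}\frac{d}{ds}(hG)\,ds$, has total-derivative structure. Integrating it by parts once more, or doing the same to your $\tfrac12\frac{d}{dt}(h^2)z^{-3}$ term, produces new cross terms of the same type, such as $\int hG\,z^{-4}R$ and $\tfrac32\int h^2z^{-4}R_a$. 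So the scheme does not close as written. Your fallback variable $1/z+\int_t^T R_a z^{-2}$ has the same defect, because a weighted integral of $R_a$ against $z^{-2}$ is not controlled by \eqref{lemODEdampRa}.

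The missing idea is to remove the rough part of the remainder at the level of $z$, where the equation is linear in $R$, before any nonlinear change of variables. The paper sets $Z(t):=z(t)+\int_t^T R_a(s)\,ds-h(t)$. Then $\dot Z=-\Gamma_0z^2$ exactly, and $|Z-z|\le C(c_0z_{\Gamma_0}+z_{\Gamma_0}^{3/2})\le\bar c\,z_{\Gamma_0}$ by \eqref{lemODEdampRa}--\eqref{lemODEdampRh}. The only term left in $\dot Z$ is $z^2$, which is pointwise comparable to $z_{\Gamma_0}^2$, so a first-crossing argument for $Z$ against $\tfrac65 z_{\Gamma_0}$ and $\tfrac45 z_{\Gamma_0}$ closes. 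Equivalently, your $1/z$ computation goes through if you apply it to $Z$ instead of $z$: under the bootstrap, $\frac{d}{dt}(1/Z)=\Gamma_0 z^2/Z^2=\Gamma_0\big(1+O(\bar c)\big)$, and no weighted integral of $R$ ever appears. Your short-time step is fine and matches the paper.
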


\medskip
\begin{proof}[Proof of Lemma \ref{lemODEdamp}]
In view of the assumption \eqref{lemODEdampR0},
a direct integration of \eqref{lemODEdampeq} gives
\begin{align}\label{lemODEdampshort}
|z(t) - \rho_0| \leq \frac{1}{10}\rho_0, \qquad 0 \leq t \leq c\rho_0^{-1/2},
\end{align}
for $c$ small enough, which matches \eqref{lemODEdampconc} on this short time interval.
It then suffices to look at times $t \geq t_0:=c\rho_0^{-1/2}$. 
Define
\begin{align}\label{lemODEdampZz}
Z(t) := z(t) + \int_t^T R_a(s) 
  \,d s - h(t),
\end{align}
so that
\begin{align}\label{lemODEdampZdot}
\dot{Z}(t) = - \Gamma_0 z^2(t).
\end{align}
Under the assumptions \eqref{lemODEdampRa}-\eqref{lemODEdampRh} we have
\begin{align}\label{lemODEdampZ-z}
\big| Z(t)-z(t)\big| \leq 
  C \big( c_0 z_{\Gamma_0}(t) + z_{\Gamma_0}^{3/2}(t) \big) \leq \bar{c} z_{\Gamma_0}(t), 
  \qquad \bar{c} \ll 1
\end{align}
where the smallness of $\bar{c}$ holds provided we choose $c_0$ and $\rho_0$ small enough.

We now perform a comparison argument to show that the desired conclusion \eqref{lemODEdampconc} holds.
At time $t=t_0$, in view of \eqref{lemODEdampshort}, we have 
$(9/10)\rho_0 \leq  z(t_0) \leq (11/10)\rho_0$ and therefore, in view of \eqref{lemODEdampZ-z},
\begin{align}
\frac{4}{5}z_{\Gamma_0}(t_0) < Z(t_0) < \frac{6}{5}z_{\Gamma_0}(t_0),
\end{align}
since $z_{\Gamma_0}(c\rho^{-1/2}) \approx \rho_0$.

Suppose that there is a first time $t_\ast > t_0$ such that $Z(t_\ast) = (6/5)z_{\Gamma_0}(t_\ast)$.
In particular, from \eqref{lemODEdampZ-z} we would have
\begin{align}\label{lemODEdampz>}
z(t_\ast) \geq |Z(t_\ast)| - \bar{c} z_{\Gamma_0}(t_\ast) \geq \big(\frac{6}{5}-\bar{c} \big)
  z_{\Gamma_0}(t_\ast).
\end{align}
It then follows from \eqref{lemODEdampZdot} and \eqref{lemODEdampz>} that
\begin{align}
\frac{d}{dt}\Big( {Z} - \frac{6}{5}z_{\Gamma_0} \Big) (t_\ast) 
& = - \Gamma_0 z^2(t_\ast) + \frac{6}{5}\Gamma_0 z^2_{\Gamma_0}(t_\ast) 
  \leq - \Big[ \big(\frac{6}{5}-\bar{c} \big)^2 - \frac{6}{5} \Big] 
  \Gamma_0 z_{\Gamma_0}^2(t_\ast) < 0,
\end{align}
provided $\bar{c}$ is small enough;
the last inequality contradicts the fact that $t_\ast$ 
is the first time at which $Z$ crosses $(6/5)z_{\Gamma_0}$.
Using again \eqref{lemODEdampZ-z} we obtain
\begin{align}
z(t) \leq Z(t) + \bar{c} z_{\Gamma_0}(t) < \frac{5}{4}z_{\Gamma_0}(t).
\end{align}

The lower bound can be proven similarly.
Indeed, suppose that there is a first time $t_\ast > t_0$ such that $Z(t_\ast) = (4/5)z_{\Gamma_0}(t_\ast)$.
In particular, from \eqref{lemODEdampZ-z} we would have
\begin{align}\label{lemODEdampz<}
z(t_\ast) \leq |Z(t_\ast)| + \bar{c} z_{\Gamma_0}(t_\ast) \leq \big(\frac{4}{5}+\bar{c} \big)
  z_{\Gamma_0}(t_\ast).
\end{align}
It then follows from \eqref{lemODEdampZdot} and \eqref{lemODEdampz<} that
\begin{align}
\frac{d}{dt}\Big( {Z} - \frac{4}{5}z_{\Gamma_0} \Big) (t_\ast) 
& = - \Gamma_0 z^2(t_\ast) + \frac{4}{5}\Gamma_0 z^2_{\Gamma_0}(t_\ast) 
  \geq \Big[ -\big(\frac{4}{5}+\bar{c} \big)^2 + \frac{4}{5} \Big] 
  \Gamma_0 z_{\Gamma_0}^2(t_\ast) > 0,
\end{align}
provided $\bar{c}$ is small enough;
the last inequality contradicts the fact that $t_\ast$ 
is the first time at which $Z$ crosses $(4/5)z_{\Gamma_0}$.
Using again \eqref{lemODEdampZ-z} we obtain
\begin{align}
z(t) \geq Z(t) - \bar{c} z_{\Gamma_0}(t) > \frac{3}{4}z_{\Gamma_0}(t).
\end{align}
This concludes the proof of the Lemma.
\end{proof}

\smallskip
We record here a variant of the above lemma which will be useful to 
derive asymptotics for the internal mode's amplitude in Subsection \ref{sec:Asymp-A}. 

\smallskip
\begin{remark}\label{rem:lemODE}
Let $z:[0,\infty) \mapsto \R$ be a global solution of \eqref{lemODEdampeq}, 
under the same assumptions of Lemma \ref{lemODEdamp} with $T=\infty$,
and assume in addition the inequality
\begin{align}\label{lemODEdampRaasy}
\begin{split}
& \Big| \int_{t_1}^{t_2} R_a(s) ds \Big| \leq c_0 z_{\Gamma_0}^{1+\gamma}(t_1),
  \qquad t_0 \leq t_1\leq t_2 \leq \infty
\end{split}
\end{align}
in place of \eqref{lemODEdampRa}, where $\gamma \in (0,1/2)0$, and $t_0 \gtrsim \rho_0^{-1}$.
Then, letting 
\begin{align}
y_0(t) = \frac{z(t_0)}{1+\Gamma_0 z(t_0)(t-t_0)} \qquad (\dot{y}_0 = -\Gamma_0y_0^2, \quad y_0(t_0) = z(t_0)),
\end{align}
we have the quantitative conclusion
\begin{align}\label{lemODEdampconcasy}
\Big| \frac{z(t)}{y_0(t)} - 1 \Big| \lesssim y_0^{\gamma}(t) \qquad t \geq t_0.
\end{align}
\end{remark}

\smallskip
\subsection{Estimates of the remainder terms}
Our aim now is to apply Lemma \ref{lemODEdamp} to $\rho=|A|^2$.
To this end, we first multiply \eqref{evolution-A} by $2\overline{A}$ and take the real part
to derive the equation
\begin{equation}\label{evolution-A-rho}
\frac{d}{dt}{\rho}(t) = - \frac{\Gamma}{\lambda}\rho^2(t) + 2\Re \big(P(t) \overline{A}(t) \big) + 2\Re(R(t) \overline{A}(t)), 
\end{equation}
where, we recall, that $P$ and $R$ are defined in Lemma \ref{lemevolA}.
We then recall the definitions from Lemma \ref{lemevolA} and,
for the sake of this proof, use the formulas \eqref{RsplitDF}-\eqref{R_Q-defn}, 
to re-organize these remainders as follows: 
\begin{align}\label{Rsplit'}
R = R_D + R_Q + R_E = R_D + R_Q + \sum_{j=0}^4 R_{E,j},
\end{align}
where $R_Q$ is defined in \eqref{R_Q-defn}, and 
we split the contributions from $R_E$ in \eqref{R_F-defn} into
\begin{align}
\label{R_E0-defn}
R_{E,0}(t) & := \frac{1}{i \lambda} e^{-i\lambda t} a(t) \int_\R \beta \varphi^2 
  \frac{1}{L} \Im \left( e^{itL} w_0 \right) dx,
\\
\label{R_E1-defn}
R_{E,1}(t) & := \frac{1}{2i \lambda} e^{-i\lambda t} 
  \int_\R \left(\frac{1}{L} \Im(w)\right)^2 \beta \varphi dx,
\\ 
\label{R_E2-defn}
R_{E,2}(t) & := \frac{1}{i \lambda} e^{-i\lambda t} a(t) \int_\R \beta \varphi^2 \frac{1}{L}
  \Im \left( e^{itL} R_w \right) dx,
\\
\label{R_E3-defn}
R_{E,3}(t) & := \frac{1}{i \lambda} e^{-i\lambda t} a(t) \int_\R \beta \varphi^2 \frac{1}{L}
  \Im \left( \rest - \int_0^t e^{i(t-s)L} P_c v^4(s) ds \right) dx,
\\
\label{R_E4-defn}
R_{E,4}(t) & := \frac{1}{2i \lambda} e^{-i\lambda t} \int_\R (a\varphi + v)^4 \varphi dx.
\end{align}

To obtain Proposition \ref{prop:bootstrap-on-rho} we want to 
apply Lemma \ref{lemODEdamp} with $z(t) = \rho(t)$ and $\Gamma_0 = \Gamma \lambda^{-1}$, 
by verifying that all the remainder terms satisfy one of the sufficient
conditions from Lemma \ref{lemODEdamp}.
In particular, we see that the conclusion of the Proposition \ref{prop:bootstrap-on-rho} will hold
provided we show the following estimates:

\setlength{\leftmargini}{2em}
\begin{itemize}

\smallskip
\item For short times $t \leq t_0=c\rho^{-1/2}_0$, the remainders terms appearing on
the right-hand side of \eqref{evolution-A-rho} satisfiy
\begin{align}\label{ODEdampestshort}
|P(t)|, \, |R(t)| \lesssim \rho_0;
\end{align}

\item Adopting the notation \eqref{defzG} for $\rho_{\Gamma_0}$, we have
\begin{align}
\label{ODEdampestP}
& \Re \big(P(t) \bar{A}(t) \big) = \frac{d}{dt}h_P(t), \qquad | h_P(t) | \lesssim \rho_{\Gamma_0}^{3/2}(t); 
\end{align}

\smallskip
\item With $R_D$ defined as in \eqref{R_D-defn} we have
\begin{align}
\label{ODEdampestRD}
& \Re \big(R_D(t) \bar{A}(t) \big) = \frac{d}{dt}h_D(t) + O(\rho_{\Gamma_0}^{5/2}(t)), 
  \qquad | h_D(t) | \lesssim \rho_{\Gamma_0}^{3/2}(t);
\end{align}

\smallskip
\item With $R_Q$ given by \eqref{R_Q-defn} we have
\begin{align}\label{ODEdampestRQ}
\Big| \int_{t_1}^{t_2} R_Q(s) \bar{A}(s) \, ds \Big| \lesssim z_{\Gamma_0}^{1+\delta_0}(t_1),
  \qquad t_1\leq t_2\in[c\rho_0^{-1/2},T].
\end{align}
Notice that we are estimating the full complex value and not just the real part, as this will
be useful when deriving asymptotics later in Section \ref{sec:Asymp-A}.


\smallskip
\item For the terms in \eqref{R_E0-defn}-\eqref{R_E4-defn} we have,
for all $t \geq c\rho_0^{-1/2}$,
\begin{align}
\label{ODEdampestRE0}
& \Big| \int_{t_1}^{t_2} R_{E,0}(s) \bar{A}(s) \, ds \Big| 
  \lesssim \rho_{\Gamma_0}^{3/2}(t_1), \qquad t_1\leq t_2\in[c\rho_0^{-1/2},T],
\\
\label{ODEdampestRE1}
& \big| R_{E,1}(t) \bar{A}(t) \big| \lesssim \rho^{9/4}_{\Gamma_0}(t),
\\
\label{ODEdampestRE2}
& \Big| \int_{t_1}^{t_2} \Re( R_{E,2}(s) \bar{A}(s)) \, ds \Big| 
  \lesssim \rho_{\Gamma_0}^{5/4}(t_1), 
  \qquad t_1\leq t_2\in[c\rho_0^{-1/2},T],
\\
\label{ODEdampestRE3}
& \Big| \int_{t_1}^{t_2} R_{E,3}(s) \bar{A}(s) \, ds \Big| \lesssim \, \rho^{5/4}_{\Gamma_0}(t_1)^,
  \qquad t_1\leq t_2\in[c\rho_0^{-1/2},T],
\\
\label{ODEdampestRE4}
& \big| R_{E,4}(t) \bar{A}(t) \big| \leq c_0 \, \rho^{5/2}_{\Gamma_0}(t).
\end{align}
Notice how we have kept the real part only for the term in \eqref{ODEdampestRE2}.

\end{itemize}

\medskip
\begin{proof}[Proof of \eqref{ODEdampestshort}]
We begin by proving the short-time estimate \eqref{ODEdampestshort}.
By the bootstrap assumption \eqref{rho-assumption-bootstrap} for $\rho=|A|^2$, we have
\begin{equation}\label{short-time-A-a-bound}
        |A(t)|+|a(t)|\lesssim \rho_0^{1/2} .
\end{equation}
From \eqref{P-defn} we see that $P(t)$ is quadratic in $A$ and 
therefore $|P(t)|\lesssim |A(t)|^2\lesssim \rho_0$.
Next, recall \eqref{Rsplit'}.
Since $R_D$ is cubic in $A,\overline A$, see \eqref{R_D-defn}, we have
$|R_D(t)|\lesssim |A(t)|^3\lesssim \rho_0^{3/2}$
which is stronger than what is needed.  

For \(R_{E,0}\), using \eqref{R_E0-defn}, the localization of \(\beta\varphi^2\), 
and the assumption on the data \eqref{mainthmdata}, we get
\begin{equation}\label{short-time-RE0}
\begin{aligned}
        |R_{E,0}(t)|
        &\lesssim
        |a(t)|\,
        \|L^{-1}e^{itL}w_0\|_{H^1_x}       \lesssim
        \rho_0 .
\end{aligned}
\end{equation}
For \(R_{E,1}\), using \eqref{R_E1-defn} and Sobolev embedding, we have
\begin{equation}\label{short-time-RE1}
\begin{aligned}
        |R_{E,1}(t)|
        &\lesssim
        \|L^{-1}w(t)\|_{H^1_x}^2     \lesssim
        \rho_0 .
\end{aligned}
\end{equation}
For \(R_{E,2}\), we first look at the contribution of \(R_w\). 
From \eqref{Rwextra}--\eqref{Rw}, and Lemma \ref{lemlocal-decay-0-plus}, we have
\begin{equation}\label{short-time-Rw-local-bound}
\begin{aligned}
        \sup_{0\leq s\leq t_0}
        \|\langle x\rangle^{-10}L^{-1}e^{isL}R_w(s)\|_{L^1_x}
        &\lesssim
        |A(0)|^2
        +
        \int_0^{t_0}|A(\tau)\dot A(\tau)|\,d\tau       
        +
        \int_0^{t_0}|A(\tau)|^2\,d\tau \lesssim
\rho_0^{1/2}.
\end{aligned}
\end{equation}
Therefore, from \eqref{R_E2-defn},
\begin{equation}\label{short-time-RE2}
\begin{aligned}
        |R_{E,2}(t)|
        &\lesssim
        |a(t)|\,
        \big\|\langle x\rangle^{-10}L^{-1}e^{itL}R_w(t)\big\|_{L^1_x}       \lesssim
        \rho_0 .
\end{aligned}
\end{equation}
For $R_{E,3}$, recall from \eqref{R_E3-defn} that this term contains all the Duhamel contributions
in \eqref{Rest}, except for the pure $v^4$ term. Thus the sources are schematically given by
\[
        a\,\beta\varphi v,\qquad
        \beta v^2,\qquad
        a^4\varphi^4,
\]
together with terms containing more factors of \(a\) or \(v\), which are smaller;
in particular, they are all localized in space.
We can then use the local decay estimate 
\eqref{local-decay-faster} 
to obtain
\begin{equation}\label{short-time-RE3}
\begin{aligned}
        |R_{E,3}(t)|
        &\lesssim
        |a(t)|
        \int_0^t
        \langle t-s\rangle^{-3/2}
        \Big(
          |a(s)|\|\langle x\rangle^{-3}v(s)\|_{H^2_x}
          +
          \|\langle x\rangle^{-3}v(s)\|_{H^2_x}^2
          +
          |a(s)|^4
        \Big)\,ds \lesssim \rho_0^{1/2} \cdot \rho_0.
\end{aligned}
\end{equation}
For $R_{E,4}$ in \eqref{R_E4-defn}, we use the localization of $\varphi$ and \eqref{short-time-A-a-bound}Jesu to obtain,
\begin{equation}\label{short-time-RE4}
\begin{aligned}
        |R_{E,4}(t)|
        &\lesssim
        |a(t)|^4+\|v(t)\|_{H^1_x}^4   \lesssim
        \rho_0^2 .
\end{aligned}
\end{equation}
Finally, from the definition \eqref{R_Q-defn}, using Sobolev's embedding and 
\eqref{w-assumptions-bootstrap},
we estimate
\begin{equation}\label{short-time-RQ}
|R_Q(t)| \lesssim |a(t)|
        \int_0^t \|v(s)^4\|_{H^1_x}\,ds                                     
        \lesssim \rho_0^{1/2} \,(c\rho_0^{-1/2})\,\varepsilon_0^4
        \lesssim \rho_0^2 .
\end{equation}
This concludes the proof of \eqref{ODEdampestshort}.
\end{proof}

\begin{remark}\label{remAdotbasic}
From the evolution equation \eqref{evolution-A-initial},
and using the bootstrap assumptions \eqref{rho-assumption-bootstrap}-\eqref{w-assumptions-bootstrap},
it is easy to see that arguments similar to the ones just given above, give
the basic bound
\begin{equation}
|\dot{A}(t)| \lesssim \rho_{\Gamma_0}(t).
\end{equation} 
In what follows we are going to use this bound multiple times, often without 
explicitly referring to it.
\end{remark}

\medskip
We now proceed to prove \eqref{ODEdampestP}-\eqref{ODEdampestRE4} under the assumptions of 
Proposition \ref{prop:bootstrap-on-rho}.

\medskip
\begin{proof}[{\bf Proof of \eqref{ODEdampestP}}]
Recall the definition of $P$ in \eqref{P-defn}. 
We denote $c:=  \int_\R \beta \varphi^3 dx$, and calculate
\begin{align}
\Re(P(t)\overline{A}(t)) & = c \, \Re\Big( \frac{1}{2i\lambda}e^{-i\lambda t} \overline{A}(t)  
  (A e^{i\lambda t} + \overline{A} e^{-i \lambda t})^2\Big)
  \\
  & = - c \frac{1}{4i\lambda}\Big(e^{i\lambda t}A(t) - e^{-i\lambda t} \overline{A}(t) \Big) \, a(t)^2
  \\
  & = c \frac{1}{4\lambda^2} \, \dot{a}(t) \, a(t)^2 = \frac{c}{12\lambda^2} \partial_t a^3(t)
\end{align}
where, for the last two identities, we have used the definition of the profile $A$ in \eqref{a-in-A-defn}.
This implies $\Re(P(t)\overline{A}(t)) = \partial_t h_P(t)$ with $h_P(t) = O(|a(t)|^3) = O(\rho_{\Gamma_0}(t)^{3/2})$
which concludes the proof of \eqref{ODEdampestP}.
\end{proof}

\medskip
\begin{proof}[{\bf Proof of \eqref{ODEdampestRD}}]
Recall that $R_D$ is defined in \eqref{R_D-defn} as a cubic expression in $A$.
To show \eqref{ODEdampestRD} we first observe that $R_D$ has non-vanishing oscillations, that is,
it can be written as a linear combination of terms of the form
\begin{align}
e^{i (\iota_1+\iota_2+\iota_3-1) \lambda s} A_{\iota_1}A_{\iota_2}A_{\iota_3}, \qquad \iota_j \in \{+,-\},
  \quad A_+:=A, \quad A_-:= \overline{A},
  \qquad \{\iota_1,\iota_2,\iota_3\}\neq \{+,+,-\}.
\end{align}
Then, we can write $R_D(t) \bar{A}(t)$ 
as a linear combination of terms of the form
\begin{align}
e^{i (\iota_1+\iota_2+\iota_3-1) \lambda t} (A_{\iota_1}A_{\iota_2}A_{\iota_3} \bar{A})(t)
 & = \frac{d}{dt} \Big[ \frac{1}{i(\iota_1+\iota_2+\iota_3-1) \lambda}
 e^{i (\iota_1+\iota_2+\iota_3-1) \lambda t} (A_{\iota_1}A_{\iota_2}A_{\iota_3} \bar{A})(t) \Big]
 \\
 & - \frac{1}{i (\iota_1+\iota_2+\iota_3-1) \lambda}
 e^{i (\iota_1+\iota_2+\iota_3-1) \lambda t} \frac{d}{dt} (A_{\iota_1}A_{\iota_2}A_{\iota_3} \bar{A})(t)
 \\
 & =: \frac{d}{dt}h_D(t) + O(|A(t)|^3 |\dot{A}(t)|)
\end{align}
with the natural definition of $h_D(t)$, satisfying $h_D(t) = O(|A|^4) = O(\rho_{\Gamma_0}^{2}(t))$ which is stronger than the desired bound.
Using that $\dot{A} = O(\rho_{\Gamma_0}(t))$ we have obtained \eqref{ODEdampestRD}.
\end{proof}

\medskip
\begin{proof}[{\bf Proof of \eqref{ODEdampestRQ}}]
We start by recalling the definition of $R_Q$ from \eqref{R_Q-defn} 
and that $a(t) = e^{i\lambda t}A(t) + e^{-i\lambda t}\overline{A}(t)$ and write 
\begin{align}
\nonumber
R_Q(s) \bar{A}(s) & =
 \frac{1}{i \lambda} e^{-i\lambda s} a(s) \int_\R \beta \varphi^2 \frac{1}{L} 
  \Im \left( \int_0^s e^{i(s-\tau)L} P_c v(\tau)^4 \, d\tau \right) dx \, \overline{A}(s)
  = J(s) + K(s),
  \\
  \label{ODEdampestQ1}
J(s) & :=\frac{1}{i \lambda}
  e^{-2i\lambda s} \overline{A}^2(s) \int_\R \beta \varphi^2 \frac{1}{L} 
  \Im \left( \int_0^s e^{i(s-\tau)L} P_c v(\tau)^4 \, d\tau \right) dx,
  \\
  \label{ODEdampestQ2}
K(s) & :=
  \frac{1}{i \lambda}
  |A(s)|^2 \int_\R \beta \varphi^2 \frac{1}{L} 
  \Im \left( \int_0^s e^{i(s-\tau)L} P_c v(\tau)^4 \, d\tau \right) dx . 
\end{align}
We then introduce the notation
\begin{align}\label{notQ-0}
Q(s,x) := \int_0^s e^{-i\tau L} P_c v(\tau)^4 \, d\tau
\end{align}
where we use the apex $\pm$ to denote $Q^+ = Q$, and $Q^- = \overline{Q}$, and recall the estimates 
\begin{align}\label{lem:Q-tilde'}
{\big\| \widetilde{Q^\pm}(s,\cdot) \big\|}_{H^1_k} 
  \lesssim \varepsilon_0^{3\al}\lr{s}^{8\delta},
  \qquad {\big\| Q^\pm(s,\cdot) \big\|}_{H^1_x} \lesssim \varepsilon_0^2 
\end{align}
where the first follows from Lemma \ref{lem:Q-tilde} and 
the second, which here is only used for short-times $s\leq \varepsilon_0^{-2}$, 
is easily obtained from the a priori bounds.


We first look at the term $J$ in \eqref{ODEdampestQ1}.
To prove the 
desired bound we need to exploit the oscillations in the $ds$ integral.
We introduce 
a smooth bump function $\chi_{r}(x) = \phi(x/r)$ (see the notation in $\S$\ref{ssec:notations}), 
with $r \ll 2\lambda-1$,
and write
\begin{align}\label{estRQJ}
\Big|  \int_{t_1}^{t_2} J(s) \, ds \Big| & 
  \lesssim J^{(1)}_+ + J^{(1)}_- + J^{(2)}_+ + J^{(2)}_-, 
\\
\label{estRQJ1}
J^{(1)}_\pm & := \Big| \int_{t_1}^{t_2} \overline{A}^2(s) e^{-2i\lambda s}
  \int_{\R} \beta \varphi^2 L^{-1} e^{\pm isL} \chi_{r} (L-2\lambda) Q^\pm(s,x) dx ds \Big|,
  \\
\label{estRQJ2}
J^{(2)}_\pm & :=  \Big| \int_{t_1}^{t_2} \overline{A}^2(s) e^{-2i\lambda s}
  \int_{\R} \beta \varphi^2 L^{-1} e^{\pm isL} \left(1 - \chi_{r} (L-2\lambda) \right) 
  Q^\pm(s,x) dx ds \Big|.
\end{align}

\smallskip
{\it Estimate of \eqref{estRQJ1}}. 
We only deal with $J^{(1)}_+$ since the other case has
a non-resonant phase and is easier to treat.
Here we are going to use the refined local decay estimate \eqref{local-estimate-time-cut-off-refined}.
Note that if we were to apply directly \eqref{local-estimate-time-cut-off}, together with
the bound \eqref{Q-tildeL2} on $Q$, we would not close the desired bound 
just because of an excess of powers of $s^\delta.$
We decompose the time integral using the time-cutoff $\tau_m$ defined in Subsection \ref{ssec:notations},
and then split the contributions into high and low modulations $L-2\lambda$ relative 
to the scale $2^{-m/4} \approx s^{-1/4}$:
\begin{align}
\label{estRQJ1.0}
& J^{(1)}_+ \leq J_{\mathrm{low}} + J_{\mathrm{high}}, 
\\
\label{estRQJ1.1}
& J_{\mathrm{low}} := \sum_m 
  \Big| \int_{t_1}^{t_2} \overline{A}^2(s)
  \int_{\R} \beta \varphi^2 L^{-1} e^{is(L-2\lambda)} 
  \phi_{\leq -m/4} (L-2\lambda) Q(s,x) dx \, \tau_m(s) ds \Big|,
  \\
\label{estRQJ1.2}
& J_{\mathrm{high}} :=  \sum_m 
  \Big| \int_{t_1}^{t_2} \overline{A}^2(s)
  \int_{\R} \beta \varphi^2 L^{-1} e^{is(L-2\lambda)} 
  \phi_{(-m/4,-D]} (L-2\lambda) Q(s,x) dx \, \tau_m(s) ds \Big|.
\end{align}
Note that, without loss of generality, we have disregarded the superfluous 
cutoff $\chi_{r} (L-2\lambda)$, but consistently restricted the size of $L-2\lambda$
to $2^{-m/4} \leq |L-2\lambda| \leq 2^{-D} \approx r$.

For the first term $J_{\mathrm{low}}$ 
we can directly invoke \eqref{local-estimate-time-cut-off-refined} 
combined with the bound on $Q$ from \eqref{Q-tildeL2}, to obtain
\begin{align}
J_{\mathrm{low}} & \lesssim \sum_m 
  \int_{t_1}^{t_2} \Rho(s)
  {\Big\| \lr{x}^{-2}  L^{-1} e^{is(L-2\lambda)} 
  \phi_{\leq -m/4} (L-2\lambda) Q(s) \Big\|}_{L^\infty_x} \, \tau_m(s) ds
  \\
  & \lesssim \int_{t_1}^{t_2} \Rho(s) \cdot 2^{-m/8} \cdot \frac{1}{\lr{s}} 
  {\big\| \wt{Q}(s) \big\|}_{H^1_k} \, \tau_m(s) ds
  \\
  & \lesssim \int_{t_1}^{t_2} \Rho(s) \cdot 2^{-9m/8} \cdot \varepsilon_0^{3\alpha} 2^{8\delta m}
  \, \tau_m(s) ds \lesssim 
  \Rho^{1+1/20}(t_1),
\end{align}
since $\delta$ is small enough. This is sufficient for \eqref{ODEdampestRQ}
by choosing $\delta_0$ small enough.

For the term $J_{\mathrm{high}}$ in \eqref{estRQJ1.2} we use the fact that $L-2\lambda$ is not too small
to gain by integrating by parts in $s$, which produces four terms:
\begin{align}
\label{estRQJ1B}
& J_{\mathrm{high}}\leq B_1 + B_2 + B_3 + B_4,
\\
\label{estRQJ1B1}
& B_1 := \sum_m 
  \Big| \overline{A}^2(s)
  \int_{\R} \beta \varphi^2 L^{-1} e^{is(L-2\lambda)} 
  \frac{\phi_{(-m/4,-D]} (L-2\lambda)}{L-2\lambda} Q(s,x) dx \, \tau_m(s) \Big|_{s=t_1}^{s=t_2}\Big|,
  \\
\label{estRQJ1B2}
& B_2 := \sum_m 
  \Big| \int_{t_1}^{t_2} \frac{d}{ds} \overline{A}^2(s)
  \int_{\R} \beta \varphi^2 L^{-1} e^{is(L-2\lambda)} 
  \frac{\phi_{(-m/4,-D]} (L-2\lambda)}{L-2\lambda} Q(s,x) dx \, \tau_m(s) ds \Big|,
\\
\label{estRQJ1B3}
& B_3 := \sum_m 
  \Big| \int_{t_1}^{t_2} \overline{A}^2(s)
  \int_{\R} \beta \varphi^2 L^{-1} e^{is(L-2\lambda)} 
  \frac{\phi_{(-m/4,-D]} (L-2\lambda)}{L-2\lambda} \frac{d}{ds} Q(s,x) dx \, \tau_m(s) ds \Big|,
\\
\label{estRQJ1B4}
& B_4 := \sum_m 
  \Big| \int_{t_1}^{t_2} \overline{A}^2(s)
  \int_{\R} \beta \varphi^2 L^{-1} e^{is(L-2\lambda)} 
  \frac{\phi_{(-m/4,-D]} (L-2\lambda)}{L-2\lambda} Q(s,x) dx \, \tau_m'(s) ds \Big|.
\end{align}
We will skip the details for the estimate of the boundary terms \eqref{estRQJ1B1}
and of the last term \eqref{estRQJ1B4}
since they can be treated very similarly to the term \eqref{estRQJ1B2}, for which we now provide full details.

Recalling that $|A(s)| \lesssim \Rho^{1/2}(s)$ and $|\dot{A}(s)| \lesssim \Rho(s)$,
and using \eqref{local-estimate-time-cut-off-refined} on each dyadic piece $\phi_n$ with $n \in [-m/4,-D] \cap \mathbb{Z}$, 
we can bound
\begin{align}
| B_2 | & \lesssim \sum_m 
  \int_{t_1}^{t_2} \Rho^{3/2}(s)
  {\Big\| \lr{x}^{-2} L^{-1} e^{isL} 
  \frac{\phi_{>-m/4}(L-2\lambda)}{L-2\lambda} Q(s) \Big\|}_{L^\infty_x} \, \tau_m(s) ds
\\
& \lesssim \int_{t_1}^{t_2} \Rho^{3/2}(s)
  \cdot 2^{m/8} \cdot \frac{1}{\lr{s}} {\|  \tilde{Q}(s) \big\|}_{H^1_k}\, \tau_m(s) ds
  \lesssim \Rho^{3/2}(t_1) t_1^{1/4} \varepsilon_0^{3\alpha}
\end{align}
having used \eqref{Q-tildeL2}, which suffices.

For the term \eqref{estRQJ1B3} we use Plancherel,
recall the definition of $Q$ in \eqref{notQ-0}, 
and the a priori decay bound in \eqref{w-assumptions-bootstrap} and estimate
\begin{align}
| B_3 | & \lesssim \sum_m 
  \int_{t_1}^{t_2} \Rho(s)
  {\Big\| L^{-1}  \phi_{>-m/4} \chi_{r} (L-2\lambda) 
  \frac{\chi_{>-m/4}(L-2\lambda)}{L-2\lambda} e^{isL} 
  \partial_s Q^\pm(s) \Big\|}_{L^2_x} \, \tau_m(s) ds
\\
& \lesssim \sum_m \int_{t_1}^{t_2} \Rho(s)
  \cdot 2^{m/4} \cdot {\|  v^4(s) \big\|}_{L^2_x} \, \tau_m(s) ds
  \\
  & \lesssim  \sum_m \int_{t_1}^{t_2} \Rho(s)
  \cdot 2^{m/4} \min\big( \varepsilon_0^4, \lr{s}^{-3/2+3\delta}\varepsilon_0^{1+\alpha} \big)
  \, \tau_m(s) ds
\end{align}
which is comfortably bounded by the right-hand side of
\eqref{ODEdampestRQ} for $\delta_0$ small enough. 

\smallskip
{\it Estimate of \eqref{estRQJ2}.}
Moving on to the term \eqref{estRQJ2} we concentrate on the case with the $+$ sign,
since the case with the $-$ sign is easier. 
We integrate by parts using that $|L-2\lambda| \gtrsim r$
on the support of the integral and write:
\begin{align}\label{estRQJ2split}
\begin{split}
J^{(2)}_+ & \lesssim C_1 + C_2 + C_3,
\\
C_1 & := \left| \overline{A}^2(s)
  \int_{\R} \beta \varphi^2 \frac{\left(1 - \chi_{r} (L-2\lambda) \right)}{L(L-2\lambda)} 
    e^{is(L-2\lambda)}  Q(s,x) dx \Big|_{s=t_1}^{s=t_2} \right|,
\\
C_2 & := \Big| \int_{t_1}^{t_2} \frac{d}{ds}\big(\overline{A}^2(s)\big)
  \int_{\R} \beta \varphi^2 \frac{\left(1 - \chi_{r} (L-2\lambda) \right)}{L(L-2\lambda)} 
    e^{is(L-2\lambda)} Q(s,x) dx ds \Big|,
\\
C_3 & := \Big| \int_{t_1}^{t_2} \overline{A}^2(s)
  \int_{\R} \beta \varphi^2 \frac{\left(1 - \chi_{r} (L-2\lambda) \right)}{L(L-2\lambda)} 
    e^{is(L-2\lambda)} \, \partial_s Q(s,x) dx ds \Big|.
\end{split}
\end{align}
To control the boundary terms we apply \eqref{local-estimate-time-cut-off-internal-mode} 
from Lemma \ref{lem:local-estimate-time-cut-off}, 
and combine it with the bound from Lemma \ref{lem:Q-tilde} to obtain
\begin{align}
|C_1| & \lesssim \left| \overline{A}^2(s)
  \int_{\R} \beta \varphi^2 \frac{\left(1 - \chi_{r} (L-2\lambda) \right)}{L(L-2\lambda)} 
  e^{is(L-2\lambda)}  Q(s,x) dx \Big|_{s=t_1}^{s=t_2} \right|
  \lesssim \rho_{\Gamma_0}(t_1) \lr{t_1}^{-1} \cdot \varepsilon_0^{3\alpha} t_1^{8\delta} 
\end{align}which suffices. 
Using $\dot{A} = O(\rho_{\Gamma_0})$, and combining again 
\eqref{local-estimate-time-cut-off-internal-mode} and Lemma \ref{lem:Q-tilde}, we can bound
\begin{align}
C_2 & \lesssim \int_{t_1}^{t_2} \rho_{\Gamma_0}^{3/2}(s)
   \cdot \lr{s}^{-1} \cdot \varepsilon_0^{3\alpha} s^{8\delta} ds
    \lesssim \rho_{\Gamma_0}^{3/2-8\delta}(t_1)
\end{align}which is again more than sufficient.
Finally, for the last term in \eqref{estRQJ2split}, we use the a priori bounds 
on $v = \Im (L^{-1}w)$ from \eqref{w-assumptions-bootstrap} and H\"older to get
\begin{align*}
{\| \partial_s Q(s,x) \|}_{L^2_x} = {\big\| v(s)^4 \big\|}_{L^2_x} 
  \lesssim \min\big( \varepsilon_0^{1+3\alpha} \lr{s}^{-3/2+3\delta}, \varepsilon_0^4\big)
\end{align*}
and then bound
\begin{align}
\begin{split}
C_3 & \lesssim \int_{t_1}^{t_2} \rho_{\Gamma_0}(s)
  {\Big\|\frac{\left(1 - \chi_{r} (L-2\lambda) \right)}{L(L-2\lambda)} 
  e^{is(L-2\lambda)} \, \partial_s Q(s,x) \Big\|}_{L^2} ds
  \\
  & \lesssim \int_{t_1}^{t_2} \rho_{\Gamma_0}(s) \cdot 
  \min \big( \varepsilon_0^{1+3\alpha} \lr{s}^{-3/2+3\delta}, \varepsilon_0^4\big) \, ds 
  \lesssim \rho^{3/2-3\delta}_{\Gamma_0}(t_1)
\end{split}
\end{align}
which is much stronger than needed. We have obtained \eqref{ODEdampestRQ}.

\smallskip
{\it Estimate of \eqref{ODEdampestQ2}}.
The term $K$ in \eqref{ODEdampestQ2} can be treated similarly to the term $J$, and is in fact easier 
since the time oscillations are given by $e^{\pm i sL}$ whose phase never vanishes.
In particular, integrating by parts in $s$ as above we can directly adapt the estimates above
to obtain
\begin{align}
\Big| \int_{t_1}^{t_2} |A(s)|^2
  \int_{\R} \beta \varphi^2 L^{-1} e^{\pm isL} Q^\pm(s,x) dx ds \Big|
  \lesssim \rho^{3/2-8\delta}_{\Gamma_0}(t_1).
\end{align}
This concludes the proof of \eqref{ODEdampestRQ}.
\end{proof}

\medskip
\begin{proof}[{\bf Proof of \eqref{ODEdampestRE0}}]

Using \eqref{a-in-A-defn} in \eqref{R_E0-defn}, and recalling that
$\theta=P_c(\beta\varphi^2)$, we write
\begin{align*}
R_{E,0}(s)\bar A(s)
  =\frac{1}{i\lambda}\big(\rho(s)+e^{-2i\lambda s}\bar A^2(s)\big)
    \Im\int_\R\theta L^{-1}e^{isL}w_0\,dx.
\end{align*}
Expanding the imaginary part and taking complex conjugates of the
resulting integrals, it suffices to estimate by $\rho_{\Gamma_0}^{3/2}(t_1)$
the three terms
\begin{align}
\label{eq:3pieces}
\begin{aligned}
D_1 & := \int_{t_1}^{t_2}\rho(s)
    \int_\R\theta L^{-1}e^{isL}w_0\,dx\,ds,\\
D_2 & := \int_{t_1}^{t_2}\bar A^2(s)
    \int_\R\theta L^{-1}e^{is(L-2\lambda)}w_0\,dx\,ds,\\
D_3 & := \int_{t_1}^{t_2}A^2(s)
    \int_\R\theta L^{-1}e^{is(L+2\lambda)}w_0\,dx\,ds.
\end{aligned}
\end{align}
Recall that $c\rho_0^{-1/2}\leq t_1\leq t_2\leq T$.

We first consider the second term from \eqref{eq:3pieces} 
and split it into two parts by inserting the localization $\chi_r(L-2\lambda)$, with the same cutoff as in Lemma \ref{lem:local-estimate-time-cut-off}.
We let $D_{2,r}$ be the contribution corresponding to the localization,
and denote $D_{2,nr}$ the complementary contribution.
By Plancherel, we can write
\begin{align*}
D_{2,r}:= \int_\R\theta L^{-1}e^{is(L-2\lambda)}\chi_r(L-2\lambda)w_0\,dx
=\int_\R e^{is(\lr{k}-2\lambda)}\frac{\chi_r(\lr{k}-2\lambda)}{\lr{k}}\overline{\wt{\theta}(k)}\wt{w}_0(k)\,dk.
\end{align*}Since $r\ll 2\lambda-1$, the support of the integrand is compact
and bounded away from $k=0$. Motivated by smoothing estimates  in \cite{GC21},  we integrate by parts in $k$ to see that
\begin{align*}
s\int_\R e^{is(\lr{k}-2\lambda)}
\frac{\chi_r(\lr{k}-2\lambda)}{\lr{k}}\overline{\wt{\theta}(k)}\wt{w}_0(k)\,dk
=i\int_\R e^{is(\lr{k}-2\lambda)}\partial_k\left(\frac{\chi_r(\lr{k}-2\lambda)}{k}\overline{\wt{\theta}(k)}\wt{w}_0(k)\right)dk.
\end{align*}
On each half-line, the change of variables $\xi=\lr{k}-2\lambda$
has Jacobian $|d\xi/dk|=|k|/\lr{k}$, and using Plancherel in time yields
\begin{align*}
&\left\|\int_{\R_\pm}e^{is(\lr{k}-2\lambda)}\partial_k\left(\frac{\chi_r(\lr{k}-2\lambda)}{k}\overline{\wt{\theta}(k)}\wt{w}_0(k)\right)dk\right\|_{L^2_s(\R)}^2\\
&\qquad=2\pi\int_{\R_\pm}\frac{\lr{k}}{|k|}\left|\partial_k\left(
\frac{\chi_r(\lr{k}-2\lambda)}{k}
\overline{\wt{\theta}(k)}\wt{w}_0(k)\right)\right|^2dk
  \lesssim\varepsilon_0^2.
\end{align*}
Summing the estimates on the two half-lines, we obtain
\begin{align*}
\left\|s\int_\R\theta L^{-1}e^{is(L-2\lambda)}
\chi_r(L-2\lambda)w_0\,dx\right\|_{L^2_s(\R)}\lesssim\varepsilon_0.
\end{align*}
Consequently, by Cauchy--Schwarz and the bootstrap bound on $A$, we obtain
\begin{align*}
| D_{2,r} | & \lesssim \left|\int_{t_1}^{t_2}\bar A^2(s) \int_\R\theta L^{-1}e^{is(L-2\lambda)}\chi_r(L-2\lambda)w_0\,dx\,ds\right|\\
&\qquad\lesssim\varepsilon_0
\left(\int_{t_1}^{t_2}\frac{\Rho(s)^2}{s^2}\,ds\right)^{1/2}\lesssim\varepsilon_0\Rho(t_1)t_1^{-1/2}.
\end{align*}

For the complementary part, we first note that, using the localization
of $\theta$ and \eqref{local-estimate-time-cut-off-internal-mode} we have
\begin{align}\label{locestuse}
\left|\int_\R\theta \frac{1-\chi_r(L-2\lambda)}{L(L-2\lambda)}
e^{is(L-2\lambda)}w_0\,dx\right| \lesssim\varepsilon_0\lr{s}^{-1}.
\end{align}
Integration by parts in $s$ gives
\begin{align*}
D_{2,nr} & = \int_{t_1}^{t_2}\bar A^2(s)
\int_\R\theta L^{-1}e^{is(L-2\lambda)} (1-\chi_r(L-2\lambda))w_0\,dx\,ds\\
& =\left[\frac{\bar A^2(s)}{i}
\int_\R\theta \frac{1-\chi_r(L-2\lambda)}{L(L-2\lambda)}e^{is(L-2\lambda)}w_0\,dx\right]_{s=t_1}^{t_2}\\
&\quad-\frac{2}{i}\int_{t_1}^{t_2}\bar A(s)\overline{\dot A(s)} \int_\R\theta\frac{1-\chi_r(L-2\lambda)}{L(L-2\lambda)}
e^{is(L-2\lambda)}w_0\,dx\,ds.
\end{align*}
Using $|\dot A|\lesssim\Rho$ from Remark \ref{remAdotbasic}, and \eqref{locestuse}, we bound 
\begin{align}\label{D2nrbound}
|D_{2,nr}| \lesssim \Rho(t_1) \, \varepsilon_0 t_1^{-1}
    +\int_{t_1}^{t_2} \Rho(s)^{3/2} \, \varepsilon_0 s^{-1} \,ds
    \lesssim \Rho^{3/2}(t_1)
\end{align}
as desired.

The first and third integrals from \eqref{eq:3pieces} are easier to deal with,
and can be estimated by means of the same integration by parts argument above, 
with $(\bar A^2,L-2\lambda,1-\chi_r(L-2\lambda))$ replaced by
$(\rho,L,1)$ and $(A^2,L+2\lambda,1)$, respectively.  In these cases,
\eqref{local-l-infty-decay} gives
\begin{align*}
\left|\int_\R\theta L^{-2}e^{isL}w_0\,dx\right|+\left|\int_\R\theta\frac{e^{is(L+2\lambda)}}{L(L+2\lambda)}w_0\,dx\right|
\lesssim\varepsilon_0\lr{s}^{-1},
\end{align*}
and from Remark \ref{remAdotbasic} we have
$$|\dot\rho(s)|+|\partial_s A^2(s)|
\lesssim\Rho(s)^{3/2},$$
so that the same bound in \eqref{D2nrbound} follows for the two integrals $D_1$ and $D_3$
in \eqref{eq:3pieces}.

\end{proof}

\medskip
\begin{proof}[{\bf Proof of \eqref{ODEdampestRE1}}]

Recall the definition of $R_{E,1}$ in \eqref{R_E1-defn},
let 
\begin{align}\label{dampestRE1prI}
I(t) := \int_\R \left(\frac{1}{L} \Im(w(t))\right)^2 \beta \varphi dx, 
\end{align}
and observe that it suffices to show that 
\begin{align}\label{dampestRE1pr1}
|I(t)| \lesssim \rho_{\Gamma_{0}}^{7/4}(t).
\end{align}
We begin by noting that 
\begin{align}\label{dampestRE1pr2}
|I(t)| \lesssim \min \big( \|\lr{x}^{-5} L^{-1} w(t)\|_{L_x^\infty}^2, 
  \|\lr{x}^{-5}L^{-1}w(t)\|_{L_x^2}^2 \big),
\end{align}
and subdivide the proof of \eqref{dampestRE1pr1} into cases depending on the size
of $t$ relative to $\varepsilon_0$. We let $c$ be a sufficiently small absolute constant.


\smallskip
\noindent
{\it Case 1: $c\varepsilon_0^{-1} \leq t \leq \varepsilon_0^{-2}$.}
For this range of times we claim that 
\begin{align}\label{dampestRE1pr3}
{\| \lr{x}^{-5} L^{-1} w(t) \|}_{L^2} \lesssim \varepsilon_0^2\approx \rho_{\Gamma_0}(t). 
\end{align}
Using this bound in \eqref{dampestRE1pr2} is more than sufficient to obtain 
the desired \eqref{dampestRE1pr1}. 
Let us now verify \eqref{dampestRE1pr3}. 
Recall Duhamel's formula \eqref{duhamel-for-w}:
\begin{align}
\label{duhamel-for-w'}
w(t) & = e^{itL}w(0) +  \int_0^t e^{i(t-s)L} 
  P_c \left[ \beta (a(s) \varphi + v(s))^2 + (a(s) \varphi + v(s))^4 \right] ds.
\end{align}
We look at each of terms in \eqref{duhamel-for-w'}.
Using the linear estimate \eqref{local-l-infty-decay},
\eqref{derivative-in-fourier-bound} and the assumption on the data \eqref{mainthmdata},
we have
\begin{align}\label{dampestRE1pr5}
{\| \lr{x}^{-5} L^{-1}e^{itL}w(0) \|}_{L^2} \lesssim t^{-1} \varepsilon_0 \lesssim \varepsilon_0^2. 
\end{align}
For the quadratic terms we use the improved local decay estimate \eqref{local-decay-faster}
to see that
\begin{align}
{\Big\| \lr{x}^{-5} L^{-1} \int_0^t e^{i(t-s)L} P_c[\beta( \varphi a(s) + v(s) )^2] ds \Big\|}_{L^2}
\lesssim
  \int_0^t \lr{s-t}^{-3/2} \big( a^2(s) + {\| v(s) \|}_{H^2}^2 \big) ds \lesssim \varepsilon_0^2
\end{align}
since $a \approx \varepsilon_0$ for short times and having used the a priori bounds 
on the Sobolev norm in \eqref{w-assumptions-bootstrap}.
For the quartic terms in $a$, using again \eqref{local-decay-faster},   we have
\begin{align}
{\Big\| \lr{x}^{-5} L^{-1}\int_0^t e^{i(t-s)L} P_c (a(s) \varphi)^4 ds  \Big\|}_{L^2}
  \lesssim \int_0^t \lr{s-t}^{-3/2} a^4(s) ds \lesssim \varepsilon_0^4
\end{align}
while, just using Sobolev's embedding,  we can estimate the quartic terms in $v$ by
\begin{align}
{\Big\| L^{-1}\int_0^t e^{i(t-s)L} P_c (v(s))^4 ds  \Big\|}_{L^2}
 \lesssim t \, \sup_{s\in[0,t]} {\| v(s)^4 \|}_{H^1} \lesssim \varepsilon_0^2. \end{align}
The other quartic terms are localized products of  $a$ and $v$ and can be treated as above.

\smallskip
\noindent
{\it Case 2: $t \geq \varepsilon_0^{-2}$.}
In the case of large times, the a priori improved local decay estimate in
\eqref{w-assumptions-bootstrap} gives
\begin{align}
\|\lr{x}^{-5} L^{-1} w(t)\|_{L_x^\infty}^2 
  \lesssim \big( \varepsilon_0^{\alpha} \lr{t}^{-1+\delta_2} \big)^2
  \lesssim  \varepsilon_0^{2\alpha} t^{-2+2\delta_2}.
\end{align}
This bound is smaller than the right-hand side of \eqref{dampestRE1pr1}.
The proof of \eqref{ODEdampestRE1} is complete.
\end{proof}

\medskip
\begin{proof}[{\bf Proof of \eqref{ODEdampestRE2}}]
Consistently with the definition of the term \eqref{R_E2-defn}, let us introduce, just for the
proof of these bounds, the notation
\begin{align}\label{R_E2-defn'}
T[F](t) & := \frac{1}{i \lambda} e^{-i\lambda t} a(t) \int_\R \beta \varphi^2
  L^{-1} \Im \left( e^{itL} F(t) \right) dx,
\end{align}
so that $R_{E,2}(t) = T[R_w](t)$. 
According to \eqref{Rwextra}-\eqref{Rw}, we split
\begin{align}
R_w & = R_{w,1} + R_{w,2},
\\
\label{Rw1}
R_{w,1}(t) & := - 2i \int_0^t \frac{e^{-is (L-2\lambda)} }{L-2\lambda + i0^+}\theta \, A(s)\dot{A}(s) ds, 
  \qquad \theta := P_c \left( \beta \varphi^2 \right),
\\
\label{Rw2}
R_{w,2}(t) & := -\frac{i A^2(0)}{L-2\lambda + i0^+} \theta
  + 2 \int_0^t e^{-isL} |A(s)|^2 \theta~ds + \int_0^t e^{-is(L+2\lambda)} \overline{A(s)}^2 \theta~ds,
\end{align}
and estimate the two contributions separately. 

Using local decay for the regularized resolvent from Lemma \ref{lemlocal-decay-0-plus}, we see that 
\begin{align}\label{estER21}
\Big| T[R_{w,1}](t) \bar{A}(t) \Big| 
& 
  \lesssim |a(t)|\cdot|A(t)|\int_0^t \lr{t-s}^{-3/2} |A(s)| |\dot{A}(s)| ds 
  \lesssim \rho_{\Gamma_0}^{5/2}(t). 
\end{align}
which is stronger than the desired bound.

To deal with the contributions from $R_{w,2}$ we first integrate by parts in $s$ in 
the integrals in \eqref{Rw2} and write
\begin{align}\label{Rw2split}
\begin{split}
R_{w,2} & = I + II + III,
\\
I & := -\frac{i}{L-2\lambda + i0^+} \theta \, A^2(0) + \frac{2}{iL} \theta \, |A(0)|^2 
  + \frac{1}{i(L+2\lambda)} \theta \, \overline{A(0)}^2,
\\
II(t)& := \frac{2}{iL} \int_0^t e^{-isL}  \theta \frac{d}{ds}|A(s)|^2~ds 
  + \frac{1}{i(L+2\lambda)} \int_0^t e^{-is(L+2\lambda)}  \theta \frac{d}{ds}\overline{A(s)}^2~ds,
\\
III(t) & :=- \frac{2}{iL} e^{-it L}\theta \, |A(t)|^2
  - \frac{e^{-2i\lambda t}}{i(L+2\lambda)} e^{-itL}\theta \, \overline{A(t)}^2. 
\end{split}
\end{align}
Using Lemma \ref{lem:stronger-local-decay} and Lemma \ref{lemlocal-decay-0-plus},
 we obtain the bounds 
\begin{align}
& \Big| T[I](t) \overline{A}(t) \Big| 
  \lesssim \lr{t}^{-3/2} |a(t)|\cdot |A(t)| \cdot |A(0)|^2 
  \lesssim \rho_{\Gamma_0}^{9/4}(t),
\\
& \Big| \int_{t_1}^{t_2} T[I](s) \bar{A}(s) \, ds \Big| 
  \lesssim \int_{t_1}^{t_2}  |a(s)| \lr{s}^{-3/2}|A(0)|^2 \cdot |A(s)| \, ds
  \lesssim \rho_{\Gamma_0}^{9/4}(t_1)
\end{align}
which shows that this contribution is a remainder of the type $R_a$ as in \eqref{lemODEdampRa}.

Similarly, using Lemma \ref{lem:stronger-local-decay} we can estimate
\begin{align}
& {\big\| \lr{x}^{-3} L^{-1} \Im \left( e^{itL} II(t) \right) \big\|}_{L^\infty_x}
 \lesssim \int_0^t \lr{t-s}^{-3/2} |A(s)| |\dot{A}(s)| ds \lesssim   \rho_{\Gamma_0}^{3/2}(t)
\end{align}
which implies
\begin{align}
\big|  T[II](t) \bar{A}(t) \big| \lesssim  |a(t)|\cdot |A(t)|\cdot\rho_{\Gamma_0}^{3/2}(t)=\rho_{\Gamma_0}^{5/2}(t).
\end{align}

For contribution from the last term in \eqref{Rw2split} 
we first write, according to the definition \eqref{R_E2-defn} 
and the notation \eqref{R_E2-defn'},
\begin{align}\label{RE2pr1}
T[III](s) \overline{A}(s) & = \frac{1}{i \lambda} e^{-i\lambda s} a(s) 
  \Big( \int_\R \beta \varphi^2
  L^{-1} \Im \left( e^{is L} III(s) \right) dx \Big) \cdot \overline{A}(s) 
\\
& = \frac{2}{i \lambda} e^{-i\lambda s} a(s) 
  \Big( \int_\R \beta \varphi^2
  L^{-2} \theta(x) dx \Big) |A(s)|^2 \cdot \overline{A}(s) 
\\
& \quad -\frac{1}{i \lambda} e^{-i\lambda s} a(s) 
  \Big( \int_\R \beta \varphi^2
  L^{-1} \Im \left( \frac{e^{-2i\lambda s}}{i(L+2\lambda)}  \theta(x) \overline{A}(s)^2 
  \right)dx \Big) \cdot \overline{A}(s) . 
\end{align}
Then, recalling that 
$a(s) = e^{i\lambda s}A(s) + e^{-i\lambda s}\overline{A}(s)$,
we group terms according to their oscillations:
\begin{align}
\label{TIII}
T[III](s) \overline{A}(s) & = J_0(s) + J_1(s) + J_2(s) + J_3(s),  
\\
J_0(s) & := \frac{2}{i \lambda}
  \Big( \int_\R \beta \varphi^2
  L^{-2} \theta(x) dx \Big) \cdot |A(s)|^4
  \\ & \qquad +\frac{1}{2i\lambda}
  \Big( \int_\R \beta \varphi^2
  L^{-1} \frac{1}{(L+2\lambda)}\theta(x) dx \Big) \cdot |A(s)|^4,
\\
J_1(s) & := \frac{2}{i \lambda} e^{-2i\lambda s}
  \Big( \int_\R \beta \varphi^2
  L^{-2} \theta(x) dx \Big) |A(s)|^2 \overline{A}(s)^2, 
\\
J_2(s) & := -\frac{1}{i \lambda}
  \Big( \int_\R \beta \varphi^2
  L^{-1} \Im \left( \frac{e^{-2i\lambda s}}{i(L+2\lambda)}
  \theta(x) \overline{A}(s)^2 \right)  dx \Big) \cdot |A(s)|^2,
\\
J_3(s) & := \frac{1}{2i \lambda} e^{-4i\lambda s}
  \Big( \int_\R \beta \varphi^2 \frac{1}{L(L+2\lambda)} \theta(x) dx \Big) \cdot \overline{A}(s)^{4}.
\end{align}
We notice that $J_0$ has the form
\begin{align}\label{extracorrection1}
J_0(s) & := i \beta_1 |A(s)|^4, 
  \qquad \beta_1:= -\frac{1}{2\lambda} \Big( \int_\R \beta \varphi^2 
  \Big( \frac{4}{L^2} + \frac{1 }{L(L+2\lambda)} \Big)  \theta(x) dx \Big) \in \R
\end{align}
so that that it does not contribute to the real part of $T[III] \overline{A}$;
however, $J_0$ will be relevant later when extracting asymptotics for $A$.
The term $J_2$ is purely imaginary as well, but since it has non-vanishing oscillations
we will show that it is a remainder like $J_1$ and $J_3$

To estimate the contribution from $J_1$ we notice that it has non-vanishing oscillations $e^{-2i\lambda s}$
so that we can integrate by parts in $s$ and bound it as follows:
\begin{align}\label{extracorrectionJ1}
\Big|\int_{t_1}^{t_2} J_1(s) \, ds \Big| & \lesssim \Big| 
  \overline{A}^2(s) |A(s)|^2 \Big( \int_\R \beta \varphi^2 L^{-2} \theta dx \Big) \Big|_{s=t_1}^{t_2} \Big|
  \\
  & + \Big| \int_{t_1}^{t_2} 
  e^{-2i\lambda s} \frac{d}{ds} \big[ |A(s)|^2  \overline{A}^2(s)\big] \Big( \int_\R \beta \varphi^2 L^{-2}\theta dx \Big)
  \, ds \Big|
  \lesssim \rho_{\Gamma_0}^{3/2}(t_1), 
\end{align}
having used again $\dot{A}(s) =O( \rho_{\Gamma_0}(s))$.
For $J_3$ we can use an identical integration by parts argument and see that
\begin{align}\label{extracorrectionJ3}
\Big|\int_{t_1}^{t_2} J_3(s) \, ds \Big| 
  & \lesssim  
  \Big| \int_{t_1}^{t_2} e^{- 4i\lambda s} 
  \overline{A}(s)^4 \Big( \int_\R \beta \varphi^2 \frac{1}{L(L+2\lambda)} \theta \, dx\Big)\, ds \Big|
  \lesssim  \rho_{\Gamma_0}^{3/2}(t_1),
\end{align}
and, similarly, we can obtain
\begin{align}\label{extracorrectionJ2}
\Big|\int_{t_1}^{t_2} J_2(s) \, ds \Big| \lesssim \rho_{\Gamma_0}^{3/2}(t_1).
\end{align}
This concludes the proof of \eqref{ODEdampestRE2}.
We also record that from \eqref{TIII} and \eqref{extracorrectionJ1}-\eqref{extracorrectionJ2} we have
\begin{align}\label{J0toref}
T[III](s) \overline{A}(s) & = J_0(s) + J_R(s), \qquad  J_0(s) = i \beta_1 |A(s)|^4, \qquad \int_{t_1}^{t_2} J_R(s) \, ds \lesssim \Rho^{3/2}(t_1).
\end{align}
\end{proof}

\medskip
\begin{proof}[{\bf Proof of \eqref{ODEdampestRE3}}]
To estimate the contribution from the term \eqref{R_E3-defn},
we first recall the definition of `$\rest$' from \eqref{Rest} and see that we can schematically write
\begin{align}
R_{E,3}(t) & := \frac{1}{i \lambda} e^{-i\lambda t} a(t) 
  \int_\R \beta \varphi^2 \frac{1}{L}
  \Im \Big( \int_0^t e^{i(t-s)L} N_3(s) \, ds \Big) \, dx, 
  \\
  N_3(s) & := O\big( a(s) P_c(\beta \varphi v(s))\big) 
  + O\big( P_c(\beta v^2(s))\big) + O\big( a^4(s) P_c(\varphi^4) \big),
\end{align}
where all the other nonlinear terms are omitted since they contain more factors of $a$ or $\beta v$ 
and, therefore, are lower order.
To estimate these contributions we need to distinguish short-intermediate $t\lesssim \varepsilon_0^{-2}$, 
and larger times $t\gtrsim \varepsilon_0^{-2}$.

First, in view of \eqref{a prioriSGN}, and the a priori bounds, 
since all the terms in $N_3$ are localized in space, we have 
\begin{align}\label{ODEdampestRE3pr1}
{\| \lr{x}^2 N_3(s) \|}_{H^2} \lesssim 
  \Rho^{1/2}(s)\cdot\min( \varepsilon_0^{\tilde{\alpha}} \lr{s}^{-1+\widetilde{\delta_2}}, \varepsilon_0)
  + \min( \varepsilon_0^{\widetilde{\alpha}} \lr{s}^{-1+\widetilde{\delta_2}}, \varepsilon_0)^2
  + \Rho^2(s) =: F(s)
\end{align}
and we can directly verify that 
\begin{align}\label{ODEdampestRE3pr2}
\int_0^\infty F(s) \, ds \lesssim \varepsilon_0^{1-(2 \tilde{\delta_2}-\tilde{\alpha})}
\end{align}
by looking at the cases $s \lesssim \varepsilon_0^{-2}$ and $s \gtrsim \varepsilon_0^{-2}$. 

Next, using the local energy decay estimate \eqref{local-decay-faster}, we have
\begin{align}\label{ODEdampestRE3pr3}
\Big| \int_{t_1}^{t_2} R_{E,3}(t) \bar{A}(t) \, dt \Big|  
  \lesssim \Big| \int_{t_1}^{t_2} \Rho(t) \Big( \int_0^t \frac{1}{\lr{t-s}^{3/2}} F(s) \, ds \Big) dt \Big|. 
\end{align}
For $t_1 \lesssim \varepsilon_0^{-2}$, using $\Rho \lesssim \varepsilon_0^2$ and Fubini,
we can estimate
\begin{align}
\Big| \int_{t_1}^{t_2} R_{E,3}(t) \bar{A}(t) \, dt \Big|  
  \lesssim \varepsilon_0^2 \int_0^\infty F(s) \, ds,
\end{align}
which, in view of \eqref{ODEdampestRE3pr2} is stronger than the claimed \eqref{ODEdampestRE3}.

For larger times $t_1 \gtrsim \varepsilon_0^{-2}$, using \eqref{ODEdampestRE3pr3}
and splitting the inner integral we have
\begin{align}\label{ODEdampestRE3pr3'}
\begin{split}
\Big| \int_{t_1}^{t_2} R_{E,3}(t) \bar{A}(t) \, dt \Big|  
  & \lesssim \Big| \int_{t_1}^{t_2} \Rho(t) \Big( \int_0^{\frac{1}{2}t_1} 
  \frac{1}{\lr{t-s}^{3/2}} F(s) \, ds \Big) dt \Big|
  \\
  & + \Big| \int_{t_1}^{t_2} \Rho(t) \Big( \int_{\frac{1}{2}t_1}^t 
  \frac{1}{\lr{t-s}^{3/2}} F(s) \, ds \Big) dt \Big|.
\end{split}
\end{align}
The first integral is bounded by
\begin{align}
  & C\Big| \int_{t_1}^{t_2} \Rho(t) t^{-3/2} \Big( \int_0^\infty F(s) \, ds \Big) dt \Big|
  \lesssim \varepsilon_0^{1-(2 \tilde{\delta_2}-\tilde{\alpha})} t_1^{-3/2},
\end{align}
which suffices.
For the second in integral in \eqref{ODEdampestRE3pr3'} we bound $\Rho(t) \leq \Rho(s)$ 
and exchange the order of integration to bound it by
\begin{align}
& C \Big| \int_{\frac{1}{2}t_1}^{\infty} \Rho(s) F(s) \, ds \Big| \lesssim t_1^{-5/4},
\end{align}
having used the bound \eqref{ODEdampestRE3pr1} for $\tilde{\alpha},\widetilde{\delta_2}$ small enough.
\end{proof}

\medskip
\begin{proof}[{\bf Proof of \eqref{ODEdampestRE4}}]
We now look at the contribution of the last term in \eqref{R_E4-defn} and directly estimate
using the a priori assumptions
\begin{align}
\big| R_{E,4}(t) \big| 
  & \lesssim |a(t)|^4 + {\| \lr{x}^{-2} v^4 \|}_{L^\infty} 
  \lesssim \rho_{\Gamma_0}^2(t) + \min\big( \varepsilon_0^{4\alpha} \lr{t}^{-4+4\delta_2}, \varepsilon_0^4 \big)
  \lesssim \rho_{\Gamma_0}^{2}(t)
\end{align}so that
\begin{align}
\big| R_{E,4}(t)\overline A(t) \big|
  & \lesssim |A(t)|\cdot \rho_{\Gamma_0}^{2}(t)\lesssim \rho_{\Gamma_0}^{5/2}(t),
\end{align}
This concludes the proof of all the estimates for the terms on the right-hand side of \eqref{Rsplit'}
and therefore, via Lemma \ref{lemODEdamp}, we have obtained the main Proposition \ref{prop:bootstrap-on-rho}.
\end{proof}

\smallskip
\begin{remark}\label{rmk:controlR}
From the proof of Proposition \ref{prop:bootstrap-on-rho} 
and the definition of $R_Q$ in \eqref{R_Q-defn},
we can see, in particular, that for all $t>c\rho_0^{-1/2}$,
\begin{align}\label{rmk:controlRD}
|R_D(t)|&\lesssim  \Rho^{3/2}(t),
\\
\label{rmk:controlRE}
|R_E(t)|&\lesssim \Rho^{3/2}(t), 
\\
\label{rmk:controlRQ}
|R_Q(t)|&\lesssim  \varepsilon_0^{3\alpha}\Rho^{1/2}(t)\lr{t}^{-1+8\delta}. 
\end{align}
These estimates will be useful when deriving the asymptotics 
of the renormalized amplitude $B$ later on.
Note that \eqref{rmk:controlRE} can be extracted from the proof of \eqref{ODEdampestRE3} as follows:
with the same notation for $F$ in \eqref{ODEdampestRE3pr1} we have that 
\begin{align}
|R_{E,3}(t)| \lesssim \Rho^{1/2}(t) 
  \int_0^\infty \frac{1}{\lr{t-s}^{3/2}} F(s) \, ds \lesssim \Rho^{3/2}(t),
\end{align}
having used the definition of $F$ in \eqref{ODEdampestRE3pr1}.
\end{remark}


\smallskip
We conclude this section by introducing a new variable 
that varies more slowly than $A$ over time;  this will be helpful to establish
some estimates later on in the proof, and also to derive precise asymptotics and lower bounds.

\begin{lemma}
\label{lem:decay-B'}
Assume the hypotheses of Proposition \ref{prop:bootstrap-on-rho} and define 
\begin{equation}\label{B}
B := A + \frac{c}{\lambda^2} \left( \frac{e^{i\lambda t}}{2} A^2 - \frac{e^{-3i\lambda t}}{6} \overline{A}^2 - e^{-i\lambda t} |A|^2 \right), 
\end{equation}
where  $c: = \int \beta \varphi^3 dx$. Then, 
$B \to A$ as $|A| \to 0$ and for all $t> 0$
\begin{equation}\label{Bdot}
|\dot{B}(t)| \lesssim \Rho(t)^{3/2} 
  + \varepsilon_0^{3\al}\Rho^{1/2}(t)\lr{t}^{-1+8\delta}.
\end{equation}
\end{lemma}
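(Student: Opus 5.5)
This is a normal form computation: $B-A$ is chosen so that its time derivative cancels the quadratic term $P$ in \eqref{evolution-A}, up to cubic errors. We differentiate \eqref{B} using \eqref{evolution-A}, i.e. $\dot A = P + \frac{1}{2i\lambda}(\Lambda-i\Gamma)|A|^2A + R_D + R_Q + R_E$. The claim $B\to A$ as $|A|\to0$ is immediate, since the correction is quadratic in $A$.

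\textbf{Step 1 (cancellation of $P$).} Expanding \eqref{P-defn} with $c=\int\beta\varphi^3$ gives
$$P = \frac{c}{2i\lambda}\big(e^{i\lambda t}A^2 + 2e^{-i\lambda t}|A|^2 + e^{-3i\lambda t}\overline A^2\big).$$
When $\partial_t$ falls only on the exponentials in the correction of \eqref{B}, we get
$$\frac{c}{\lambda^2}\Big(\frac{i\lambda}{2}e^{i\lambda t}A^2 + \frac{3i\lambda}{6}e^{-3i\lambda t}\overline A^2 + i\lambda e^{-i\lambda t}|A|^2\Big) = \frac{ic}{2\lambda}\big(e^{i\lambda t}A^2 + e^{-3i\lambda t}\overline A^2 + 2e^{-i\lambda t}|A|^2\big) = -P,$$
since $\frac{1}{2i}=-\frac{i}{2}$. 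The signs in \eqref{B} are therefore exactly the ones that make this cancel. I would record this check explicitly, because any sign slip breaks the whole argument.

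\textbf{Step 2 (remaining terms).} After the cancellation,
$$\dot B = \frac{1}{2i\lambda}(\Lambda-i\Gamma)|A|^2A + R_D + R_E + R_Q + \frac{c}{\lambda^2}\,O\big(|A|\,|\dot A|\big),$$
where the last term collects the contributions of $\partial_t$ hitting $A$ or $\overline A$ in the correction. We bound these as follows.
\begin{itemize}
\item The cubic term is $O(|A|^3)=O(\Rho^{3/2})$ by the bootstrap assumption \eqref{rho-assumption-bootstrap}.
\item By Remark \ref{remAdotbasic}, $|\dot A|\lesssim\Rho$, so $|A||\dot A|\lesssim \Rho^{3/2}$.
\item For $t>c\rho_0^{-1/2}$, Remark \ref{rmk:controlR} gives $|R_D|+|R_E|\lesssim\Rho^{3/2}$ via \eqref{rmk:controlRD}--\eqref{rmk:controlRE}, and $|R_Q|\lesssim\varepsilon_0^{3\alpha}\Rho^{1/2}\lr{t}^{-1+8\delta}$ via \eqref{rmk:controlRQ}.
\end{itemize}
Summing these produces exactly the right-hand side of \eqref{Bdot}.

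\textbf{Step 3 (short times, the main obstacle).} For $t\le c\rho_0^{-1/2}$, Remark \ref{rmk:controlR} is not stated, so we must bound $R$ directly. The short-time proof of \eqref{ODEdampestshort} gives $|R_D|\lesssim\rho_0^{3/2}$ and $|R_{E,3}|\lesssim\rho_0^{3/2}$, but only $|R_{E,0}|,|R_{E,1}|,|R_{E,2}|\lesssim\rho_0$. Those three are too weak, so each needs a sharper estimate:
\begin{itemize}
\item $R_{E,1}$: use \eqref{dampestRE1pr2} together with the bootstrap bounds.
\item $R_{E,0}$: write it as $|a(t)|$ times a linear term in the localized data, which gives $\rho_0^{1/2}\varepsilon_0\lr{t}^{-1}\lesssim \rho_0^{3/2}+\rho_0^{1/2}\lr{t}^{-1}$.
\item $R_{E,2}$: use \eqref{short-time-Rw-local-bound} refined through the decomposition \eqref{Rw2split} and the local decay $\lr{t}^{-3/2}$.
\end{itemize}
Even so, a term like $\rho_0^{1/2}\cdot\varepsilon_0\lr{t}^{-1}$ is larger than $\varepsilon_0^{3\alpha}\Rho^{1/2}\lr{t}^{-1+8\delta}$ only by the factor $\varepsilon_0^{1-3\alpha}\lr{t}^{-8\delta}$, which is harmless. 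The second term of \eqref{Bdot} absorbs these linear-in-data contributions as long as $\varepsilon_0\le\varepsilon_0^{3\alpha}$. I expect this bookkeeping at short times, confirming that every piece of $R_E$ fits one of the two terms of \eqref{Bdot}, to be the only delicate point. Everything else reduces to the algebraic cancellation in Step 1.
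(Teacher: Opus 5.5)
Your proposal is correct and follows the paper's proof. The quadratic correction in \eqref{B} cancels $P$ exactly, leaving $\dot B = O(|A||\dot A|)+O(|A|^3)+R$, which is then bounded using $|\dot A|\lesssim\Rho$ (Remark \ref{remAdotbasic}) and Remark \ref{rmk:controlR}.

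Your Step 3 covers short times, which the paper passes over, but the sharpening there is unnecessary, and the claim that $|R_{E,j}|\lesssim\rho_0$ is ``too weak'' is false. For $t\le c\rho_0^{-1/2}$ one has $\Rho\approx\rho_0$ and $\lr{t}\lesssim\rho_0^{-1/2}$. Hence the second term of \eqref{Bdot} is $\gtrsim\varepsilon_0^{3\al}\rho_0^{1-4\delta}\approx\rho_0\,\varepsilon_0^{3\al-8\delta}\geq\rho_0$, since $8\delta>3\al$, so the crude bound $|R|\lesssim\rho_0$ from \eqref{ODEdampestshort} already suffices.
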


\begin{proof}
    Integrate \eqref{evolution-A} over the interval $[0,t]$ and integrate by parts in
$\int_0^tP(s)ds$ (using the nonvanishing phase) to find 
\begin{align*}
 A(t)=A(0)-\frac{c}{\lambda^2}\Big(\frac{e^{i\lambda s}}{2}A^2(s)-e^{-i\lambda s}|A(s)|^2-\frac{e^{-3i\lambda s}}{6}\overline{A}(s)^2\Big)\Big|_{s=0}^t   +\int_0^tR_B(s)ds
\end{align*}
where
\begin{equation*}
R_B(s) := 
  \frac{c}{\lambda^2}\bigg(-e^{-i\lambda s}\frac{d}{ds}|A|^2(s)
  -\frac{e^{-3i\lambda s}}{6}\frac{d}{ds}\overline{A}^2(s)+\frac{e^{i\lambda s}}{2}\frac{d}{ds}A^2(s)\bigg)+\frac{1}{2i\lambda}(\Lambda-i\Gamma)|A|^2A+R(s).
\end{equation*}Therefore,  
\begin{equation}\label{B'}
\dot{B}(t)=R_B(t)\approx O(|\dot{A}(t)||A(t)|)+O(|A|^3) + R(t).  
\end{equation}
The desired bound follows from $A=O(\Rho^{1/2})$, $\dot A=O(\Rho)$ and Remark \eqref{rmk:controlR}.
\end{proof}


\bigskip
\section{Bootstrap for the continuous component I: setup and basic weighted bounds}
\label{sec:PDE-part0}
In this section we begin our nonlinear analysis for the continuous component $v = P_c u.$
We first decompose the profile $f=e^{-itL}w$ 
into a `good' component ($h$) and a `bad' component ($g$) ,
and organize the nonlinear terms in Duhamel's formula for $h$.
We state the main bootstrap result in Proposition \ref{prop:bootstrap-on-v}
and then give some estimate for the `bad' component. 
We finally state the main bootstrap on the weighted-type norm of the good component in Proposition \ref{prop:good-part},
and bound some of the relatively easier nonlinear terms, while postponing the more difficult estimates
to Section \ref{sec:PDE-part2}.

\smallskip
\subsection{Good and bad decomposition and the main bootstrap}
Recall the Duhamel formula for the radiation profile (see \eqref{f-evolution}):
\begin{equation*}
\partial_t \widetilde f = e^{-it\lr{k}} \distF P_c \left[ \beta (a(t) \varphi + v(t))^2 + (a(t) \varphi + v(t))^4 \right]. 
\end{equation*}
This implies that 
\begin{equation}\label{duhamel-tilde-f-1}
\begin{aligned}
\widetilde f(t) - \widetilde f_0 & =
\int_0^t a^2(s) e^{-is\lr{k}} \widetilde{P_c (\beta \varphi^2)}(k) ds 
\\
& \qquad + 2 \int_0^t a(s) e^{-is\lr{k}} \widetilde{P_c (\beta \varphi v)}(k) ds + \int_0^t e^{-is\lr{k}} \widetilde{P_c (\beta v^2)}(k) ds 
\\
& \qquad + \int_0^t a^4(s) e^{-is\lr{k}} \widetilde{P_c (\varphi^4)}(k) ds + 4 \int_0^t a^3(s) e^{-is\lr{k}} \widetilde{P_c (\varphi^3 v)}(k) ds 
\\
& \qquad + 6 \int_0^t a^2(s) e^{-is\lr{k}} \widetilde{P_c (\varphi^2 v^2)}(k) ds + 4 \int_0^t a(s) e^{-is\lr{k}} \widetilde{P_c (\varphi v^3)}(k) ds 
\\
& \qquad + \int_0^t e^{-is\lr{k}} \widetilde{P_c (v^4)}(k) ds.
\end{aligned}
\end{equation}
Note that, using \eqref{a-in-A-defn}, we can rewrite the first line of the right-hand side of \eqref{duhamel-tilde-f-1} as
\begin{equation}
\label{duhamel-tilde-f-line1}
\begin{aligned}
& \int_0^t a^2(s) e^{-is\lr{k}} \widetilde{P_c (\beta \varphi^2)}(k) ds 
\\
& = \int_0^t e^{-is(\lr{k}-2\lambda)} B^2(s) \widetilde{P_c (\beta \varphi^2)}(k) ds + \int_0^t e^{-is(\lr{k}-2\lambda)} (A^2(s) - B^2(s)) \widetilde{P_c (\beta \varphi^2)}(k) ds
\\
& + 2 \int_0^t e^{-is \lr{k}} |A|^2(s) \widetilde{P_c (\beta \varphi^2)}(k) ds + \int_0^t e^{-is(\lr{k}+2\lambda)} \overline{A}^2(s) \widetilde{P_c (\beta \varphi^2)}(k) ds. 
\end{aligned}
\end{equation}

Since $\lr{k} - 2\lambda$ vanishes at frequency $|k| = k_0 = \sqrt{4\lambda^2-1}$, 
we localize the corresponding resonant term in $\wt{f}$ around $k_0$. 
This leads to the introduction of a `bad' variable $g$ and a `good' variable $h$ as follows:
\begin{equation}\label{bad-g-defn}
\widetilde{g} (t, k) := - \phi_{\leq -C} (\lr{k} - 2\lambda) 
  \int_0^t e^{-is(\lr{k}-2\lambda)} B^2(s) \widetilde{P_c (\beta \varphi^2)}(k) ds,
  \qquad \widetilde{h} := \widetilde{f} + \widetilde{g}. 
\end{equation}
where, recall, $B$ is defined in \eqref{B}.
By rearranging the terms we have the following:

\begin{lemma}\label{lem:h}
Let $\widetilde{h}$ be defined as in \eqref{bad-g-defn}, then
\begin{equation}
\label{tilde-h-decomposition}
h(t) = f_0 + S(t) + M (t) + Q(t) + R_{v,a}(t),
\end{equation}
where $S$ denotes the source terms from \eqref{duhamel-tilde-f-line1} which we write as
\begin{equation}
\label{term-S}
\begin{aligned}
&S = S_1 + S_2 + S_3, 
\\ 
&\widetilde S_1 (t) := \left(1 - \phi_{\leq -C} (\lr{k} - 2\lambda) \right) \int_0^t e^{-is(\lr{k}-2\lambda)} B^2(s) \widetilde{P_c (\beta \varphi^2)}(k) ds, 
\\
&\widetilde S_2 (t):= \int_0^t e^{-is(\lr{k}-2\lambda)} (A^2(s) - B^2(s)) \widetilde{P_c (\beta \varphi^2)}(k) ds, 
\\
&\widetilde S_3 (t):= 2 \int_0^t e^{-is \lr{k}} |A|^2(s) \widetilde{P_c (\beta \varphi^2)}(k) ds 
  + \int_0^t e^{-is(\lr{k}+2\lambda)} \overline{A}^2(s) \widetilde{P_c (\beta \varphi^2)}(k) ds,
\end{aligned}
\end{equation}
$M$ denotes the mixed term defined by
\begin{equation}
\label{term-M}
\begin{aligned}
\widetilde M (t):= 2 \int_0^t a(s) e^{-is\lr{k}} \widetilde{P_c (\beta \varphi v)}(k) ds,
\end{aligned}
\end{equation}
$Q$ denotes the quartic term defined by
\begin{equation}
\label{term-Q}
\widetilde Q (t):= \int_0^t e^{-is\lr{k}} \widetilde{P_c (v^4)}(k) ds,
\end{equation}
and $R_{v,a}$ denotes the remaining terms given by
\begin{equation}
\label{term-RM}
\begin{aligned}
\widetilde{R_{v,a}}(t) := &\int_0^t e^{-is\lr{k}} \widetilde{P_c (\beta v^2)}(k) ds + \int_0^t a^4(s) e^{-is\lr{k}} \widetilde{P_c (\varphi^4)}(k) ds + 4 \int_0^t a^3(s) e^{-is\lr{k}} \widetilde{P_c (\varphi^3 v)}(k) ds \\
& \qquad + 6 \int_0^t a^2(s) e^{-is\lr{k}} \widetilde{P_c (\varphi^2 v^2)}(k) ds + 4 \int_0^t a(s) e^{-is\lr{k}} \widetilde{P_c (\varphi v^3)}(k) ds. 
\end{aligned}
\end{equation}
\end{lemma}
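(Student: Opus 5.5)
The identity in Lemma \ref{lem:h} is pure bookkeeping, and the plan is to check it directly on the distorted Fourier side. Since $\wt{\mathcal F}$ is injective on $P_cL^2$ by Lemma \ref{lemma-distorted-fourier}, it suffices to show that $\wt{h}$ equals $\wt{f}_0+\wt S+\wt M+\wt Q+\wt{R_{v,a}}$. All these terms lie in $P_cL^2$, because each is built from $P_c(\cdot)$ or is a bounded multiplier in $k$ applied to such a transform.

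The first step is to start from the Duhamel formula \eqref{duhamel-tilde-f-1}. It comes from integrating \eqref{f-evolution} and then expanding $\beta(a\varphi+v)^2+(a\varphi+v)^4$ binomially. In the expansion, the term $a^2\beta\varphi^2$ gives the first line. The term $2a\beta\varphi v$ gives $\wt M$ as in \eqref{term-M}. The term $v^4$ gives $\wt Q$ as in \eqref{term-Q}. The remaining terms, $\beta v^2$, $a^4\varphi^4$, $4a^3\varphi^3v$, $6a^2\varphi^2v^2$ and $4a\varphi v^3$, are exactly the five integrals that make up $\wt{R_{v,a}}$ in \eqref{term-RM}.

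Next, the first line is rewritten using \eqref{duhamel-tilde-f-line1}. That identity follows by inserting $a^2=A^2e^{2i\lambda s}+2|A|^2+\overline A^2e^{-2i\lambda s}$ from \eqref{a-in-A-defn}, and then adding and subtracting $B^2$ in the resonant piece. In this way the first line becomes
\[
\int_0^t e^{-is(\lr{k}-2\lambda)}B^2\,\widetilde{P_c(\beta\varphi^2)}\,ds+\wt S_2+\wt S_3.
\]

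The last step is to split the $B^2$ integral with the identity $1=\phi_{\leq -C}(\lr{k}-2\lambda)+\big(1-\phi_{\leq -C}(\lr{k}-2\lambda)\big)$. The piece carrying $1-\phi_{\leq -C}$ is $\wt S_1$. The piece carrying $\phi_{\leq -C}$ is exactly $-\wt g$ by \eqref{bad-g-defn}. Adding $\wt g$ to $\wt f$ cancels this piece, and what remains is $\wt h=\wt f_0+\wt S_1+\wt S_2+\wt S_3+\wt M+\wt Q+\wt{R_{v,a}}$. Applying $\wt{\mathcal F}^{-1}$ then gives \eqref{tilde-h-decomposition}.

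There is no real analytic obstacle here. The only point that needs care is matching the binomial coefficients and signs, in particular the sign convention $\wt h=\wt f+\wt g$. One should also confirm that each time integral makes sense for fixed finite $t$, which holds because the integrands are bounded in $L^2_k$ under the a priori bounds.
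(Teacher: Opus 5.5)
Your proposal is correct and takes the same route as the paper. The paper obtains the lemma by rearranging the expanded Duhamel formula \eqref{duhamel-tilde-f-1}, rewriting its first line via \eqref{duhamel-tilde-f-line1} (inserting $a=Ae^{i\lambda s}+\overline{A}e^{-i\lambda s}$ and $A^2=B^2+(A^2-B^2)$), and cancelling the localized resonant $B^2$ piece against the definition of $\wt g$ in \eqref{bad-g-defn}; your bookkeeping of binomial coefficients and of the sign convention $\wt h=\wt f+\wt g$ is accurate.
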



Based on the above Duhamel's formula for $h$ we can proceed to prove our main bootstrap
proposition where we control the weighted norm of the good part with a slow growth in time,
and prove almost sharp decay and local decay for $v$, as well as control its high Sobolev norm.

\begin{proposition}\label{prop:bootstrap-on-v}
Let $\alpha,\delta,\delta_1,\delta_2$ be as in \eqref{choice-of-csts}.  
Assume that the following a priori assumptions are satisfied for $T>0$:  
\begin{equation}
\label{rho-assumption-bootstrap-pde}
\sup\limits_{t \in [0, T]} \left| \left(1 + \rho_0 \frac{\Gamma}{\lambda} t \right) \frac{\rho(t)}{\rho_0} - 1 \right| < \frac{1}{2},
\end{equation}
and 
\begin{equation}
\label{w-assumptions-bootstrap-pde}
\begin{aligned}
&\sup\limits_{t \in [0, T]} \lr{t}^{-\delta_1} {\big\|\partial_k \wt{h}(t)\big\|}_{L^2_k} 
  \leq 2\varepsilon_0^{\al},
\\
& \sup\limits_{t \in [0, T]} 
  \lr{t}^{1/2-\delta} {\big\|L^{-1} w(t)\big\|}_{L_x^\infty} 
  \leq 2\varepsilon_0^{\al}, 
  \\
&\sup\limits_{t \in [0, T]} {\big\|L^{-1} w(t) \big\|}_{H_x^N} \leq 2C \varepsilon_0, 
\\
& \sup\limits_{t \in [0, T]} \lr{t}^{1-\delta_2} \left( 
  {\big\|\lr{x}^{-2} \big(L^{-1} e^{\pm itL}f^{(\pm)}(t)\big)_S \big\|}_{L_x^\infty}
  + {\big\|\lr{x}^{-2} \big(L^{-1} e^{\pm itL}f^{(\pm)}(t)\big)_R\big\|}_{L_x^\infty}\right) \leq 2 \varepsilon_0^{\al},
\end{aligned}
\end{equation}
for some sufficiently large $C>1$, where $f^+=f, f^-=\overline{f}$.
Then, we have the improved bounds
\begin{equation}
\label{w-conclusion-bootstrap}
\begin{aligned}
&\sup\limits_{t \in [0, T]} \lr{t}^{-\delta_1} {\big\|\partial_k \wt{h}(t)\big\|}_{L^2_k}  \leq  \varepsilon_0^{\al} ,
\\
& \sup\limits_{t \in [0, T]} \lr{t}^{1/2-\delta} 
  {\big\|L^{-1} w(t)\big\|}_{L_x^\infty}
  \leq \varepsilon_0^{\al}, 
\\
&\sup\limits_{t \in [0, T]} {\big\|L^{-1} w(t)\big\|}_{H_x^N} \leq C\varepsilon_0,
\\
& \sup\limits_{t \in [0, T]} \lr{t}^{1-\delta_2} \left(
  {\big\|\lr{x}^{-2} \big(L^{-1} e^{ \pm i tL}f^{(\pm)}(t)\big)_S\big\|}_{L_x^\infty}
  + {\big\|\lr{x}^{-2} \big(L^{-1} e^{\pm itL}f^{(\pm)}(t)\big)_R\big\|}_{L_x^\infty}\right) \leq \varepsilon_0^{\al}. 
\end{aligned}
\end{equation}
\end{proposition}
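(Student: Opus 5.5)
The plan is to recover each of the four bounds in \eqref{w-conclusion-bootstrap} separately, working from the Duhamel decomposition of Lemma \ref{lem:h}, $h = f_0 + S + M + Q + R_{v,a}$, together with $f = h - g$. Throughout we use the ODE bounds from Proposition \ref{prop:bootstrap-on-rho}: $|A|\approx\rho_{\Gamma_0}^{1/2}$, $|\dot A|\lesssim\rho_{\Gamma_0}$, and, via Lemma \ref{lem:decay-B'}, the stronger bound \eqref{Bdot} on $\dot B$.

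\textbf{Weighted norm of $h$.} We start with the weighted bound on $\partial_k\wt h$.
\begin{itemize}
\item The data term is controlled by \eqref{mainthmdata} and \eqref{derivative-in-fourier-bound}.
\item For $S_1$, the cutoff $1-\phi_{\le -C}$ keeps $|\lr{k}-2\lambda|\gtrsim 2^{-C}$. We integrate by parts in $s$ and then use $|\dot B|$ from \eqref{Bdot}. When $\partial_k$ hits the phase it produces a factor $s$, which is compensated by the extra decay of $\dot B$; the resulting integrals $\int s\rho_{\Gamma_0}^{3/2}\,ds$ and $\int s\,\varepsilon_0^{3\alpha}\rho^{1/2}\lr{s}^{-1+8\delta}\,ds$ grow at most like $\lr{t}^{\delta_1}$ times a small power of $\varepsilon_0$.
\item For $S_2$, we have $A^2-B^2=O(|A|^3)$, and $S_3$ has non-vanishing phases. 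In both cases the growing factor $s$ is handled either by integrating by parts in $s$, or, where a $\partial_k$ lands on the phase, by the smoothing estimate \eqref{smooth-est}.
\item The localized terms $M$ and $R_{v,a}$ are handled by \eqref{smooth-est} with $p=2$ after commuting $\partial_k$ through the distorted plane waves, using the local decay assumption, which gives $\lr{x}^{-2}v\lesssim\varepsilon_0^\alpha\lr{s}^{-1+\delta_2}$, so that $\|s\,a\,\lr{x}^{-2}v\|_{L^2_s}$ grows only slowly. Here we split $v$ into its $h$ part and its $g$ part and use Lemma \ref{lem:local-estimate-time-cut-off} for the $g$ part.
\item The quartic term $Q$ is the main obstacle. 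We split $\mu=\mu_S+\mu_R$. The regular part is localized, so it is handled as $M$ is. For the singular part we follow \eqref{sing-terms}–\eqref{fW}: convert $\partial_k$ into $\partial_{k_j}$, which is possible because the phase is a function of $k-\sum\pm k_j$ up to the $\lr{\cdot}$ factors, then return to physical space. This gives $\int_0^t\|\partial_k\wt f\|_{L^2}\|v\|_{L^\infty}^3\,ds$. Since $\|\partial_k\wt g\|\lesssim s^{1/2}$, we obtain $\lesssim\varepsilon_0^{3\alpha}\lr{t}^{3\delta}$, which is fine since $3\delta\ll\delta_1$.
\end{itemize}

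\textbf{$L^\infty$ decay.} The sharper decay rate $\lr{t}^{-1/2+\delta}$ is the delicate step. For $e^{itL}h$ we would use Lemma \ref{lemlininterp}, but a direct application loses $t^{\delta_1/2}$, because $\|\partial_k\wt h\|\lesssim t^{\delta_1}$ costs $t^{\delta_1/2}$ after interpolation. So for the quartic Duhamel piece we instead decompose time dyadically with $\tau_n$. On $[2^{n-1},2^{n+1}]$ we apply the dispersive estimate \eqref{dispersive-estimates0} to the localized-in-time Duhamel integral. Its contribution to $\partial_k$ is $\lesssim 2^{n(3\delta)}$, while its $L^2$/$H^4$ size is $\lesssim 2^{n}\cdot 2^{-3n/2}$ from $\|v^4\|_{L^2}\lesssim\|v\|_\infty^3\|v\|_{L^2}$. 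Interpolating these two via \eqref{interpestpr2} gives decay $\lr{t}^{-1/2}2^{-n/4+}$, which sums over $n$. The remaining terms of $h$ have $\partial_k$ bounds with only logarithmic or $\varepsilon_0$-small growth, so Lemma \ref{lemlininterp} suffices for them.

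For $e^{itL}g$ we use the explicit formula: integrate by parts in $s$ near the resonance and use Sokhotsky's formula \eqref{Sokhotsky}, as in Lemma \ref{lemevolw}. This gives $\|L^{-1}e^{itL}g\|_\infty\lesssim|B|^2+\lr{t}^{-1/2}\log$, and $|B|^2\lesssim\rho_{\Gamma_0}\lesssim\lr{t}^{-1}$ is acceptable.

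\textbf{Local decay.} For $h$, local decay follows from \eqref{local-l-infty-decay-sing} and \eqref{local-l-infty-decay-reg}, together with the weighted bound above and the time-dyadic refinement, which converts $\lr{t}^{-1}\|\wt h\|_{H^1}\lesssim\lr{t}^{-1+\delta_1}$ into $\lr{t}^{-1+\delta_2}$. For $g$, we use the refined estimate \eqref{local-estimate-time-cut-off-refined} with $2^n\approx t^{-2/3}$, combined with the explicit resolvent form and Lemma \ref{lemlocal-decay-0-plus}.

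\textbf{Energy bound.} The $H^N$ bound comes from Duhamel in $L^2$-based spaces. The localized terms are controlled using smoothing and \eqref{a prioriSGN}. The quadratic source $a^2\theta$ is controlled by integrating by parts in time away from the resonance and by \eqref{smooth-est} near it, giving an $O(\varepsilon_0)$ bound. The quartic term is controlled by $\int\|v\|_\infty^3\|v\|_{H^N}\,ds$. Since $\|v\|_\infty^3\sim s^{-3/2+3\delta}$ is integrable, this gives $\lesssim\varepsilon_0^{3\alpha}\cdot C\varepsilon_0\ll C\varepsilon_0$.
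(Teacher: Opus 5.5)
Your weighted bound for $h$, the dyadic treatment of the quartic term and the $H^N$ bound follow the paper's route (Proposition \ref{prop:good-part}, Lemmas \ref{lemreduction} and \ref{lem:Q-in-Linfty}, Proposition \ref{prop:Sobolev}). There is one bookkeeping slip in $S_1$: after integrating by parts in $s$ the integrand is $s|B||\dot B|\lesssim s\Rho^2+\varepsilon_0^{3\alpha}s\Rho\lr{s}^{-1+8\delta}$. The integrals you wrote, which lack the factor $|B|$, grow like $t^{1/2}$.

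\textbf{The gap.} It lies in the pointwise and local decay bounds, where you split $L^{-1}w=L^{-1}e^{itL}h-L^{-1}e^{itL}g$ and argue through profile norms.

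For $g$, integrating by parts in $s$ with \eqref{Sokhotsky} does not give $|B(t)|^2+\lr{t}^{-1/2}\log$. It also produces the boundary term $B^2(0)\,L^{-1}e^{itL}(L-2\lambda+i0)^{-1}\phi_{\leq -C}(L-2\lambda)\theta$, and a Duhamel integral of the same operator against $B\dot B$. For each fixed $t$, the function $e^{itL}(L-2\lambda+i0)^{-1}\phi_{\leq -C}(L-2\lambda)\theta$ has an oscillating tail $\propto \wt{\theta}(\mp k_0)e^{\mp ik_0x}$ as $x\to\pm\infty$. This tail comes from the pole, where $e^{it(\lr{k}-2\lambda)}=1$, so it does not depend on $t$. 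By the Fermi Golden Rule, the $L^\infty_x$ norm of this function is therefore bounded below uniformly in $t$. Consequently the boundary term has size $\varepsilon_0^2$ in $L^\infty_x$ for all times, and the integral term does not decay when estimated directly. Decay holds only for their sum. This decomposition is harmless only for localized norms, via Lemma \ref{lemlocal-decay-0-plus}.

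For the remaining parts of $h$, the weighted norms do not grow only logarithmically. For $M$, the a priori local decay is only $\lr{s}^{-1+\delta_2}$, so the available bound is $\|\partial_k\wt{M}(t)\|_{L^2_k}\lesssim\varepsilon_0^{\alpha}\|\Rho^{1/2}\lr{s}^{\delta_2}\|_{L^2_s([0,t])}\approx\varepsilon_0^\alpha\delta_2^{-1/2}\lr{t}^{\delta_2}$. Two consequences follow:
\begin{itemize}
\item Lemma \ref{lemlininterp} gives $\lr{t}^{-1/2+\delta_2/2}\log(2+t)$, which loses against $\lr{t}^{-1/2+\delta}$ for large $t$ because $\delta_2/2=5\delta>\delta$.
\item \eqref{local-l-infty-decay-sing} gives $\varepsilon_0^\alpha\delta_2^{-1/2}\lr{t}^{-1+\delta_2}$, which neither improves the bootstrap constant nor reaches \eqref{local-decay-w}.
\end{itemize}
Your time-dyadic refinement works for $Q$ only because of Lemma \ref{lemreduction}; it does not help for $M$.

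\textbf{The missing idea.} No profile splitting is needed for these bounds. The paper estimates $L^{-1}w-I_Q$ directly from the Duhamel formula \eqref{L-1w}. Every source other than $v^4$ is spatially localized, so \eqref{dispersive-estimates} and \eqref{improved-local-decay-sing/reg} apply inside the time integral, with kernels $\lr{t-s}^{-1/2}$ and $\lr{t-s}^{-3/2}$. For instance,
\[
\|I_a(t)\|_{L^\infty_x}\lesssim\int_0^t\lr{t-s}^{-1/2}\Rho(s)\,ds\lesssim\lr{t}^{-1/2}\log(1+\rho_0t),
\qquad
\|\lr{x}^{-2}(I_a(t))_S\|_{L^\infty_x}\lesssim\int_0^t\lr{t-s}^{-3/2}\Rho(s)\,ds\lesssim\Rho(t).
\]
This handles the resonant forcing, and hence $g$, with no cancellation issue. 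The terms $I_v$ and $I_{a,v}$ are treated the same way using the interpolation bound \eqref{gagliardo-local-1}. They give $\varepsilon_0^{2\alpha}\lr{t}^{-1/2}$ in $L^\infty_x$ and at least $\varepsilon_0^{2\alpha}\lr{t}^{-5/4}$ for the local norms. Weighted profile bounds are needed only for the non-localized term $I_Q=L^{-1}e^{itL}Q$, through the dyadic pieces $Q_m$; there your argument agrees with the paper's.
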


\medskip
\subsection{Basic bounds for the good and bad parts}
We first record some estimates on the bad part.

\begin{lemma}\label{lem:bad-part-bds} 
Let $\widetilde{g}$ be defined in \eqref{bad-g-defn}. Then, under the assumption 
of Proposition \ref{prop:bootstrap-on-v} we have for all $t>0$, the estimates 
\begin{align}
\label{g-in-H1}
& {\| \widetilde{g}(t,\cdot) \|}_{L^2_k}\lesssim \varepsilon_0,  
\\
\label{nablag-in-H1} 
& {\|\partial_k \widetilde{g}(t,\cdot)\|}_{L^2_k}\lesssim 
\min \big( \varepsilon_0^2 \lr{t}^{3/2} , \lr{t}^{1/2}\big).
\end{align}

\end{lemma}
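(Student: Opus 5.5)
The plan is to work directly from the explicit formula \eqref{bad-g-defn}. On the support of $\phi_{\leq -C}(\lr{k}-2\lambda)$, $|k|$ stays in a compact set bounded away from $0$, and $\widetilde{P_c(\beta\varphi^2)}$ is smooth and bounded with bounded derivative there. Throughout I will use $|B(s)|\lesssim \Rho^{1/2}(s)$, which follows from \eqref{B} and \eqref{rho-assumption-bootstrap-pde}, together with $|\dot B|$ from Lemma \ref{lem:decay-B'}.

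\emph{Proof of \eqref{g-in-H1}.} I would apply the smoothing estimate of Lemma \ref{lem:smoothing-effect} with $p=p'=2$. Write $\widetilde{g}(t,k)$ as $-\phi_{\leq -C}(\lr{k}-2\lambda)\int_0^t e^{-is\lr{k}}\int \overline{\psi(x,k)}\,e^{2i\lambda s}B^2(s)\beta\varphi^2(x)\,dx\,ds$. Choose $\mathcal{Q}$ and $\Psi$ as in Remark \ref{rem:smoothing-effect}; the weight $\lr{x}$ is absorbed by the localization of $\beta\varphi^2$. The lemma then gives
\[
\|\widetilde{g}(t)\|_{L^2_k}\lesssim \|B^2\|_{L^2_s} \lesssim \Big(\int_0^\infty \Rho^2(s)\,ds\Big)^{1/2}\lesssim \Big(\int_0^\infty \frac{\rho_0^2}{(1+\rho_0 s)^2}\,ds\Big)^{1/2}\approx \rho_0^{1/2}\approx\varepsilon_0 .
\]
The cutoff is harmless because it is a bounded multiplier.

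\emph{Proof of \eqref{nablag-in-H1}.} The derivative $\partial_k$ can land in three places.
\begin{itemize}
\item On the cutoff or on $\widetilde{P_c(\beta\varphi^2)}$: this gives terms like $\widetilde g$ itself, bounded by $\varepsilon_0$ as above.
\item On the phase: this produces $-i\frac{k}{\lr{k}}\int_0^t s\,e^{-is(\lr{k}-2\lambda)}B^2(s)\,ds$.
\end{itemize}
For the phase term I would again apply Lemma \ref{lem:smoothing-effect} with source $sB^2(s)\beta\varphi^2$. This gives
\[
\|sB^2\|_{L^2_s([0,t])}\lesssim \Big(\int_0^t s^2\Rho^2(s)\,ds\Big)^{1/2}.
\]
For $t\lesssim\varepsilon_0^{-2}$ we have $\Rho\approx\varepsilon_0^2$, so this is $\lesssim \varepsilon_0^2 t^{3/2}$. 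For larger $t$ we have $\Rho(s)\lesssim s^{-1}$, so the contribution is $\lesssim t^{1/2}$. Hence the bound is $\min(\varepsilon_0^2\lr{t}^{3/2},\lr{t}^{1/2})$; the minimum is consistent because the two expressions cross at $t\approx\varepsilon_0^{-2}$. The crude estimate of $\widetilde g$ itself is dominated by this at all relevant times. For very small $t$ I would instead use the trivial pointwise bound $|\partial_k\widetilde g|\lesssim \int_0^t (1+s)\Rho\,ds$.

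\emph{Main obstacle.} The only delicate point is checking that the smoothing lemma applies on the support of the cutoff. The Jacobian $\lr{k}/|k|$ stays bounded there since $|k|\approx k_0>0$, and $\Psi=T$ satisfies $|T(k)|\lesssim |k|/\lr{k}$ by Lemma \ref{lem:T-R-prop}. I would also check that the $\lr{x}$ growth of $m_\pm$ for $\pm x\leq 1$ is absorbed by the fast decay of $\beta\varphi^2$, which it is.
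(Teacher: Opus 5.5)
Your proposal is correct and follows the paper's proof essentially line by line. Lemma \ref{lem:smoothing-effect}, with $\mathcal{Q},\Psi$ as in Remark \ref{rem:smoothing-effect}, applied to the source $B^2(s)\lr{x}\beta\varphi^2$ gives \eqref{g-in-H1}. Applied to $sB^2(s)\beta\varphi^2$, it gives the main term of $\partial_k\widetilde g$ via $\|sB^2\|_{L^2_s([0,t])}\lesssim\min(\varepsilon_0^2t^{3/2},t^{1/2})$, and the terms where $\partial_k$ hits the cutoff or $\widetilde{P_c(\beta\varphi^2)}$ are easier.

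There is one bookkeeping slip. The $O(\varepsilon_0)$ bound you quote for those easier terms is not dominated by $\varepsilon_0^2\lr{t}^{3/2}$ when $1\lesssim t\lesssim\varepsilon_0^{-2/3}$. Your pointwise bound $\int_0^t(1+s)\Rho(s)\,ds$ only covers $t\lesssim 1$. Keeping the interval $[0,t]$ in the smoothing estimate, $\|B^2\|_{L^2_s([0,t])}\lesssim\min(\varepsilon_0^2t^{1/2},\varepsilon_0)$, fixes this for all $t$.
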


\medskip
\begin{proof}[Proof of Lemma \ref{lem:bad-part-bds}]
Let us start with the proof of \eqref{g-in-H1}. Since $|k| \approx 1$
on the support of $\widetilde{g}$ (constant here are understood to depend on the fixed $\lambda$),
using the smoothing-type estimate in Lemma \ref{lem:smoothing-effect}
(see Remark \ref{rem:smoothing-effect} as well), we find that
\begin{equation*}
\begin{aligned}
\|\wt{g}(t,\cdot)\|_{L^2_k} & = \left\|\int_0^t \phi_{\leq -C}(\lr{k}-2\lambda) 
  B^2(s) e^{-is(\lr{k}-2\lambda)} \widetilde{\beta \varphi^2}(k) ds \right\|_{L_k^2} 
  \lesssim \|B^2(s) \lr{x}\beta\varphi^2\|_{L_x^1 L_s^2[0,t]} \lesssim \varepsilon_0.
\end{aligned}
\end{equation*}
For the weighted estimate \eqref{nablag-in-H1}, we write 
\begin{align*}
\|\partial_k \widetilde{g}(t,\cdot)\|_{L^2_k}\lesssim \| I(t,\cdot)\|_{L_k^2}
  +\| II(t,\cdot)\|_{L_k^2} 
  + \| III(t,\cdot)\|_{L_k^2}   
\end{align*}
where 
\begin{equation}\label{partial_kg-tilde}
\begin{aligned}
I(t,k) & := \frac{k}{\langle k\rangle}\phi_{\leq -C}(\langle k\rangle-2\lambda) \widetilde{\beta \varphi^2}(k)  \int_0^tsB^2(s)e^{-is(\langle k\rangle-2\lambda)} ds,
\\
II(t,k) & := \phi_{\leq -C}(\langle k\rangle-2\lambda)  \partial_k \widetilde{\beta \varphi^2}(k)\int_0^tB^2(s)e^{-is(\langle k\rangle-2\lambda)} ds, 
\\
III(t,k) & := \partial_k(\phi_{\leq -C}(\langle k\rangle-2\lambda)) \widetilde{\beta \varphi^2}(k) \int_0^tB^2(s)e^{-is(\langle k\rangle-2\lambda)} ds.
\end{aligned}
\end{equation}
The estimate for $I$ is again a direct consequence of Lemma \ref{lem:smoothing-effect};
in fact,
\begin{equation}
\begin{aligned}
\| I\|_{L^2_k} & = \left\|\int_0^t s B^2(s) \frac{k}{\lr{k}} \phi_{\leq -C}(\lr{k}-2\lambda) 
  e^{-is(\lr{k}-2\lambda)} \widetilde{\beta \varphi^2}(k) ds \right\|_{L_k^2} 
  \\
& \lesssim \|s B^2(s) \varphi^2(x)\|_{L_x^1 L_s^2 [0,t]} 
  \lesssim \|s B^2(s)\|_{L_s^2 [0,t]} 
  \lesssim 
  \min \big( \varepsilon_0^2t^{3/2} , t^{1/2}\big).
\end{aligned}
\end{equation}
The claimed estimates 
for the terms $II$ and $III$ are easier to obtained.
\end{proof}

Next, we record an additional decay estimate to be used in the course
of the nonlinear analysis in Section \ref{sec:Quartic}.

\begin{lemma}\label{lem:BoundednessW}
Recall the notation for the coefficients $A_{\star,\dagger}$ in \eqref{A-functions}.
Define 
\begin{equation}\label{lemdecaytousew} 
W(t,x) := \langle D\rangle^{-1}
e^{it\langle D\rangle}
  \widehat{\mathcal{F}}^{-1}\big( A_{\ast,\dagger}\widetilde{f}(t,\cdot) \big)(x),
  \qquad \ast,\dagger \in \{+,-\},
\end{equation}
where $f$ is the profile of $w$.
Then, under the a priori assumptions \eqref{w-assumptions-bootstrap-pde}, we have  for all $t>0$,  
\begin{align}
   \|W(t,\cdot)\|_{L^\infty}\lesssim \varepsilon_0^\al \lr{t}^{-1/2+2\delta}.
\end{align}
\end{lemma}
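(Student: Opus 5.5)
We prove the estimate by treating $W$ as a \emph{flat} Klein--Gordon evolution whose profile is $\widehat{\mathcal F}^{-1}(A_{\ast,\dagger}\wt f)$, and by splitting $\wt f=\wt h-\wt g$ as in \eqref{bad-g-defn}. Concretely,
\[
W=W_h-W_g,\qquad W_h:=\lr{D}^{-1}e^{it\lr{D}}\widehat{\mathcal F}^{-1}\big(A_{\ast,\dagger}\wt h\big),\qquad W_g:=\lr{D}^{-1}e^{it\lr{D}}\widehat{\mathcal F}^{-1}\big(A_{\ast,\dagger}\wt g\big).
\]
The two pieces are then handled by different arguments.

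\textbf{Good part $W_h$.} We apply the flat dispersive estimate \eqref{disp-est-stand-fourier} to the profile $A_{\ast,\dagger}\wt h$.
\begin{itemize}
\item The coefficients $A_{\ast,\dagger}$ are piecewise smooth with a jump at $k=0$. However, $\wt h(0)=0$: indeed $\wt f(0)=0$ by genericity (Lemma \ref{lemma-distorted-fourier}), and $\wt g$ is supported near $|k|=k_0\neq 0$.
\item Moreover $A_{\ast,\dagger}(0)$ equals $1$, $T(0)=0$, or $R_\pm(0)=-1$. Using \eqref{uniform-estimate-T-R}, the product $A_{\ast,\dagger}\wt h$ is continuous and
\[
\|\partial_k(A_{\ast,\dagger}\wt h)\|_{L^2}\lesssim\|\partial_k\wt h\|_{L^2}+\|\lr{k}^{-1}\wt h\|_{L^2}.
\]
No delta mass appears at $k=0$ precisely because $\wt h(0)=0$.
\item By Hardy's inequality (again using $\wt h(0)=0$) and the isometry, this gives
\[
\|W_h(t)\|_{L^\infty}\lesssim \lr{t}^{-1/2}\big(\|\partial_k\wt h\|_{L^2}+\|\lr{k}^2\wt f\|_{L^2}+\|\lr{k}^2\wt g\|_{L^2}\big)\lesssim \varepsilon_0^{\al}\lr{t}^{-1/2+\delta_1}.
\]
\end{itemize}
This falls short of the target, since $\delta_1\gg 2\delta$. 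To fix it we interpolate, as in Lemma \ref{lemlininterp}, between the weighted norm and the high Sobolev bound $\|L^{-1}w\|_{H^N}\lesssim\varepsilon_0$. Running the Littlewood--Paley argument of that lemma in the flat setting yields
\[
\|W_h\|_{L^\infty}\lesssim \lr{t}^{-1/2}\log(2+t)\,\big(\varepsilon_0^{1/2}(\varepsilon_0^{\al}\lr{t}^{\delta_1})^{1/2}+\varepsilon_0\big).
\]
This still loses $t^{\delta_1/2}$.

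The better route is to compare $W$ with $L^{-1}e^{itL}f$ itself. The singular part $(L^{-1}e^{itL}f)_S$ is exactly $\sigma_\pm$ times sums of flat waves of the type $W$. Because $\sigma_+$ equals $1$ for $x\geq 2$, for $x\gg1$ one has
\[
W_{+,+}+W_{+,-}=(L^{-1}e^{itL}f)_S=L^{-1}e^{itL}f-(\cdots)_R.
\]
Here the regular part is localized and controlled by the local decay bootstrap, and the full flow is controlled by the $L^\infty$ bootstrap, which gives $\varepsilon_0^\al\lr{t}^{-1/2+\delta}$. Each individual $W_{\ast,\dagger}$ is then isolated by frequency sign: on $k>0$, $e^{ikx}$ is outgoing and $e^{-ikx}$ is incoming, so for $|x|\gtrsim t$ versus $|x|\lesssim t$ non-stationary phase separates them, with an acceptable loss. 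In the complementary region $|x|\lesssim 1$ we use the local decay bound \eqref{local-l-infty-decay-sing'}.

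\textbf{Bad part $W_g$.} Since $\wt g$ is supported near $|k|=k_0$, we can use the explicit form of $\wt g$ in \eqref{bad-g-defn} and write
\[
W_g=\int_0^t B^2(s)\,e^{2i\lambda s}\,\lr{D}^{-1}e^{i(t-s)\lr{D}}\,\Phi\,ds,\qquad \Phi:=\widehat{\mathcal F}^{-1}\big(A_{\ast,\dagger}\phi_{\leq -C}\,\widetilde{\beta\varphi^2}\big),
\]
where $\Phi$ is Schwartz. The flat $L^1$--$L^\infty$ estimate gives
\[
\|W_g(t)\|_{L^\infty}\lesssim\int_0^t\rho_{\Gamma_0}(s)\lr{t-s}^{-1/2}ds\lesssim \lr{t}^{-1/2}\log(2+\varepsilon_0^2t),
\]
which is acceptable, as it is better than the required $\lr{t}^{-1/2+2\delta}$.

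\textbf{Main obstacle.} The hard step is the good part: avoiding the $t^{\delta_1}$ loss from the weighted norm. The plan above handles it by deriving the bound from the $L^\infty$ and local decay bootstrap assumptions, via the $S/R$ decomposition and a separation of incoming and outgoing waves, rather than from $\partial_k\wt h$ directly.
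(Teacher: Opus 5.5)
\textbf{There is a genuine gap in your treatment of the good part.} Your estimate of $W_g$ is fine, and you are right that anything routed through $\partial_k\wt h$ loses a power $t^{\delta_1}$. The route you finally adopt, however, does not close.

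\emph{Main issue: the identity does not see $W_{+,\dagger}$ on the left half-line.} The identity you invoke holds only for $x\ge 2$. Up to normalizing constants it actually reads $(L^{-1}e^{itL}f)_S(x)=W_{+,+}(t,x)+W_{+,-}(t,-x)$. For $x\le -2$ the singular part is built from $A_{-,\pm}$ and says nothing about $W_{+,\dagger}$.

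Concretely, for $x\le -2$ the stationary contribution to $W_{+,+}(t,x)$ is $\int_{k>0}T(k)\lr{k}^{-1}e^{i(t\lr{k}+kx)}\wt f(k)\,dk$. Modulo non-stationary pieces, the bootstrap (through $W_{-,+}$) controls only the same integral with coefficient $1$ in place of $T(k)$. Similarly, $W_{+,-}(t,y)$ for $y\ge 2$ is the stationary negative-frequency wave weighted by $R_+(-k)$ instead of $1$.

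Passing from one to the other amounts to bounding $\mathbbm{1}_{+}(D)T(D)$ or $\mathbbm{1}_{-}(D)R_+(-D)$ on $L^\infty$. This fails in general, since these are half-line multipliers of Hilbert-transform type. Separating incoming from outgoing waves by non-stationary phase cannot help, because the obstruction sits exactly at the stationary point.

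\emph{Secondary, patchable issue: the intermediate range for $x\ge 2$.} Consider the piece $\int_{k<0}R_+(-k)\lr{k}^{-1}e^{i(t\lr{k}-kx)}\wt f\,dk$. Here $R_+(0)=-1\neq 0$ and the phase derivative near $k=0$ is only of size $t|k|+x$. One integration by parts therefore gives roughly $\varepsilon_0^{\al}\lr{t}^{-1/2+\delta_1}x^{-1/2}$. This misses the target unless $x\gtrsim t^{2(\delta_1-2\delta)}$. The intermediate range needs the weighted linear local decay applied to $h$, not just the region $|x|\lesssim 1$.

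\textbf{The missing idea.} Transfer the $L^\infty$ bootstrap through $L^p$ with $p<\infty$ large, where all these operators are bounded, at the price of a tiny power of the $H^1$ norm. Since $\widehat{\mathcal{F}}^{-1}(A_{\ast,\dagger}\wt f)=A_{\ast,\dagger}(D)\mathcal{W}^\ast f$, Sobolev embedding with $s=2/p$ gives
\[
\|W(t)\|_{L^\infty}\lesssim\big\|\lr{D}^{-1+s}e^{it\lr{D}}A_{\ast,\dagger}(D)\mathcal{W}^\ast f\big\|_{L^p}.
\]
Three facts then apply:
\begin{itemize}
\item $A_{\ast,\dagger}(D)$, being half-line projections times Mikhlin multipliers, is $L^p$-bounded.
\item The intertwining property \eqref{intertwining-prop} gives $\lr{D}^{-1+s}e^{it\lr{D}}\mathcal{W}^\ast=\mathcal{W}^\ast L^{-1+s}e^{itL}$.
\item $\mathcal{W}^\ast$ is bounded (Lemma \ref{lem:waveoper}).
\end{itemize}
This reduces matters to $\|L^{-1}w(t)\|_{W^{s,p}}\lesssim\|L^{-1}w\|_{H^1}^{\theta}\|L^{-1}w\|_{L^\infty}^{1-\theta}$ with $\theta=2/p$. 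The bootstrap \eqref{w-assumptions-bootstrap-pde} then yields $\varepsilon_0^{\theta+\al(1-\theta)}\lr{t}^{-(1/2-\delta)(1-\theta)}\lesssim\varepsilon_0^{\al}\lr{t}^{-1/2+2\delta}$ once $\theta\le 2\delta$.

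This treats the full $\wt f$ at once, with no $h/g$ splitting and no stationary phase. The loss from $\delta$ to $2\delta$ is exactly the cost of the interpolation.
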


\begin{proof}
We first apply Sobolev's embedding
$W^{s,p}(\mathbb R)\hookrightarrow L^\infty(\mathbb R)$ with $s= 2/p$, $p=\infty-$,
and write
\begin{align}\label{WLp-Linfty}
    \|W(t,\cdot)\|_{L^{\infty}_x}&\lesssim \|\lr{D}^{-1+s}e^{it\lr{D}}\widehat{\mathcal{F}}^{-1}(A_{\ast,\dagger}\wt{f})\|_{L^p_x}   \lesssim \|\lr{D}^{-1+s}e^{it\lr{D}}A_{\ast,\dagger}(D)\mathcal{W}^\ast{f}\|_{L^p_x}
\end{align}
where we recall that the wave operator $\mathcal{W}$ and its adjoint $\mathcal{W}^\ast$
are defined in Lemma \ref{lem:waveoper}.
From the definitions in \eqref{A-functions},
we see that $A_{\ast,\dagger}$ is a bounded operator on $L^p$, $p\in(1,\infty)$, since 
it is a linear combination of half-line Fourier projections with
Mikhlin multipliers $T(D)$ and $R_\pm(D)$.
Using also the boundedness of the wave operator from Lemma \ref{lem:waveoper},  
and the a priori bounds \eqref{w-assumptions-bootstrap-pde}, it follows from \eqref{WLp-Linfty}
 that 
 \begin{align*}
   \|W(t,\cdot)\|_{L^{\infty}_x} &\lesssim \|\mathcal{W}^\ast(L^{-1+s}e^{it L}f)\|_{L^p_x}
   \\
   &\lesssim \|{L}^{-1}e^{itL}f\|_{W^{s,p}}
   \\
   &\lesssim \|{L}^{-1}e^{itL}f\|^{\theta}_{H^1}\|{L}^{-1}e^{itL}f\|^{1-\theta}_{L^\infty}
   \\
   &\lesssim \varepsilon_0^{\theta}\cdot (\varepsilon_0^\al\lr{t}^{-1/2+\delta})^{1-\theta}=\varepsilon_0^{\theta+\al(1-\theta)}
   \lr{t}^{-1/2+2\delta},
\end{align*}
where the last inequality follows from the Sobolev-Gagliardo-Nirenberg inequality with 
$\theta = 2/p$ and choosing $p$ large enough.
\end{proof}

\medskip
We now turn to the good part. The key bound in our whole bootstrap argument is given by the following
weighted bound for $h$:

\begin{proposition}\label{prop:good-part}  
Recall the definitions from Lemma \ref{lem:h}, and assume the a priori bound 
\eqref{w-assumptions-bootstrap-pde} in Proposition \ref{prop:bootstrap-on-v}.
Then for all $t>0$ we have
\begin{align}
\label{S-tildeL2}
{\|\partial_k \widetilde S(t,\cdot) \|}_{L_k^2} \lesssim \varepsilon_0^{2\al}\lr{t}^{\delta_1},
\\
\label{M-tildeL2}
{\|\partial_k \wt{M}(t,\cdot)\|}_{L_k^2} \lesssim \varepsilon_0^{2\al}\lr{t}^{\delta_1},
\\
\label{Q-tildeL20}
{\|\partial_k \wt{Q}(t,\cdot)\|}_{L_k^2} \lesssim \varepsilon_0^{3\al}\lr{t}^{8\delta},
\\
\label{partial-k-R}
{\|\partial_k \wt{R_{v,a}}(t,\cdot) \|}_{L_k^2} \lesssim \varepsilon_0^{2\al} \lr{t}^{\delta_1}. 
\end{align}
In particular, according to \eqref{tilde-h-decomposition} and \eqref{choice-of-csts}, 
and also in view of the initial data assumption \eqref{mainthmdata}, we have
\begin{equation}\label{partial-h-tilde}
\|\partial_k \widetilde h(t,\cdot)\|_{L_k^2} \lesssim \varepsilon_0^{2\al} \lr{t}^{\delta_1}
\end{equation}
which gives the first estimate in \eqref{w-conclusion-bootstrap}.
\end{proposition}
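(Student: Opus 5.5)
The plan is to estimate the four groups of terms in \eqref{tilde-h-decomposition} separately, each by differentiating the Duhamel integral in $k$. The derivative $\partial_k$ either falls on the phase, producing a factor $s\,k/\lr{k}$, or it falls on the distorted Fourier coefficient of the localized source. In the latter case we use Lemma \ref{lem:nonconv-bds} together with \eqref{derivative-in-fourier-bound}, since $\partial_k\overline{\psi}$ costs at most one power of $\lr{x}$.

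\textbf{Step 1: the source terms $S$.} For $S_1$ the modulation $\lr{k}-2\lambda$ is bounded below on the support. We integrate by parts in $s$ before applying $\partial_k$, which turns the integral into boundary terms of size $|B|^2\lesssim\Rho$ plus $\int_0^t e^{-is(\lr{k}-2\lambda)}\partial_s(B^2)\,ds/(\lr{k}-2\lambda)$. The factor $s$ then appears only next to $\partial_s B^2=O(\Rho^{3/2}+\varepsilon_0^{3\alpha}\Rho\lr{s}^{-1+8\delta})$ by Lemma \ref{lem:decay-B'}, and boundary terms give $t\Rho(t)\lesssim1$ times $\varepsilon_0^{2}$-type constants, which is acceptable. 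For $S_2$ we use $|A^2-B^2|\lesssim\Rho^{3/2}$, and the smoothing estimate Lemma \ref{lem:smoothing-effect} with $p=2$ gives $\|s(A^2-B^2)\|_{L^2_s}\lesssim\log$-type growth. For $S_3$ the phases $\lr{k}$ and $\lr{k}+2\lambda$ never vanish, so integration by parts in $s$ together with Lemma \ref{lemlocal-decay-0-plus}-type bounds closes directly.

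\textbf{Step 2: the mixed term $M$ and the remainder $R_{v,a}$.} These are spatially localized, so we write $\partial_k$ hitting the phase as $s$ times a smoothing-estimate integrand. We then apply Lemma \ref{lem:smoothing-effect} with $p=2$, which requires $\|s\,a(s)\lr{x}\beta\varphi v(s)\|_{L^1_xL^2_s}$. Using the local decay bootstrap, $|a|\lesssim\Rho^{1/2}$ and $\|\lr{x}^{-2}v\|_{L^\infty}\lesssim\varepsilon_0^\alpha\lr{s}^{-1+\delta_2}$, the integrand is $\lesssim\varepsilon_0^\alpha\Rho^{1/2}(s)\lr{s}^{\delta_2}$. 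Its $L^2_s[0,t]$ norm is $\lesssim\varepsilon_0^{\alpha}\lr{t}^{\delta_2+}$, up to logarithms, because $\Rho\lesssim\min(\varepsilon_0^2,s^{-1})$; the extra $\varepsilon_0^{\alpha}$ comes from the short-time regime. This is well within $\lr{t}^{\delta_1}$. The terms in $R_{v,a}$ are quadratic or higher in localized $v$ and decay faster, so they are treated identically.

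\textbf{Step 3: the quartic term $Q$.} This is the main obstacle, since $v^4$ is not localized and the smoothing estimate is unavailable. We would expand $\wt{P_c(v^4)}$ multilinearly in $\wt f$ via \eqref{F-mu}, splitting $\mu=\mu_S+\mu_R$. For $\mu_R$ the spatial localization lets us use the Step 2 argument with the $L^\infty$ bounds. For $\mu_S$ we first use the commutation identity \eqref{sing-terms} to move $\partial_k$ onto a single $\wt f(k_1)$ at the cost of $\lr{k_1}/\lr{k}$, and then return to physical space as in \eqref{fW}. The bad part $\partial_k\wt g$ grows like $\sqrt s$, but it is supported near $|k|=k_0$ with profile driven by $\theta$. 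We therefore split $f=h-g$: the $h$ contribution gives $\int_0^t s^{\delta_1}\|v\|_{L^\infty}^3\,ds$, where we need the sharper $\lr{s}^{-1/2+2\delta}$ bound of Lemma \ref{lem:BoundednessW} for the $A_{\ast,\dagger}$-twisted flows. This is still too weak by itself, so we would decompose time dyadically via $\tau_n$ and interpolate, on each dyadic block, between the crude bound and an improved bound obtained by integrating by parts in $s$ using the quartic phase. The $g$ contribution uses the explicit structure of $g$, integrating by parts in $s$ against $e^{-is(\lr{k_1}-2\lambda)}B^2$, to trade $\sqrt{s}$ for the decay of $B^2$. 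Summing the blocks should give $\varepsilon_0^{3\alpha}\lr{t}^{8\delta}$.

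Summing Steps 1--3 with $\|\partial_k\wt f_0\|_{L^2}\lesssim\varepsilon_0$ from \eqref{mainthmdata} yields \eqref{partial-h-tilde}.
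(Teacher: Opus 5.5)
Your Steps 1 and 2 follow the paper's route: integration by parts in $s$ for $S_1$, and the smoothing estimate of Lemma \ref{lem:smoothing-effect} plus the local-decay bootstrap for $M$ and $R_{v,a}$. Applying the smoothing estimate directly to $S_2$, using only $|A^2-B^2|\lesssim\Rho^{3/2}$, is slightly cleaner than the paper's resonant/non-resonant expansion. There are two slips.
\begin{itemize}
\item $\partial_s B^2=2B\dot B=O(\Rho^{2}+\varepsilon_0^{3\al}\Rho\lr{s}^{-1+8\delta})$, not $O(\Rho^{3/2}+\cdots)$. With $\Rho^{3/2}$ and Minkowski in $s$ you would only get $t^{1/2}$.
\item For $S_3$, after integrating by parts you only have $|\partial_s|A|^2|\lesssim\Rho^{3/2}$. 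So you must apply the smoothing estimate to $s\,\partial_s|A|^2$, which gives $\|s\Rho^{3/2}\|_{L^2_s}\lesssim\log^{1/2}(2+\rho_0 t)$, or pass to $B$. Lemma \ref{lemlocal-decay-0-plus} plays no role in an $L^2_k$ bound.
\end{itemize}

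The genuine gap is Step 3. The identity \eqref{sing-terms} is only a heuristic for a pure $\delta$. The actual singular distribution is $\widehat{\Lambda_\pm}(q)$, which also contains a principal value and a smooth part (Lemma \ref{lem:FT-varLambda}). The exact identity for $\lr{k}\partial_k$ (Lemma \ref{partial-k-I}) produces the terms with $\lr{\xi_j}\partial_{\xi_j}$ on one input, plus two further terms your plan never identifies.
\begin{itemize}
\item The term $-is\,\mathcal{Q}_{\zeta\widehat{\Lambda_+}}$. The factor $s$ survives because $\zeta\widehat{\Lambda_+}(\zeta)=\widehat{\Theta_+}(\zeta)$ is a nonzero Schwartz function. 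This localized quartic term with an explicit $s$ can only be bounded by $\int s\prod_j\|w_j\|_{L^\infty}\,ds\lesssim\varepsilon_0^{4\al}2^{8\delta m}$ on each dyadic block, using the decay of the flat flows $w_j$ from Lemma \ref{lem:BoundednessW}. This is the borderline term \eqref{Q0m2}, and it is where the loss $\lr{t}^{8\delta}$ comes from.
\item The term with $\Upsilon e^{is\Upsilon}\partial_\zeta\widehat{\Lambda_+}(\zeta)$, which is present even for a pure $\delta$. It must be handled by integrating by parts in $s$ on each dyadic block and bounding the resulting $\partial_s\wt f$ factors by \eqref{partial-sf} with $s\varkappa(s)\lesssim1$. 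One then converts $\partial_\zeta$ into $\partial_{\xi_j}$ and integrates by parts in $\xi_j$.
\end{itemize}
You also need the rearrangement \eqref{rearange-I+}, since otherwise differentiating the output coefficients $\overline{A_{+,s_0}(k)}$ creates a $\delta_0(k)$.

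Conversely, the difficulty you do address is not a real one. With the derivative on one input, the crude bound $\int_{2^{m-1}}^{2^{m+1}}\|\wt f(s)\|_{H^1_k}\|w\|_{L^\infty}^3\,ds\lesssim 2^m\cdot 2^{m/2}\cdot\varepsilon_0^{3\al}2^{-3m/2+6\delta m}$ already closes, even with the $\sqrt{s}$ growth of $\partial_k\wt g$; this is \eqref{Q12m}. The $h$-part gives the integrable $\int s^{\delta_1}s^{-3/2+6\delta}\,ds$, so it is not ``too weak by itself''. None of your extra devices is needed here.
\begin{itemize}
\item The $h/g$ split is unnecessary.
\item Integrating by parts against $e^{-is(\lr{k_1}-2\lambda)}B^2$ is unnecessary, and it gains nothing near the resonance where $g$ is supported.
\item Interpolation is unnecessary; in the paper it (Lemma \ref{lemlininterp}) serves the $L^\infty$ decay of $I_Q$, not \eqref{Q-tildeL20}.
\end{itemize}
As written, Step 3 does not control the terms that actually determine the bound.
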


We postpone the estimate \eqref{Q-tildeL20} for the quartic term $\widetilde{Q}$
until Section \ref{sec:PDE-part2} as the proof is more technically involved 
and relies on multilinear analysis which is discussed later.
In the rest of this section we estimate the other relatively simpler contributions 
on the right-hand side of \eqref{tilde-h-decomposition}.

\medskip
\textbf{Proof of \eqref{S-tildeL2}.} 
We recall the splitting in \eqref{term-S}
and start by estimating the contribution from $S_1$.
Set $\chi_C (k): = 1- \psi_{\leq -C} (\lr{k}-2\lambda)$. 
The expression for $\partial_k S_1$ contains several terms, 
but we concentrate on the most difficult one, 
when $\partial_k$ hits the phase $e^{-is(\lr{k} - 2\lambda)}$ producing
an additional $s$ in the integrand. Then, using integration by parts in $s$ we get
\begin{align}\label{S1-term-estimate}
\nonumber &\left\| \chi_C(k) \int_0^t s \frac{k}{\lr{k}} e^{-is(\lr{k} - 2\lambda)}  B^2(s) \distF P_c (\beta \varphi^2) (k) ds \right\|_{L_k^2} 
\\
\nonumber & \qquad \qquad \lesssim \left\| t B^2(t) \frac{k}{\lr{k} (\lr{k}-2\lambda)} e^{-it(\lr{k} - 2\lambda)} \chi_C (k) \distF P_c (\beta \varphi^2) (k) \right\|_{L_k^2} 
\\
\nonumber & \qquad \qquad \qquad  + \left\| \frac{\chi_C(k)}{\lr{k} - 2\lambda} \int_0^t \frac{k}{\lr{k}} e^{-is(\lr{k} - 2\lambda)} \left[ 2s \dot{B}(s) B(s) + B^2(s) \right] \distF P_c (\beta \varphi^2) (k) ds \right\|_{L_k^2} 
\\
& \qquad \qquad 
\lesssim t \Rho(t) + \int_0^t \big(\varepsilon_0^{3\al}s\Rho(s) \lr{s}^{-1+8\delta} 
  + s \Rho^2(s) \big) ds
\\
& \qquad\qquad  \lesssim t\Rho(t)
  + \varepsilon_0^{3\al} \lr{t}^{8\delta} 
  + \log (1 + \rho_0 t)
\end{align}
where we used the decay properties of $B$ and $\dot{B}$ from Lemma \ref{lem:decay-B'}. 
The estimate \eqref{S1-term-estimate} is acceptable for the bound \eqref{S-tildeL2}
since, in view of our choices of parameters \eqref{choice-of-csts}, we have 
$\alpha, \delta \ll \delta_1$
and therefore, for all $t \geq 0$,
\begin{equation}\label{convert}
\Rho(t)t \leq \varepsilon_0^{2\al}\lr{t}^{\delta_1}, 
\qquad \varepsilon_0^{3\al} \lr{t}^{8\delta} \ll \varepsilon_0^{2\al} \lr{t}^{\delta_1},
\qquad \log(1+\rho_0t) \ll \varepsilon_0^{2\al} \lr{t}^{\delta_1}.
\end{equation}
For the term $S_2$, we first rewrite $A^2 - B^2$ using the definition of $B$ from Lemma \ref{lem:decay-B'} as 
\begin{equation*}
A^2 - B^2 = -c I (2A + c I), \qquad
I = \frac{e^{i\lambda t}}{2\lambda^2} A^2 - \frac{e^{-3i\lambda t}}{6\lambda^2} \overline{A}^2 - \frac{e^{-i\lambda t}}{\lambda^2} |A|^2.
\end{equation*}
Direct multiplication leads to many terms in the expression for $A^2-B^2$, and so in $\partial_k S_2$. Most of these terms either have fast decay (like those associated with quartic powers of $A$ and $\overline{A}$) or non-vanishing phase, which can be handled integrating by parts as in \eqref{S1-term-estimate}. 
The only problematic term in $\partial_k S_2$ is the resonant one associated with $A^3$,
that is,
\begin{equation*}
\int_0^t s \frac{k}{\lr{k}} e^{-is(\lr{k}-3\lambda)} A^3(s) \distF P_c (\beta \varphi^2) (k) ds.
\end{equation*}
To deal with the $L_k^2$ norm of this integral, 
we use the smoothing-type estimate from Lemma \ref{lem:smoothing-effect} to get
\begin{equation*}
\begin{aligned}
\left\| \int_0^t s \frac{k}{\lr{k}} e^{-is(\lr{k}-3\lambda)} A^3(s) \distF P_c (\beta\varphi^2) (k) ds \right\|_{L_k^2} & \lesssim \left\| s A^3(s) \varphi^2 \right\|_{L_x^1 L_s^2[0, t]} 
\\
& \lesssim \left\| s \Rho^{3/2}(s) \right\|_{L_s^2[0, t]} \lesssim \log (1+\rho_0 t),
\end{aligned}
\end{equation*}
where the last inequality can be verified by looking separately at the small and large times regimes.
The remaining term $S_3$ can be handled in a similar manner after integrating by parts
using the non-vanishing of the phases.

\medskip
\textbf{Proof of \eqref{M-tildeL2}.} 
Recall the definition of $M$ in \eqref{term-M}.
The main contribution to $\partial_k M$ arises again when the derivative hits the exponential term;
using once more the smoothing-type estimate in Lemma \ref{lem:smoothing-effect},
followed by H\"older's inequality, and using the a priori local decay assumption on $v=\Im(L^{-1}w)$ 
from \eqref{w-assumptions-bootstrap-pde}, we have
\begin{equation}
\label{dkM-difficult-term}
\begin{aligned}
{\left\| \int_0^t a(s) s \frac{k}{\lr{k}} e^{-is\lr{k}} \widetilde{P_c (\beta \varphi v)}(k) ds \right\|}_{L_k^2} 
  & \lesssim {\big\|s a(s) \varphi \, v \big\|}_{L_x^1 L_s^2[0,t]} 
  \lesssim {\big\|s a(s) \lr{x}^{-10} v \big\|}_{L_s^2[0,t] L_x^\infty} 
\\
& \lesssim \varepsilon_0^{\al} {\big\| \Rho^{1/2}(s) \lr{s}^{\delta_2} \big\|}_{L_s^2[0,t]} 
  \lesssim \varepsilon_0^{2\al}\lr{t}^{\delta_1},
\end{aligned}
\end{equation}
where the last inequality can be verified by splitting 
the intergral at $s \approx \rho^{-1}_0$ as usual, 
and using that $\al, \delta_2 \ll \delta_1$ by \eqref{choice-of-csts}.

\medskip
\textbf{Proof of \eqref{partial-k-R}.} 
Recall the definition of $R_{a,v}$ from \eqref{term-RM}.
As before, we only look at the main terms where the $\partial_k$-derivative hits the exponential;
the corresponding contribution from the first term in \eqref{term-RM} is handled using the smoothing-type estimate
of Lemma \ref{lem:smoothing-effect}:
\begin{equation*}
\begin{aligned}
\left\|\int_0^t s \frac{k}{\lr{k}} e^{-is\lr{k}} \distF P_c (\beta v^2) (k) ds \right\|_{L_k^2}
& \lesssim \|s \lr{x} \beta v^2\|_{L_x^1 L_s^2[0,t]} 
\lesssim \|s \lr{x}^{-10} v^2\|_{L_s^2[0,t] L_x^\infty} \\
& \lesssim \varepsilon_0^{2 \al} \|s \lr{s}^{-2+2\delta_2}\|_{L_s^2[0,t]} \lesssim \varepsilon_0^{2\al},
\end{aligned}
\end{equation*}
having used the improved local decay assumption in \eqref{w-assumptions-bootstrap-pde}.
For the second term on the right-hand side of  \eqref{term-RM}, the main contribution is bounded as follows:
\begin{equation*}
\begin{aligned}
\left\|\int_0^t a^4(s) s \frac{k}{\lr{k}} e^{-is\lr{k}} \distF P_c (\varphi^4) (k) ds \right\|_{L_k^2} & \lesssim \int_0^t s \Rho^2(s) ds \lesssim \log (1 + \rho_0 t).
\end{aligned}
\end{equation*}
where we used Minkowski inequality and the exponential decay of $\varphi$. 
The two estimates above are consistent with the bound \eqref{partial-k-R} due to \eqref{convert}.
The rest of the terms in \eqref{term-RM} are easier to deal with 
and can be estimated exactly as the mixed terms in \eqref{dkM-difficult-term}.  
This concludes the proof of the proposition. $\hfill \Box$


\medskip
\section{Bootstrap for the continuous component II: dispersive and Sobolev estimates}\label{sec:PDE-part1}
In this section we begin the proof of the pointwise decay, local decay, and Sobolev a priori bounds from
the main Proposition \ref{prop:bootstrap-on-v}.
We will bound all the nonlinear terms with the exception of the 
pure radiation term $v^4$, whose treatment is postponed to Section \ref{sec:PDE-part2}. 
For ease of reference we recall that, under the hypotheses of Proposition \ref{prop:bootstrap-on-v}, 
we need to establish the following improved bounds for the continuous component $P_cu = \Im(L^{-1}w)$,
where the profile of $w$ is $f=e^{-itL} w$:
\begin{align}\label{linear-decay-w}
& \|L^{-1} w(t)\|_{L_x^\infty} \lesssim  \varepsilon_0^{2\al}\lr{t}^{-1/2+\delta},
\\
\label{local-decay-w}
& \left\|\lr{x}^{-2}\big(L^{-1}e^{\pm i tL}f^{(\pm)}(t)\big)_{E}\right\|_{L^{\infty}_x}
 \lesssim \varepsilon_0^{2\al}\lr{t}^{-1+8\delta},\qquad E\in\{S,R\}
  \\
\label{Sobolev}
&\|L^{-1}w\|_{H^N_x}\lesssim \varepsilon_0,
\end{align}
for all times $t\in  [0,T]$.
Here we use the notation $f^{(+)}=f$ and  $f^{(-)}=\overline{f}.$ 

From the  Duhamel formula for $w$ in \eqref{duhamel-for-w} we write
\begin{align}\label{L-1w}
L^{-1} w(t) & = L^{-1} e^{itL} w_0 + I_{a} + I_v+I_{a,v}+I_Q,
\end{align}
where $I_a$, $I_v$  denote the forcing generated by the discrete component and the continuous component respectively while $I_{a,v}$  represents the mixed terms:
\begin{align}
\label{Ia}
I_a(t) & := \int_0^t L^{-1} e^{i(t-s)L} \Big[a^2(s)P_c \big(\beta  \varphi^2)+a^4(s)P_c(\varphi^4)\Big]ds, 
\\
\label{Iv}
I_v(t) & := \int_0^t L^{-1} e^{i(t-s)L} P_c( \beta v^2(s) ) ds,
\\\label{Iav}
I_{a,v}(t)&:=I_{a,v}^{(0)}(t)+\sum_{\ell=1}^{3}I_{a,v}^{(\ell)}(t),
\end{align}
where  
\begin{align*}
I^{(0)}_{a,v}(t)&= \int_0^t L^{-1} e^{i(t-s)L}2a(s)P_c (\beta  \varphi \, v(s))ds
\\
I^{(\ell)}_{a,v}(t)&=\int_0^t  a(s) ^{4-\ell}L^{-1} e^{i(t-s)L} 
  P_c(\varphi_\ell \, v^\ell(s))ds,\quad \ell=1,2,3  , 
\end{align*}
with $\varphi_\ell = \big(\begin{array}{c}
                          \!4 \!\\ \!\ell\!
                          \end{array}
\big)\varphi^{4-\ell}$ a Schwartz function. 
The last term $I_Q$ represents the integral involving the quartic term in $v$, namely,
\begin{equation}\label{I_Q}
I_Q(t) = \int_0^t L^{-1} e^{i(t-s)L} P_c v^4(s) ds = L^{-1} e^{itL}Q(t),    
\end{equation}
where the last identity comes from the definition of $Q$ in \eqref{term-Q}.
We will systematically use the lemmas from Section \ref{sec:linear-decay} to bound all terms in \eqref{L-1w}
except for $I_Q$ which requires a more involved analysis 
and is treated in the next section. 
We will also often use, without explicit mention, that
\begin{align}
|a(t)| \approx \Rho^{1/2} \lesssim \varepsilon_0^{2\gamma} \lr{s}^{-1/2+\gamma}, 
  \qquad \gamma \in (0,1/2 ).
\end{align}

More precisely, in this section we prove the following propositions:

\begin{proposition}\label{prop:decay_boostrap}
Under the assumptions \eqref{rho-assumption-bootstrap-pde}-\eqref{w-assumptions-bootstrap-pde} 
we have, for all $t \in [0,T]$,
\begin{equation}
\label{I12-w-linear-decay} 
{\big\| L^{-1} w(t) - I_Q(t) \big\|}_{L_x^\infty} 
  \lesssim \varepsilon_0^{2\al} \lr{t}^{-1/2+\delta}.
\end{equation}
\end{proposition}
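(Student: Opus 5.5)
We prove \eqref{I12-w-linear-decay} term by term from the decomposition \eqref{L-1w}: $L^{-1}w - I_Q = L^{-1}e^{itL}w_0 + I_a + I_v + I_{a,v}$. Each piece is estimated with the linear tools of Section \ref{sec:linear-decay}. The plan is to use the global dispersive bound \eqref{dispersive-estimates} (or \eqref{dispersive-estimates0}) for terms localized in space, and the a priori bounds \eqref{rho-assumption-bootstrap-pde}--\eqref{w-assumptions-bootstrap-pde} for the time-dependent coefficients.

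\emph{Linear term.} By \eqref{dispersive-estimates} and \eqref{derivative-in-fourier-bound}, we have $\|L^{-1}e^{itL}w_0\|_{L^\infty}\lesssim \lr{t}^{-1/2}(\|\lr{x}w_0\|_{L^2}+\|w_0\|_{H^2})\lesssim\varepsilon_0\lr{t}^{-1/2}$ by \eqref{mainthmdata}.

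\emph{Localized nonlinear terms $I_v$, $I_{a,v}$ and the $a^4$ part of $I_a$.} All sources are of the form $P_c(\text{Schwartz}\times\cdots)$. For such a term we use \eqref{dispersive-estimates0}, which gives
\[
\|L^{-1}e^{i(t-s)L}P_cF\|_{L^\infty}\lesssim\lr{t-s}^{-1/2}\|\lr{x}F\|_{H^2}.
\]
For $F=\beta v^2$ we bound $\|\lr{x}^2 F\|_{H^2}$ by \eqref{a prioriSGN} with $\lesssim\min(\varepsilon_0^{2\tilde\alpha}\lr{s}^{-2+2\widetilde{\delta_2}},\varepsilon_0^2)$. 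The integral $\int_0^t\lr{t-s}^{-1/2}\lr{s}^{-2+}\,ds\lesssim\lr{t}^{-1/2}$ then gives the claim with room to spare. For $a\beta\varphi v$, the integrand is bounded by $\rho_{\Gamma_0}^{1/2}(s)\min(\varepsilon_0^{\tilde\alpha}\lr{s}^{-1+\widetilde{\delta_2}},\varepsilon_0)$. This is integrable up to a power $\varepsilon_0^{0+}$: split at $s\approx\varepsilon_0^{-2}$, using $\rho^{1/2}\approx\varepsilon_0$ below and $s^{-1/2}$ above. Hence the convolution with $\lr{t-s}^{-1/2}$ gives $\varepsilon_0^{2\alpha}\lr{t}^{-1/2+\delta}$. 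The $a^4\varphi^4$ term uses $\int\rho_{\Gamma_0}^2\lesssim\varepsilon_0^2$, and the higher $I^{(\ell)}_{a,v}$ are easier.

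\emph{The main obstacle: the quadratic forcing $a^2\theta$ in $I_a$.} Here $\int_0^t\lr{t-s}^{-1/2}\rho(s)\,ds$ only gives $\lr{t}^{-1/2}\log(1+\rho_0t)$, and the resonance at $L=2\lambda$ prevents a naive integration by parts. We instead use the structure of Lemma \ref{lemevolw}. Up to $L^{-1}\Im$, this term equals
\[
e^{2i\lambda t}A^2(t)\,L^{-1}(L-2\lambda-i0)^{-1}\theta + L^{-1}e^{itL}R_w(t).
\]
For the first piece, $(L-2\lambda-i0)^{-1}\theta$ is a bounded function: it is the outgoing resolvent applied to a Schwartz function, and Lemma \ref{lemlocal-decay-0-plus} at $t=0$ together with Sobolev embedding should give this. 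Hence that piece is $O(\rho(t))\lesssim\varepsilon_0^{2\alpha}\lr{t}^{-1/2+\delta}$, since $\rho\lesssim\min(\varepsilon_0^2,t^{-1})$.

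For $R_w$ we use Lemma \ref{lemlocal-decay-0-plus} in $L^\infty$ form together with \eqref{dispersive-estimates}. The $A^2(0)$ term decays like $\lr{t}^{-1/2}\varepsilon_0^2$. The $A\dot A$ integral gives $\int_0^t\lr{t-s}^{-1/2}\rho^{3/2}(s)\,ds\lesssim\varepsilon_0\lr{t}^{-1/2}$. The non-resonant integrals with $|A|^2$ and $\overline{A}^2$ are handled by integrating by parts in $s$ against the nonvanishing phases $L$ and $L+2\lambda$. This produces boundary terms of size $\rho(t)$ and integrals of $\rho^{3/2}$, as in \eqref{Rw2split}. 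If the $L^\infty$ bound on the resolvent term proves delicate near $k_0$, the fallback is to split it with $\chi_r(L-2\lambda)$ and use the smoothing estimate, Lemma \ref{lem:smoothing-effect}, combined with Sobolev. Summing all contributions yields \eqref{I12-w-linear-decay}.
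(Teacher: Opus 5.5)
Your bounds for the linear term and for the localized pieces ($I_v$, $I_{a,v}$, and the $a^4$ part of $I_a$) are fine and agree with the paper. The treatment of the quadratic forcing $a^2\theta$, however, has a genuine gap, and the detour is also unnecessary.

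\textbf{The logarithm is not an obstacle.} By \eqref{dispersive-estimates} and the bootstrap bound on $\rho$,
\[
\Big\| \int_0^t L^{-1}e^{i(t-s)L}a^2(s)\theta\,ds \Big\|_{L^\infty_x}
\lesssim \int_0^t \lr{t-s}^{-1/2}\rho(s)\,ds \lesssim \lr{t}^{-1/2}\log\big(1+\rho_0\lr{t}\big)
\lesssim \rho_0^{\al}\lr{t}^{-1/2+\al} \lesssim \varepsilon_0^{2\al}\lr{t}^{-1/2+\delta},
\]
using $\log(1+y)\lesssim y^{\al}$ and $\al\le\delta$. The $\lr{t}^{\delta}$ loss in the bootstrap norm absorbs the logarithm, and $\rho_0^{\al}$ supplies the smallness. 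This is exactly the paper's argument.

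\textbf{Why the route through Lemma \ref{lemevolw} fails.} You estimate its pieces separately in $L^\infty_x$, but they do not decay individually.
\begin{itemize}
\item The function $(L-2\lambda+i0)^{-1}\theta$ is not in $L^2$. Since $\wt{\theta}$ does not vanish at both points $\pm k_0$ (this is the Fermi Golden Rule), it behaves at spatial infinity like a plane wave of frequency $k_0$ with nonzero amplitude.
\item The difference $e^{itL}(L-2\lambda+i0)^{-1}\theta - e^{2i\lambda t}(L-2\lambda+i0)^{-1}\theta = i e^{2i\lambda t}\int_0^t e^{i\sigma(L-2\lambda)}\theta\,d\sigma$ is negligible for $|x|\gg t$. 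Hence $\|L^{-1}e^{itL}(L-2\lambda+i0)^{-1}\theta\|_{L^\infty_x}\gtrsim 1$ for all $t$.
\item Consequently the $A^2(0)$ term in $e^{itL}R_w$ has size $\approx\varepsilon_0^2$ uniformly in time, not $\varepsilon_0^2\lr{t}^{-1/2}$. Likewise, the $A\dot A$ integral only admits the non-decaying bound $\int_0^t\rho^{3/2}\,ds\approx\varepsilon_0$.
\end{itemize}

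Lemma \ref{lemlocal-decay-0-plus} cannot rescue this. It controls $h\cdot(\cdots)$ in $L^1_x$ for a decaying weight $h$ and has no global $L^\infty_x$ version. For the same reason it does not justify your boundedness claim for the first piece, although that claim happens to be true.

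The decay of $I_a$ comes from cancellation. In the region $|x|\gtrsim t$, the tail of the boundary term $e^{2i\lambda t}A^2(t)(L-2\lambda+i0)^{-1}\theta$ is cancelled by the tails of the resolvent terms in $e^{itL}R_w$, because their sum is the Duhamel integral, which is essentially supported in $|x|\lesssim t$. Bounding the pieces separately destroys this cancellation.

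Your fallback via Lemma \ref{lem:smoothing-effect} gives $L^2_k$ control, which yields at best boundedness in $L^\infty_x$, not $t^{-1/2}$ decay. Drop the decomposition and use the direct estimate above.
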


Next,  recall the notation in \eqref{generalized-eigenf-decomp}-\eqref{psi-regular-part}.

\begin{proposition}\label{lem:local-decay-bds}
Under the assumptions \eqref{rho-assumption-bootstrap-pde}-\eqref{w-assumptions-bootstrap-pde} 
we have, for all $t \in [0,T]$,
\begin{equation}
\label{T-f-local-decay}
{\left\| \lr{x}^{-2} \big(L^{-1}w(t) - I_Q(t) \big)_{E} \right\|}_{L_x^\infty} 
  \lesssim \varepsilon_0^{2\alpha} \lr{t}^{-1+6\delta}, \qquad E \in \{S,R\}.
\end{equation} \end{proposition}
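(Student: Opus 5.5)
The plan is to use the splitting \eqref{L-1w}. Since $I_Q$ is removed, it suffices to bound the $(\cdot)_E$-parts, $E\in\{S,R\}$, of
\[
L^{-1}e^{itL}w_0,\qquad I_a,\qquad I_v,\qquad I_{a,v}
\]
separately by $\varepsilon_0^{2\alpha}\lr{t}^{-1+6\delta}$. Every term is either linear or has a spatially localized source: $\beta$, $\varphi$ or $\theta=P_c(\beta\varphi^2)$ is always present. So only linear local decay estimates from Section \ref{sec:linear-decay} are needed, and no multilinear analysis.

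\emph{Linear term.} By \eqref{local-l-infty-decay-sing}--\eqref{local-l-infty-decay-reg}, together with \eqref{derivative-in-fourier-bound} and \eqref{mainthmdata}, this is $\lesssim \lr{t}^{-1}\|\wt{w_0}\|_{H^1_k}\lesssim \varepsilon_0\lr{t}^{-1}$.

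\emph{Localized nonlinear sources $I_v$, $I_{a,v}$ and the $a^4$ part of $I_a$.} I would apply \eqref{improved-local-decay-sing/reg} under the time integral, which gives the kernel $\lr{t-s}^{-3/2}$. The sources are controlled by $\|\lr{x}^2 N(s)\|_{H^2}$. Using Remark \ref{remSobGagNir} and $|a|\approx\Rho^{1/2}$, this norm is bounded by
\[
\Rho^{1/2}(s)\min\bigl(\varepsilon_0^{\tilde\alpha}\lr{s}^{-1+\widetilde{\delta_2}},\varepsilon_0\bigr)
+\min\bigl(\varepsilon_0^{\tilde\alpha}\lr{s}^{-1+\widetilde{\delta_2}},\varepsilon_0\bigr)^2+\Rho^2(s),
\]
exactly as in \eqref{ODEdampestRE3pr1}. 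The first two pieces decay at least like $\lr{s}^{-3/2+\widetilde{\delta_2}+\gamma}$. The convolution with $\lr{t-s}^{-3/2}$ then gives $\lr{t}^{-3/2+}$, which beats $\lr{t}^{-1}$. This matters because $\widetilde{\delta_2}\approx 2/N$ is larger than $6\delta$, so the extra decay must absorb it. The $\Rho^2$ piece gives $\lesssim \min(\varepsilon_0^4,\lr{t}^{-3/2}+\Rho^2(t))$, which is acceptable.

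\emph{Quadratic forcing $a^2\theta$ in $I_a$.} This is the main step. Writing $a^2=e^{2i\lambda s}A^2+2|A|^2+e^{-2i\lambda s}\overline A^2$ as in \eqref{duhamel-tilde-f-line1}, the pieces with phases $L$ and $L+2\lambda$ are non-resonant.

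For these, I integrate by parts in $s$. The boundary term at $s=t$ is a fixed smooth function of $L$ applied to $\theta$, times $|A(t)|^2$ or $A(t)^2$. Its $\lr{x}^{-2}$-weighted $L^\infty$ norm is $\lesssim\Rho(t)\lesssim\min(\varepsilon_0^2,t^{-1})$, which is acceptable. The boundary term at $s=0$ decays like $\lr{t}^{-3/2}$ by Lemma \ref{lem:stronger-local-decay}. The remaining integrals carry $\lr{t-s}^{-3/2}\Rho^{3/2}(s)$, using Remark \ref{remAdotbasic}.

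For the resonant piece $e^{2i\lambda s}A^2$, I split with $\chi_r(L-2\lambda)$.
\begin{itemize}
\item Off the resonance, I integrate by parts in $s$ exactly as for $D_{2,nr}$ in the proof of \eqref{ODEdampestRE0}. The boundary term at $s=t$ is $\lesssim\Rho(t)$. The boundary term at $s=0$ is $\lesssim\varepsilon_0^2\lr{t}^{-1}$, using \eqref{local-estimate-time-cut-off-internal-mode} (or Lemma \ref{lemlocal-decay-0-plus}). The integral term is $\lesssim\int_0^t\lr{t-s}^{-1}\Rho^{3/2}(s)\,ds$.
\item Near the resonance, I apply \eqref{local-estimate-time-cut-off} under the time integral. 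This gives
\[
\int_0^t \Rho(s)\lr{t-s}^{-1}\,ds\lesssim \lr{t}^{-1}\log(2+t),
\]
after distinguishing $t\lesssim\varepsilon_0^{-2}$ (where $\Rho\lesssim\varepsilon_0^2\le t^{-1}$) from larger $t$. The logarithm is absorbed into $\lr{t}^{6\delta}$ at the cost of a factor of $\varepsilon_0^{2\alpha}$, since $\varepsilon_0^{2-2\alpha}\log\lesssim\lr{t}^{6\delta}$ in the short-time regime.
\end{itemize}

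The main obstacle I expect is precisely this resonant, frequency-localized forcing. There the only available local decay is $\lr{t-s}^{-1}$, not $\lr{t-s}^{-3/2}$, because the $\delta(L-2\lambda)$ contribution is not localized. Closing the estimate relies on $\Rho(s)\approx(1+\rho_0 s)^{-1}$ being exactly critical, which produces only a logarithmic loss. If that bound turns out to be too crude, I would fall back on the refined estimate \eqref{local-estimate-time-cut-off-refined}, with dyadic time and modulation cutoffs as in the treatment of $J^{(1)}_+$.
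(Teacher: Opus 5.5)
Your proof is correct. For the linear term, $I_v$, $I_{a,v}$ and the $a^4$ part of $I_a$, it follows the paper: local decay \eqref{improved-local-decay-sing/reg} under the time integral, plus Gagliardo--Nirenberg interpolation for the localized sources.

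The genuine difference is the quadratic forcing $a^2\theta$. The paper does not treat it as a separate step. It applies \eqref{improved-local-decay-sing/reg} to all of $I_a$:
\begin{equation*}
\big\|\lr{x}^{-2}(I_a(t))_S\big\|_{L^\infty_x}\lesssim\int_0^t\lr{t-s}^{-3/2}\big(\Rho(s)+\Rho^2(s)\big)\,ds\lesssim\Rho(t),
\end{equation*}
which is already acceptable.

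Your reason for avoiding this is a misdiagnosis. You say that near $\lr{k}=2\lambda$ only $\lr{t-s}^{-1}$ local decay is available because of the $\delta(L-2\lambda)$ contribution. But for each fixed $s$ the source is the fixed nice function $\theta$ (or $\chi_r(L-2\lambda)\theta$), with $\wt\theta(0)=0$, so $L^{-1}e^{i(t-s)L}\theta$ enjoys $\lr{t-s}^{-3/2}$ local decay. The frequency $\lr{k}=2\lambda$ is not a stationary point of the linear phase. The resonance enters only through the time oscillation $e^{2i\lambda s}$, which is harmless for local decay because $\lr{t-s}^{-3/2}$ is integrable and $\Rho$ is slowly varying. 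The distribution $\delta(L-2\lambda)$ appears only after you integrate by parts in $s$, as in the normal form of Lemma \ref{lemevolw}. It matters for the weighted norm of the profile and for the ODE, not for local decay of this Duhamel term.

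Your route does still close:
\begin{itemize}
\item The near-resonant piece gives $\lr{t}^{-1}\log(2+t)$, which is absorbed as you indicate.
\item The integrations by parts in the off-resonant and non-resonant pieces are fine.
\item You do need $S/R$ versions of \eqref{local-estimate-time-cut-off}--\eqref{local-estimate-time-cut-off-internal-mode}. These are not listed in Remark \ref{rmk:r-s-local-decay}, but they follow from the same proofs.
\end{itemize}
Overall your approach costs a logarithm and several extra steps. Its only advantage is that it uses just the $H^1_k$-based $\lr{t}^{-1}$ local decay on the forcing. Here the paper's one-line use of the $\lr{t}^{-3/2}$ estimate is both shorter and sharper.
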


\begin{proposition}\label{prop:Sobolev}
Assume \eqref{rho-assumption-bootstrap-pde}-\eqref{w-assumptions-bootstrap-pde}. 
Then, for all $t \in [0,T]$, we have the Sobolev estimate
\begin{equation}\label{H^N}
{\| L^{-1}w(t) \|}_{H^N_x} \leq C\varepsilon_0.
\end{equation}
\end{proposition}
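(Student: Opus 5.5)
We prove \eqref{H^N} by an energy-type argument on the Duhamel formula \eqref{L-1w}. The starting point is \eqref{k-s-h-tilde-bound}, which gives $\|L^{-1}w(t)\|_{H^N_x}\approx\|\lr{k}^{N-1}\wt{f}(t)\|_{L^2_k}$. The linear term $L^{-1}e^{itL}w_0$ is then bounded by $\|L P_cu_0\|_{H^{N-1}}+\|P_cu_1\|_{H^{N-1}}\le\varepsilon_0$, using \eqref{mainthmdata} and unitarity. It therefore suffices to show that each Duhamel contribution $I_a,I_v,I_{a,v},I_Q$ has $H^N$ norm at most $\frac{C}{10}\varepsilon_0$. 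At most it may be $o(\varepsilon_0)$ plus a fixed multiple of $\varepsilon_0$ independent of $C$, which is what allows us to choose $C$ large.

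\emph{Localized terms.} The terms $I_a,I_v,I_{a,v}$ are handled first; all their nonlinearities carry a Schwartz factor $\beta$ or $\varphi$.

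The naive Minkowski bound fails here. It would require $\int_0^t\|a^2\beta\varphi^2\|_{H^{N-1}}ds$, and $\int\rho_{\Gamma_0}\,ds\sim\log(1+\rho_0t)$ grows.

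Instead we use the smoothing estimate of Lemma \ref{lem:smoothing-effect} with $p=2$, weighted by $\lr{k}^{N-1}$. Since $\lr{k}^{N-1}\wt{F}=\wt{L^{N-1}F}$ and $L^{N-1}$ preserves spatial localization of the Schwartz coefficients, we get
\[
\Big\|\lr{k}^{N-1}\int_0^te^{-is\lr{k}}\wt{F}(s)\,ds\Big\|_{L^2_k}\lesssim\|\lr{x}L^{N-1}F\|_{L^1_xL^2_s}.
\]
The factor $|k|/\lr{k}$ required in the lemma is harmless, since $\wt{F}(0)=0$; alternatively, low frequencies can be treated with \eqref{ptwise-bd-T} as in Remark \ref{rem:smoothing-effect}.

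With this bound, the pure discrete forcing gives $\|a^2\|_{L^2_s}\lesssim(\int\rho_{\Gamma_0}^2)^{1/2}\lesssim\rho_0^{1/2}\approx\varepsilon_0$, and $a^4$ is even better. For the terms containing $v$ we need $\|\lr{x}^{-10}v(s)\|_{H^{N-1}}$ in $L^2_s$. Interpolating the local decay and Sobolev bootstrap assumptions as in Remark \ref{remSobGagNir}, but now at regularity $N-1$ with a small loss, gives $\lesssim\varepsilon_0^{c}\lr{s}^{-1/2-}$ at worst. Hence
\[
\|a\,\beta\varphi v\|_{L^1_xL^2_s}\lesssim\|\rho^{1/2}\|_{L^\infty}\,\|\lr{x}^{-10}v\|_{L^2_sH^{N-1}}\lesssim\varepsilon_0^{1+},
\]
and $\beta v^2$, $\varphi_\ell v^\ell$ are handled similarly. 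In these products we put the high norm on one factor and $L^\infty$ on the others. Since $N-1$ is slightly below $N$, the interpolation only costs $\varepsilon_0^{c}$ and a negligible power of time.

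\emph{Quartic term.} This is the main obstacle, because $v^4$ is not localized and smoothing is unavailable. We use energy-type Minkowski:
\[
\|I_Q(t)\|_{H^N}\lesssim\int_0^t\|v^4\|_{H^{N-1}}\,ds\lesssim\int_0^t\|v\|_{L^\infty}^3\|v\|_{H^{N-1}}\,ds.
\]
With $\|v\|_{L^\infty}\lesssim\varepsilon_0^{\alpha}\lr{s}^{-1/2+\delta}$ this gives $\int\varepsilon_0^{1+3\alpha}\lr{s}^{-3/2+3\delta}ds\lesssim\varepsilon_0^{1+3\alpha}$, which is acceptable. The algebra/Moser estimate in $H^{N-1}$ is standard. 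The remaining worry is that the $L^\infty$ bootstrap has $\varepsilon_0^\alpha$ and not $\varepsilon_0$; this is harmless because the integral converges uniformly in time.

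Summing all contributions yields $\|L^{-1}w(t)\|_{H^N}\le C_0\varepsilon_0+C\,o(1)\varepsilon_0$, with $C_0$ independent of $C$. This closes \eqref{H^N} for $C$ large and $\varepsilon_0$ small.
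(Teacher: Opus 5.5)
Your overall scheme is the paper's. After commuting $L^{N-1}$ (a differential operator, since $N-1$ is even), you use the smoothing estimate \eqref{smooth-est} for the localized forcings, $\|a^2\|_{L^2_s}\lesssim\rho_0^{1/2}$ for $I_a$, and plain Minkowski with $\|v\|_{L^\infty}^3\|v\|_{H^{N-1}}$ for $I_Q$. The gap is in the quadratic mixed term $a\,\beta\varphi v$.

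Your claim that interpolating the local decay and Sobolev bootstrap bounds gives $\|\lr{x}^{-10}v(s)\|_{H^{N-1}}\lesssim\varepsilon_0^{c}\lr{s}^{-1/2-}$ is false. The heuristic that ``$N-1$ slightly below $N$ only costs a negligible power of time'' is backwards: the closer the target regularity is to the top regularity $N$, the \emph{less} decay survives. Interpolation only yields
\[
\|\lr{x}^{-10}v(s)\|_{H^{N-1}}\lesssim\|\lr{x}^{-10}v(s)\|_{L^2}^{1/N}\,\|v(s)\|_{H^N}^{1-1/N}\lesssim\varepsilon_0^{c}\lr{s}^{-(1-\delta_2)/N},
\]
which is not square integrable in $s$. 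Remark~\ref{remSobGagNir} works only because it targets $H^2$, where the interpolation exponent $2/N$ is small. Hence $\|\rho^{1/2}\|_{L^\infty_s}\|\lr{x}^{-10}v\|_{L^2_sH^{N-1}}$ is not bounded uniformly in $t$. You also cannot shift all the integrability onto $a$, since $\|a\|^2_{L^2_s([0,t])}\approx\int_0^t\rho_{\Gamma_0}\,ds\sim\log(1+\rho_0t)$.

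The missing idea is to use both decays at once. Write $L^{N-1}(a\beta\varphi v)=\sum_{\ell\le N-1}a\,\beta_\ell\,\partial_x^\ell v$ and apply a localized version of \eqref{GN-endpoint} with $\theta_\ell=2\ell/(2N-1)$. The paper then bounds
\[
\|a\,\beta_\ell\,\partial_x^\ell v\|_{L^1_xL^2_s}\lesssim\Big\|\min(\varepsilon_0,s^{-1/2})\,\big(\varepsilon_0^{\alpha}\lr{s}^{-1+\delta_2}\big)^{1-\theta_\ell}\,\varepsilon_0^{\theta_\ell}\Big\|_{L^2_s}.
\]
Next it uses $\min(\varepsilon_0,s^{-1/2})\le\varepsilon_0^{2\gamma_\ell}s^{-1/2+\gamma_\ell}$ with $\gamma_\ell<(1-\theta_\ell)(1-\delta_2)$. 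The tiny residual decay of $v$ is exactly what makes $s^{-1/2+\gamma_\ell}$ square integrable. This gives $\varepsilon_0^{2\gamma_\ell+\alpha(1-\theta_\ell)+\theta_\ell}$, and choosing $\gamma_\ell=(1-\theta_\ell)/2$ makes the exponent $1+\alpha(1-\theta_\ell)>1$. The contribution is therefore $o(\varepsilon_0)$, which closes the bootstrap.

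For $\beta v^2$ and $a^{4-\ell}\varphi_\ell v^\ell$ your scheme (high norm on one factor, localized $L^\infty$ on the others) does work. There, a second localized factor of $v$, or a factor $a^2$ or $a^3$, supplies the square integrability in time.
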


\smallskip
We now proceed to prove the above propositions.

\smallskip
\begin{proof}[{\bf Proof of Proposition \ref{prop:decay_boostrap}}]
For the linear evolution, using \eqref{dispersive-estimates} 
and the bounds \eqref{derivative-in-fourier-bound}-\eqref{k-s-h-tilde-bound}, we have 
\begin{equation*}
\|L^{-1} e^{itL} w_0\|_{L_x^\infty} \lesssim \lr{t}^{-1/2} \left( \|\lr{x} w_0\|_{L_x^2} + \| w_0\|_{H_x^2}\right)\lesssim \varepsilon_0\lr{t}^{-1/2},
\end{equation*}
which is sufficient in view of our assumptions on the initial data \eqref{mainthmdata}.
Next, we  show that all the terms in \eqref{Ia}-\eqref{Iav}  have acceptable bounds.

\smallskip
Using \eqref{dispersive-estimates}, we  estimate
\begin{align}\label{Iatoref}
\begin{split}
\left\|I_{a}(t)\right\|_{L_x^\infty} & \lesssim \int_0^t\Big( a^2(s) \| L^{-1} e^{i(t-s)L} P_c(\beta \varphi^2) \|_{L_x^\infty} +a^4(s) \| L^{-1} e^{i(t-s)L} P_c( \varphi^4) \|_{L_x^\infty}\Big)ds \\
& \lesssim \int_0^t \lr{t-s}^{-1/2}(\Rho(s) +\Rho^2(s) ) ds \lesssim\lr{t}^{-1/2} \log (1+\rho_0t)\lesssim \rho_0^{\al}\lr{t}^{-1/2+\delta}
\end{split}
\end{align}
which is acceptable since $\al\leq \delta$.

Next, for $\theta\in [0,1]$ and $\alpha\in (0,1)$, 
using Gagliardo-Nirenberg-Sobolev interpolation
and the a priori assumption in \eqref{w-assumptions-bootstrap-pde}
we have, for any Schwartz function $\phi$ and $2\leq m \leq N$,
\begin{align}\label{gagliardo-local-1}
\begin{split}
\|\phi v(\tau)\|_{H_x^2} \lesssim \|\phi v(\tau) \|^{1-\theta}_{L_x^2} \|\phi v(\tau)\|^{\theta}_{H_x^m}
  \lesssim \|\lr{x}^{-10} v(\tau)\|_{L_x^\infty}^{1-\theta} \|v(\tau)\|_{H_x^m}^{\theta} 
  \\
 \lesssim \varepsilon_0^{\theta_\al} \lr{\tau}^{-1+\theta_{\delta_2}}, \qquad \theta=2/m,
\end{split}
\end{align}
where $\theta_\al = \theta + \alpha(1-\theta)$ and 
$\theta_{\delta_2} = -(1-\theta)(1-\delta_2) + 1 = \delta_2(1-\theta) + \theta.$

\smallskip
To estimate  $I_{v}$ we apply \eqref{dispersive-estimates},
and \eqref{gagliardo-local-1} and use the standard product estimate for Sobolev spaces to obtain
\begin{align}\label{Ivtoref}
\begin{split}
{\left\|I_{v}(t)\right\|}_{L_x^\infty} & \lesssim \int_0^t \| L^{-1} e^{i(t-s)L} P_c(\beta v^2(s)) \|_{L_x^\infty} ds \\
& \lesssim \int_0^t \lr{t-s}^{-1/2} \left( \|\lr{x}\beta v^2(s) \|_{L_x^2} + \|\beta v^2(s)\|_{H_x^2} \right) ds 
\\
& \lesssim \int_0^t  
  \lr{t-s}^{-1/2} \left( \varepsilon_0^{2\alpha} \lr{s}^{2(-1+\delta_2)} 
  + \varepsilon_0^{2\alpha(1-\theta)} \lr{s}^{2(-1+\delta_2)(1-\theta)} 
    \cdot \varepsilon_0^{2\theta} \right) ds \lesssim \varepsilon_0^{2\al} \lr{t}^{-1/2},
\end{split}
\end{align}
where the last inequality holds for $m$ large enough so that $2(-1+\delta_2)(1-\theta) <-1$.

\smallskip
Using once again \eqref{gagliardo-local-1}
applied with $m=N$,  we have
 \begin{equation}\label{Iav0}
\begin{aligned}
{\|I_{a,v}^{(0)}(t)\|}_{L_x^\infty} &\lesssim \int_0^t \lr{t-s}^{-1/2}
\Rho^{1/2}(s) \left( \varepsilon_0^{\al} \lr{s}^{-1+\delta_2} + \varepsilon_0^{\theta_\al} \lr{s}^{-1+\theta_{\delta_2}} \right) ds 
\\
& \lesssim \int_0^t \varepsilon_0^{2\al}
  \lr{t-s}^{-1/2} 
  \left( \varepsilon_0^{\al} \lr{s}^{-3/2+\delta_2 + \alpha} + \varepsilon_0^{\theta_{\alpha}} \lr{s}^{-3/2+\theta_{\delta_2}+\al} \right) ds \lesssim \varepsilon_0^{2\al}\lr{t}^{-1/2} , 
  \end{aligned}
\end{equation}
where we have used \eqref{choice-of-csts} to deduce $\theta_{\delta_2}+\al \ll 1/2$ for $N$ sufficiently large.

\smallskip
Next, for each $\ell=1,2,3$ we have 
\begin{equation}
{\|L^{-1}e^{i(t-s)L}P_c(\varphi_\ell v^\ell(s))\|}_{L^\infty_x}\lesssim   \lr{t-s}^{-1/2} \Big(\varepsilon_0^{\alpha \ell}\lr{s}^{(-1+\delta_2)\ell}+\varepsilon_0^{\theta_\al \ell}\lr{s}^{(-1+\theta_{\delta_2})\ell}\Big),
\end{equation}
so that 
\begin{align}\label{Iav0'}
{\|I^{(\ell)}_{a,v}\|}_{L^\infty_x} 
  \lesssim \int_0^t\lr{t-s}^{-1/2}\Rho^{2-\ell/2}(s)\Big(\varepsilon_0^{\alpha \ell}\lr{s}^{(-1+\delta_2)\ell}+\varepsilon_0^{\theta_\al \ell}\lr{s}^{(-1+\theta_{\delta_2})\ell}\Big)ds\lesssim \varepsilon_0^{2\al}\lr{t}^{-1/2}.
\end{align}
Therefore, summing over all $\ell$ and combining it with \eqref{Iav0} 
we obtained the desired bound for $I_{a,v}$.

This concludes the proof of the proposition.  
\end{proof}


\medskip
\begin{proof}[{\bf Proof of Proposition \ref{lem:local-decay-bds}}]
We only prove the estimate for the singular term with $f^{(+)}$ since the bound 
for the term with $f^{(-)}$ as well as the regular term can be obtained in the same way.

From \eqref{L-1w}, 
we can write
\begin{equation}\label{sing-f}
\big( L^{-1}w- I_Q \big)_{S} = \big( L^{-1}e^{ itL}w_0 \big)_{S} +  (I_a)_S +(I_v)_S+(I_{a,v})_S.
\end{equation}

The term involving the initial data in \eqref{sing-f} can be controlled using Remark \ref{rmk:r-s-local-decay}
and \eqref{derivative-in-fourier-bound}:
\begin{align}
\|\lr{x}^{-2}(L^{-1}e^{itL}w_0)_S\|_{L^\infty_x}\lesssim \lr{t}^{-1}(\|w_0\|_{L^2}+\|\lr{x}w_0\|_{L^2})\lesssim \rho^{1/2}_0\lr{t}^{-1}
\end{align}
which is stronger than the desired estimate.
For $(I_a)_S$, we use \eqref{improved-local-decay-sing/reg}
and estimate 
\begin{equation*}
\begin{aligned}
&\left\|\lr{x}^{-2}(I_{a}(t))_S \right\|_{L_x^\infty} \\
&\quad  \lesssim \int_0^t \Big[a^2(s) \|\lr{x}^{-2} (L^{-1} e^{ i( t-s)L} P_c(\beta \varphi^2))_S \|_{L_x^\infty} +a^4(s) \|\lr{x}^{-2} (L^{-1} e^{ i( t-s)L} P_c(\varphi^4))_S \|_{L_x^\infty}\Big]ds \\
&\quad \lesssim \int_0^t  \lr{t- s}^{-3/2}(\Rho(s)+\Rho^2(s)) ds
\lesssim 
  \Rho(t)
\end{aligned}
\end{equation*}which is consistent with the desired bound \eqref{T-f-local-decay} since
$\Rho(t) \lesssim \Rho^{\al}(t) \lr{t}^{-1+\al} \lesssim \varepsilon_0^{2\al} \lr{t}^{-1+3\delta}$
and $\alpha\leq \delta$ (see \eqref{choice-of-csts}).

For the term $(I_v)_S$ we apply again \eqref{improved-local-decay-sing/reg}, 
together with the Gagliardo-Nirenberg interpolation \eqref{gagliardo-local-1} with $m= N$ to obtain
\begin{equation*}
\begin{aligned}
\left\|\lr{x}^{-2}(I_{v}(t))_S\right\|_{L_x^\infty} & \lesssim \int_0^t \Big\|\lr{x}^{-2} \big(L^{-1} e^{i(t-s)L} P_c(\beta v^2(s))\big)_{S} \Big\|_{L_x^\infty} ds 
\\
& \lesssim \int_0^t \lr{t- s}^{-3/2} \left( \|\lr{x}^2\beta v^2 \|_{L_x^2} 
  + \|\beta v^2\|_{H_x^2}+\|\lr{x}\beta v^2\|_{H^2_x} \right) ds \\
& \lesssim \int_0^t \lr{t- s}^{-3/2} \left[ \|\lr{x}^{-2} v \|_{L_x^\infty}^2 
  + \|\lr{x} \beta v^2\|_{L^2_x}^{1-\theta} \big( 
  \|\beta v^2\|_{H_x^N}^{\theta} + \|\lr{x} \beta v^2\|_{H_x^N}^{\theta} \big) \right] ds 
\\
& \lesssim \int_0^t  \lr{t- s}^{-3/2} \left( \varepsilon_0^{2\al} \lr{s}^{-2+2\delta_2} + \varepsilon_0^{2 \theta_{\al}} \lr{s}^{-2+2\theta_{\delta_2}} \right) ds
  \lesssim  \varepsilon_0^{2 \al} \lr{t}^{-3/2}.
\end{aligned}
\end{equation*}
Similarly, for the quadratic mixed term in $(I_{a,v})_S$ we can estimate 
\begin{equation}
|  a(s)| {\big\| \lr{x}^{-2}\big(L^{-1} e^{ i(t-s)L} P_c(\beta \varphi v(s))\big)_S \big\|}_{L^\infty}  \lesssim  \Rho^{1/2}(s)\lr{t- s}^{-3/2} \big(\varepsilon_0^{\alpha}\lr{s}^{-1+\delta_2}+ \varepsilon_0^{\theta_{\al}}\lr{s}^{-1+\theta_{\delta_2}} \big)
\end{equation}
and for each $\ell=1,2,3$,  
\begin{align}
& |a(s) ^{4-\ell}|\big\|\lr{x}^{-2}\big( L^{-1} e^{ i(t-s)L} 
  P_c(\varphi^{4-\ell} v^\ell(s)\big)_S\big\|_{L^\infty}
\\ 
& \lesssim \Rho^{2-\ell/2}(s)\lr{t-s}^{-3/2}\Big(\varepsilon_0^{\alpha \ell}\lr{s}^{(-1+\delta_2)\ell}+\varepsilon_0^{\theta_\al \ell}\lr{s}^{(-1+\theta_{\delta_2})\ell}\Big).
\end{align}
Integrating and summing, we obtain 
\begin{align}
\left\|\lr{x}^{-2}(I_{a,v}(t))_S\right\|_{L_x^\infty} 
  & \lesssim  
  \varepsilon_0^{2\al}\lr{t}^{-5/4},
\end{align}
which is more than sufficient for \eqref{T-f-local-decay}. The proof is now complete.
\end{proof}


\medskip
\begin{proof}[{\bf Proof of Proposition \ref{prop:Sobolev}}]
Below, we will use the following version of the Gagliardo-Nirenberg inequality:
\begin{equation}
\label{GN-endpoint}
\|\partial^{\ell}_x u\|_{L^\infty_x} \lesssim \|\partial^{N}_x u\|^{\theta_\ell}_{L^2_x}
  \|u\|^{1-\theta_\ell}_{L^\infty_x} , \qquad \theta_\ell = \frac{\ell}{N-1/2}
\end{equation}
for all $u\in H^{N}_x\cap L^{\infty}_x$ and $0 \leq \ell < N$. 
Note that using the standard $L^2$ theory from Lemma \ref{lemma-distorted-fourier} we have
$\|f\|_{L^2}+\|L^N f\|_{L^2} \sim \|f\|_{H^N}$,
so that in all $L^2$ estimates below, 
we can freely exchange the differentiation with respect to $L$ and $\partial_x$.
We use the same notation as in \eqref{Ia}-\eqref{Iav}.

\smallskip
First note that 
\begin{equation*}
\|L^{-1}e^{itL}w_0\|_{H^{N}_x}\lesssim  \|e^{itL}w_0\|_{H^{N-1}_x} \lesssim \|w_0\|_{H^{N-1}_x},
\end{equation*}
which is okay given the assumption on the data \eqref{mainthmdata}.

\smallskip
For the term $I_{a}$, using Lemma \eqref{lemma-distorted-fourier}  we have 
\begin{equation}
  \|I_a(t)\|_{H^N}\lesssim    \|I_a(t)\|_{L^2}+  \|L^NI_a(t)\|_{L^2}.  
\end{equation}
Thus, it suffices to estimate the second term on the right-hand
side as the same argument works for the first term.
We apply the smoothing estimate \eqref{smooth-est} to obtain
\begin{align*}
\left\|L^NI_{a}(t)\right\|_{L^2_x}&\lesssim  \left\|\int_0^ta^2(s)e^{-isL}L^{N-1}P_c(\beta \varphi^2)ds\right\|_{L^{2}_x}+\left\|\int_0^ta^4(s)e^{-isL}L^{N-1}P_c( \varphi^4)ds\right\|_{L^{2}_x}
\\
&\lesssim \left\|\int_0^t\int_\R a^2(s)e^{-is\lr{k}}\overline{\psi(x,k)}L^{N-1}(\beta\varphi^2)(x)dxds\right\|_{L^2_k}
\\
& \quad
+\left\|\int_0^t\int_\R a^4(s)e^{-is\lr{k}}\overline{\psi(x,k)}L^{N-1}(\varphi^4)(x)dxds\right\|_{L^2_k}    \\
&\lesssim \|\Rho(s) \|_{L^2_t}+\|\Rho^2(s) \|_{L^2_t} \lesssim \sqrt{\rho_0} \approx \varepsilon_0.
\end{align*}
Note that this recovers \eqref{H^N} provided the constant $C$ that appears there 
and in \eqref{w-assumptions-bootstrap-pde} is chosen large enough
depending on the absolute constant involved in the smoothing estimate just used above.

\smallskip
For $I_v$ in \eqref{Iv} we proceed similarly:
we first commute $L$ and distribute the derivatives, and then apply
the smoothing estimate \eqref{smooth-est} followed by H\"older to obtain
\begin{align}\label{IvH^N}
\left\|L^N I_v(t)\right\|_{L^2_x} 
& \lesssim \left\|\int_0^te^{i(t-s)L}L^{N-1}P_c(\beta v^2(s))ds\right\|_{L^{2}_x}  
\\
& \lesssim \sum_{0\leq \ell\leq N-1}
  \left\|\int_0^t\int_\R e^{-is\lr{k}}\overline{\psi(x,k)}
  \beta_\ell \, \partial_x^\ell v^2(s,x)]dxds \right\|_{L^2_k}
  \\
& \lesssim \sum_{0\leq \ell\leq N-1} {\Big\| 
  {\big\| \beta_\ell \, \partial_x^\ell v^2 (s,x) \big\|}_{L^1_x(\R)}
  \Big\|}_{L^2_s([0,t])}
\end{align}
where $\beta_\ell$ denote some generic Schwartz functions which may be different from
one line to the next, and we have switched the time and space integration by adding more weights in $x$.
Then we bound each of the terms in the last sum above using standard
product Sobolev estimate and the a priori bounds on the local decay and the high Sobolev norm
in \eqref{w-assumptions-bootstrap-pde}:
\begin{align}
{\Big\| {\big\| \beta_\ell\partial_x^\ell (  v^2(s,\cdot)) \big\|}_{L^1_x(\R)}
  \Big\|}_{L^2_s([0,t])}
  & \lesssim {\Big\|  
  {\big\| \lr{x}^{10} \beta_\ell \, v(s,\cdot) \big\|}_{H^\ell_x(\R)} \, 
  {\big\| \lr{x}^{-10} \, v(s,\cdot) \big\|}_{L^2_x(\R)}
  \Big\|}_{L^2_s([0,t])}
  \\
  & \lesssim
  \varepsilon_0^{1+\alpha} {\big\| \lr{s}^{-1+\delta_2} \big\|}_{L^2_s([0,t])} 
  \lesssim \varepsilon_0^{1+\alpha}.
\end{align}This concludes the desired bound for $I_v$ in \eqref{Iv}.

\smallskip
Next, we look at the quadratic mixed term in $I_{a,v}$
$$ I_{a,v}^{(0)}(t) = \int_0^t a(s) L^{-1} e^{i(t-s)L} P_c(\beta \varphi \, v(s)) ds.$$
Commuting $L$ through, using that $N-1$ is even and distributing the derivatives, we see that 
\begin{align}
\big\| L^N I_{a,v}^{(0)} (t)\big\|_{L^2}& \lesssim 
  \left\|\int_0^ta(s)e^{i(t-s)L}L^{N-1}(\beta \varphi v(s))ds\right\|_{L^{2}_x}\lesssim
  \sum_{0\leq \ell\leq N-1} \left\|\int_0^t a(s) e^{i(t-s)L}( \beta_\ell(x)\, \partial_x^\ell v(s) \big) ds\right\|_{L^{2}_x}
\end{align}
where $\beta_\ell$ denote some generic Schwartz functions.
We disregard the term $\ell=0$ which is easier as it does not require any interpolation.
For each $\ell=1,\dots N-1$ we look at the terms in the last inequality above and apply the smoothing
estimate \eqref{smooth-est}:
\begin{align}
{\left\|\int_0^t a(s) e^{i(t-s)L}( \beta_\ell(x)\, \partial_x^\ell v(s) \big) ds \right\|}_{L^{2}_x}
&= {\left\|\int_0^t \int_\R e^{i(t-s)\lr{k}} \overline{\psi(x,k)}
   \big( a(s) \beta_\ell\, \partial_x^\ell v(s) \big)  dx ds\right\|}_{L^{2}_k}
   \\
   &\lesssim {\big\| a(s) \beta_\ell \, \partial_x^\ell v(s) \big\|}_{L^{1}_xL^2_s([0,t])}.
\end{align}We use  the fast decay of $\beta_\ell$ and Cauchy-Schwarz
to exchange the order of integration, and then H\"older and the interpolation \eqref{GN-endpoint}
to obtain
\begin{align}
{\big\| a(s) \beta_\ell \, \partial_x^\ell v(s) \big\|}_{L^{1}_xL^2_s([0,t])}
  & \lesssim {\Big\| a(s) {\big\| \lr{x}^2 \beta_\ell(x) \partial_x^\ell v(s) \big\|}_{L^\infty_x} \Big\|}_{L^2_s([0,t])}
  \\
  & \lesssim {\Big\| a(s) \,
  {\big\| \lr{x}^{-10} v(s) \big\|}_{L^\infty(\R)}^{1-\theta_\ell} 
  {\big\| v(s) \big\|}_{H^N_x}^{\theta_\ell} \Big\|}_{L^2_s([0,t])}
\end{align}
where, recall, $\theta_\ell := 2\ell/(2N-1)$.
Using the last two a priori bounds in \eqref{w-assumptions-bootstrap-pde},
we can estimate
\begin{align}\label{cond0}
\begin{split}
{\big\| a(s) \beta_\ell \, \partial_x^\ell v(s) \big\|}_{L^{1}_xL^2_s([0,t])}
  & \lesssim {\Big\| \min\big( \varepsilon_0,s^{-1/2} \big) \cdot
  \big( \varepsilon_0^\alpha\lr{s}^{-1+\delta_2}  \big)^{1-\theta_\ell} 
  \cdot \varepsilon_0^{\theta_\ell} \Big\|}_{L^2_s([0,t])} 
  \\
  & \lesssim 
  \varepsilon_0^{2\gamma_\ell} \cdot \varepsilon_0^{\alpha(1-\theta_\ell)} \cdot \varepsilon_0^{\theta_\ell}. 
\end{split}
\end{align}
for any $\gamma_\ell \in (0,1/2)$ such that 
\begin{align}\label{cond1}
\gamma_\ell < (1-\theta_\ell)(1-\delta_2).
\end{align}
The estimate \eqref{cond0} is sufficient to recover the a priori bound provided that
\begin{align}
\label{cond2}
2\gamma_\ell + \alpha(1-\theta_\ell) + \theta_\ell > 1.
\end{align}
We can then choose, for example, $\gamma_\ell = (1-\theta_\ell)/2$ so that both condition 
\eqref{cond2} and \eqref{cond1} are satisfied.

\smallskip
For the other term  in \eqref{Iav}, it will  suffice to estimate
\begin{equation*}
  I^{(\ell)}_{a,v}(t)= \int_0^t a(s)^{4-\ell} L^{-1} e^{i(t-s)L} P_c(\varphi_\ell \, v^\ell(s))  ds
\end{equation*}
for each $\ell=1,2,3$, and the same arguments as before apply
since it contains polynomials of higher degree in $a(s)$ and $v(s)$,
with localized coefficients.

Finally, for the term $I_Q$ in \eqref{I_Q}, using the bootstrap assumptions
on the $L^\infty$ norm and high Sobolev norms in \eqref{w-assumptions-bootstrap-pde} 
we can easily estimate:
\begin{align*}
{\|L^{N}I_Q(t) \|}_{L^2_x} 
& = {\left\|\int_0^tL^{N-1}e^{i(t-s)L}P_c( v^4(s))ds\right\|}_{L^2_x}
\\
& \lesssim \int_0^t {\left\|v^4(s)\right \|}_{H^{N-1}_x} ds 
  \lesssim \int_0^t {\|v(s)\|}^3_{L^\infty_x}
  {\|v(s)\|}_{H^{N-1}_x}ds\lesssim \varepsilon_0^{3\al+1}.
\end{align*}The proof of Proposition \ref{prop:Sobolev} is complete.
\end{proof}


\bigskip
\section{Bootstrap for the continuous component III: the quartic terms}\label{sec:PDE-part2}
This section is devoted to proving all the remaining (and harder) estimates on the quartic contributions
so that we can conclude the proof of the bootstrap estimates in Proposition \ref{prop:bootstrap-on-v}.
To describe how we will proceed to do this, let us first recall the notation from \eqref{I_Q} and \eqref{term-Q}, that is
\begin{align}\label{term-Q'}
I_Q(t) = L^{-1} e^{itL} Q(t), \qquad Q(t,x) = \int_0^t e^{-isL} P_c (v^4)(s,x) ds.
\end{align}
In this section we will do the following:

\begin{itemize}
 \smallskip
 \item[1.] Prove the weighted-type estimate \eqref{Q-tildeL20}, that is,
 \begin{align}
 {\big\|\partial_k \wt{Q}(t,\cdot) \big\|}_{L_k^2}
 \lesssim \varepsilon_0^{3\al}\lr{t}^{8\delta}.
 \end{align}
 This concludes the proof of Proposition \ref{prop:good-part} and recovers the first improved 
 a priori bound 
 in \eqref{w-conclusion-bootstrap}.
 
 \smallskip
 \item[2.] Prove the decay estimate
 \begin{equation}
 \left\| I_Q(t) \right\|_{L^\infty_x}\lesssim \varepsilon_0^{2\al} \lr{t}^{-1/2+\delta};
 \end{equation}
 together with Proposition \ref{prop:decay_boostrap}
 this will recover the second a priori bound \eqref{w-conclusion-bootstrap}.

 \smallskip
 \item[3.] Prove the local decay estimate
 \begin{equation}
 \left\| \lr{x}^{-2} \big( I_Q(t) \big)_{E} \right\|_{L_x^\infty}
 \lesssim \varepsilon_0^{3\al} \lr{t}^{-1+8\delta} \qquad E\in\{S,R\};
 \end{equation}
 together with Proposition \ref{lem:local-decay-bds}
 this will recover the last a priori bound in \eqref{w-conclusion-bootstrap}.

 
 
\end{itemize}

\medskip
\subsection{Set-up and reductions}
Here are all the statements of the results we will prove in this section:

\begin{lemma}\label{lem:Q-tilde}
Under the hypotheses of Proposition $\ref{prop:bootstrap-on-v}$ 
we have, for all $t \in [0,T]$, 
\begin{equation}\label{Q-tildeL2}
\|\partial_k\wt{Q}(t,\cdot)\|_{L^2_k}\lesssim \varepsilon_0^{3\al}\lr{t}^{8\delta}.
\end{equation}
\end{lemma}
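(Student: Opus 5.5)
We plan to estimate $\partial_k\wt{Q}$ by writing $v=L^{-1}\Im w$ with $w=e^{isL}f$, so that $v^4$ is a sum over sign combinations $\ell_1,\dots,\ell_4\in\{+,-\}$ of quartic products of $L^{-1}e^{\pm isL}f^{(\pm)}$. Expanding each factor on the distorted Fourier side gives $\wt{Q}(t,k)$ as a sum of expressions of the form \eqref{F-mu}, with phase $e^{is(-\lr{k}\pm\lr{k_1}\pm\dots\pm\lr{k_4})}$ and nonlinear spectral distribution $\mu(k,k_1,\dots,k_4)=\int\overline{\psi(x,k)}\prod_j\psi^{(\pm)}(x,k_j)\,dx$. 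We then split $\overline{\psi(x,k)}=\psi_S+\psi_R$ only in the output variable $k$. This is where the derivative falls, and the splitting gives $\wt{Q}=\wt{Q}_S+\wt{Q}_R$.

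The regular part is the easy one. Here $\partial_k$ falls on $\overline{\psi_R(x,k)}$ or on the phase. In the first case Lemma \ref{lem:nonconv-bds} gives a bound by $\int_0^t\|\lr{x}^{-10}v^4(s)\|_{L^2}ds$, which the local decay a priori bound controls as $\int_0^t\varepsilon_0^{4\alpha}\lr{s}^{-4+4\delta_2}ds$. In the second case, the term with $s\,\partial_k\lr{k}$ and a localized kernel $\psi_R$, we use the smoothing estimate \eqref{smooth-est} with weight $\lr{x}^{-10}$. This gives $\|s\lr{x}^{-10}v^4\|_{L^1_xL^2_s}\lesssim\|s\,\|\lr{x}^{-2}v\|_{L^\infty}\|v\|_{L^\infty}^3\|_{L^2_s}\lesssim\varepsilon_0^{4\alpha}\|s\lr{s}^{-5/2+3\delta+\delta_2}\|_{L^2_s}$, which is bounded. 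The same two treatments handle the pieces of $\psi_S$ that are localized, namely the commutator terms where $\partial_k$ hits $\sigma_\pm$ or the coefficients $T$, $R_\pm$ (using Lemma \ref{lem:T-R-prop}).

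The singular part carries the difficulty. Here $\overline{\psi_S(x,k)}$ is a combination of $\sigma_\pm(x)A_{\pm,\pm}(k)e^{\mp ixk}$, and $\partial_k$ produces a factor $x$. We remove this factor by the identity $x\,e^{is\lr{D}}=e^{is\lr{D}}(x - s D\lr{D}^{-1})$ in flat Fourier variables. Concretely, we write $v$ through the flat profiles $W$ of Lemma \ref{lem:BoundednessW}, using the plane-wave form of $\psi_S$ in each input. The weight $x$ then distributes onto one of the four factors, becoming $\partial_{k_j}$ on $\wt{f}(s,k_j)$ (plus localized or lower-order terms). The $s$-weight produced by the phase cancels against the $s\,\partial_k\lr{k}$ term, up to the combination $s(\partial_k\lr{k}-\partial_{k_j}\lr{k_j})$, which we treat by integrating by parts in $s$. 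This yields schematically the bound \eqref{fW}:
\[
\|\partial_k\wt{Q}_S(t)\|_{L^2}\lesssim\int_0^t\big\|\partial_k\wt{f}(s)\big\|_{L^2}\,\|W(s)\|_{L^\infty}^3\,ds,
\]
where $W$ is the $L^\infty$-type bound supplied by Lemma \ref{lem:BoundednessW}. The boundary and time-derivative terms from the integration by parts are quintic or higher, or contain $\partial_s f$, and decay faster.

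The main obstacle is that $\|\partial_k\wt{f}\|\le\|\partial_k\wt{h}\|+\|\partial_k\wt{g}\|$, and the bad part grows like $\lr{s}^{1/2}$ by \eqref{nablag-in-H1}. The resulting crude bound $\int\lr{s}^{1/2}\lr{s}^{-3/2+6\delta}ds$ only gives $\lr{t}^{6\delta}$, and only if $W$ decays at the rate $\lr{s}^{-1/2+2\delta}$. The good part is harmless: $\int\lr{s}^{\delta_1}\lr{s}^{-3/2+6\delta}ds$ is bounded.

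For the bad part $g$ I would avoid placing $\partial_k\wt{g}$ in $L^2$. Instead I would use its explicit form \eqref{bad-g-defn}: it is spectrally localized near $|k|=k_0$, and in physical space the corresponding $x\,g$ term is a localized function times $\int_0^s B^2(\tau)\,\cdot\,d\tau$ with local decay. The identity then produces $s\,\partial_k\lr{k}\,\wt{g}$, and we bound $\|\wt{g}\|_{L^2}\lesssim\varepsilon_0$ by \eqref{g-in-H1}. Combining this with a dyadic decomposition in time via $\tau_n$, and interpolating between the $\varepsilon_0^2\lr{s}^{3/2}$ and $\lr{s}^{1/2}$ bounds, we expect to gain $\varepsilon_0^{3\alpha}$ with total loss at most $\lr{t}^{8\delta}$. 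This matches \eqref{Q-tildeL2}.
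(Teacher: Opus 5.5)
Your regular-part estimates are fine. The gap is in the singular part, at the step where you move the weight $x$ onto a single input.

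\textbf{The $s$-weighted terms do not cancel.} Using $x e^{is\lr{D}}=e^{is\lr{D}}(x-sD\lr{D}^{-1})$ on one factor leaves the term $s\big(\partial_k\lr{k}-\partial_{k_j}\lr{k_j}\big)e^{is\Upsilon}$. You propose to integrate it by parts in $s$, but that means dividing by $\Upsilon$, and $\Upsilon$ has time resonances. Take $(\ell_1,\ell_2,\ell_3,\ell_4)=(+,+,+,-)$, with constraint $k=k_1+k_2+k_3-k_4$. The set $\{\Upsilon=0\}$ is a hypersurface containing, for example, the point $k_1=k_2=k_3=0$, $k_4=-k$, $\lr{k}=3/2$. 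Near such points $\partial_k\lr{k}-\partial_{k_j}\lr{k_j}$ is about $k/\lr{k}$ for $j=1,2,3$ and about $2k/\lr{k}$ for $j=4$. So for no choice of $j$ is the symbol $\big(\partial_k\lr{k}-\partial_{k_j}\lr{k_j}\big)/\Upsilon$ bounded. Estimated directly, this term only gives $\int_0^t s\,\|W\|_{L^2}\|W\|_{L^\infty}^3\,ds\approx\varepsilon_0^{1+3\alpha}\lr{t}^{1/2+6\delta}$, which is far too large.

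\textbf{The missing idea is the exact identity of Lemma~\ref{partial-k-I}.} Differentiate with $\lr{k}\partial_k$ rather than $\partial_k$; in physical space this is the Lorentz boost $x\partial_t+t\partial_x$, which commutes with $\partial_t^2-\partial_x^2+1$ and is a derivation on $v^4$. Then write $\lr{k}=-\Upsilon+\sum_j\ell_j\lr{\xi_j}$ and integrate by parts in every $\xi_j$, not just one. This has three effects:
\begin{itemize}
\item The weighted derivatives $\lr{\xi_j}\partial_{\xi_j}$ fall on all four inputs.
\item The $s$-terms cancel exactly except for the one with multiplier $s\,\zeta\widehat{\Lambda_\pm}(\zeta)=s\,\widehat{\Theta_\pm}(\zeta)$. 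This is Schwartz by Lemma~\ref{lem:FT-varLambda}, hence spatially localized, and is bounded by $\int s\|w\|_{L^\infty}^4\,ds\lesssim\varepsilon_0^{4\alpha}2^{8\delta m}$ per dyadic block. This is where the $\lr{t}^{8\delta}$ comes from; see \eqref{Q0m2}.
\item The leftover term carries $\Upsilon$ as a factor, so the time integration by parts uses $\Upsilon e^{is\Upsilon}=-i\partial_s e^{is\Upsilon}$ and never divides by $\Upsilon$.
\end{itemize}

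\textbf{The bad part is not an obstacle.} You have $\|\partial_k\wt{f}(s)\|_{L^2}\lesssim\lr{s}^{1/2}+\varepsilon_0^\alpha\lr{s}^{\delta_1}$, and $\|w_j(s)\|_{L^\infty}\lesssim\varepsilon_0^\alpha\lr{s}^{-1/2+2\delta}$ from Lemma~\ref{lem:BoundednessW}. Each dyadic block then contributes $\varepsilon_0^{3\alpha}2^{6\delta m}$, and summing gives $\varepsilon_0^{3\alpha}\lr{t}^{6\delta}$, which is within the target. The factor $\varepsilon_0^{3\alpha}$ comes from the three $L^\infty$ factors, not from $\partial_k\wt{g}$. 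This is exactly the paper's argument in \eqref{Q12m}, so your separate, heuristic treatment of $g$ is unnecessary.

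\textbf{A smaller point.} The coefficients $A_{\ast,\dagger}$ jump at $k=0$, so differentiating them one at a time produces $\delta_0$ terms (see \eqref{A+der}). These cancel only within $\psi_{S,\pm}$, which is why the paper regroups as in \eqref{rearange-I+} before differentiating.
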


\begin{lemma}\label{lem:Q-in-Linfty} Under the hypotheses of 
Proposition $\ref{prop:bootstrap-on-v}$, 
for all $t \in [0,T]$, there holds the estimate
\begin{equation}\label{linear-decay-quartic}
\left\|\int_0^t L^{-1} e^{i(t-s)L} P_c v^4(s) ds \right\|_{L_x^\infty}
 \lesssim \varepsilon_0^{1/2} \lr{t}^{-1/2+\delta/2}.
\end{equation} \end{lemma}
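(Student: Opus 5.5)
We will prove \eqref{linear-decay-quartic} by applying the interpolated linear decay estimate of Lemma \ref{lemlininterp} to $I_Q(t) = L^{-1}e^{itL}Q(t)$, with $f$ replaced by $L^{-1}Q(t)$. This reduces matters to bounds on ${\|Q(t)\|}_{H^4}$, ${\|Q(t)\|}_{H^N}$ and ${\|\partial_k \wt{Q}(t)\|}_{L^2_k}$. Note that $\wt{Q}(t,0)=0$ because $V$ is generic (Lemma \ref{lemma-distorted-fourier}), so the hypothesis used in the proof of Lemma \ref{lemlininterp} holds.

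\emph{Step 1: weighted bound.} Lemma \ref{lem:Q-tilde} gives ${\|\partial_k\wt{Q}(t)\|}_{L^2_k}\lesssim \varepsilon_0^{3\al}\lr{t}^{8\delta}$.

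\emph{Step 2: Sobolev bounds.} We use Minkowski, the Sobolev product estimate and the a priori bounds \eqref{w-assumptions-bootstrap-pde}:
\begin{align*}
{\|Q(t)\|}_{H^{N-1}} \lesssim \int_0^t {\|v(s)\|}_{L^\infty}^3{\|v(s)\|}_{H^{N-1}}\,ds \lesssim \int_0^t \varepsilon_0^{3\al}\lr{s}^{-3/2+3\delta}\varepsilon_0\,ds \lesssim \varepsilon_0^{1+3\al},
\end{align*}
as in the proof of Proposition \ref{prop:Sobolev}. The lower norm ${\|Q\|}_{H^4}$ is controlled by the same quantity. The factor $L^{-1}$ costs nothing, and the one derivative lost relative to $H^N$ is handled by running Lemma \ref{lemlininterp} with $N-1\geq 4$ in place of $N$.

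\emph{Step 3: combine.} Inserting these bounds into \eqref{lininterpest} gives
\begin{align*}
\|I_Q(t)\|_{L^\infty} \lesssim \lr{t}^{-1/2}\log(2+t)\Big( (\varepsilon_0^{1+3\al})^{1/2}(\varepsilon_0^{3\al}\lr{t}^{8\delta})^{1/2} + \varepsilon_0^{1+3\al}\Big) \lesssim \varepsilon_0^{1/2}\lr{t}^{-1/2+4\delta}\log(2+t).
\end{align*}
This is the main obstacle: the time exponent is $4\delta$ plus a logarithm, while the statement asks for $\delta/2$. The $\varepsilon_0$-powers leave room, since $\varepsilon_0^{1/2+3\al}\ll\varepsilon_0^{1/2}$, but the growth exponent does not. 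The half power comes only from the weighted norm, so we need to sharpen the input from Step 1.

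First, we would check whether the proof of Lemma \ref{lem:Q-tilde} actually yields a smaller growth rate for the bulk of $Q$. The estimate \eqref{fW'} suggests growth of order $\lr{t}^{3\delta}$ at the level of $\|\wt{f}\|_{H^1}\|v\|_{L^\infty}^3$, with the $8\delta$ loss coming only from the $g$-contributions. Second, following the strategy announced in the introduction, we would decompose $Q=\sum_n Q_n$ dyadically in time using $\tau_n$ from \eqref{timeloc}. The early pieces ($2^n\leq t^{1-c}$) have $\partial_k\wt{Q_n}$ bounded by $\varepsilon_0^{3\al}2^{8\delta n}$. For these we apply Lemma \ref{lemlininterp} to $e^{i(t-s)L}$ with the full time gap, so only the smaller $2^{n}$-growth appears. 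The late pieces ($s\approx t$) are estimated directly from $\int_{t/4}^t \|L^{-1}e^{i(t-s)L}v^4(s)\|_{L^\infty}\,ds$ using \eqref{dispersive-estimates0}. Here $\|v^4\|_{L^1}\lesssim \|v\|_{L^\infty}^2\|v\|_{L^2}^2\lesssim \varepsilon_0^{2}\lr{s}^{-1+2\delta}$, combined with Bernstein-type bounds on the Littlewood--Paley pieces, gives $\varepsilon_0^{2}\lr{t}^{-1/2+2\delta}$ after integration; high frequencies are absorbed by $H^N$. Balancing the cutoff $c$ between the two regimes, and spending the surplus power $\varepsilon_0^{1/2}$ against the factors $\lr{t}^{O(\delta)}$ where $t$ is only moderately large, should reduce the total loss to $\lr{t}^{\delta/2}$. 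This balancing is where we expect most of the work to lie.
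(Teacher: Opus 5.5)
Your diagnosis after Step 3 is correct: applying \eqref{lininterpest} to all of $Q(t)$ loses $\lr{t}^{4\delta}\log(2+t)$. The remedy you sketch does not close, however, and the balancing you defer cannot succeed.

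The late block alone is a problem. Estimating it through $\|v^4(s)\|_{L^1}\lesssim\|v\|_{L^\infty}^2\|v\|_{L^2}^2\lesssim\varepsilon_0^{2}\lr{s}^{-1+2\delta}$ gives a contribution of $\lr{t}^{-1/2+2\delta}$ whatever the cutoff $c$ is, and $2\delta>\delta/2$. Since $T$ is arbitrary, a surplus power of $\varepsilon_0$ cannot pay for a polynomial loss in $t$.

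The early blocks do not help either. With only the uniform bound $\|Q_n\|_{H^4}\lesssim\varepsilon_0^{1+3\al}$, summing $2^{4\delta n}$ up to $2^n\approx t^{1-c}$ still loses $t^{4\delta(1-c)}$. Even sharpening Lemma \ref{lem:Q-tilde} to $\lr{t}^{3\delta}$ growth would leave $\lr{t}^{3\delta/2}$ after the square root.

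The missing idea is already visible in your Step 2. The integrand $\|v(s)\|_{L^\infty}^3\|v(s)\|_{H^{N-1}}\lesssim\varepsilon_0^{1+3\al}\lr{s}^{-3/2+3\delta}$ decays. Hence each dyadic time block $Q_m$ from \eqref{reductionestQm} has a \emph{decaying} Sobolev norm: $\|L^{-1}Q_m\|_{H^\ell}\lesssim\varepsilon_0^{1+3\al}2^{-m/2+3\delta m}$ for $\ell\leq N$.

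The paper applies \eqref{lininterpest} to each $e^{itL}L^{-1}Q_m$ separately, using the triangle inequality in $m$ and the same time $t$ for every block. It pairs this decaying Sobolev bound with the per-block weighted bound $\|\partial_k\wt{Q_m}\|_{L^2_k}\lesssim\varepsilon_0^{3\al}2^{8\delta m}$ from Lemma \ref{lemreduction}, not the cumulative Lemma \ref{lem:Q-tilde}. Each block then contributes $\lr{t}^{-1/2}\log(2+t)\,\varepsilon_0^{1/2+3\al}2^{-m/4+11\delta m/2}$, plus an even smaller $H^N$ term. This series is summable in $m$ uniformly in $t$.

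The result is $\varepsilon_0^{1/2}\lr{t}^{-1/2}\log(2+t)\lesssim\varepsilon_0^{1/2}\lr{t}^{-1/2+\delta/2}$, with no polynomial loss at all. Block by block, the half power of the decaying $H^4$ norm beats the half power of the growing weighted norm. No early/late split, no $L^1$--$L^\infty$ estimate, and no trade of $\varepsilon_0$ against $t$ is needed.
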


\begin{lemma}\label{lem:local-Q-in-Linfty}
Assume the hypotheses of Proposition $\ref{prop:bootstrap-on-v}$.
Then, for all $t \in [0,T]$,
\begin{equation}\label{T_Q}
\left\|\lr{x}^{-2} \Big( L^{-1}e^{\pm itL} 
  Q^{(\pm)}(t) \Big)_{E}
  \right\|_{L_x^\infty}\lesssim \varepsilon_0^{3\al} \lr{t}^{-1+8\delta},
  \quad E \in \{S,R\}.
\end{equation} 
\end{lemma}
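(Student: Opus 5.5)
The plan is to reduce \eqref{T_Q} to the linear local decay estimates of Section \ref{sec:linear-decay}, fed by the weighted bound of Lemma \ref{lem:Q-tilde}, and to use the spare room in the time exponent to absorb the growth of $\wt{Q}$. Since $\big(L^{-1}e^{\pm itL}Q^{(\pm)}(t)\big)_E$ is exactly the linear flow applied to the profile $Q(t)$, Remark \ref{rmk:r-s-local-decay} gives directly
\begin{align*}
\left\|\lr{x}^{-2}\big(L^{-1}e^{\pm itL}Q^{(\pm)}(t)\big)_E\right\|_{L^\infty_x}
\lesssim \lr{t}^{-1}\big\|\wt{Q}(t)\big\|_{H^1_k},
\end{align*}
using \eqref{local-l-infty-decay-sing} or \eqref{local-l-infty-decay-sing'} for $E=S$ and \eqref{local-l-infty-decay-reg} for $E=R$; the latter is even stronger because of the $\lr{x}^{10}$ weight. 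Conjugation is harmless, since $\wt{\overline{Q}}$ is controlled by $\wt{Q}$ through the relation $\overline{\psi(x,k)}=\psi(x,-k)$.

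Two facts about $\wt{Q}$ are needed. The first is $\wt{Q}(t,0)=0$, which holds because $V$ is generic (Lemma \ref{lemma-distorted-fourier}); this is what makes the integration by parts behind Remark \ref{rmk:r-s-local-decay} free of boundary terms. The second is the $L^2_k$ bound. By Plancherel and Minkowski,
\begin{align*}
\|\wt{Q}(t)\|_{L^2_k}\le\int_0^t\|v^4(s)\|_{L^2_x}\,ds\lesssim\int_0^t\|v(s)\|_{L^\infty}^3\|v(s)\|_{L^2}\,ds.
\end{align*}
Under the a priori bounds \eqref{w-assumptions-bootstrap-pde} the integrand is at most $\varepsilon_0^{1+3\alpha}\lr{s}^{-3/2+3\delta}$, which is integrable, so $\|\wt{Q}(t)\|_{L^2_k}\lesssim\varepsilon_0^{1+3\alpha}$. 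The derivative part $\|\partial_k\wt{Q}(t)\|_{L^2_k}\lesssim\varepsilon_0^{3\alpha}\lr{t}^{8\delta}$ is precisely Lemma \ref{lem:Q-tilde}. Combining these gives the bound $\varepsilon_0^{3\alpha}\lr{t}^{-1+8\delta}$, which is \eqref{T_Q}.

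The main obstacle is entirely upstream, in Lemma \ref{lem:Q-tilde}: with the multilinear analysis in place, this lemma is a corollary. The only point to check is that Remark \ref{rmk:r-s-local-decay} applies to $Q(t)$ for fixed $t$, with the singular/regular splitting \eqref{phi-decomp} taken after the flow. This needs only $\wt{Q}(t)\in H^1_k$, which is guaranteed by Lemma \ref{lem:Q-tilde} together with the $L^2_k$ bound above.
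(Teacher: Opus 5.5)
Your argument is correct and is essentially the paper's: the paper applies \eqref{local-l-infty-decay-sing}--\eqref{local-l-infty-decay-reg} to each dyadic piece $Q_m$ and sums \eqref{reductionest} over $m\lesssim\log(2+t)$. That is equivalent to your single application of the linear estimate to $Q(t)$, using the already-summed bound of Lemma~\ref{lem:Q-tilde} together with your integrable $L^2_k$ bound.

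One small correction concerns your justification for the conjugate case. The identity $\overline{\psi(x,k)}=\psi(x,-k)$ is false for a nonzero generic potential. For $k>0$ one has instead $\overline{\psi(x,k)}=\overline{T(k)}\,\psi(x,-k)+\overline{R_-(k)}\,\psi(x,k)$, and the same identity holds for $\psi_S$ and $\psi_R$. Hence $(\overline{\phi})_E=\overline{\phi_E}$, and since $L^{-1}e^{-itL}\overline{Q}=\overline{L^{-1}e^{itL}Q}$, the $Q^{(-)}$ case still reduces to the $Q^{(+)}$ case as you claim.
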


The proofs of all the lemmas above can be reduced to proving a
bound on the derivative in distorted Fourier space of a time localized version of
the quartic term $Q$ in \eqref{term-Q'}, stated in the next lemma:

\begin{lemma}[The main quartic bound]\label{lemreduction}
Let $\{\tau_m\}_{m=0,\dots,L+1}$ be a partition of the time interval $[0,t]$
as 
in \eqref{timeloc}, and define
\begin{align}\label{reductionestQm}
Q_m(x) :=\int_0^\infty e^{-isL}P_c(v^4)(s,x)\tau_m(s)ds, \qquad  Q(t,x) = \sum_{m=0}^{L+1} Q_m(x).
\end{align}
Then, under the a priori assumption in Proposition \ref{prop:bootstrap-on-v}, we have,
for all $m \in \mathbb{Z} \cap [0,L+1]$,
\begin{align}\label{reductionest}
{\big\| \partial_k \wt{Q_m}(\cdot) \big\|}_{L^2_k}
  \lesssim \varepsilon_0^{3\al}2^{8\delta m}.
\end{align}
\end{lemma}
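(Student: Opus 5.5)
We will prove \eqref{reductionest} by writing $v=L^{-1}\Im w = \frac{1}{2i}L^{-1}(e^{isL}f - e^{-isL}\overline f)$ and expanding $v^4$ into $2^4$ sign combinations. Taking the distorted Fourier transform gives, for each choice $(\ell_1,\dots,\ell_4)\in\setsign^4$, a quartic expression as in \eqref{F-mu}: a time integral against $\tau_m(s)$ of
\[
e^{is(-\lr{k}+\sum_j \ell_j\lr{k_j})}\,\prod_{j=1}^4 \frac{\wt{f^{(\ell_j)}}(s,k_j)}{\lr{k_j}}\,\mu(k,k_1,\dots,k_4).
\]
We split $\sqrt{2\pi}\psi=\psi_S+\psi_R$ in each of the five factors of $\mu$, using \eqref{generalized-eigenf-decomp}. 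Whenever at least one factor is $\psi_R$, which is localized by Lemma \ref{lem:oscillatory-Jost}, the $x$-integral is localized. Then $\partial_k$ either falls on $\overline{\psi_R(x,k)}$, which is handled by Lemma \ref{lem:nonconv-bds}, or on the phase, producing a factor $s$. In the latter case we apply the smoothing estimate \eqref{smooth-est} with an $\lr{x}$ weight as in Remark \ref{rem:smoothing-effect}, and use the a priori local decay from \eqref{w-assumptions-bootstrap-pde} on one factor together with $L^\infty$ decay on the others. This gives a bound of the form $\|s\,\lr{x}^{-2}v\,\|v\|_{L^\infty}^3\|_{L^2_s(\tau_m)}\lesssim \varepsilon_0^{4\al}2^{m(3/2-1-3/2+4\delta)}$, which is acceptable.

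The main term is the purely singular one, with all five factors equal to $\psi_S$. Expanding via \eqref{decomposition-sing-part}, $\mu_S$ becomes a sum of products of the coefficients $A_{\ast,\dagger}$ with the flat distributions $\int \sigma_{\pm}(x)e^{ixq}\,dx = \sqrt{2\pi}\,\widehat{\sigma_\pm}(q)$, that is, $\pi\delta(q)\pm \mathrm{p.v.}\,\frac{1}{iq}$ plus a Schwartz function, with $q=\pm k\mp k_1\cdots$. For the $\delta$ and $\mathrm{p.v.}$ parts we use that $\partial_k$ of a function of $q$ equals $-\partial_{k_1}$ of it. Integrating by parts in $k_1$, the derivative lands on one of three things:
\begin{itemize}
\item on $\wt f(s,k_1)/\lr{k_1}$, which is the good term;
\item on $A_{\ast,\dagger}(k_1)$, which is bounded by Lemma \ref{lem:T-R-prop} apart from jumps at $0$, harmless since $\wt f(0)=0$;
\item on the phase.
\end{itemize}
In the phase case the phase derivatives combine into $s(\frac{k_1}{\lr{k_1}}\ell_1-\frac{k}{\lr{k}})$. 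We handle this by the standard identity $s\partial_k e^{is\Phi} \leftrightarrow$ integration by parts in $s$ only on the nonresonant set, and otherwise by symmetrizing over $k_1,\dots,k_4$. The latter turns it into $s\,\partial_s$-type terms, where $\partial_s \wt f$ is again nonlinear and gives extra decay.

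After this reduction, each term is an $L^2_k$ bound of a flat-Fourier pseudo-product. We return to physical space with Plancherel, using that $\mathrm{p.v.}\frac1q$ corresponds to a Hilbert-transform-type bounded operator on $L^2$. This gives \eqref{fW}:
\[
\|\cdot\|_{L^2_k}\lesssim \int \tau_m(s)\,\|\partial_k\wt f(s)\|_{L^2}\,\prod_{j=2}^4\|W_j(s)\|_{L^\infty}\,ds,
\]
where the $W_j$ are as in Lemma \ref{lem:BoundednessW}. That lemma gives $\|W_j\|_{L^\infty}\lesssim\varepsilon_0^\al s^{-1/2+2\delta}$.

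Here lies the main obstacle. We have $\|\partial_k\wt f\|_{L^2}\le\|\partial_k\wt h\|+\|\partial_k\wt g\|$, and $\|\partial_k\wt g\|\sim s^{1/2}$ by Lemma \ref{lem:bad-part-bds}. Inserting this loses $2^{m/2}\cdot 2^{-3m/2}\cdot 2^{m}=O(1)$, so the estimate is critical rather than acceptable. For the $g$ contribution we plan to exploit its structure instead of its weighted norm. The function $\wt g$ is explicit and localized at $|k|\approx k_0$, and $\partial_k\wt g$ is dominated by $\int_0^s \sigma B^2 e^{-i\sigma(\lr{k}-2\lambda)}\,d\sigma$ times a smooth function. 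So we plan to put $e^{isL}\partial_k g$ in the local-decay or smoothing norm, by means of \eqref{smooth-est} used in reverse or Lemma \ref{lem:local-estimate-time-cut-off}, and to put the other three factors in $L^\infty$. This should yield $\|\sigma B^2\|_{L^2_\sigma}$ times $2^{-3m/2+6\delta m}\cdot 2^m$, with the extra $2^{-m/2}$ coming from the gain in the time-localized window.

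The remaining factors contribute $\varepsilon_0^{3\al}$ and growth $2^{6\delta m}$, within the target $2^{8\delta m}$. For the $h$ contribution, the a priori bound $\lr{s}^{\delta_1}$ is too lossy. We instead interpolate, via Lemma \ref{lemlininterp}-type arguments, using \eqref{lininterpest} on the $L^\infty$ factors to gain $\varepsilon_0$ powers. Alternatively, we place the $\partial_k\wt h$ factor via a dyadic-in-time bootstrap, since $\partial_k\wt h$ has only the contributions bounded in Proposition \ref{prop:good-part}, whose growth $\lr{t}^{\delta_1}$ comes from explicit $A$-terms that can be treated like $g$. I expect this step, closing exactly at the critical power with only $\delta$-losses, to be the main difficulty.
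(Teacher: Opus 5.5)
There is a genuine gap in the treatment of the singular (non-localized) part, which is where the lemma is actually decided. Your overall architecture does match the paper: the $S/R$ split of $\mu$, flat pseudo-products against $\widehat{\Lambda_\pm}$, moving $\partial_k$ onto the inputs, and $L^2\times(L^\infty)^3$ bounds via Lemma \ref{lem:BoundednessW}.

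\textbf{The phase term.} If you transfer $\partial_k$ to $\partial_{k_1}$ with flat derivatives, the phase contributes $s(\ell_1k_1/\lr{k_1}-k/\lr{k})$. This bounded multiplier neither vanishes on $\{q=0\}$ nor regularizes the p.v. part. The term still carries the non-decaying cutoff $\Lambda_+$, so one factor must be placed in $L^2_x$. That gives about $2^m\cdot 2^m\cdot\varepsilon_0\cdot(\varepsilon_0^\al 2^{-m/2+2\delta m})^3\approx\varepsilon_0^{1+3\al}2^{m/2+6\delta m}$, which fails. Your proposed remedy does not fix this:
\begin{itemize}
\item Integrating by parts in $s$ ``on the nonresonant set'' leaves the resonant set untreated, and $\Upsilon$ has genuine time resonances on $\{\zeta=0\}$ for mixed sign combinations (two-versus-three configurations).
\item Flat symmetrization over $k_1,\dots,k_4$ still yields a phase factor $-k/\lr{k}+\tfrac14 s_0\sum_j s_j\xi_j/\lr{\xi_j}$, which does not vanish on $\{\zeta=0\}$.
\end{itemize}

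\textbf{The missing idea (Lemma \ref{partial-k-I}).} Use the weighted fields $\lr{k}\partial_k$ and $\lr{\xi_j}\partial_{\xi_j}$ in all four inputs. Since $\lr{k}\partial_k e^{is\Upsilon}=-isk\,e^{is\Upsilon}$ and $\lr{\xi_j}\partial_{\xi_j}e^{is\Upsilon}=is\ell_j\xi_j\,e^{is\Upsilon}$, the phase contributions add up to exactly $-is\,s_0\,\zeta\,\Omega(\zeta)$. By Lemma \ref{lem:FT-varLambda}, $\zeta\widehat{\Lambda_\pm}(\zeta)=\widehat{\Theta_\pm}(\zeta)$ is Schwartz: the $\delta$ disappears and the p.v. becomes smooth. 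The price is the term with $\Upsilon\,\partial_\zeta\Omega$. It carries no factor $s$ and is integrated by parts in $s$ on the whole domain, since $\Upsilon e^{is\Upsilon}=-i\partial_s e^{is\Upsilon}$ requires no division by $\Upsilon$ and hence no resonance analysis. This produces $\tau_m'$ and $\partial_s\wt f$ terms. These are handled by rewriting $\partial_\zeta\widehat{\Lambda_+}$ as $\partial_{\xi_j}$ and integrating by parts once more.

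\textbf{The misdiagnosed obstacle.} Your ``main obstacle'' is a miscount. The term carrying $\lr{\xi_1}\partial_{\xi_1}(A_{+,s_1}\wt f)$ is bounded by
\[
2^m\,\|\wt f\|_{H^1_k}\,(\varepsilon_0^\al 2^{-m/2+2\delta m})^3\lesssim 2^m\big(2^{m/2}+\varepsilon_0^\al 2^{\delta_1 m}\big)\,\varepsilon_0^{3\al}2^{-3m/2+6\delta m}\lesssim\varepsilon_0^{3\al}2^{6\delta m},
\]
which is already within the target $\varepsilon_0^{3\al}2^{8\delta m}$; the $h$-part with $\lr{s}^{\delta_1}$ is even better. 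So no special treatment of $g$ or $h$ is needed. Your local-decay or smoothing plan for $e^{isL}\partial_k g$ could not work inside the singular term anyway, because $\Lambda_+$ gives no spatial localization. The genuinely critical term is the one with $s\,\widehat{\Theta_+}$. It is bounded by $2^{2m}(\varepsilon_0^\al 2^{-m/2+2\delta m})^4=\varepsilon_0^{4\al}2^{8\delta m}$, and this is the source of the exponent $8\delta$.

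\textbf{A smaller omission.} The output coefficients $\overline{A_{\ast,s_0}(k)}$ also jump at $k=0$, and $\wt f(0)=0$ does not help there. The paper regroups as in \eqref{rearange-I+}, using two facts: $\sum_{s_0}A_{+,s_0}$ is continuous, and $\mathcal{Q}_{\widehat{\Lambda_+},-,\dots}-\mathcal{Q}_{\widehat{\Lambda_+},+,\dots}$ vanishes at $k=0$.
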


We now show how  Lemma \ref{lemreduction} implies all the needed results
and will then proceed to prove it in Subsection \ref{sec:Quartic}.

\begin{proof}[Proof of Lemma \ref{lem:Q-tilde}, Lemma \ref{lem:Q-in-Linfty} 
and Lemma \ref{lem:local-Q-in-Linfty} using Lemma \ref{lemreduction}]
First, observe that \eqref{reductionest} directly implies \eqref{Q-tildeL2}
upon summing over $m$ with $2^m \lesssim t$.

Applying the linear interpolation-type estimate \eqref{lininterpest},
according to the notation in \eqref{term-Q'}, 
we can bound the left-hand side of \eqref{linear-decay-quartic} as follows:
\begin{align}\label{redpr1}
\begin{split}
\left\|\int_0^t L^{-1} e^{i(t-s)L} P_c v^4(s) ds \right\|_{L_x^\infty}
  & = \left\| e^{itL} L^{-1} Q(t,\cdot) \right\|_{L_x^\infty}
  \lesssim \sum_{m=0}^{L+1} \left\| e^{itL} L^{-1} Q_m(\cdot) \right\|_{L_x^\infty}
  \\
  & \lesssim
  \lr{t}^{-1/2} \log(2+t) \sum_{m=0}^{L+1}\Big( 
  {\big\|  L^{-1} Q_m(\cdot) \big\|}_{H^4}^{1/2} 
  {\big\| x  L^{-1} Q_m(\cdot) \big\|}_{L^2}^{1/2}+\big\|  L^{-1} Q_m(\cdot) \big\|_{H^N}\Big).
\end{split}
\end{align}   
It is straightforward to see, using our bootstrap assumptions, that 
\begin{align}\label{redpr2}
\begin{split}
{\big\|  L^{-1} Q_m(\cdot) \big\|}_{H^\ell}
  & \lesssim \int_0^\infty {\left\| P_c(v^4)(s,\cdot)  \right\|}_{H^{\ell-1}}\tau_m(s)ds   
  \\
  & \lesssim \int_0^\infty {\left\| v(s,\cdot)  \right\|}_{H^{\ell-1}}
  {\left\| v(s,\cdot) \right\|}_{L^\infty}^3 \tau_m(s)ds  
  \\
  & \lesssim 2^m \cdot \varepsilon_0 \cdot \big( \varepsilon_0^\alpha \, 2^{-\frac{m}{2}+\delta m}\big)^3    
  = \varepsilon_0^{1+3\al} 2^{-\frac{m}{2}+3\delta m}.    
\end{split}
\end{align}
Using this with $\ell=4$,
\eqref{derivative-in-fourier-bound}, and the estimate \eqref{reductionest},
we can bound
\begin{align}
{\big\| L^{-1} Q_m(\cdot) \big\|}_{H^4}
  {\big\| x  L^{-1} Q_m(\cdot) \big\|}_{L^2}
  \lesssim \varepsilon_0^{1+6\al} 2^{-\frac{m}{2} + 11\delta m}.
\end{align}
Plugging this bound, and \eqref{redpr2} with $\ell=N$, in \eqref{redpr1} we obtain
\begin{align}\label{IQtoref}
{\big\| e^{itL} L^{-1} Q(t,\cdot) \big\|}_{L_x^\infty}
  \lesssim \lr{t}^{-1/2} \log(2+t) 
  \sum_{m=0}^{L+1} \varepsilon_0^{1/2+3\al} 2^{-\frac{m}{4}+\frac{11}{2}\delta m}
  \lesssim \varepsilon_0^{1/2} \lr{t}^{-1/2} \log(2+t) 
\end{align}   
for $\delta$ small enough. This bound is stronger than \eqref{linear-decay-quartic}.

Finally, to obtain \eqref{T_Q} we can apply the linear estimates
\eqref{local-l-infty-decay-sing}-\eqref{local-l-infty-decay-reg} and bound,
for $E \in \{S,R\}$,
\begin{align}
{\left\|\lr{x}^{-2} \Big(\int_0^t L^{-1} e^{itL} e^{-isL}
  P_c( v^4(s)) ds \Big)_{E} \right\|}_{L_x^\infty}
  &= {\left\| \lr{x}^{-2} \Big( e^{itL} L^{-1} Q(t,\cdot) \Big)_{E} \right\|}_{L_x^\infty}
  \\
  &\lesssim \sum_{m=0}^{L+1} 
  {\left\| \lr{x}^{-2} \Big( e^{itL} L^{-1} Q_m(\cdot) \Big)_E \right\|}_{L_x^\infty}
  \\
&  \lesssim
  \frac{1}{\lr{t}} \sum_{m=0}^{L+1} 
  \big( {\big\| \widetilde{Q_m} \big\|}_{L^2_k} + {\big\| \partial_k \widetilde{Q_m} \big\|}_{L^2_k} \big)
  \lesssim \frac{1}{\lr{t}} \sum_{m=0}^{L+1} \varepsilon_0^{3\al} 2^{8\delta m},
\end{align}
having used \eqref{reductionest};
summing over $m \lesssim L+1 \approx \log (2+t)$ gives the desired \eqref{T_Q}. 

\end{proof}

\medskip
\subsection{Proof of Lemma \ref{lemreduction}
} \label{sec:Quartic}

In this subsection we prove the estimate \eqref{reductionest}
and therefore conclude the proof of the main bootstrap Proposition \ref{prop:bootstrap-on-v}.
The proofs are based on the use of the distorted Fourier transform
and associated multilinear analysis.

\medskip
\subsubsection{The nonlinear spectral distribution and decomposition of the quartic term}
For a complex-valued function $f$ we denote
\begin{equation}
    f^{(+)}=f, \qquad f^{(-)}=\overline{f}.
\end{equation}
Using the relations $P_cu = \Im(L^{-1}w)$ and $f=e^{-itL} w$, 
and the distorted Fourier inversion formula from Lemma \ref{lemma-distorted-fourier} 
we can write 
\begin{align}\label{Fourier-v^4}
 \begin{split}
2^4e^{-is \langle k\rangle}\wt{\mathcal{F}}(v^4)(k)
=\Suml{\ell_1}{\ell_2}{\ell_3}{\ell_4}\ell_1\ell_2\ell_3\ell_4
  \iiiint \frac{e^{is\Upsilon_{\ell_1\ell_2\ell_3\ell_4}}}
  {\prod_{j=1}^4\langle \xi_j\rangle}\threef{f}{\ell_1}{\xi_1}{f}{\ell_2}{\xi_2}{f}{\ell_3}{\xi_3}\onef{f}{\ell_4}{\xi_4}
\\
\times\, \mu_{\ell_1\ell_2\ell_3\ell_4}(k,\xi_1,\xi_2,\xi_3,\xi_4)
  \, d\xi_{1234}  
 \end{split}
\end{align}
where we are adopting the shorthand notation 
$$d\xi_{1234} = d\xi_1d\xi_2d\xi_3d\xi_4$$ 
and we are defining the ``nonlinear spectral distribution'' (NSD)
\begin{equation}\label{NSD}
\mu_{\ell_1\ell_2\ell_3,\ell_4}(k,\xi_1,\xi_2,\xi_3,\xi_4)
  =\int \firstpsi{\psi}{x}{k}{\psi}{\ell_1}{x}{\xi_1} 
  \scdpsi{\psi}{\ell_2}{x}{\xi_2}{\psi}{\ell_3}{x}{\xi_3}\psi^{(\ell_4)}(x,\xi_4)dx
  \end{equation}
and the nonlinear phase function
\begin{equation}\label{Upsilonphase}
\Upsilon_{\ell_1\ell_2\ell_3\ell_4}(k,\xi_1,\xi_2,\xi_3,\xi_4)
=-\langle k\rangle+\ell_1\langle \xi_1\rangle+
\ell_2\langle \xi_2\rangle+\ell_3\langle \xi_3\rangle+\ell_4\langle \xi_4\rangle.
\end{equation}


According to \eqref{generalized-eigenf-decomp} 
we decompose the nonlinear spectral distribution into a singular (S) and a regular (R)
part:
\begin{equation}\label{NDF}
(2\pi)^{\frac{5}{2}}\mu_{\ell_1\ell_2\ell_3\ell_4}(k,\xi_1,\xi_2,\xi_3,\xi_4)
:=\mu_{\ell_1\ell_2\ell_3\ell_4}^S(k,\xi_1,\xi_2,\xi_3,\xi_4)
+\mu_{\ell_1\ell_2\ell_3\ell_4}^R(k,\xi_1,\xi_2,\xi_3,\xi_4),
\end{equation}
where the singular part collects all singular terms in the decomposition 
of the generalized eigenfunctions \eqref{generalized-eigenf-decomp}, that is,
\begin{equation} \label{sing-mu0}
\mu^{S}_{\ell_1\ell_2\ell_3\ell_4}(k,\xi_1,\xi_2,\xi_3,\xi_4) 
= \mu^+_{\ell_1\ell_2\ell_3\ell_4}(k,\xi_1,\xi_2,\xi_3,\xi_4)
+\mu^{-}_{\ell_1\ell_2\ell_3\ell_4}  (k,\xi_1,\xi_2,\xi_3,\xi_4)
\end{equation}
with (recall the notation \eqref{psi-singular-part} and \eqref{decomposition-sing-part})
\begin{align}
 \label{sing-mu}
 \mu^{\ast}_{\ell_1\ell_2\ell_3\ell_4}(k,\xi_1,\xi_2,\xi_3,\xi_4)
 =\int \Lambda_{\ast}(x)\overline{\psi_{S,\ast}(x,k)}\psi^{(\ell_1)}_{S,\ast}(x,\xi_1)
 \psi^{(\ell_2)}_{S,\ast}(x,\xi_2)\psi^{(\ell_3)}_{S,\ast}(x,\xi_3)
 \psi^{(\ell_4)}_{S,\ast}(x,\xi_4)dx,
 \end{align}
where $ \ast\in \setsign$ and $\Lambda_{\ast} =\sigma_{\ast}^5$. 
Note that, in particular, 
the integrand in \eqref{sing-mu} does not decay as $|x|\rightarrow \infty$.

The regular part $\mu^{R}$ is defined according \eqref{NSD} and \eqref{NDF}.
To describe it more precisely we let
\[Y_R=\{\mathbf{E}=(E_0,...,E_4):\exists \,k\in \{0,...,4\}\,\,\mbox{such that}\,\, E_k=R\}\subset \{S,R\}^5,\]
and decompose
\begin{equation}
 \mu^{R}_{\ell_1\ell_2\ell_3\ell_4}(k,\xi_1,\xi_2,\xi_3,\xi_4)=\mu^{R,1}_{\ell_1\ell_2\ell_3\ell_4}(k,\xi_1,\xi_2,\xi_3,\xi_4)+\mu^{R,2}_{\ell_1\ell_2\ell_3\ell_4}(k,\xi_1,\xi_2,\xi_3,\xi_4)   
\end{equation}
where 
\begin{align}\label{mur1}
\mu^{R,1}_{\ell_1\ell_2\ell_3\ell_4}(k,\xi_1,\xi_2,\xi_3,\xi_4)=\sum_{\mathbf{E}\in Y_R}\int\overline{\psi_{E_0}(x,k)}\psi_{E_1}^{(\ell_1)}(x,\xi_1)\psi_{E_2}^{(\ell_2)}(x,\xi_2)\psi_{E_3}^{(\ell_3)}(x,\xi_3)\psi_{E_4}^{(\ell_4)}(x,\xi_4)dx
\end{align}
and
\begin{align}\label{mur2}
\begin{split}
& \mu^{R,2}_{\ell_1\ell_2\ell_3\ell_4}(k,\xi_1,\xi_2,\xi_3,\xi_4)
\\
& =\int\overline{\psi_{S}(x,k)}\psi_{S}^{(\ell_1)}(x,\xi_1)\psi_{S}^{(\ell_2)}(x,\xi_2)\psi_{S}^{(\ell_3)}(x,\xi_3)\psi_{S}^{(\ell_4)}(x,\xi_4)dx
-    \mu^{S}_{\ell_1\ell_2\ell_3\ell_4}(k,\xi_1,\xi_2,\xi_3,\xi_4)
\end{split}
\end{align}
Notice that $\mu^{R,1}$ contains at least one $\psi_R$ which has fast decay as 
$|x|\rightarrow\infty$, while, according to \eqref{sing-mu0}-\eqref{sing-mu},
the component $\mu^{R,2}$ contains at least a pair of $\sigma_+\psi_{S_+}$ and $
\sigma_-\psi_{S_-}$ so that the integrand is compactly supported in $x$.

According to \eqref{NDF}-\eqref{mur2} above, we can decompose
\begin{align}\label{partial_k-decomposed}
2^4 e^{-is \langle k\rangle}\wt{\mathcal{F}}(v^4(s,\cdot))(k)
  = (2\pi)^{-5/2} \big[ I_{-}(s,k)+I_{+}(s,k)+I_{R,1}(s,k)+I_{R,2}(s,k) \big],   
\end{align}
with the natural definitions
\begin{align}
\begin{split}
I_{\ast}(s,k) := \Suml{\ell_1}{\ell_2}{\ell_3}{\ell_4}\ell_1\ell_2\ell_3\ell_4
  \iiiint \frac{e^{is\Upsilon_{\ell_1\ell_2\ell_3\ell_4}}}
  {\prod_{j=1}^4\langle \xi_j\rangle}\threef{f}{\ell_1}{\xi_1}{f}{\ell_2}{\xi_2}{f}{\ell_3}{\xi_3}\onef{f}{\ell_4}{\xi_4}
\\
\times \,\mu^{\ast}_{\ell_1\ell_2\ell_3\ell_4}(k,\xi_1,\xi_2,\xi_3,\xi_4)
  \, d\xi_{1234}.
 \end{split}
\end{align}


According to this we can then decompose the term $Q_m$ in \eqref{reductionestQm} as follows:
\begin{equation}\label{Qm-decomposed}
2^4 \, \wt{Q_m}(k) = (2\pi)^{-5/2} \big[
  Q_{m,-}(k) + Q_{m,+}(k) + Q_{m,R,1}(k) + Q_{m,R,2}(k)\big],
\end{equation}
with the natural definitions
\begin{align}
\begin{split}
Q_{m,\ast}(k) := \Suml{\ell_1}{\ell_2}{\ell_3}{\ell_4}\ell_1\ell_2\ell_3\ell_4
  \int_0^\infty \iiiint \frac{e^{is\Upsilon_{\ell_1\ell_2\ell_3\ell_4}}}
  {\prod_{j=1}^4\langle \xi_j\rangle}\threef{f}{\ell_1}{\xi_1}{f}{\ell_2}{\xi_2}{f}{\ell_3}{\xi_3}
  \onef{f}{\ell_4}{\xi_4}
\\
\times \,\mu^{\ast}_{\ell_1\ell_2\ell_3\ell_4}(k,\xi_1,\xi_2,\xi_3,\xi_4)
  \, d\xi_{1234} \,\tau_m(s) ds.
 \end{split}
\end{align}

To prove Lemma \ref{lemreduction} it then suffices to show that
\begin{align}\label{reductionestdec}
{\big\| \partial_k Q_{m,\ast}(\cdot) \big\|}_{L^2_k}
  \lesssim \varepsilon_0^{3\al} 2^{8\delta m} \qquad \ast = \{+,-,(R,1),(R,2)\} ,\quad m=0,\dots L+1.
\end{align}
In what follows we will show that every term except one (the term $Q_{m,2}^{(0)}$ below)
satisfies better estimates.
Moreover, we will estimate each dyadic-time block when $m=1,\dots ,L$,
disregarding the cases $m=0$ (which is easy to treat since $\mathrm{supp}(\tau_0) \subset [0,2]$) 
and $m=L+1$ which can be treated similarly to the others
using that 
$|\tau'_{L+1}(t)| \lesssim 2^{-L}$ on $(0,t)$.

\medskip
\subsubsection{Control of $Q_{m,\pm}$}\label{sec:Qpm}
Starting with the singular terms, we write  
\begin{align*}
I_{\ast}(s,k)= \Suml{\ell_1}{\ell_2}{\ell_3}{\ell_4} \ell_1\ell_2\ell_3\ell_4 \iiiint\frac{e^{is\Upsilon_{\ell_1\ell_2\ell_3\ell_4}}}{\prod_{j=1}^4\langle \xi_j\rangle} \threef{f}{\ell_1}{\xi_1}{f}{\ell_2}{\xi_2}{f}{\ell_3}{\xi_3}\onef{f}{\ell_4}{\xi_4}\mu_{\ell_1\ell_2\ell_3\ell_4}^{\ast}d\xi_{1234}
\end{align*}
where $\ast\in \{+,-\}$.
Using \eqref{A-functions} and from \eqref{sing-mu}, we express each component of the singular part of the nonlinear distribution function as \begin{align}  
\mu^{\ast}_{\ell_1\ell_2\ell_3\ell_4}(k,\xi_1,\xi_2,\xi_3,\xi_4)=\sqrt{2\pi}\sum_{s_0,s_1,s_2,s_3,s_4\in \{+,-\}}\overline{A_{\ast,s_0}(k)}A^{(\ell_1)}_{\ast,s_1}(\xi_1)A^{(\ell_2)}_{\ast,s_2}(\xi_2)A^{(\ell_3)}_{\ast,s_3}(\xi_3)A^{(\ell_4)}_{\ast,s_4}(\xi_4)\widehat{\Lambda_{\ast}}(q)
\end{align}
where $q=s_0k-s_1\ell_1\xi_1-s_2\ell_2\xi_2-s_3\ell_3\xi_3-s_4\ell_4\xi_4$.
The following lemma is exploited to show that $I_{\pm}$ can be written as a multiplier operator.

\begin{lemma}\label{lem:FT-varLambda}
The Fourier transform of the cut-off function $\Lambda_{\ast}$ is given by
\begin{equation}
\widehat{\Lambda_{\pm}}(k) = \sqrt{\frac{\pi}{2}}\delta_0 
  \pm \mathrm{p.v.} \frac{\widehat{\eta}(k)}{ik} + \widehat{\overline{\omega}_\pm}(k),   
\end{equation}
where $\eta\in C^{\infty}_c(\R)$ is an even function with  normalized integral and 
$\overline{\omega}$ denotes a function in $ C^{\infty}_c$.
Moreover, \begin{equation}
k \widehat{\Lambda_{\pm}}(k)=\mp i\widehat{\eta}(k)+k\widehat{\overline{\omega}}_\pm(k)
  :=\widehat{\Theta}_\pm(k)    
\end{equation}
for some function $\Theta_\pm \in \mathcal{S}(\mathbb{R})$. 
\end{lemma}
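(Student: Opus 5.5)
The plan is to use the structure of $\Lambda_\pm=\sigma_\pm^5$. Here $\sigma_+(x)=\int_{-\infty}^x\varphi$ with $\varphi\in C_c^\infty$ supported in $[-2,2]$, $\varphi\ge0$, $\int\varphi=1$, and $\sigma_-=1-\sigma_+$. Consequently $\Lambda_+$ is smooth, vanishes for $x\le-2$, and equals $1$ for $x\ge2$. Symmetrically, $\Lambda_-$ equals $1$ for $x\le-2$ and vanishes for $x\ge 2$.

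The first step is to fix an even function $\eta\in C_c^\infty$ with $\int\eta=1$ and set $H_\eta(x):=\int_{-\infty}^x\eta(y)\,dy$, a smoothed Heaviside function. It will also be convenient to write $\mathrm{sgn}_\eta:=2H_\eta-1$, which is odd and equals $\pm1$ outside a compact set. Then I write
\[
\Lambda_\pm=\tfrac12 \pm \tfrac12\,\mathrm{sgn}_\eta + \overline{\omega}_\pm ,\qquad \overline{\omega}_\pm:=\Lambda_\pm-\tfrac12\mp\tfrac12\mathrm{sgn}_\eta .
\]
The function $\overline\omega_\pm$ is smooth, and it vanishes for $|x|$ large because both $\Lambda_\pm$ and $\tfrac12\pm\tfrac12\mathrm{sgn}_\eta$ equal $1$ on one side and $0$ on the other. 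Hence $\overline\omega_\pm\in C_c^\infty$ and $\widehat{\overline\omega_\pm}$ is Schwartz.

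Next I compute the two distributional Fourier transforms with the normalization $\mathcal F f(k)=(2\pi)^{-1/2}\int e^{-ixk}f$. For the constant, $\widehat{1/2}=\tfrac12\sqrt{2\pi}\,\delta_0=\sqrt{\pi/2}\,\delta_0$. For the smoothed sign function, $\mathrm{sgn}_\eta'=2\eta$, so $ik\,\widehat{\mathrm{sgn}_\eta}=2\hat\eta$. Since $\mathrm{sgn}_\eta$ is odd, its transform is odd, so no multiple of $\delta_0$ can appear in the solution of this division problem. This gives $\widehat{\mathrm{sgn}_\eta}=2\,\mathrm{p.v.}\,\hat\eta(k)/(ik)$, which is well defined because $\hat\eta$ is even and smooth. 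To justify this rigorously I would approximate $\mathrm{sgn}_\eta$ by $e^{-\epsilon|x|}\mathrm{sgn}_\eta$ and pass to the limit, or simply check the identity against test functions. Multiplying by $\tfrac12$ gives $\widehat{\Lambda_\pm}=\sqrt{\pi/2}\,\delta_0\pm\mathrm{p.v.}\,\hat\eta/(ik)+\widehat{\overline\omega_\pm}$.

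Finally, I multiply by $k$. We have $k\delta_0=0$ and $k\cdot\mathrm{p.v.}(\hat\eta/(ik))=\hat\eta/i=-i\hat\eta$, so $k\widehat{\Lambda_\pm}=\mp i\hat\eta+k\widehat{\overline\omega_\pm}$. This is the Fourier transform of $\Theta_\pm:=\mp i\eta+i\,\overline\omega_\pm'$, using $\widehat{f'}=ik\hat f$, and $\Theta_\pm$ lies in $C_c^\infty\subset\mathcal S$.

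The only delicate point is the division step, namely ruling out an extra $c\,\delta_0$ term. This is exactly where the oddness of $\mathrm{sgn}_\eta$ is used, and everything else is bookkeeping of the normalization constants.
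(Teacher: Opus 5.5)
Your proof is correct and follows essentially the route the paper takes: the paper only notes that $\partial_x\Lambda_\pm$ is smooth and compactly supported and defers to the same decomposition in Germain--Pusateri, namely a constant, plus a smoothed sign function, plus a compactly supported remainder, and your oddness argument correctly rules out an extra $c\,\delta_0$ term. There is one small sign slip: since $k\widehat{f}=-i\widehat{f'}$, the correct choice is $\Theta_\pm=\mp i\eta-i\,\overline{\omega}_\pm'=-i\Lambda_\pm'$ rather than your $+i\,\overline{\omega}_\pm'$, and this $\Theta_\pm$ still lies in $C_c^\infty\subset\mathcal{S}$, so the conclusion is unaffected.
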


\begin{proof}
Noticing that $\partial_x \Lambda_{\pm}$ is a smooth function with compact support,
the proof of this lemma can be done following the argument in \cite[Page 42]{GP22-mathpi}. 
\end{proof}

\begin{lemma}\label{lem:quadriform}
Let $\Omega$ be a distribution defined on $\mathbb R$. 
For $f_j\in \mathrm{Lip}(\R)$, define the quadrilinear form 
\begin{align*}&\mathcal{Q}^{\ell_1,\ell_2,\ell_3,\ell_4}_{\Omega,s_0,s_1,s_2,s_3,s_4}(f_1,f_2,f_3,f_4)(k)
\\
&\quad =\ell_1\ell_2\ell_3\ell_4\iiiint\frac{e^{is\Upsilon_{\ell_1,\ell_2,\ell_3,\ell_4}}}{\prod_{j=1}^4\langle \xi_j\rangle} 
\threefct{f_1}{\ell_1}{\xi_1}{f_2}{\ell_2}{\xi_2}{f_3}{\ell_3}{\xi_3}\onefct{f_4}{\ell_4}{\xi_4}\Omega(q)d\xi_{1234}\end{align*} where $\ell_j,s_j\in \{+,-\}$ and $q=s_0k-s_1\ell_1\xi_1-s_2\ell_2\xi_2-s_3\ell_3\xi_3-s_4\ell_4\xi_4$.
We have
\begin{align}\label{I*-Quadriform}
\begin{split}
I_{\ast}(s,k)& = \sum_{\substack{(\ell_1,\ell_2,\ell_3,\ell_4)
\in \{+,-\}^4 \\ (s_0,s_1,s_2,s_3,s_4)\in \{+,-\}^5}}  \overline{A_{\ast,s_0}(k)}
\\
& \times \mathcal{Q}^{\ell_1,\ell_2,\ell_3,\ell_4}_{\widehat{\Lambda_{\ast}},s_0,s_1,s_2,s_3,s_4}		(A_{\ast,s_1} \widetilde{f_1},
  A_{\ast,s_2} \widetilde{f_2},
  A_{\ast,s_3} \widetilde{f_3},
  A_{\ast,s_4} \widetilde{f_4})(k).  
\end{split}
\end{align}
\end{lemma}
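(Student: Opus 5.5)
The identity \eqref{I*-Quadriform} is an algebraic rewriting, so the plan is to substitute the product expansion of the singular spectral distribution into the definition of $I_\ast$ and regroup. First, I would recall from \eqref{sing-mu} that $\mu^\ast_{\ell_1\ell_2\ell_3\ell_4}$ is the $x$-integral of $\Lambda_\ast(x)$ times a product of five singular eigenfunction pieces. By \eqref{decomposition-sing-part}--\eqref{A-functions}, each piece is a sum $\psi_{S,\ast}(x,\xi)=\sum_{s\in\{+,-\}}A_{\ast,s}(\xi)e^{is x\xi}$. For conjugates, $\psi^{(-)}_{S,\ast}(x,\xi)=\sum_s \overline{A_{\ast,s}(\xi)}e^{-isx\xi}$, so with the convention $A^{(\ell)}=A$ or $\overline{A}$ we have $\psi^{(\ell)}_{S,\ast}(x,\xi)=\sum_s A^{(\ell)}_{\ast,s}(\xi)e^{i s\ell x\xi}$. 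In the same way, $\overline{\psi_{S,\ast}(x,k)}=\sum_{s_0}\overline{A_{\ast,s_0}(k)}e^{-is_0xk}$.

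Multiplying out gives a sum over $(s_0,\dots,s_4)\in\{+,-\}^5$. Each term has a coefficient that does not depend on $x$, multiplied by $e^{-ix q}$ with $q=s_0k-\sum_j s_j\ell_j\xi_j$. The $x$-integral against $\Lambda_\ast$ is then $\sqrt{2\pi}\,\widehat{\Lambda_\ast}(q)$, interpreted distributionally. This is exactly the displayed formula for $\mu^\ast$ preceding Lemma \ref{lem:FT-varLambda}, and Lemma \ref{lem:FT-varLambda} identifies $\widehat{\Lambda_\ast}$ as $\delta$ plus p.v. plus a Schwartz part.

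Next, I would insert this into $I_\ast(s,k)$. The factor $\overline{A_{\ast,s_0}(k)}$ depends only on $k$, so it comes out of the $\xi$-integrals. For each $j$, the factor $A^{(\ell_j)}_{\ast,s_j}(\xi_j)$ combines with $\wt f^{(\ell_j)}(\xi_j)$ into $(A_{\ast,s_j}\wt f)^{(\ell_j)}(\xi_j)$. What remains is exactly $\mathcal{Q}^{\ell_1,\ell_2,\ell_3,\ell_4}_{\widehat{\Lambda_\ast},s_0,\dots,s_4}$ applied to $f_j=A_{\ast,s_j}\wt f$. The constant $\sqrt{2\pi}$ is absorbed by the normalization conventions in \eqref{Qm-decomposed}. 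Summing over signs then yields \eqref{I*-Quadriform}.

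The main point requiring care is justifying the exchange of the $x$-integral with the $\xi$-integrals, since the $x$-integrand does not decay. I would handle this by regularizing: replace $\Lambda_\ast$ by $\Lambda_\ast(x)e^{-\epsilon x^2}$, or test against Schwartz functions in $k$. With that regularization Fubini applies because $\wt f\in L^2$ with $\lr{\xi}^{-1}$ weights. One then passes to the limit $\epsilon\to0$ in the sense of distributions, using that the $A_{\ast,s}$ are bounded and Lipschitz away from $0$ by Lemma \ref{lem:T-R-prop}. This is consistent with the hypothesis $f_j\in\mathrm{Lip}$ in Lemma \ref{lem:quadriform}.
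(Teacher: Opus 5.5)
Your proposal is correct and follows the paper's (implicit) argument. The paper gives no written proof; the lemma is an immediate consequence of substituting the displayed expansion $\mu^\ast_{\ell_1\ell_2\ell_3\ell_4}=\sqrt{2\pi}\sum\overline{A_{\ast,s_0}(k)}\prod_j A^{(\ell_j)}_{\ast,s_j}(\xi_j)\,\widehat{\Lambda_\ast}(q)$ into $I_\ast$ and merging each $A^{(\ell_j)}_{\ast,s_j}$ with $\wt f^{(\ell_j)}$, which is exactly what you do. Your regularization remark adds a justification the paper omits.

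One small correction: the factor $\sqrt{2\pi}$ is not absorbed by \eqref{Qm-decomposed}, since $I_\ast$ is defined there with $\mu^\ast$ itself. So the identity as stated holds only up to this harmless multiplicative constant.
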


The above lemma is particularly useful for finding 
a representation formula for $\partial_k I_{\ast}(s,k)$.
By simple differentiation rules and integration by parts, we obtain the 
identity in the following lemma:

\begin{lemma}
\label{partial-k-I}
For all $f_i\in \mathrm{Lip}
(\R)$, we have the following identity
\begin{align*}
\nonumber\langle  k\rangle \partial_k\mathcal{Q}^{\ell_1,\ell_2,\ell_3,\ell_4}_{\Omega,s_0,s_1,s_2,s_3,s_4}
(f_1,f_2,f_3,f_4)(k)=\sum_{j=1}^6 s_0 \mathcal{Q}_j(f_1,f_2,f_3,f_4)(k)
\end{align*}
where  the forms $\mathcal{Q}_\ell$, $\ell=1,...,6$ are defined as
\begin{align*}
\begin{aligned}
\mathcal{Q}_1(f_1,f_2,f_3,f_4)(k) &= 
-is \mathcal{Q}^{\ell_1,\ell_2,\ell_3,\ell_4}_{\zeta \Omega,s_0,s_1,s_2,s_3,s_4}(f_1,f_2,f_3,f_4)(k)
\\
\mathcal{Q}_2(f_1,f_2,f_3,f_4)(k) &=
s_1\mathcal{Q}^{\ell_1,\ell_2,\ell_3,\ell_4}_{\Omega,s_0,s_1,s_2,s_3,s_4}
  (\langle\cdot\rangle \partial_{\xi_1}f_1,f_2,f_3,f_4)(k)
\\
\mathcal{Q}_3(f_1,f_2,f_3,f_4)(k) &=s_2\mathcal{Q}^{\ell_1,\ell_2,\ell_3,\ell_4}_{\Omega,s_0,s_1,s_2,s_3,s_4}(f_1,\langle\cdot\rangle \partial_{\xi_2}f_2,f_3,f_4)(k)
\\
\mathcal{Q}_4(f_1,f_2,f_3,f_4)(k) &=s_3\mathcal{Q}^{\ell_1,\ell_2,\ell_3,\ell_4}_{\Omega,s_0,s_1,s_2,s_3,s_4}(f_1,f_2,\langle\cdot\rangle \partial_{\xi_3}f_3,f_4)(k)
\\
\mathcal{Q}_5(f_1,f_2,f_3,f_4)(k) &=s_4\mathcal{Q}^{\ell_1,\ell_2,\ell_3,\ell_4}_{\Omega,s_0,s_1,s_2,s_3,s_4}(f_1,f_2,f_3,\langle\cdot\rangle \partial_{\xi_4}f_4)(k)
\\
\mathcal{Q}_6(f_1,f_2,f_3,f_4)(k) & = -\ell_1\ell_2\ell_3\ell_4 \iiiint\frac{\Upsilon e^{is\Upsilon}}{\prod_{j=1}^4\langle \xi_j\rangle} f_1^{(\ell_1)}(\xi_1)f_2^{(\ell_2)}(\xi_2)
  f_3^{(\ell_3)}(\xi_3) f_4^{(\ell_4)}(\xi_4)\partial_\zeta \Omega(\zeta)d\xi_{1234}
\end{aligned}
\end{align*}
with 
$\zeta=s_0k-s_1\ell_1\xi_1-s_2\ell_2\xi_2-s_3\ell_3\xi_3-s_4\ell_4\xi_4$
and $\Upsilon:=\Upsilon_{\ell_1,\ell_2,\ell_3,\ell_4}$ as in \eqref{Upsilonphase}.
\end{lemma}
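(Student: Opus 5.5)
The plan is a direct computation. I differentiate under the integral, rewrite the factor coming from the phase in terms of $\xi_j$-derivatives, and integrate by parts in each $\xi_j$. The $k$-dependence of $\mathcal{Q}^{\ell_1,\ell_2,\ell_3,\ell_4}_{\Omega,s_0,\dots,s_4}$ enters only through $e^{is\Upsilon}$, via $-\lr{k}$, and through $\Omega(\zeta)$. So
\begin{align*}
\lr{k}\partial_k e^{is\Upsilon} = -is\,k\,e^{is\Upsilon},
\qquad
\lr{k}\partial_k \Omega(\zeta) = s_0\lr{k}\,\Omega'(\zeta).
\end{align*}
To make the formal manipulations legitimate I will first treat $\Omega$ as a smooth Schwartz function, or pair against a test function in $k$. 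The identity then passes to general distributions by density. Since the $f_j$ are Lipschitz and the weights $\lr{\xi_j}^{-1}$ are present, boundary terms at infinity vanish, if necessary after inserting a cutoff and removing it at the end.

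The key algebraic step uses $\zeta=s_0k-\sum_j s_j\ell_j\xi_j$ and $s_0^2=1$ to write
\begin{align*}
k = s_0\zeta + s_0\sum_{j=1}^4 s_j\ell_j\xi_j .
\end{align*}
The $s_0\zeta$ part produces $-is\,s_0\,\mathcal{Q}_{\zeta\Omega}$, which is $s_0\mathcal{Q}_1$.

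For each $\xi_j$ part I use $is\ell_j\xi_j e^{is\Upsilon}=\lr{\xi_j}\partial_{\xi_j}e^{is\Upsilon}$. The resulting factor $\lr{\xi_j}$ cancels the weight $\lr{\xi_j}^{-1}$ in the integrand. Integrating by parts in $\xi_j$, with the sign $-s_0s_j$ flipped, produces two pieces. The first is the term where the derivative falls on $f_j^{(\ell_j)}$; using $\partial f^{(\ell)}=(\partial f)^{(\ell)}$ and restoring $\lr{\xi_j}^{-1}$, it is exactly $s_0 s_j\mathcal{Q}(\dots,\lr{\cdot}\partial_{\xi_j}f_j,\dots)$, i.e. $s_0\mathcal{Q}_{j+1}$ for $j=1,\dots,4$. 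The second is the term where the derivative falls on $\Omega$. Since $\partial_{\xi_j}\Omega(\zeta)=-s_j\ell_j\Omega'(\zeta)$, it contributes $-s_0\ell_j\lr{\xi_j}\Omega'(\zeta)$, relative to the original integrand.

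Finally I collect all the $\Omega'$ terms: the one from $\lr{k}\partial_k\Omega$ and the four from the integrations by parts. Their total coefficient is
\begin{align*}
s_0\Big(\lr{k}-\sum_{j=1}^4\ell_j\lr{\xi_j}\Big) = -s_0\Upsilon,
\end{align*}
which yields precisely $s_0\mathcal{Q}_6$ with $\partial_\zeta\Omega$.

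The only real obstacle is bookkeeping. One must track the signs $s_j,\ell_j$ through the conjugations $f^{(\ell)}$, and justify the integration by parts when $\Omega$ is a genuine distribution such as $\delta$ or $\mathrm{p.v.}$. I will handle the latter by the regularization and duality argument described above.
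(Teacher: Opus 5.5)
Your computation is correct and follows the paper's route, which obtains the identity ``by simple differentiation rules and integration by parts'': you write $k=s_0\zeta+s_0\sum_j s_j\ell_j\xi_j$, convert $is\ell_j\xi_j e^{is\Upsilon}$ into $\lr{\xi_j}\partial_{\xi_j}e^{is\Upsilon}$, integrate by parts in each $\xi_j$, and collect the $\Omega'$ terms into the coefficient $-s_0\Upsilon$, with all signs matching the statement. One small correction to your justification: once $\lr{\xi_j}$ cancels the weight, the boundary terms in $\xi_j$ vanish because the regularized $\Omega(\zeta)$ decays as $|\xi_j|\to\infty$, which beats the at most linear growth of a Lipschitz $f_j$; the weight $\lr{\xi_j}^{-1}$ plays no role here, since it is no longer present in that variable.
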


\begin{lemma}\label{lem:transference}
Let $\Omega$ be a tempered distribution on $\R$ with bounded transform. We have (with $s_0 = +$)
  \begin{align}
       \widehat{\mathcal{F}}^{-1}\big(e^{is\langle k \rangle}
       \mathcal{Q}^{\ell_1,\ell_2,\ell_3,\ell_4}_{\Omega,s_0,s_1,s_2,s_3,s_4}(f_1,f_2,f_3,f_4)(k)\big)(x)
       =
       \widehat{\mathcal{F}}^{-1}(\Omega)(x)\prod_{j=1}^4 v_j(s,s_jx)       
  \end{align}
  where $v_j(s,x)=\langle D\rangle^{-1}e^{is\langle D\rangle}
  \widehat{\mathcal{F}}^{-1}(f_j^{(\ell_j)})(x)$, $\langle D\rangle=\sqrt{-\partial^2_{xx}+1}$.
\end{lemma}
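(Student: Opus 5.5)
The identity is a direct computation: the $e^{is\langle k\rangle}$ factor cancels the $-\langle k\rangle$ in the phase $\Upsilon$, and what remains is a convolution in $k$, which the inverse Fourier transform turns into a product. I would first check it for Schwartz $f_j$ and a Schwartz $\Omega$, and then pass to general $f_j$ and distributional $\Omega$ by approximation. Since the statement fixes $s_0=+$, we have $q=k-\sum_j s_j\ell_j\xi_j$ and
\begin{equation*}
e^{is\langle k\rangle}\mathcal{Q}^{\ell_1,\ell_2,\ell_3,\ell_4}_{\Omega,+,s_1,s_2,s_3,s_4}(f_1,f_2,f_3,f_4)(k)
=\ell_1\ell_2\ell_3\ell_4\iiiint \Omega\Big(k-\sum_{j=1}^4 s_j\ell_j\xi_j\Big)\prod_{j=1}^4 g_j(\xi_j)\,d\xi_{1234},
\end{equation*}
with $g_j(\xi):=e^{is\ell_j\langle\xi\rangle}\langle\xi\rangle^{-1}f_j^{(\ell_j)}(\xi)$.

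Next I would change variables $\eta_j=s_j\ell_j\xi_j$; each Jacobian has modulus one. Writing $G_j(\eta):=g_j(s_j\ell_j\eta)$, the right-hand side becomes the five-fold convolution $\Omega\ast G_1\ast G_2\ast G_3\ast G_4$ evaluated at $k$. With the normalization $\mathcal{F}^{-1}f(x)=(2\pi)^{-1/2}\int e^{ixk}f(k)\,dk$ we have $\widehat{\mathcal F}^{-1}(a\ast b)=\sqrt{2\pi}\,\widehat{\mathcal F}^{-1}a\cdot\widehat{\mathcal F}^{-1}b$. Applying this repeatedly gives $\widehat{\mathcal F}^{-1}(\Omega)(x)\prod_j \widehat{\mathcal F}^{-1}(G_j)(x)$, up to the harmless factor $(2\pi)^2$, which I would absorb into the normalization as the paper does elsewhere. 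By the dilation rule for a reflection, $\widehat{\mathcal F}^{-1}(G_j)(x)=\widehat{\mathcal F}^{-1}(g_j)(s_j\ell_j x)$.

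The remaining step is to identify $\widehat{\mathcal F}^{-1}(g_j)$ with the factor $v_j$ in the statement, and this is where the $\ell_j$ bookkeeping happens. For $\ell_j=+$ we get $\widehat{\mathcal F}^{-1}(g_j)=\langle D\rangle^{-1}e^{is\langle D\rangle}\widehat{\mathcal F}^{-1}(f_j)$ directly. For $\ell_j=-$ we have $g_j=\overline{e^{is\langle\xi\rangle}\langle\xi\rangle^{-1}f_j}$. Using $\widehat{\mathcal F}^{-1}(\overline{a})(x)=\overline{\widehat{\mathcal F}^{-1}(a)(-x)}$, the reflection $x\mapsto -x$ cancels the $\ell_j=-$ in the argument $s_j\ell_jx$. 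This leaves the complex conjugate of the $+$ flow evaluated at $s_jx$, which is the meaning of the factor $v_j(s,s_jx)$ with the $(\ell_j)$ convention. The sign prefactor $\ell_1\ell_2\ell_3\ell_4$ is carried along unchanged, consistent with \eqref{Fourier-v^4}, where $v=\Im(L^{-1}w)$ produces exactly these signs. I would check these sign and conjugation conventions against the definition of $f^{(\pm)}$ carefully, since that is the only place an error is likely.

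The main (mild) obstacle is justification rather than algebra. $\Omega$ is only a tempered distribution, for instance $\widehat{\Lambda_\pm}$, which contains a $\delta$ and a principal value, and the $f_j$ are only Lipschitz with $L^2$-type control. So I would first prove the identity for Schwartz $f_j$ and Schwartz $\Omega$. Then I would approximate: the hypothesis that $\widehat{\mathcal F}^{-1}\Omega$ is bounded (for example $\Lambda_\pm\in L^\infty$) makes the right-hand side a product of a bounded function with $L^2\cap L^\infty$ factors, so both sides depend continuously on the data in $\mathcal S'$. The convolution on the left is then defined as a tempered distribution by duality, and the identity extends.
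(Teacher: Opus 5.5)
Your argument is correct and is precisely the direct computation the paper leaves implicit (the lemma is stated there without proof): $e^{is\langle k\rangle}$ cancels the $-\langle k\rangle$ in $\Upsilon$, and the substitution $\eta_j=s_j\ell_j\xi_j$ turns the $\xi$-integral into a convolution with $\Omega$. Then $\widehat{\mathcal F}^{-1}$ converts it into a product; the factors $(2\pi)^2$ and $\ell_1\ell_2\ell_3\ell_4$ that this produces are simply suppressed in the statement.

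Your bookkeeping for $\ell_j=-$, which gives $\overline{U_j(s_jx)}$ with $U_j=\langle D\rangle^{-1}e^{is\langle D\rangle}\widehat{\mathcal F}^{-1}(f_j)$, is the right reading. Taken literally, the statement's $\langle D\rangle^{-1}e^{is\langle D\rangle}\widehat{\mathcal F}^{-1}(\overline{f_j})(s_jx)$ is a different function in general: it has the opposite time direction of the flow and a reflection in $x$. This is harmless, since the paper only applies the lemma with $\ell_j=+$.
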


\smallskip
Motivated by \eqref{I*-Quadriform} and the previous lemma, we define
\begin{equation}\label{wj} 
w_j(s,x)=\langle D\rangle^{-1}e^{is\langle D\rangle}\widehat{\mathcal{F}}^{-1}(F_j(s,\cdot))(x),
  \qquad F_j(s,\xi)=A_{+,s_j}(\xi)\widetilde{f}(s,\xi). 
\end{equation} 
By Plancherel we have
\begin{equation}\label{wjL2}
\|w_j(s,\cdot)\|_{L^2_x} \lesssim {\| \lr{k}^{-1}\widetilde{f}(s,\cdot)\|}_{L^2_k}.   
\end{equation}
Moreover, from Lemma \ref{lem:BoundednessW} we know that
 \begin{equation}\label{wjLinfty}
\|w_j(s)\|_{L^\infty_x}
 \lesssim  \varepsilon_0^{\al} \lr{s}^{-1/2+2\delta}.   
\end{equation}

Next, note that the functions $A_{\star,s_j}$ exhibit jump discontinuities at $k=0$. To take the $\partial_k$-derivative in \eqref{I*-Quadriform} we need to rearrange the term in $I_{\ast}$.  First, observe that according to \eqref{T-R-near-zero} and \eqref{A-functions}, we have
\begin{align}\label{A+der}
\begin{aligned}
\partial_k A_{+,-}(k)&=\delta_0(k) -\mathbbm{1}_-(k)\partial_k R_{+}(-k),\\
\sum_{s_0\in \{+,-\}}\partial_kA_{+,s_0}(k)&= \mathbbm{1}_+(k)\partial_k T(k)-\mathbbm{1}_-(k)\partial_kR_+(-k),
\end{aligned}
\end{align}
and 
\begin{align}\label{A-der}
  \begin{aligned}\partial_k A_{-,+}(k)&=\delta_0(k) -\mathbbm{1}_-(k)\partial_k T(-k)\\
\partial_k\sum_{s_0\in \{+,-\}}A_{-,s_0}(k)&= \mathbbm{1}_+(k)\partial_k R_-(k)-\mathbbm{1}_-(k)\partial_kT(-k).
    \end{aligned}  
\end{align}
Hence, we rewrite  $I_+(s,k)$ (see \eqref{I*-Quadriform}) in the expanded form
\begin{align}
\label{rearange-I+}
\begin{aligned}
&\sum_{s_0,...,s_4 \in \{+,-\}}\overline{A_{+,s_0}(k)}\mathcal{Q}_{\widehat{\Lambda_+},+,s_1,s_2,s_3,s_4}(F_1,F_2,F_3,F_4)(k)=\\
&\sum_{s_1,...,s_4\in \{+,-\}}\Big[\overline{A_{+,+}(k)}+\overline{A_{+,-}(k)}\Big]\mathcal{Q}_{\widehat{\Lambda_+},+,s_1,s_2,s_3,s_4}(F_1,F_2,F_3,F_4)(k)\,+ \\
&\quad\sum_{s_1,...,s_4\in \{+,-\}}\overline{A_{+,-}(k)}\big[\mathcal{Q}_{\widehat{\Lambda_+},-,s_1,s_2,s_3,s_4}(F_1,F_2,F_3,F_4)(k)-\mathcal{Q}_{\widehat{\Lambda_+},+,s_1,s_2,s_3,s_4}(F_1,F_2,F_3,F_4)(k)\big],
\end{aligned}
\end{align}
and see that the term inside the bracket of the second sum in the right-hand side  vanishes at $k=0$; so that taking the derivative does not introduce any singular term (see \eqref{A+der}). Moreover, according to  \eqref{A-der}, it is clear that $I_-$ can be written in a similar manner. Thus, in what follows we focus solely on the term $Q_{m,+}$ and, for simplicity, set $\ell_j=+$, $j=1,...,4$ (the remaining cases are handled analogously). Adopting the notation $\mathcal{Q}^{+,+,+,+}_{\widehat{\Lambda_+},s_0,s_1,s_2,s_3,s_4}:=\mathcal{Q}_{\widehat{\Lambda_+},s_0,s_1,s_2,s_3,s_4}$, it turns out that $\partial_kI_+$ may be expressed as a linear combination of $\mathcal{Q}_{\widehat{\Lambda_+},-,...}$ and $\mathcal{Q}_{\widehat{\Lambda_+},+,...}$ as well as their derivatives with bounded coefficients. 
Therefore, using \eqref{uniform-estimate-T-R} we  estimate
\begin{align}
\| 
\partial_k Q_{m,+}\|_{L^2_k} &\lesssim Q_{m,1} +Q_{m,2},   
\end{align}
where 
\begin{align*}
Q_{m,1}&:=\sum_{s_0\in \{+,-\}}\bigg\|\int_0^\infty\mathcal{Q}_{\widehat{\Lambda_+},s_0,s_1,s_2,s_3,s_4}(F_1,F_2,F_3,F_4)\tau_m(s)ds\bigg\|_{L^2_k},
\\
Q_{m,2}&:=\sum_{s_0\in \{+,-\}}\bigg\| 
\int_0^\infty\partial_k\mathcal{Q}_{\widehat{\Lambda_+},s_0,s_1,s_2,s_3,s_4}(F_1,F_2,F_3,F_4)\tau_m(s)ds\bigg\|_{L^2_k}.    
\end{align*}
The goal is to bound $Q_{m,1}$ and $Q_{m,2}$ by the right-hand side of \eqref{reductionestdec}.
We will only consider the case $s_0=+$ since $s_0=-$ is identical.

Using Lemma \ref{lem:transference} and the linear decay from the bootstrap assumptions, we obtain
\begin{align}\label{Qm1}
\nonumber Q_{m,1} &\lesssim    \int_{2^{m-1}}^{2^{m+1}} \|\Lambda_+w_1(s,s_1\cdot)w_2(s,s_2\cdot)w_3(s,s_3\cdot)w_4(s,s_4\cdot)\|_{L^2_x}ds\\
\nonumber &\lesssim    \int_{2^{m-1}}^{2^{m+1}}  \|w_1(s,s_1\cdot)\|_{L^2_x}\|w_2(s,s_2\cdot)\|_{L^{\infty}_x}\|w_3(s,s_3\cdot)\|_{L^{\infty}_x}\|w_4(s,s_4\cdot)\|_{L^{\infty}_x}ds
\\
& \lesssim 2^m \cdot \varepsilon_0 \cdot\big( \varepsilon_0^\alpha 2^{-\frac{m}{2} + 2\delta m}\big)^3
 \lesssim \varepsilon_0^{1+3\al} 2^{-\frac{m}{2}+6\delta m}
\end{align}
which is more than sufficient for \eqref{reductionest}.

Next, we estimate $Q_{m,2}$ using the representation formula in Lemma \ref{partial-k-I}.
Indeed, we write 
\begin{align}
 Q_{m,2}\lesssim  \sum_{j=0}^5Q^{(j)}_{m,2} 
\end{align}
where 
\begin{align*}
Q^{(0)}_{m,2}&:=  \left\|\int^t_0\tau_m(s) s\langle k\rangle^{-1}\mathcal{Q}_{\zeta\widehat{\Lambda_+},+,s_1,s_2,s_3,s_4}(F_1,F_2,F_3,F_4)ds\right\|_{L^2_k}, \\
Q^{(1)}_{m,2}&:=  \left\|\int^t_0\tau_m(s) \langle k\rangle^{-1}\mathcal{Q}_{\widehat{\Lambda_+},+,s_1,s_2,s_3,s_4}(\langle \cdot \rangle \partial_{\xi_1}F_1,F_2,F_3,F_4)ds\right\|_{L^2_k},\\
Q^{(2)}_{m,2}&:=  \left\|\int^t_0\tau_m(s) \langle k\rangle^{-1}\mathcal{Q}_{\widehat{\Lambda_+},+,s_1,s_2,s_3,s_4}(F_1,\langle \cdot \rangle \partial_{\xi_2}F_2,F_3,F_4)ds\right\|_{L^2_k}, \\
Q^{(3)}_{m,2}&:= \left\| \int^t_0\tau_m(s) \langle k\rangle^{-1}\mathcal{Q}_{\widehat{\Lambda_+},+,s_1,s_2,s_3,s_4}(F_1,F_2,\langle \cdot \rangle \partial_{\xi_3}F_3,F_4)ds\right\|_{L^2_k}, \\
Q^{(4)}_{m,2}&:=  \left\|\int^t_0\tau_m(s)\langle k\rangle^{-1}\mathcal{Q}_{\widehat{\Lambda_+},+,s_1,s_2,s_3,s_4}(F_1,F_2,F_3,\langle \cdot \rangle \partial_{\xi_4}F_4)ds\right\|_{L^2_k},
\\
Q^{(5)}_{m,2}&:=  \bigg\|\langle k\rangle^{-1}\int^t_0\tau_m(s) \iiiint\frac{\Upsilon e^{is\Upsilon}}{\prod_{j=1}^4\langle \xi_j\rangle} F_1(\xi_1)F_2(\xi_2)F_3(\xi_3)F_4(\xi_4)\partial_\zeta \widehat{\Lambda_+}(\zeta)d\xi_{1234}ds\bigg\|_{L^2_k}.
\end{align*}

Since $\Theta_+$ is  a Schwartz function, 
it follows from Lemma \ref{lem:FT-varLambda} and the (global) decay estimate
for the $w_j$'s in \eqref{wjLinfty},
that
\begin{align}\label{Q0m2}
 \nonumber Q^{(0)}_{m,2}&\lesssim  \int^{2^{m+1}}_{2^{m-1}} s\|\Theta_+ \, w_1(s,s_1\cdot)w_2(s,s_2\cdot)w_3(s,s_3\cdot)w_4(s,s_4\cdot)\|_{L^2_x} ds
 \\
\nonumber 
  & \lesssim  \int^{2^{m+1}}_{2^{m-1}} s 
  \| w_1(s,\cdot)\|_{L^{\infty}_x}\| w_2(s,\cdot)\|_{L^{\infty}_x} \| w_3(s,\cdot)\|_{L^{\infty}_x}
  \|w_4(s,\cdot)\|_{L^{\infty}_x}
\\
&\lesssim 2^{2m} \cdot \big(\varepsilon_0^\alpha 2^{-\frac{m}{2} + 2 \delta m} \big)^4
\lesssim \varepsilon_0^{4\al} 2^{8\delta m} 
\end{align}
which is sufficient for \eqref{reductionest}.

The terms $Q^{(j)}_{m,2}$, $j=1,...,4$ are similar; thus, we focus only on the first term.
Note that, with 
\begin{equation}
W_j(s,x) 
  := \lr{D}^{-1}e^{is\lr{D}}\widehat{\mathcal{F}}^{-1}(\lr{\xi_j}
  \partial_{\xi_j} (A_{+,s_j}\wt{f}(s)))(x),
\end{equation}  
we have
\begin{align}\label{Wj-in-L2}
\|W_j(s,\cdot)\|_{L^2_x}\lesssim \|\wt{f}(s)\|_{H^1_k} \lesssim 
  \min \big(\varepsilon_0^2 s^{3/2} , s^{1/2}\big)
  + \varepsilon_0^\alpha \lr{s}^{\delta_1} + \varepsilon_0,
\end{align}
in view of $\wt{f} = \wt{h} - \wt{g}$ and \eqref{nablag-in-H1} in Lemma \ref{lem:bad-part-bds}.

Using Lemmas \ref{lem:FT-varLambda} and \ref{lem:transference},
the bound \eqref{Wj-in-L2} and the decay \eqref{wjLinfty}, we have 
\begin{align}\label{Q12m}
\nonumber 
Q^{(1)}_{m,2}
  &\lesssim    \int_{2^{m-1}}^{2^{m+1}} 
  \|\Lambda_+W_1(s,s_1\cdot)w_2(s,s_2\cdot)w_3(s,s_3\cdot)w_4(s,s_4\cdot)\|_{L^2_x}ds\\
\nonumber &\lesssim  \int_{2^{m-1}}^{2^{m+1}} \|W_1(s,s_1\cdot)\|_{L^2_x}\|w_2(s,s_2\cdot)\|_{L^{\infty}_x}\|w_3(s,s_3\cdot)\|_{L^{\infty}_x}\|w_4(s,s_4\cdot)\|_{L^{\infty}_x}ds
\\
&\lesssim 2^m \cdot 
  \big[ \min \big( \varepsilon_0^2 2^{3m/2} , 2^{m/2}\big) 
  + \varepsilon_0^\alpha 2^{\delta_1 m} + \varepsilon_0 \big]
  \cdot \big(\varepsilon_0^{\alpha} 2^{-m/2 + 2\delta m} \big)^3
\\
& \lesssim 
  \varepsilon_0^{3\alpha} 2^{6\delta m}.
\end{align}

To control $Q^{(5)}_{m,2}$, we use integration by parts in
time to rewrite the integral inside the norm as 
\begin{align*}
\int^{2^{m+1}}_{2^{m-1}} \tau_m(s)
&\iiiint\frac{\Upsilon e^{is\Upsilon}}{\prod_{j=1}^4\langle \xi_j\rangle}
  F_1(s,\xi_1)F_2(s,\xi_2)F_3(s,\xi_3)F_4(s,\xi_4)\partial_\zeta \widehat{\Lambda_+}(\zeta)\,d\xi_{1234}ds
\\
& 
= i \int^{2^{m+1}}_{2^{m-1}}
  \iiiint\frac{e^{is\Upsilon}}{\prod_{j=1}^4\lr{\xi_j}} F_1(s,\xi_1)F_2(s,\xi_2) F_3(s,\xi_3) F_4(s,\xi_4) \partial_{\zeta} \widehat{\Lambda_+}(\zeta)\,d\xi_{1234} 
  \, \tau_m'(s) ds
\\
&\quad + i\int^{2^{m+1}}_{2^{m-1}}\iiiint\frac{e^{is\Upsilon}}{\prod_{j=1}^4\lr{\xi_j}}
  \partial_s\big[ F_1(s,\xi_1)F_2(s,\xi_2) F_3(s,\xi_3) F_4(s,\xi_4) \big] 
  \partial_{\zeta} \widehat{\Lambda_+}(\zeta)\,d\xi_{1234}
  \tau_m(s) ds
  \\ &\quad + \quad \{ \mbox{boundary terms} \}
\\
& := N_1(2^m,k) + N_2(2^m,k) + \{ \mbox{boundary terms} \},
\end{align*}
where we disregard the ``boundary terms'' (that only appear for $m=0$ and $L+1$) since they 
can be treated similarly to $N_1$ terms.
It will suffice to control $\lr{\cdot}^{-1}N_j$, $j=1,2$ in $L^2_k$ to obtain the desired bound for $Q^{(5)}_{m,2}    
$. 
Using the relation 
\begin{equation}
\partial_{\zeta}\widehat{\Lambda_+}  
  (\zeta)=-s_j\partial_{\xi_j}\widehat{\Lambda_+}  (\zeta),\quad j=1,...,4, 
\end{equation}
and via integration by parts in one frequency variable, 
say, $\xi_1$, and using Lemma \ref{lem:quadriform} , we further decompose $N_1$ as follows:
\begin{align*}
N_1(2^m,k) & = - is_1 \int^{2^{m+1}}_{2^{m-1}} \tau_m'(s) \iiiint\frac{\xi_1e^{is\Upsilon}}{\lr{\xi_1}^2\prod_{j=1}^4\lr{\xi_j}}F_1(s,\xi_1)F_2(s,\xi_2) F_3(s,\xi_3) F_4(s,\xi_4) \widehat{\Lambda_+}(\zeta)\,d\xi_{1234} \, ds 
\\
&\quad - s_1 \int^{2^{m+1}}_{2^{m-1}} \tau_m'(s) s\iiiint\frac{\xi_1e^{is\Upsilon}}{\lr{\xi_1}\prod_{j=1}^4\lr{\xi_j}}F_1(s,\xi_1)F_2(s,\xi_2) F_3(s,\xi_3) F_4(s,\xi_4) \widehat{\Lambda_+}(\zeta)\,d\xi_{1234} \, ds
\\
&\quad + is_1
  \int^{2^{m+1}}_{2^{m-1}} \tau_m'(s) \iiiint\frac{e^{is\Upsilon}}{\prod_{j=1}^4\lr{\xi_j}}\partial_{\xi_1}F_1(s,\xi_1)F_2(s,\xi_2) F_3(s,\xi_3) F_4(s,\xi_4)  \widehat{\Lambda_+}(\zeta)\,d\xi_{1234} \, ds 
\\
& =: N_1^{(1)}(2^{m},k)+N_1^{(2)}(2^{m},k)+N_1^{(3)}(2^{m},k).
\end{align*}
Thus, arguing as before, using $\tau_m' \lesssim 2^{-m}$, we find
\begin{align}
\nonumber\|\langle \cdot\rangle^{-1}N_1^{(1)}(2^{m},\cdot)\|_{L^2_k}
 & \lesssim 
 \sup_{s\in[2^{m-1},2^{m+1}]}
 \bigg\|\langle k\rangle^{-1}\mathcal{Q}_{\widehat{\Lambda_+},+,s_1,s_2,s_3,s_4}\bigg(\frac{\cdot}{\lr{\cdot}^2}F_1(s),F_2(s),F_3(s),F_4(s)\bigg)\bigg\|_{L^2_k}
 \\
\nonumber 
& \lesssim \sup_{s\in[2^{m-1},2^{m+1}]} \|\wt{f}(s)\|_{L^2_k}
  \|w_2(s,\cdot)\|_{L^{\infty}_x} \|w_3(s,\cdot)\|_{L^{\infty}_x}
  \|w_4(s,\cdot)\|_{L^{\infty}_x}
  \\
 \label{N11} 
 & \lesssim \varepsilon_0^{1+3\alpha}2^{-\frac{3m}{2}+6\delta m} . 
\end{align}
It can immediately be deduced from \eqref{N11} that the bound for $N_1^{(2)}$ is also acceptable
as it has only one extra factor of $2^m$.
Regarding the last term, since $\wt{f}(s,0)=0$, and using  \eqref{wjLinfty} and \eqref{Wj-in-L2},
we have 
\begin{equation}\label{K13}
\begin{aligned}
\|\langle \cdot\rangle^{-1}N_1^{(3)}(2^{m},\cdot)\|_{L^2_k}
   &\lesssim 
   \sup_{s\in[2^{m-1},2^{m+1}]} \big\|\langle k\rangle^{-1}\mathcal{Q}_{\widehat{\Lambda_+},+,s_1,s_2,s_3,s_4}\big(\partial_{\xi_1}F_1(s),F_2(s),F_3(s),F_4(s)\big)\big\|_{L^2_k}
   \\
\nonumber 
& \lesssim \sup_{s\in[2^{m-1},2^{m+1}]} \|\wt{f}(s,\cdot)\|_{H^1_k}
  \|w_2(s)\|_{L^{\infty}_x}\|w_3(s)\|_{L^{\infty}_x}\|w_4(s)\|_{L^{\infty}_x}
  \\
&\lesssim \big[ \min \big(\varepsilon_0^2 2^{3m/2} , 2^{m/2}\big) 
  + \varepsilon_0^\alpha 2^{\delta_1 m} + \varepsilon_0 \big]
 \cdot \varepsilon_0^{3\al} 2^{-3m/2+6m\delta}\lesssim \varepsilon_0^{3\al}2^{6m\delta}.
\end{aligned}
\end{equation}
These bounds are all acceptable according to \eqref{reductionestdec}.

For the remaining term $N_2(2^{m},k)$, we first observe that the $\partial_s$-derivative inside the integral can be distributed over each of the functions $F_j$ using the product rule, resulting in the sum of four similar integrals.  Thus, it will suffice to analyze only one of those. Using integration by parts in $s$ and Lemma \ref{lem:FT-varLambda}, we write
\begin{align*}
\nonumber N_2^{(1)}(2^m,k)&=i\int_{2^{m-1}}^{2^{m+1}}\iiiint\frac{e^{is\Upsilon}}{\prod_{j=1}^4\lr{\xi_j}}\partial_sF_1(s,\xi_1)F_2(s,\xi_2) F_3(s,\xi_3) F_4(s,\xi_4)\partial_{\zeta} \widehat{\Lambda_+}(\zeta)\,d\xi_{1234} \tau_m(s) ds
\\
\nonumber &= - s_2\int_{2^{m-1}}^{2^{m+1}}s\iiiint \frac{\xi_2}{\lr{\xi_2}}\frac{e^{is\Upsilon}}{\prod_{j=1}^4\lr{\xi_j}}\partial_sF_1(s,\xi_1)F_2(s,\xi_2) F_3(s,\xi_3) F_4(s,\xi_4) \widehat{\Lambda_+}(\zeta)\,d\xi_{1234}\tau_m(s)ds
\\
\nonumber &\quad - is_2\int_{2^{m-1}}^{2^{m+1}}\iiiint \frac{\xi_2}{\lr{\xi_2}^2}\frac{e^{is\Upsilon}}{\prod_{j=1}^4\lr{\xi_j}}\partial_sF_1(s,\xi_1)F_2(s,\xi_2) F_3(s,\xi_3) F_4(s,\xi_4) \widehat{\Lambda_+}(\zeta)\,d\xi_{1234}\tau_m(s)ds
\\
\nonumber&\quad +is_2\int_{2^{m-1}}^{2^{m+1}}\iiiint \frac{e^{is\Upsilon}}{\prod_{j=1}^4\lr{\xi_j}}\partial_sF_1(s,\xi_1)\partial_{\xi_2}F_2(s,\xi_2) F_3(s,\xi_3) F_4(s,\xi_4) \widehat{\Lambda_+}(\zeta)\,d\xi_{1234}\tau_m(s)ds
\\
\nonumber &=-s_2\int_{2^{m-1}}^{2^{m+1}}s\mathcal{Q}_{\widehat{\Lambda_+},+,s_1,s_2,s_3,s_4}\big(\partial_{s}F_1(s,\xi_1),\cdot\lr{\cdot}^{-1}F_2(s,\xi_2),F_3(s,\xi_3),F_4(s,\xi_4)\big)\tau_m(s)ds
\\
&\quad +is_2\int_{2^{m-1}}^{2^{m+1}}\mathcal{Q}_{\widehat{\Lambda_+},+,s_1,s_2,s_3,s_4}\big(\partial_{s}F_1(s,\xi_1),\cdot\lr{\cdot}^{-2}F_2(s,\xi_2),F_3(s,\xi_3),F_4(s,\xi_4)\big)\tau_m(s)ds
\\
\nonumber &\quad -is_2\int_{2^{m-1}}^{2^{m+1}}\mathcal{Q}_{\widehat{\Lambda_+},+,s_1,s_2,s_3,s_4}\big(\partial_{s}F_1(s,\xi_1),\partial_{\xi_2}F_2(s,\xi_2),F_3(s,\xi_3),F_4(s,\xi_4)\big)\tau_m(s)ds
\\
&:= \mathrm{M}_1(2^m,k)+\mathrm{M}_2(2^m,k)+\mathrm{M}_3(2^m,k).
\end{align*}
In what follows, we use the bound 
\begin{align}\label{partial-sf}
\|\partial_s\wt{f}(s,\cdot)\|_{L^2_k} \lesssim \Rho(s) 
 +\varepsilon_0^{2\al}\lr{s}^{-2+2\delta_2}
 +\varepsilon_0^{1+3\al}\lr{s}^{-3/2+3\delta} 
  :=\varkappa(s)
\end{align}
which can be derived directly from \eqref{f-evolution} 
and our bootstrap assumptions \eqref{w-assumptions-bootstrap-pde}. 
Notice that $s\varkappa(s) \leq 1$.
Next, we define 
\begin{align*}
 \mathfrak{W}(s,x)&=\lr{D}^{-1}e^{is\lr{D}}\widehat{\mathcal{F}}^{-1}(A_{+,s_1}\partial_s\wt{f})(x)
 \\[1.5ex]
 \mathfrak{W}_j(s,x)&=\lr{D}^{-1}e^{is\lr{D}}\widehat{\mathcal{F}}^{-1}(A_{+,s_j}
 \cdot\lr{\cdot}^{-1}\wt{f}(s,\cdot))(x)
 \\[1.5ex]
 \mathfrak{N}_j(s,x)&=\lr{D}^{-1}e^{is\lr{D}}\widehat{\mathcal{F}}^{-1}\big[\partial_{\xi_{j}}(A_{+,s_j}\wt{f})\big](x),\quad j=1,2,3,4.
 \end{align*} 
 By Plancherel, we have
\begin{equation} \label{W-partials-f}
\|\mathfrak{W}(s,\cdot)\|_{L^2_x}\lesssim \|\partial_s\wt{f}(s)\|_{L^2_k}\lesssim \varkappa(s).
\end{equation}
Similarly to \eqref{wjLinfty} (see Lemma \ref{lem:BoundednessW}), we have
\begin{equation}\label{WjLinfty}
\big\|\mathfrak{W}_j(s,\cdot)\|_{L^\infty}=\|\lr{D}^{-1}e^{is\lr{D}}\widehat{\mathcal{F}}^{-1}[A_{+,s_j}\cdot\lr{\cdot}^{-1}\wt{f}(s,\cdot) ]\big\|_{L^\infty}\lesssim \varepsilon_0^\al \lr{s}^{-1/2+2\delta},
\end{equation}
and by Sobolev embedding and \eqref{Wj-in-L2}
\begin{align}\label{NjLinfty}
\begin{split}
\big\|\mathfrak{N}_j(s,\cdot)\|_{L^\infty}=\|\lr{D}^{-1}e^{is\lr{D}}\widehat{\mathcal{F}}^{-1}[\partial_{\xi_j}(A_{+,s_j}\wt{f}(s))]\big\|_{L^\infty}
\\ \lesssim  \|\wt{f}(s)\|_{H^1_k}\lesssim 
  \min\big(\varepsilon_0^2s^{3/2},s^{1/2}\big) + \varepsilon_0^\alpha \lr{s}^{\delta_1}.
\end{split}
\end{align}
Thus, using \eqref{Wj-in-L2} and Lemma \ref{lem:transference} we find 
\begin{align}\label{L1}
\begin{aligned}
 \|\mathrm{M}_1(2^m,\cdot)\|_{L^2_k}&\lesssim  2^m\int_{2^{m-1}}^{2^{m+1}}\left\|\Lambda_{+}\mathfrak{W}(s,s_1\cdot)\mathfrak{W}_2(s,s_2\cdot)w_3(s,s_3\cdot)w_4(s,s_4\cdot)\right\|_{L^2_x}ds\\
&\lesssim 2^m\int_{2^{m-1}}^{2^{m+1}}\|\mathfrak{W}(s,s_1\cdot)\|_{L^2_x}\|\mathfrak{W}_2(s,s_2\cdot)\|_{L^{\infty}_x}\|w_3(s,s_3\cdot)\|_{L^{\infty}_x}\|w_4(s,s_4\cdot)\|_{L^{\infty}_x}ds\\
 &\lesssim 2^m\int_{2^{m-1}}^{2^{m+1}}\|\partial_s\wt{f}(s,\cdot)\|_{L^2_k}\|\mathfrak{W}_2(s,\cdot)\|_{L^{\infty}_x}\|w_3(s,\cdot)\|_{L^{\infty}_x}\|w_4(s,\cdot)\|_{L^{\infty}_x}ds
 \\
&\lesssim \varepsilon_0^{3\al}2^m \varkappa(2^m) 2^{-\frac{m}{2}+4\delta m}.
\end{aligned}
\end{align}
The above bound is acceptable since $2^m\varkappa(2^m)$ is uniformly bounded.
It is easily seen that $\mathrm{M}_2(2^m,k)$ satisfies a stronger bound. 
Regarding $\mathrm{M}_3(2^m,k)$, we have 
\begin{align}\label{M3}
\nonumber \|\lr{\cdot}^{-1}\mathrm{M}_3(2^m,\cdot)\|_{L^2_k}&\lesssim \int_{2^{m-1}}^{2^{m+1}}\|\Lambda_{+}\mathfrak{W}(s,s_1\cdot)\mathfrak{N}_2(s,s_2\cdot)w_3(s,s_3\cdot)w_4(s,s_4\cdot)\|_{L^2_x}ds\\
\nonumber&\lesssim \int_{2^{m-1}}^{2^{m+1}}\|\mathfrak{W}(s,s_1\cdot)\|_{L^2_x}\|\mathfrak{N}_2(s,s_2\cdot)\|_{L^\infty_x}\|w_3(s,s_3\cdot)\|_{L^\infty_x}\|w_4(s,s_4\cdot)\|_{L^\infty_x}ds\\
\nonumber &\lesssim \int_{2^{m-1}}^{2^{m+1}}\|\partial_s\wt{f}(s,\cdot)\|_{L^2_k}\|\wt{f}(s,s_2\cdot)\|_{H^1_k}\|w_3(s,s_3\cdot)\|_{L^\infty_x}\|w_4(s,s_4\cdot)\|_{L^\infty_x}ds\\
&\lesssim  \varepsilon_0^{2\al}2^m\varkappa(2^m)2^{-m+4\delta m}
 \big[\min\big(\varepsilon_0^22^{3m/2},2^{m/2}\big) + \varepsilon_0^\alpha 2^{\delta_1 m} \big]
\\
& \lesssim \varepsilon_0^{3\al}2^{4\delta m}
\end{align}
where we have used \eqref{wjLinfty}, \eqref{Wj-in-L2} and \eqref{NjLinfty}.
Once again, these bounds are sufficient for our purposes, as seen in \eqref{reductionestdec}.

\medskip
\subsubsection{Control of $Q_{m,R,\ell}$, $\ell=1,2$}\label{QmRl} 
To conclude the proof, we estimate the regular parts in \eqref{Qm-decomposed}, 
which are expected to have better bounds.
Recall, 
\begin{align*}
I_{R,\ell}(s,k)=\sum_{\ell_1,\ell_2,\ell_3,\ell_4\in \{+,-\}} \iiiint\frac{e^{is\Upsilon}}{\prod_{j=1}^4\lr{\xi_j}} \wt{f}^{(\ell_1)}(s,\xi_1)\wt{f}^{(\ell_2)}(s,\xi_2)\wt{f}^{(\ell_3)}(s,\xi_3)\wt{f}^{(\ell_4)}(s,\xi_4)\mu^{R,\ell}_{\ell_1\ell_2\ell_3\ell_4}d\xi_{1234}
\end{align*}
where $\mu^{R,\ell}_{\ell_1\ell_2\ell_3\ell_4}$ is defined in \eqref{mur1} and \eqref{mur2}. As before, for simplicity's sake, we set $\ell_j=+$, $j=1,2,3,4$, and simply write $\mu^{R,2}$ without the signs. 
We start with $Q_{m,R,1}$. By the nature of the argument we will employ here, it  will suffice to consider only two cases: $E_0=R$ and $E_j\in \{S,R\}$ for $j\neq 0$, and $E_0=S$, $E_1=R$, and $E_j\in \{S,R\}$ for $j\neq 0,1$. Thus, we define
\begin{equation*}
I_{R,1,\ell}(s,k)=\iiiint\frac{e^{is\Upsilon}}{\prod_{j=1}^4\lr{\xi_j}} 
  \wt{f}
  (s,\xi_1) \wt{f}(s,\xi_2)\wt{f}(s,\xi_3)\wt{f}(s,\xi_4)\mu^{R,1}_\ell d\xi_{1234},\quad \ell=1,2,   
\end{equation*}
where 
\begin{equation*}
 \mu_{1}^{R,1}(k,\xi_1,\xi_2,\xi_3,\xi_4)=\int \overline{\psi_R(x,k)}\psi_{E_1}(x,\xi_1)\psi_{E_2}(x,\xi_2) \psi_{E_3}(x,\xi_3)\psi_{E_4}(x,\xi_4)dx  
\end{equation*}
and 
\begin{equation*}
 \mu_{2}^{R,1}(k,\xi_1,\xi_2,\xi_3,\xi_4)=\int \overline{\psi_S(x,k)}\psi_{R}(x,\xi_1)\psi_{E_2}(x,\xi_2) \psi_{E_3}(x,\xi_3)\psi_{E_4}(x,\xi_4)dx. 
\end{equation*}
To estimate $\partial_kI_{R,1,\ell}$ in $L^2_k$, we will control the following  terms:
\begin{align}
J_\ell(s,k)= is\frac{k}{\lr{k}}
  \iiiint \frac{e^{is\Upsilon}}{\prod_{j=1}^4\lr{\xi_j}} \wt{f}(s,\xi_1) 
    \wt{f}(s,\xi_2) \wt{f}(s,\xi_3) \wt{f}(s,\xi_4)\mu^{R,1}_\ell d\xi_{1234}
\end{align}
and 
\begin{align}
H_\ell(s,k)= \iiiint   \frac{e^{is\Upsilon}}{\prod_{j=1}^4\lr{\xi_j}}
  \wt{f}(s,\xi_1) \wt{f}(s,\xi_2) \wt{f}(s,\xi_3) \wt{f}(s,\xi_4)
  \partial_k\mu^{R,1}_\ell d\xi_{1234}.
\end{align}
First, we rewrite $J_1$ using \eqref{phi-decomp} as follows: 
\begin{align*}
J_1(s,k)=-(2\pi)^{-2} is\frac{k}{\lr{k}}e^{-is\lr{k}}
  \int\overline{\psi_R(x,k)}U_{E_1}(s,x)U_{E_2}(s,x)   U_{E_3}(s,x)U_{E_4}(s,x)dx
\end{align*}
 where 
 \begin{equation*}
 U_{E_j} 
 (s,x) = \big(L^{-1}e^{
 isL}f
 (s,\cdot)\big)_{E_j}(s,x),
 \end{equation*} the notation $\phi_E$ being defined in \eqref{phi-decomp}.
 By the local decay bootstrap assumptions \eqref{w-assumptions-bootstrap-pde}, we have 
\begin{align}\label{J1}
\begin{aligned}
\nonumber&\bigg\|\int_0^\infty J_1(s,\cdot)\tau_m(s)ds\bigg\|_{L^2_k}
\\
\nonumber&\lesssim \int_{2^{m-1}}^{2^{m+1}}s\|\lr{\cdot}^{-11}U_{E_1}(s,\cdot)U_{E_2}(s,\cdot)U_{E_3}(s,\cdot)U_{E_4}(s,\cdot)\|_{L^2_x}ds\\
\nonumber &\lesssim 2^m\int_{2^{m-1}}^{2^{m+1}}\|\lr{\cdot}^{-2}U_{E_1}(s,\cdot)\|_{L^{\infty}_x}\|\lr{\cdot}^{-2}U_{E_2}(s,\cdot)\|_{L^{\infty}_x}\|\lr{\cdot}^{-2}U_{E_3}(s,\cdot)\|_{L^{\infty}_x}\|\lr{\cdot}^{-2}U_{E_4}(s,\cdot)\|_{L^{\infty}_x}ds\\
&\lesssim \varepsilon_0^{4\al} 2^{-2m+4\delta_2m}.
\end{aligned}
\end{align}
This estimate is stronger than the desired bound \eqref{reductionestdec}. Similarly, we write 
\begin{align*}
H_1(s,k)=(2\pi)^{-2} e^{-is\lr{k}}\int\partial_k\overline{\psi_R(x,k)}U_{E_1}(s,x)U_{E_2}(s,x) 
  U_{E_3}(s,x)U_{E_4}(s,x)dx,
\end{align*}
and using the local decay in the bootstrap assumptions, we find that
\begin{equation}\label{H1}
\begin{aligned}
\nonumber\bigg\|\int_0^\infty &H_1(s,\cdot)\tau_m(s)ds\bigg\|_{L^2_k}\\
&\lesssim \int_{2^{m-1}}^{2^{m+1}}\|\lr{\cdot}^{-10}U_{E_1}(s,\cdot)U_{E_2}(s,\cdot)U_{E_3}(s,\cdot)U_{E_4}(s,\cdot)\|_{L^2_x}ds\\
\nonumber &\lesssim \int_{2^{m-1}}^{2^{m+1}}\|\lr{\cdot}^{-2}U_{E_1}(s,\cdot)\|_{L^{\infty}_x}\|\lr{\cdot}^{-2}U_{E_2}(s,\cdot)\|_{L^{\infty}_x}\|\lr{\cdot}^{-2}U_{E_3}(s,\cdot)\|_{L^{\infty}_x}\|\lr{\cdot}^{-2}U_{E_4}(s,\cdot)\|_{L^{\infty}_x}ds\\
&\lesssim \varepsilon_0^{4\al}2^{-3m+4\delta_2m}
\end{aligned}
\end{equation}
which is acceptable.

For the remaining terms involving  $H_2$ and $J_2$, we argue as before using the expressions
\begin{align*}
 J_2(s,k)=-(2\pi)^{-2} is\frac{k}{\lr{k}}
  e^{-is\lr{k}}\int\overline{\psi_S(x,k)}U_{R}(s,x)U_{E_2}(s,x)   U_{E_3}(s,x)U_{E_4}(s,x)dx,   
\end{align*}
and 
\begin{align*}
H_2(s,k)=(2\pi)^{-2} e^{-is\lr{k}}\int\partial_k\overline{\psi_S(x,k)}U_{R}(s,x)U_{E_2}(s,x)   U_{E_3}(s,x)U_{E_4}(s,x)dx.
\end{align*}
Since the distorted Fourier transform (defined with singular kernel $\psi_S$) is bounded in $L^2_k$, we use \eqref{local-l-infty-decay-reg} to obtain 
\begin{equation}\label{J2}
\begin{aligned}
\nonumber &\bigg\|\int_0^\infty J_2(s,\cdot)\tau_m(s)ds\bigg\|_{L^2_k}
\\
\nonumber&\lesssim 2^m\int_{2^{m-1}}^{2^{m+1}}\|U_{R}(s,\cdot)U_{E_2}(s,\cdot)U_{E_3}(s,\cdot)U_{E_4}(s,\cdot)\|_{L^2_x}ds\\
 \nonumber &\lesssim 2^m\int_{2^{m-1}}^{2^{m+1}}\|\lr{\cdot}^{10}U_{R}(s,\cdot)\|_{L^{\infty}_x}\|\lr{\cdot}^{-2}U_{E_2}(s,\cdot)\|_{L^{\infty}_x}\|\lr{\cdot}^{-2}U_{E_3}(s,\cdot)\|_{L^{\infty}_x}\|\lr{\cdot}^{-2}U_{E_4}(s,\cdot)\|_{L^{\infty}_x}ds\\
\nonumber &\lesssim \int_{2^{m-1}}^{2^{m+1}}\|\wt{f}\|_{H^1_k}\|\lr{\cdot}^{-2}U_{E_2}(s,\cdot)\|_{L^{\infty}_x}\|\lr{\cdot}^{-2}U_{E_3}(s,\cdot)\|_{L^{\infty}_x}\|\lr{\cdot}^{-2}U_{E_4}(s,\cdot)\|_{L^{\infty}_x}ds
\\
&\lesssim \varepsilon_0^{3\al}
 \big[ \min(\varepsilon_0^2 2^{3m/2},2^{m/2}) + \varepsilon_0^\alpha2^{\delta_1m}\big] 	
 2^{-2m+3\delta_2m},
\end{aligned}
\end{equation}
which suffices.
Proceeding similarly, we can use the estimate
\begin{align}\label{partphiSL2}
\left\|\int\partial_k\overline{\psi_S(x,k)}f(x)dx\right\|_{L^2_k}\lesssim \|\lr{x}f\|_{L^2_x}    
\end{align}
and deduce from \eqref{H1} that the bound for $H_2$ is acceptable as well. 

Regarding $Q_{m,R,2}$, we use once again the structure of the spectral distribution. 
We may rewrite \eqref{mur2} using \eqref{psi-regular-part} as follows
\begin{align*}
\mu^{R,2}&(k,\xi_1,\xi_2,\xi_3,\xi_4)
\\
&=\sum\int\sigma_{s_0}(x)\overline{\psi_{S,s_0}(x,k)}\sigma_{s_1}(x)\psi_{S,s_1}(x,\xi_1)\sigma_{s_2}(x)\psi_{S,s_2}(x,\xi_2)\sigma_{s_3}(x)\psi_{S,s_3}(x,\xi_3)\sigma_{s_4}(x)\psi_{S,s_4}(x,\xi_4)dx
\end{align*}
where the sum is taken over \[s_0,s_1,s_2,s_3,s_4\in \{+,-\}\quad \mbox{with} \quad (s_0,s_1,s_2,s_3,s_4)\neq (+,+,+,+,+),(-,-,-,-,-).\]
Since  $\sigma_+\sigma_-$ has compact support 
it will suffice to consider only one combination 
of the types of terms that appear in the decomposition,
all the other contributions being similar.
More precisely, we consider the distribution 
\begin{equation}\label{mu1R2}
\mu^{R,2}_1(k,\xi_1,\xi_2,\xi_3,\xi_4)
=\int\sigma_{-}(x)\overline{\psi_{S,-}(x,k)}\sigma_{+}(x)\psi_{S,+}(x,\xi_1)\psi_{S}(x,\xi_2)\psi_{S}(x,\xi_3)\psi_{S}(x,\xi_4)dx
\end{equation}
and its corresponding term in the decomposition of $I_{R,2}$, namely,
\begin{align*}
I_{R,2,1}(s,k)&=  \iiiint\frac{e^{is\Upsilon}}{\prod_{j=1}^4\langle \xi_j\rangle} \wt{f}(s,\xi_1)\wt{f}(s,\xi_2)\wt{f}(s,\xi_3)\wt{f}(s,\xi_4)\mu^{R,2}_1d\xi_{1234}\\
&=(2\pi)^{-2}e^{-is\lr{k}}\int \sigma_-(x)\sigma_+(x)\overline{\psi_{S,-}(x,k)}U_{S,+}(s,x)U^3_{S}(s,x)dx.
\end{align*}
Here, with the notation in \eqref{phi-decomp'},  the term $U_{S,+}$ in the integrand is defined by   
\[ U_{S,+}(s,x)=\big(L^{-1}e^{is L}f(s)\big)_{S,+}(x).\]

Taking the partial derivative in $k$, we find 
\begin{align*}
(2\pi)^2\partial_k I_{R,2,1}(s,k)&:=-is\frac{k}{\lr{k}}e^{-is\lr{k}}\int \sigma_-(x)\sigma_+(x)\overline{\psi_{S,-}(x,k)}U_{S,+}(s,x)U^3_{S}(s,x)dx 
\\
&\quad +e^{-is\lr{k}}\int \sigma_-(x)\sigma_+(x)\partial_k\overline{\psi_{S,-}(x,k)}U_{S,+}(s,x)U^3_{S}(s,x)dx\\
&:=I^{(1)}_{R,2,1}(s,k)+I^{(2)}_{R,2,1}(s,k).
\end{align*}
By Remark \ref{rmk:r-s-local-decay}, \eqref{Wj-in-L2} and   the a priori assumptions, we have
\begin{align*}
\nonumber &\bigg\|\int_0^\infty I^{(1)}_{R,2,1}(s,\cdot)\tau_m(s)ds\bigg\|_{L^2_k}\\
&\lesssim 2^m\int_{2^{m-1}}^{2^{m+1}}\|\sigma_-\sigma_+U_{S,+}(s,\cdot)U_{E_2}(s,\cdot)U_{E_3}(s,\cdot)U_{E_4}(s,\cdot)\|_{L^2_x}ds\\
 &\lesssim 2^m\int_{2^{m-1}}^{2^{m+1}}\|\lr{\cdot}^{-2}U_{S,+}(s,\cdot)\|_{L^{\infty}_x}\|\lr{\cdot}^{-2}U_{E_2}(s,\cdot)\|_{L^{\infty}_x}\|\lr{\cdot}^{-2}U_{E_3}(s,\cdot)\|_{L^{\infty}_x}\|\lr{\cdot}^{-2}U_{E_4}(s,\cdot)\|_{L^{\infty}_x}ds
 \\
 &\lesssim \varepsilon_0^{3\al}
 \big[\min(\varepsilon_0^22^{3m/2},2^{m/2}) + \varepsilon_0^\alpha 2^{\delta_1m} \big]2^{-2m+3\delta_2m}
\end{align*}
which is acceptable according to \eqref{reductionestdec}. Since   \eqref{partphiSL2} is still valid if $\psi_S$ is replaced by $\psi_{S,-}$, we obtain 
\begin{align*}
\nonumber &\bigg\|\int_0^\infty I^{(2)}_{R,2,1}(s,\cdot)\tau_m(s)ds\bigg\|_{L^2_k}
\\
&\lesssim \int_{2^{m-1}}^{2^{m+1}}\|\lr{x}\sigma_-\sigma_+U_{S,+}(s,\cdot)U_{E_2}(s,\cdot)U_{E_3}(s,\cdot)U_{E_4}(s,\cdot)\|_{L^2_x}ds\\
 &\lesssim \int_{2^{m-1}}^{2^{m+1}}\|\lr{\cdot}^{-2}U_{S,+}(s,\cdot)\|_{L^{\infty}_x}\|\lr{\cdot}^{-2}U_{E_2}(s,\cdot)\|_{L^{\infty}_x}\|\lr{\cdot}^{-2}U_{E_3}(s,\cdot)\|_{L^{\infty}_x}\|\lr{\cdot}^{-2}U_{E_4}(s,\cdot)\|_{L^{\infty}_x}ds
 \\
 &\lesssim \varepsilon_0^{3\al}
  \big[\min(\varepsilon_0^22^{3m/2},2^{m/2}) + \varepsilon_0^\alpha 2^{\delta_1m} \big] 2^{-3m+3\delta_2m}.
\end{align*}
This is more than sufficient for \eqref{reductionest}
and concludes the proof of Lemma \ref{lemreduction}. $\hfill \Box$

\section{Asymptotics}\label{app:asy}
In this section we prove scattering for the continuous component 
$v$, and give a more precise upper bound on its decay (Subsection \ref{AsySecv});
we then derive precise asymptotics for the internal mode (Subsection \ref{sec:Asymp-A})
and also give a lower bound on the weighted-type norm of the `bad' component (Subsection \ref{app:growth-f}) 
which shows the optimality of the assumed bootstrap estimates.

\medskip
\subsection{Asymptotics for the continuous component}\label{AsySecv}
Recall that  the continuous component $v$ is related to the radiation 
$w$ through $w=\partial_tv+iLv$, and we have (see \eqref{duhamel-for-w})
\begin{equation}\label{Duhamel-w}
w(t)=e^{itL}w(0)+\int_0^te^{i(t-s)L}P_c[\beta(a(s)\varphi+v(s))^2+(a(s)\varphi+v(s))^4]ds.    
\end{equation}
Our interest here is in refining the decay estimate
for $v$ (without polynomial losses in $t$; compare with the linear decay estimate from the bootstrap norms),
and proving a scattering result with a polynomial rate.

\begin{proposition}\label{prop:scattering}
The field component  $w(t)$ of the solution $u(t)$ scatters 
to a linear (perturbed) Klein-Gordon wave in $H^{N-1}_x$ in the sense that 
there exists $\Phi_+\in P_c H^{N-1}_x$ such that,
\begin{align}\label{scattering}
{\|w(t) - e^{itL}\Phi_+\|}_{H^{N-1}_x} \lesssim \varepsilon_0 (1+\varepsilon_0^2 t)^{-\gamma}
\end{align}
for some fixed $\gamma >0$.
\end{proposition}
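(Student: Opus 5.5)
The plan is to show that the profile $f(t)=e^{-itL}w(t)$ is Cauchy in $H^{N-1}_x$ as $t\to\infty$, with a quantitative rate. We then set $\Phi_+:=\lim_{t\to\infty} f(t)$. Since $e^{itL}$ is unitary on $P_cH^{N-1}_x$, this gives ${\|w(t)-e^{itL}\Phi_+\|}_{H^{N-1}_x} = {\|f(t)-\Phi_+\|}_{H^{N-1}_x}\le \sup_{t'\ge t}{\|f(t')-f(t)\|}_{H^{N-1}_x}$. By \eqref{f-evolution}, for $t_1<t_2$,
\begin{align}
f(t_2)-f(t_1)=\int_{t_1}^{t_2} e^{-isL}P_c\big[\beta(a\varphi+v)^2+(a\varphi+v)^4\big](s)\,ds,
\end{align}
and we bound this term by term using the decomposition of Lemma \ref{lem:h}, with the a priori bounds of Theorem \ref{thm:mainthmdetail} and Proposition \ref{prop:bootstrap-on-v} now holding for all $t$. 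The estimate is trivial for $t\lesssim\varepsilon_0^{-2}$, since ${\|w(t)\|}_{H^{N-1}}\lesssim\varepsilon_0$. So we may assume $t_1\ge\varepsilon_0^{-2}$, where $\rho_{\Gamma_0}(s)\approx s^{-1}$.

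The terms other than the pure $a^2$ forcing are absolutely integrable in $H^{N-1}$ with a polynomial tail.
\begin{itemize}
\item For the pure quartic term, ${\|v^4\|}_{H^{N-1}}\lesssim {\|v\|}_{L^\infty}^3{\|v\|}_{H^{N-1}}\lesssim \varepsilon_0^{1+3\alpha}s^{-3/2+3\delta}$, which integrates to $\varepsilon_0 t_1^{-1/2+}$.
\item For $\beta v^2$ and $a\beta\varphi v$, and for the localized quartic terms, we use the smoothing estimate \eqref{smooth-est} with $p=2$, exactly as in the proof of Proposition \ref{prop:Sobolev}, but with the time integral restricted to $[t_1,t_2]$. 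The Gagliardo--Nirenberg interpolation \eqref{GN-endpoint} and the local decay $\lr{s}^{-1+\delta_2}$ then give $L^2_s([t_1,\infty))$ tails of size $\varepsilon_0^{c}t_1^{-\gamma}$.
\item The $a^4$ term is handled the same way, since $\|\rho_{\Gamma_0}^2\|_{L^2_s([t_1,\infty))}\lesssim t_1^{-3/2}$.
\end{itemize}

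The main obstacle is the quadratic forcing $a^2\theta$, because $\int_{t_1}^{t_2}|a|^2\,ds$ diverges logarithmically. Here we again use \eqref{smooth-est}, since the flow carries derivatives freely: $L^{N-1}\theta$ is Schwartz. This gives
\begin{align}
{\Big\|\int_{t_1}^{t_2}a^2(s)e^{-isL}L^{N-1}\theta\,ds\Big\|}_{L^2}\lesssim {\|\rho_{\Gamma_0}\|}_{L^2_s([t_1,\infty))}\lesssim t_1^{-1/2},
\end{align}
and the bound holds uniformly in $t_2$, including the resonant component near $\lr{k}=2\lambda$. This is the step to double-check. If the smoothing lemma is insufficient near the resonance, because of the missing $k$ weight, we fall back on the integration by parts of Lemma \ref{lemevolw}. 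That rewrites the forcing as the boundary term $e^{2i\lambda t}A^2(t)(L-2\lambda+i0)^{-1}\theta$, which is $O(\rho_{\Gamma_0}(t))$ in $H^{N-1}$ only after using the $L^2$ smoothing, together with remainders containing $A\dot A=O(\rho^{3/2})$, which are integrable.

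Collecting these bounds, $\sup_{t_2>t_1}{\|f(t_2)-f(t_1)\|}_{H^{N-1}}\lesssim \varepsilon_0 (\varepsilon_0^2 t_1)^{-\gamma}$ for some small $\gamma>0$. The small powers of $\varepsilon_0$ lost in the interpolations are absorbed by taking $\gamma$ smaller, using $t_1\ge\varepsilon_0^{-2}$. This yields \eqref{scattering}.
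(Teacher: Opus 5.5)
Your proposal is correct and is essentially the paper's proof. You show the profile is Cauchy in $H^{N-1}_x$, bounding the $a^2\theta$ and $a^4$ forcing by $\|\rho\|_{L^2_s([t_1,t_2])}+\|\rho^2\|_{L^2_s([t_1,t_2])}$ via Lemma \ref{lem:smoothing-effect}, the localized $v$-terms by smoothing plus local decay and Gagliardo--Nirenberg, and $v^4$ by the $L^\infty$ decay; then you apply $e^{itL}$. The step you flagged is fine as written: the weight condition $|\Psi(k)|\le |k|/\lr{k}$ in Lemma \ref{lem:smoothing-effect} matters only at $k=0$, where $T(0)=0$ supplies it, and the resonant frequency plays no role there. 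So drop the fallback, which would fail anyway, since under the Fermi Golden Rule $(L-2\lambda+i0)^{-1}\theta$ is not an $L^2$ function.
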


A similar scattering result holds for $v$ with the natural modification of the asymptotic limit;
indeed, since $v = \Im(L^{-1} w)$ with $\partial_t v = \Re(w)$, from \eqref{scattering} we have
\begin{align}\label{scatteringv}
{\big\|v(t) - \Im(L^{-1} e^{itL}\Phi_+ \big) \big\|}_{H^{N}_x}
 + {\big\|\partial_t v(t) - \Re( e^{itL}\Phi_+ \big) \big\|}_{H^{N-1}_x} \lesssim \varepsilon_0 (1+\varepsilon_0^2 t)^{-\gamma}.
\end{align}

\begin{proof}[Proof of Proposition \ref{prop:scattering}]
Introduce the function
\begin{equation}
\Phi_+(t) := w(0) + \int_0^t e^{-isL} P_c[\beta(a(s)\varphi+v(s))^2+(a(s)\varphi+v(s))^4]ds,
\end{equation}
so that $e^{itL}\Phi_+(t) = w(t)$.
Our goal is to prove that $\Phi_+(t)$ is Cauchy in $H^{N-1}_x$ as $t\rightarrow \infty$,
and obtain convergence to its limit, which we denote $\Phi_+$, at the claimed polynomial rate.
We split
\begin{align}\label{Phi_+}
\begin{split}
\Phi_+(t) & = w(0) + \Phi_a(t) + \Phi_v(t) + \Phi_{a,v}(t)
\\
\Phi_a(t) & := \int_0^t e^{-isL}a^2(s)\theta ds + \int_0^t a^4(s) e^{-isL} P_c(\varphi^4)ds,
  \quad \theta:=P_c(\beta\varphi^2),
\\
\Phi_v(t) & :=\int_0^t e^{-isL}P_c(\beta v^2(s)) ds+\int_0^t e^{-isL}P_c[v^{4}(s)]ds,
\\
\Phi_{a,v}(t) & := 2\int_0^t e^{-isL}a(s)P_c(\beta\varphi v(s)) ds
  + \sum_{\ell=1}^3 \int_0^t e^{-isL} a^{4-\ell}(s) P_c[\varphi_\ell v^{\ell}(s)]ds,
\end{split}
\end{align}
where we are adopting the same convention of Section \ref{sec:PDE-part1}, namely  
$\varphi_\ell = \big(\begin{array}{c}
                          \!4 \!\\ \!\ell\!
                          \end{array}
\big)\varphi^{4-\ell}$ . 
%
%
Using Lemma \ref{lem:smoothing-effect} 
we estimate
\begin{align}
\|\Phi_a(t_2)-\Phi_a(t_1)\|_{H^{N-1}_x}
 & \lesssim \|\Rho\|_{L^2_t([t_1,t_2])} + \|\Rho^2\|_{L^2_t([t_1,t_2])} \lesssim \Rho^{1/2}(t_1),
\end{align}
which is consistent with \eqref{scattering}.
Using the local decay estimate (see \eqref{w-assumptions-bootstrap-pde}) and the $H^N$ bounds,
we have
\begin{align}
\|\Phi_v(t_2) - \Phi_v(t_1)\|_{H^{N-1}_x} 
  \lesssim \left\|L^{N-1}(\beta v^2(s))\right\|_{L^{1}_xL^2_t([t_1,t_2])}
  + \int_{t_1}^{t_2} \|P_c[v^{4}(s)]\|_{H^{N-1}}ds \\
   \lesssim \varepsilon_0 \cdot \min\big(\varepsilon_0^{\alpha} t_1^{-1/2+\delta_2}, \varepsilon_0 \big),
\end{align}
which again suffices for \eqref{scattering}.
%
%
For the last term in \eqref{Phi_+}, using also the Gagliardo-Nirenberg inequality \eqref{GN-endpoint}
with $\ell = N-1$, we have  
\begin{align}
\left\|\Phi_{a,v}(t_2) - \Phi_{a,v}(t_1) \right\|_{H^{N-1}_x} 
  & \lesssim \left\|a(s)L^{N-1}(\beta \varphi v(s))\right\|_{L^{1}_xL^2_t([t_1,t_2])}
  + \sum_{\ell=1}^3 \left\|a(s)^\ell L^{N-1}(\varphi_\ell v^{4-\ell}(s))\right\|_{L^{1}_xL^2_t}
  \\
  & \lesssim \left\| \Rho^{1/2}(s) \cdot \varepsilon_0^{1-1/(2N-1)} s^{-(1-\delta_2)/(2N-1)} 
  \right\|_{L^2_s([t_1,t_2])} + \, \{\mbox{better terms}\}
  \\
  & \lesssim \varepsilon_0^{1-1/(2N-1)} t_1^{-(1-\delta_2)/(2N-1)}. 
\end{align}
Since we also have a uniform $\varepsilon_0$ bound from the a priori estimates
(see also \eqref{cond0}-\eqref{cond2}) it follows that
\begin{align}
\left\|\Phi_{a,v}(t_2) - \Phi_{a,v}(t_1) \right\|_{H^{N-1}_x} 
  & \lesssim \min\big( \varepsilon_0, \varepsilon_0^{1-1/(2N-1)} t_1^{-(1-\delta_2)/(2N-1)} \big)
  \lesssim \varepsilon_0 (1+\varepsilon_0^2t_1)^{-(1-\delta_2)/(2N-1)},
\end{align}
for $\delta_2$ small enough.

The bounds above show that $\Phi_a,\Phi_v,\Phi_{a,v}$ are Cauchy in time,
so that
\begin{align}
\Phi_+(t) \stackrel{t\rightarrow \infty}{\longrightarrow}
  w(0) + \int_0^\infty e^{-isL} P_c[\beta(a(s)\varphi+v(s))^2+(a(s)\varphi+v(s))^4]ds := \Phi_+
\end{align}
with 
\begin{equation}
{\big\| \Phi_+(t) - \Phi_+ \big\|}_{H^{N-1}_x} 
  \lesssim \varepsilon_0 (1+\varepsilon_0^2t)^{-(1-\delta_2)/(2N-1)},
\end{equation}
for $t \gg 1$. Applying the evolution $e^{itL}$ we conclude the proof of the proposition.
\end{proof}

\smallskip
Next, we observe that the asymptotic decay rate for $v(t)$ 
can be improved to almost match the optimal $t^{-1/2}$ decay rate for the Klein-Gordon flow.
Note that we are not claiming that the estimate below is sharp.

\begin{proposition}\label{improved-asmp}
The continuous component $v(t)$ of the solution $u(t)$ satisfies 
\begin{equation}\label{asymp-v}
\|v(t)\|_{L^\infty_x}\lesssim    \lr{t}^{-1/2}\log(2+\varepsilon^2_0 t) ,\quad t\geq \varepsilon_0^{-2}.
\end{equation}
\end{proposition}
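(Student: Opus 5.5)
We plan to revisit the Duhamel decomposition \eqref{L-1w}, $L^{-1}w = L^{-1}e^{itL}w_0 + I_a + I_v + I_{a,v} + I_Q$, and bound each piece in $L^\infty_x$ at the rate $\lr{t}^{-1/2}\log(2+\varepsilon_0^2 t)$. This is now possible because the bootstrap of Theorem \ref{thm:mainthmdetail} has closed globally: all a priori bounds in \eqref{w-assumptions-bootstrap-pde} hold for all times, and $\rho(t)\approx \Rho(t)$ by Proposition \ref{prop:bootstrap-on-rho}. Since $v=\Im(L^{-1}w)$, it suffices to control $\|L^{-1}w(t)\|_{L^\infty_x}$ for $t\ge\varepsilon_0^{-2}$.

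\textbf{Linear and lower-order terms.} The linear term is bounded by $\varepsilon_0\lr{t}^{-1/2}$ through \eqref{dispersive-estimates}.

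For $I_a$, the estimate \eqref{Iatoref} already gives
\begin{equation*}
\int_0^t \lr{t-s}^{-1/2}\big(\Rho(s)+\Rho^2(s)\big)\,ds .
\end{equation*}
We will split this integral at $s=t/2$. On $[0,t/2]$ it contributes $\lr{t}^{-1/2}\int_0^{t}\Rho \lesssim \lr{t}^{-1/2}\log(1+\rho_0 t)$. On $[t/2,t]$ it contributes $\Rho(t)\,t^{1/2}\lesssim t^{-1/2}$. This is precisely the source of the $\log(2+\varepsilon_0^2t)$ factor.

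The terms $I_v$ and $I_{a,v}$ were already bounded by $\varepsilon_0^{2\alpha}\lr{t}^{-1/2}$ without loss in \eqref{Ivtoref}, \eqref{Iav0} and \eqref{Iav0'}. Those arguments use only the local decay and Sobolev a priori bounds, which are now global, so they carry over unchanged.

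\textbf{The quartic term $I_Q$.} Estimate \eqref{IQtoref} gives $\varepsilon_0^{1/2}\lr{t}^{-1/2}\log(2+t)$. This carries $\log t$ rather than $\log(\varepsilon_0^2 t)$, so we must remove the logarithm coming from Lemma \ref{lemlininterp}. We will not apply \eqref{lininterpest} to the full $Q$. Instead we use \eqref{dispersive-estimates}, which has no logarithm, on each dyadic block $Q_m$:
\begin{equation*}
\|e^{itL}L^{-1}Q_m\|_{L^\infty}\lesssim \lr{t}^{-1/2}\big(\|\partial_k\wt{Q_m}\|_{L^2}+\|\lr{k}^2\wt{Q_m}\|_{L^2}\big).
\end{equation*}
Summing \eqref{reductionest} over $m$ would then give $\lr{t}^{-1/2}\varepsilon_0^{3\alpha}t^{8\delta}$, which is not good enough. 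The better route is to interpolate, as in \eqref{redpr1}, between two facts. First, the $H^4$ norm of $L^{-1}Q_m$ decays like $2^{-m/2+3\delta m}$ by \eqref{redpr2}. Second, $\|\partial_k\wt{Q_m}\|\lesssim 2^{8\delta m}$. The product then decays in $m$. If the interpolation is done at the level of the stationary-phase estimate \eqref{dispersive-estimates0} with the frequency sum handled per block, the result is the summable series $\sum_m 2^{-m/4+6\delta m}$. The $\log(2+t)$ in \eqref{lininterpest} comes only from summing Littlewood--Paley pieces over the range $\lr{t}^{-2}\le2^\ell\le\lr{t}^{2/N}$.

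\textbf{Main obstacle.} Removing that frequency logarithm is the step we expect to be hardest. The first approach to try is to exploit the extra smallness of each $Q_m$ in $L^2$ (the factor $2^{-m/2}$) to absorb a $\log t\approx L$ loss: since $\sum_m 2^{-m/4}\,m<\infty$, a per-block bound of size $\lr{t}^{-1/2}\,m\,2^{-m/4}$ suffices. The concrete step is to redo Lemma \ref{lemlininterp} with the frequency range cut at $2^{\ell}\ge 2^{-m}$ rather than $\lr{t}^{-2}$. Very low frequencies are treated by the $L^2$ bound together with $\wt{Q_m}(0)=0$, as in \eqref{interpestpr5}. This turns the loss $\log t$ into a loss of order $m$, which the factor $2^{-m/4}$ absorbs. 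Combining the three parts then yields \eqref{asymp-v}.
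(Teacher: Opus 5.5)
Your handling of the linear term, of $I_a$ (split at $s=t/2$), and of $I_v$, $I_{a,v}$ is the same as in the paper. The problem is the quartic term. There you misdiagnose an obstacle and replace a one-line step with an argument that does not work as written.

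No refinement of Lemma~\ref{lemlininterp} is needed, because \eqref{IQtoref} carries the prefactor $\varepsilon_0^{1/2}$. Since $2+t\le\varepsilon_0^{-2}(2+\varepsilon_0^2 t)$, we have
\begin{equation*}
\varepsilon_0^{1/2}\log(2+t)\le 2\varepsilon_0^{1/2}\log(1/\varepsilon_0)+\varepsilon_0^{1/2}\log(2+\varepsilon_0^2 t)\lesssim \log(2+\varepsilon_0^2 t).
\end{equation*}
This uses that $\varepsilon_0^{1/2}\log(1/\varepsilon_0)$ is bounded and $\log(2+\varepsilon_0^2 t)\ge\log 2$. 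Thus $\|I_Q(t)\|_{L^\infty_x}\lesssim\lr{t}^{-1/2}\log(2+\varepsilon_0^2 t)$ follows directly from \eqref{IQtoref}, and this is how the paper concludes. You overlooked this elementary absorption of $\log(1/\varepsilon_0)$ by $\varepsilon_0^{1/2}$.

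The replacement you sketch has two defects. First, suppose the frequencies $2^\ell\le 2^{-m}$ are treated the way Lemma~\ref{lemlininterp} treats $2^\ell\le\lr{t}^{-2}$, that is, by Bernstein as in \eqref{interpestpr0}. Then even using $\wt{Q_m}(0)=0$ you only get
\begin{equation*}
\sum_{2^\ell\le 2^{-m}} 2^{\ell/2}\|P_\ell Q_m\|_{L^2}\lesssim 2^{-3m/2}\,\|\partial_k\wt{Q_m}\|_{L^2}.
\end{equation*}
This bound has no decay in $t$, so it cannot give $\lr{t}^{-1/2}$ for the early blocks $2^m\ll t^{1/3}$. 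It is precisely the threshold $\lr{t}^{-2}$ that makes the non-dispersive piece decay.

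Second, the upper threshold $\lr{t}^{2/N}$ still leaves about $(2/N)\log_2 t$ dyadic pieces. So the loss is not reduced to $O(m)$ unless you also sum the frequencies above $1$ geometrically using the $H^4$ bound.

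Both defects can be repaired. Keep the dispersive estimate \eqref{dispersive-estimates0} at all low frequencies, where $\|P_\ell Q_m\|_{L^2}\lesssim 2^{\ell}\|\partial_k\wt{Q_m}\|_{L^2}$ makes the sum geometric. But the absorption above makes the whole detour unnecessary.
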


\begin{proof}
We look back at the proof of Proposition \ref{prop:decay_boostrap}.
Since the linear solution $e^{itL}w(0)$ decays like $\varepsilon_0 t^{-1/2}$, 
it suffices to verify \eqref{asymp-v} for the nonlinear terms in $I_a$, $I_v$ and $I_{a,v}$ 
as defined in \eqref{Ia}, \eqref{Iv} and \eqref{Iav},
and for $I_Q$ as defined in \eqref{I_Q}.

Proceeding exactly as in the proof of Proposition \ref{prop:decay_boostrap},
we see from \eqref{Iatoref} that
\begin{align}
\left\|I_{a}(t)\right\|_{L_x^\infty} & 
  \lesssim \int_0^t \lr{t-s}^{-1/2}\big(\Rho(s) +\Rho^2(s) \big) ds 
  \lesssim\lr{t}^{-1/2} \log (2+\rho_0t). 
\end{align}
For the term $I_v$, we look back at the bound \eqref{Ivtoref} and see that it is already strong enough
${\left\|I_{v}(t)\right\|}_{L_x^\infty}  \lesssim \varepsilon_0^{2\al} \lr{t}^{-1/2}.$
For the terms $I_{a,v}^{(0)}(t)$  and  $I_{a,v}^{(\ell)}(t)$  we can also directly invoke \eqref{Iav0} and \eqref{Iav0'}.
Finally, for the pure quartic term $I_Q$ we can directly rely on \eqref{IQtoref} 
whose right-hand side is smaller than the right-hand side of \eqref{asymp-v}.
\end{proof}

\smallskip
\subsection{Asymptotics for the internal mode}\label{sec:Asymp-A}

In this part, we investigate the asymptotic behavior of $A$ defined via \eqref{a-in-A-defn}-\eqref{evolution-A-initial}. 
The main result of this section is the following:

\begin{theorem}\label{thm:asympA}
Let $A=A(t)$ be the profile of $a=a(t)$ according to \eqref{slow-variable},
with $(a,v)$ solutions of \eqref{ode-pde-system-initial} with given data as in \eqref{mainthmdata}.
Then, there exists $\psi_\infty \in \R$ (see \eqref{psiinftyformula} for the exact expression) such that 
\begin{align}\label{Asymp-A}
\left|A(t) - \exp\Big[ i \Big(\psi_\infty 
  - (c_2 - \beta_1) \tfrac{\lambda}{\Gamma}\log \big(1+\tfrac{\Gamma}{\lambda} \rho_0 t\big) \Big) \Big]
  \sqrt{\frac{\lambda}{\Gamma \cdot t}} \, \right|
  = 
  O\big( t^{-1/2-\gamma} \big), \qquad   t \gg \varepsilon_0^{-2}
\end{align}
for some small $\gamma > 0$, with $c_2:= \big(\frac{\Lambda}{2\lambda}+\frac{5c^2}{3\lambda^3}\big)$
and $\beta_1$ as given in \eqref{extracorrection1}.
\end{theorem}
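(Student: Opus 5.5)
Since $B$ from Lemma \ref{lem:decay-B'} differs from $A$ by $O(|A|^2)=O(t^{-1})$, which is $o(t^{-1/2-\gamma})$, it suffices to prove \eqref{Asymp-A} for $B$. The plan is to write $B$ in polar form, $B=\sqrt{\rho_B}\,e^{i\psi}$, and to derive separate ODEs for the modulus $\rho_B=|B|^2$ and the phase $\psi$.

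\textbf{Step 1: the ODE for $B$.} From \eqref{evolution-A} and the computation in the proof of Lemma \ref{lem:decay-B'}, we get
\begin{align*}
\dot B = \frac{1}{2i\lambda}(\Lambda-i\Gamma)|B|^2B + i c_0|B|^2B + \mathcal{R}.
\end{align*}
The term $ic_0|B|^2B$ is a real-coefficient resonant cubic term. It arises from the resonant part of $\frac{d}{dt}$ of the quadratic correction in \eqref{B} multiplied against $P$, i.e. $e^{\pm i\lambda t}A\cdot e^{\mp i\lambda t}\bar A A$ products, and this computation is what should produce the constant $\frac{5c^2}{3\lambda^3}$ in $c_2$. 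The remainder $\mathcal{R}$ collects:
\begin{itemize}
\item[(i)] oscillating cubic and quartic terms, which are integrable after integrating by parts in time, as in the proofs of \eqref{ODEdampestRD} and of \eqref{extracorrectionJ1};
\item[(ii)] the term $J_0=i\beta_1|A|^4$ from \eqref{J0toref}, which after dividing by $\bar A$ contributes $i\beta_1|B|^2B$;
\item[(iii)] the terms $R_Q$, $R_{E,j}$, whose time integrals are bounded by $\rho_{\Gamma_0}^{1+\gamma}$ per \eqref{ODEdampestRQ}--\eqref{ODEdampestRE4}.
\end{itemize}
I expect the sign conventions to give $\Im(\dot B\bar B)/|B|^2=-(c_2-\beta_1)|B|^2+\text{error}$.

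\textbf{Step 2: the modulus.} From Step 1,
\begin{align*}
\dot\rho_B = -\tfrac{\Gamma}{\lambda}\rho_B^2 + 2\Re(\mathcal{R}\bar B),
\end{align*}
and the remainder satisfies the improved integral bound \eqref{lemODEdampRaasy} with some $\gamma>0$. The needed gains are already present: $\rho^{1+\delta_0}$ for $R_Q$, $\rho^{5/4}$ for $R_{E,2}$ and $R_{E,3}$, and $\rho^{3/2}$ for the integrated-by-parts terms. Remark \ref{rem:lemODE} then gives $\rho_B=y_0(1+O(y_0^\gamma))$ for $t\ge t_0$. Since $y_0(t)=\frac{\lambda}{\Gamma t}(1+O(t^{-1}))$, we get $\sqrt{\rho_B}=\sqrt{\lambda/(\Gamma t)}+O(t^{-1/2-\gamma})$. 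The $O(t^{-1})$ correction from the shift $t_0$ is harmless because the leading term has no $\rho_0$-dependence.

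\textbf{Step 3: the phase.} This is the main obstacle. We have
\begin{align*}
\dot\psi = \Im(\dot B/B) = -(c_2-\beta_1)\rho_B + \Im(\mathcal{R}/B).
\end{align*}
The leading term is handled by Step 2: $\rho_B-\frac{\rho_0}{1+\frac{\Gamma}{\lambda}\rho_0t}$ is $O(t^{-1-\gamma})$, which is integrable. Integrating $\frac{\rho_0}{1+\frac\Gamma\lambda\rho_0 t}$ gives exactly $\frac{\lambda}{\Gamma}\log(1+\frac\Gamma\lambda\rho_0 t)$.

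For the remainder, dividing by $B$ costs a factor $t^{1/2}$, so I would not divide. Instead I would rewrite
\begin{align*}
\Im(\mathcal{R}/B)=\Im(\mathcal{R}\bar B)/\rho_B ,
\end{align*}
and integrate by parts in $\int_{t_1}^{t_2}$, using that $1/\rho_B$ has derivative $O(1)$. With $G(t):=\int_t^\infty \mathcal{R}\bar B$ bounded by $\rho^{1+\gamma}$, we get
\begin{align*}
\int_{t_1}^{t_2}\frac{\mathcal{R}\bar B}{\rho_B}\,ds=\Big[-\frac{G}{\rho_B}\Big]_{t_1}^{t_2}+\int_{t_1}^{t_2}G\,\partial_s(\rho_B^{-1})\,ds=O(t_1^{-\gamma}).
\end{align*}
This is where the full complex bound \eqref{ODEdampestRQ} is essential. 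For $R_{E,2}$ only the real part was controlled, so its imaginary part, which is $J_0$, must be extracted and placed into the $\beta_1$ coefficient; this is exactly why $\beta_1$ appears in the statement.

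Consequently, $\psi(t)+(c_2-\beta_1)\frac\lambda\Gamma\log(1+\frac\Gamma\lambda\rho_0t)$ converges to a limit, which we call $\psi_\infty$, at rate $O(t^{-\gamma})$. Combining this with Step 2, via $|e^{i\psi}-e^{i\psi'}|\le|\psi-\psi'|$ multiplied by the amplitude $t^{-1/2}$, yields \eqref{Asymp-A}.
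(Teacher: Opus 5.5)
Your proposal is correct and is essentially the paper's argument. The paper first removes the oscillatory cubic terms with the normal form $W=B-N(A)$ and the $c_2$-rotation with the gauge $X=We^{ic_2\int_{t_0}^t|W|^2}$ before passing to polar coordinates; this is equivalent to your direct time integrations by parts and to reading off $-(c_2-\beta_1)\rho_B$ in the phase equation. It then treats the modulus via Remark \ref{rem:lemODE} and the phase remainders by integrating by parts against $Y^{-2}$ with $J_0$ extracted, exactly as you do. One small correction: because of $R_Q$ (cf. \eqref{rmk:controlRQ}) you only have $\partial_s(\rho_B^{-1})=O(\lr{s}^{8\delta})$ rather than $O(1)$, matching the paper's $q'=O(\rho^{-8\delta})$; this is harmless since your gains $\gamma$ exceed $8\delta$.
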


The proof of this result is obtained through several lemmas. 
Recall the amplitude equation
\begin{equation}\label{evolA'}
\dot A(t)=P(t)+\frac{1}{2i\lambda}(\Lambda-i\Gamma)|A|^2(t)A(t)  +R(t),  
\end{equation}
and the renormalized amplitude from \eqref{B}:
\begin{equation}\label{AsyB}
B := A + \frac{c}{\lambda^2} \left( \frac{e^{i\lambda t}}{2} A^2 - \frac{e^{-3i\lambda t}}{6} \overline{A}^2 - e^{-i\lambda t} |A|^2 \right), \quad  c: = \int \beta \varphi^3 dx.
\end{equation}
We have shown (see Lemma \ref{lem:decay-B'})  that
\begin{align}\label{derB}
\begin{split}
\frac{d}{dt}B(t)= 
  \frac{c}{\lambda^2}\bigg(-e^{-i\lambda t}\frac{d}{dt}|A|^2(t)
  -\frac{e^{-3i\lambda t}}{6}\frac{d}{dt}\overline{A}^2(t)
  +\frac{e^{i\lambda t}}{2}\frac{d}{dt}A^2(t)\bigg)
  \\ +\frac{1}{2i\lambda}(\Lambda-i\Gamma)|A|^2(t)A(t)+R(t),
\end{split}
\end{align}
and $R=R_D+R_E+R_Q$ (see Lemma \ref{lemevolA}).
We start by isolating the cubic interactions in the equation \eqref{derB}.

\begin{lemma}  
The dynamics of $B$ can be written as 
\begin{align}\label{eq-B-dot}
 \frac{d}{dt}B(t)
=B_C(t)+B_Q(t)+B_M(t)+R_E(t)+R_Q(t)
\end{align}
where 

\setlength{\leftmargini}{2em}
\begin{itemize}

\item $B_C$ contains the cubic interactions: 
\begin{align*} B_C(t):=\frac{c}{\lambda^2} \left(e^{i\lambda t}A(t)P(t)-\frac{e^{-3i\lambda t}}{3}\bar{A}(t)\overline{P}(t)-2e^{-i\lambda t} \mathfrak{Re}(\bar{A}(t)P(t))   \right) +\frac{1}{2i\lambda}(\Lambda-i\Gamma)|A|^2(t)A(t)+R_D(t).
\end{align*}

\item $B_Q$ contains the quartic interactions:
\begin{align*}
   B_Q(t)&:=\frac{c}{2\lambda^3}|A|^2(t)\Big(\frac{1}{i}(\Lambda-i\Gamma)A^2(t)e^{i\lambda t}+\frac{1}{3i}(\Lambda+i\Gamma)\bar{A}^2(t)e^{-3i\lambda t}+2\Gamma |A|^2(t)e^{-i\lambda t}\Big) \\
   &\quad +\frac{c}{\lambda^2}\Big(A(t)R_D(t)e^{i\lambda t}-\bar{A}(t)\overline{R_D}(t)\frac{e^{-3i\lambda t}}{3}-2\,\Re (\bar{A}(t)R_D(t))e^{-i\lambda t}\Big).
\end{align*}

\item $B_M$ collects mixed interactions: 
\begin{align*}
    B_M(t)&:=\frac{c}{\lambda^2}\Big(A(t)R_E(t)e^{i\lambda t}-\bar{A}(t)\overline{R_E}(t)\frac{e^{-3i\lambda t}}{3}-2\,\Re(\bar{A}(t)R_E(t))e^{-i\lambda t}\Big)\\
&\qquad \quad+\frac{c}{\lambda^2}\Big(A(t)R_Q(t)e^{i\lambda t}-\bar{A}(t)\overline{R_Q}(t)\frac{e^{-3i\lambda t}}{3}-2\,\Re(\bar{A}(t)R_Q(t))e^{-i\lambda t}\Big),
\end{align*}
and, for all $t \geq \rho_0^{-1/2}$, satisfies
\begin{align}\label{AsyBMbound}
| B_M(t)| \lesssim \Rho(t) \cdot \varepsilon_0^{3\alpha}\lr{t}^{-1+8\delta}.
\end{align}
\end{itemize} 
In the above, $P$ is the self-interactions from \eqref{P-defn} 
and $R_D$, $R_E$ and  $R_Q$ are defined in Lemma  \ref{lemevolA}.
\end{lemma}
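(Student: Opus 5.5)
The identity \eqref{eq-B-dot} is obtained by substituting the equation \eqref{evolA'} for $\dot A$ into the time derivatives appearing in \eqref{derB}, and then sorting terms by homogeneity in $A$. The bound \eqref{AsyBMbound} follows from the controls on $R_E$ and $R_Q$ in Remark \ref{rmk:controlR}.

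\emph{Step 1: expand the derivatives.} We have $\frac{d}{dt}|A|^2 = 2\Re(\bar A\dot A)$, $\frac{d}{dt}A^2 = 2A\dot A$ and $\frac{d}{dt}\bar A^2 = 2\bar A\,\overline{\dot A}$. Inserting these into the bracket of \eqref{derB} gives
\[
\frac{c}{\lambda^2}\Big( e^{i\lambda t}A\dot A - \frac{e^{-3i\lambda t}}{3}\bar A\,\overline{\dot A} - 2e^{-i\lambda t}\Re(\bar A\dot A)\Big).
\]
This is linear in $\dot A$, so we substitute $\dot A = P + \frac{1}{2i\lambda}(\Lambda-i\Gamma)|A|^2A + R_D + R_E + R_Q$ and distribute.

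\emph{Step 2: sort by homogeneity.}
\begin{itemize}
\item The $P$ contribution is cubic in $A$. Together with the term $\frac{1}{2i\lambda}(\Lambda-i\Gamma)|A|^2A$ and $R_D$, taken directly from the $R$ in \eqref{derB}, it forms exactly $B_C$.
\item The contribution of the cubic term $\frac{1}{2i\lambda}(\Lambda-i\Gamma)|A|^2A$ is quintic, hence of the form $|A|^2\times(\text{quadratic})$.
\begin{itemize}
\item Using $\frac{1}{2i\lambda}\cdot\frac{1}{\lambda^2}=\frac{1}{2i\lambda^3}$ and the conjugation $\overline{(\Lambda-i\Gamma)/i} = -(\Lambda+i\Gamma)/i$, the first two pieces become the $A^2e^{i\lambda t}$ and $\bar A^2 e^{-3i\lambda t}$ terms of $B_Q$.
\item For the third piece, $\Re\big(\frac{1}{2i\lambda}(\Lambda-i\Gamma)|A|^4\big) = -\frac{\Gamma}{2\lambda}|A|^4$, which produces the $+2\Gamma|A|^2e^{-i\lambda t}$ term after multiplication by $-2e^{-i\lambda t}\frac{c}{\lambda^2}$.
\end{itemize}
This bookkeeping of signs and constants is the only place requiring care. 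It is a routine check that I would carry out line by line against the stated formula for $B_Q$.
\item The $R_D$ part of $\dot A$ gives the second line of $B_Q$.
\item The $R_E$ and $R_Q$ parts give $B_M$.
\item The remaining $R_E + R_Q$, coming directly from $R$ in \eqref{derB}, stay as they are in \eqref{eq-B-dot}.
\end{itemize}

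\emph{Step 3: the bound on $B_M$.} Each term in $B_M$ is of the form $\frac{c}{\lambda^2}\times(\text{unimodular phase})\times A\times R_\ast$ or its conjugate. Hence
\[
|B_M| \lesssim |A|\,\big(|R_E|+|R_Q|\big).
\]
For $t\geq \rho_0^{-1/2}$, \eqref{rmk:controlRE}–\eqref{rmk:controlRQ} give $|A|\,|R_E|\lesssim \Rho^2$ and $|A|\,|R_Q|\lesssim \varepsilon_0^{3\alpha}\Rho\lr{t}^{-1+8\delta}$. It remains to show $\Rho^2(t)\lesssim \Rho(t)\,\varepsilon_0^{3\alpha}\lr{t}^{-1+8\delta}$, i.e. $\Rho(t)\lesssim \varepsilon_0^{3\alpha}\lr{t}^{-1+8\delta}$.
\begin{itemize}
\item For $t\gtrsim\varepsilon_0^{-2}$ we have $\Rho\approx t^{-1}$, and the inequality holds because $t^{8\delta}\geq \varepsilon_0^{-16\delta}\geq\varepsilon_0^{-3\alpha}$, since $16\delta>3\alpha$ by \eqref{choice-of-csts}.
\item For $\rho_0^{-1/2}\le t\lesssim\varepsilon_0^{-2}$ we have $\Rho\approx\varepsilon_0^2\lesssim t^{-1}$, so the same argument applies.
\end{itemize}
The main, though mild, obstacle is therefore this interpolation between the two remainder bounds, together with the sign and constant tracking in Step 2.
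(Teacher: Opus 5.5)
Your proposal is correct and follows the paper's route. The paper's proof likewise substitutes the equation for $\dot A$ from Lemma \ref{lemevolA} into \eqref{derB}, regroups the terms, and deduces \eqref{AsyBMbound} from Remark \ref{rmk:controlR}. Your sign and constant checks for $B_Q$, and your verification that $\Rho^2(t)\lesssim \Rho(t)\,\varepsilon_0^{3\alpha}\lr{t}^{-1+8\delta}$ for $t\geq\rho_0^{-1/2}$, simply make explicit details the paper leaves implicit. In the range $\rho_0^{-1/2}\le t\lesssim\varepsilon_0^{-2}$, the ``same argument'' actually uses $3\alpha<8\delta$ rather than $3\alpha<16\delta$; this also holds by \eqref{choice-of-csts}.
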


\begin{proof}
The proof of this lemma follows by direct calculations 
according to the definition of $B$ in Lemma \ref{lem:decay-B'}, the equation for $A$ 
in Lemma \ref{lemevolA}, and grouping the terms appropriately.
The estimate \eqref{AsyBMbound} follows from the bounds on $R_E$ and $R_Q$ in Remark \ref{rmk:controlR}.
\end{proof}

\medskip
Using the notation  $A_{-1}:=\bar{A}$, $A_{+1}:=A$,  
we rewrite the cubic term $B_C$ isolating the non-oscillatory part:
\begin{align}
\label{Bc}
\begin{aligned}
  B_C(t)&=  \sum_{\iota _j\in \{-1,+1\}}   c_{\iota_1,\iota_2,\iota_3} A_{\iota_1}A_{\iota_2}A_{\iota_3}e^{i(-1+\iota_1+\iota_2+\iota_3)\lambda t}
  \\
 &=c_1|A|^2A+    \sum_{\iota_1+\iota_2+\iota_3\neq 1}   c_{\iota_1,\iota_2,\iota_3} A_{\iota_1}A_{\iota_2}A_{\iota_3}e^{i(-1+\iota_1+\iota_2+\iota_3) \lambda t}, 
\end{aligned}    
\end{align}
where $c_{\iota_1,\iota_2,\iota_3}\in \mathbb C$ and
\begin{align}
c_1=-\frac{\Gamma}{2\lambda}-ic_2=-\frac{\Gamma}{2\lambda}-i\big(\frac{\Lambda}{2\lambda}+\frac{5c^2}{3\lambda^3}\big).
\end{align}

\begin{lemma}\label{lem:renormalization_evol}
Define the cubic correction
\begin{align}\label{AsyW}
\begin{split}
& W := B-N(A),
\\
& N(A)(s):=  \sum_{\iota_1+\iota_2+\iota_3\neq 1}   
  \frac{c_{\iota_1,\iota_2,\iota_3}}{i\lambda (-1+\iota_1+\iota_2+\iota_3)}
  A_{\iota_1}(s)A_{\iota_2}(s)A_{\iota_3}(s)e^{i(-1+\iota_1+\iota_2+\iota_3)\lambda s} .
\end{split}
\end{align}
The dynamics of $W$ is given by the equation
\begin{equation}\label{W'}
\begin{aligned}
\frac{d }{dt}W(t)&=-\frac{\Gamma}{2\lambda}|W|^2(t)W(t)-ic_2|W|^2(t)W(t)+B_M(t)+R_E(t)+R_Q(t)+O(|A|^4).
\end{aligned}
\end{equation}
\end{lemma}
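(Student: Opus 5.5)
This is a normal-form computation: differentiate $W=B-N(A)$, cancel the oscillatory cubic terms of $B_C$ against the leading part of $\frac{d}{dt}N(A)$, and put everything else into $O(|A|^4)$ or the remainders $B_M,R_E,R_Q$ from \eqref{eq-B-dot}.

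First I differentiate $N(A)$ term by term. For each $(\iota_1,\iota_2,\iota_3)$ with $\iota_1+\iota_2+\iota_3\neq 1$, the time derivative of the exponential gives exactly
\[
c_{\iota_1,\iota_2,\iota_3}A_{\iota_1}A_{\iota_2}A_{\iota_3}e^{i(-1+\iota_1+\iota_2+\iota_3)\lambda t}.
\]
By \eqref{Bc}, these cancel the non-resonant part of $B_C$. What remains of $\frac{d}{dt}N(A)$ is a sum of terms of the form (bounded coefficient)$\times A_{\iota}A_{\iota'}\dot A_{\iota''}$.

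Into $\dot A_{\iota''}$ I substitute \eqref{evolA'}, so $\dot A=P+\frac{1}{2i\lambda}(\Lambda-i\Gamma)|A|^2A+R$:
\begin{itemize}
\item the $P$-contribution is quadratic in $A$, so it yields $O(|A|^4)$ terms;
\item the cubic damping term yields $O(|A|^5)\subset O(|A|^4)$;
\item $R=R_D+R_E+R_Q$ gives $|A|^2|R_D|=O(|A|^5)$, plus $|A|^2(|R_E|+|R_Q|)$.
\end{itemize}
The last two pieces have the same size as the bound \eqref{AsyBMbound} on $B_M$, by Remark \ref{rmk:controlR}. So either they are absorbed into $B_M$, or I enlarge the notation and let $B_M$ stand for all terms of that type. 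Since the identity \eqref{W'} is schematic, I take the second route and state it explicitly.

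Also $B_Q=O(|A|^4)$ directly from its definition. Combining, I obtain
\[
\dot W=c_1|A|^2A+B_M+R_E+R_Q+O(|A|^4).
\]

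Finally I replace $A$ by $W$ in the resonant cubic term. From \eqref{AsyB} and \eqref{AsyW}, $B-A=O(|A|^2)$ and $N(A)=O(|A|^3)$, so $A=W+O(|A|^2)$. Then
\[
|A|^2A-|W|^2W=O(|A|^2\,|A-W|)=O(|A|^4),
\]
using that $|W|\approx|A|$ for small $|A|$. Since $c_1=-\frac{\Gamma}{2\lambda}-ic_2$, this gives \eqref{W'}.

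The main obstacle is bookkeeping rather than analysis. I need to check that the coefficient of the resonant term $|A|^2A$ in $B_C$ really is $c_1$: the three contributions from $AP$, $\bar A\bar P$ and $\Re(\bar AP)$ in $B_C$ must be expanded using \eqref{P-defn} to extract their resonant parts, giving the $5c^2/(3\lambda^3)$. This is a finite computation. Apart from this and the convention about what $B_M$ absorbs, every step is an algebraic identity plus the bounds of Remark \ref{rmk:controlR} and Lemma \ref{lem:decay-B'}.
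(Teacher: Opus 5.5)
Your proposal is correct and follows the paper's proof. You differentiate $W=B-N(A)$, cancel the non-resonant part of $B_C$ from \eqref{Bc} against the exponential derivatives in $\frac{d}{dt}N(A)$, use $B_Q=O(|A|^4)$, and replace $|A|^2A$ by $|W|^2W$ via $A=W+O(|A|^2)$. The one difference is that the paper does not substitute the $A$-equation into $\frac{d}{dt}(A_{\iota_1}A_{\iota_2}A_{\iota_3})$ or enlarge $B_M$; it invokes $\dot A=O(|A|^2)$ (Remark \ref{remAdotbasic}) and puts all those terms into $O(|A|^4)$ at once.
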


\begin{proof}
First note that from \eqref{eq-B-dot} and \eqref{Bc}, we have 
\begin{align}  
 \frac{d}{dt}W =c_1|A|^2A +B_Q+B_M +R_E+R_Q-  \sum_{\iota_1+\iota_2+\iota_3\neq 1}   \frac{c_{\iota_1,\iota_2,\iota_3}}{i\lambda(-1+\iota_1+\iota_2+\iota_3)} \frac{d}{dt}(A_{\iota_1}A_{\iota_2}A_{\iota_3})e^{i(-1+\iota_1+\iota_2+\iota_3)\lambda t}
\end{align}   
Since $A=W+O(|A|^2)$, $B_Q=O(|A|^4)$ and $\dot{A}=O(A^2)$, this implies \eqref{W'}.
\end{proof}

Choose $t_0 \approx \rho_0^{-1}$. We introduce the nonlinear gauge transformation 
\begin{equation}\label{X}
X(t)=W(t) e^{ic_2\int_{t_0}^t|W(s)|^2ds}   
\end{equation}
to remove the oscillating term in \eqref{W'}.
Then, the evolution of $X$ reads 
\begin{align}\label{X'}
\frac{d}{dt}X=-\frac{\Gamma}{2\lambda}|X|^2X + (
  R_E+R_Q)e^{ic_2\int_{t_0}^t|W|^2ds} + O(\Rho^{7/4}(t)),
\end{align}
having absorbed the contribution from $B_M$ into the last remainder by using
that the right-hand side of \eqref{AsyBMbound} is $O(\Rho^{7/4}(t))$ for $\delta$ small enough.
We then look for asymptotic for $X$:

\begin{lemma}\label{lem:evolY&theta}
Set $X(t)=Y(t)e^{i\theta(t)}$, with $Y=|X|$, and $\theta\in \R$ a continuous phase with prescribed value $\theta(t_0)$.
If we define the nonlinear phase function  
\begin{align}\label{AsyTheta}
\Theta(t) = c_2\int^t_{t_0}|W(s)|^2ds-\theta(t),                                                               
 \end{align}
the evolution of the phase and the amplitude are governed by the following equations:
\begin{align}\label{theta'}
& \frac{d}{dt}\theta = Y^{-1}\Im\big[(
  R_E+R_Q)e^{i\Theta(t)}\big]+O(\Rho^{5/4}(t)),
\\
& \theta(t_0):=\theta_0=\arg(B(t_0)-N(A(t_0)))
\end{align}
and
\begin{align}\label{Y'}
\frac{d}{dt}{Y}&=-\frac{\Gamma}{2\lambda}|Y|^2Y + \Re\big[(
  R_E+R_Q)e^{i\Theta(t)}\big]+O(\Rho^{7/4}(t)),
\\
Y(t_0)&:=Y_0= \big|B(t_0)-N(A(t_0))\big|.
\end{align}
Moreover, we have
\begin{equation}\label{AsyY-A}
|Y|=|X|=|W| \approx |A| \approx \Rho^{1/2},
  \qquad W - A = O(\Rho), \qquad Y - |A| = O(\Rho), 
\end{equation}

\end{lemma}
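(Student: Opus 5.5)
The proof is a polar-coordinate computation on the $X$ equation \eqref{X'}, with \eqref{AsyY-A} (collected first) supplying the quantitative comparisons used to size the remainders. I will write \eqref{X'} as $\dot X = -\frac{\Gamma}{2\lambda}|X|^2X + G\,e^{ic_2\int_{t_0}^t|W|^2ds} + E$, where $G := R_E+R_Q$ and $E = O(\Rho^{7/4})$.

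\textbf{Comparison bounds \eqref{AsyY-A}.} The identity $|X|=|W|$ is immediate from \eqref{X}, since the gauge factor is unimodular. From \eqref{AsyB}, $B-A = O(|A|^2) = O(\Rho)$. From \eqref{AsyW}, $N(A)=O(|A|^3)$. Hence $W-A = O(\Rho)$ and $\big||W|-|A|\big| \le |W-A| = O(\Rho)$. The bootstrap bound \eqref{rho-assumption-bootstrap} gives $|A|\approx \Rho^{1/2}$, and since $\Rho \ll \Rho^{1/2}$ for $\rho_0$ small, it follows that $|W|\approx \Rho^{1/2}$. In particular $Y=|X|>0$ on $[t_0,T]$, so a continuous phase $\theta$ with $X = Ye^{i\theta}$ exists, is uniquely determined by $\theta(t_0)=\arg(B(t_0)-N(A(t_0))) = \arg W(t_0)$, and both $Y$ and $\theta$ are $C^1$. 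The initial values are consistent because $X(t_0)=W(t_0)$.

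\textbf{Derivation of \eqref{theta'} and \eqref{Y'}.} Differentiating $X=Ye^{i\theta}$ and multiplying by $e^{-i\theta}$ gives $\dot Y + iY\dot\theta = e^{-i\theta}\dot X$. Substituting the equation for $\dot X$, the damping term contributes the real quantity $-\frac{\Gamma}{2\lambda}Y^3$. The forcing term contributes $G\,e^{i(c_2\int_{t_0}^t|W|^2 - \theta)} = G\,e^{i\Theta(t)}$, by the definition \eqref{AsyTheta}. Taking real parts yields \eqref{Y'}, with remainder $|E| = O(\Rho^{7/4})$. Taking imaginary parts and dividing by $Y$ yields \eqref{theta'}, with remainder $Y^{-1}O(\Rho^{7/4}) = O(\Rho^{-1/2}\Rho^{7/4}) = O(\Rho^{5/4})$, using the lower bound $Y\gtrsim\Rho^{1/2}$.

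\textbf{Main obstacle.} The delicate point is justifying the $O(\Rho^{7/4})$ remainder in \eqref{X'}, in particular checking that the $O(|A|^4)$ term in \eqref{W'} is acceptable. This holds because $|A|^4\approx\Rho^2\le\Rho^{7/4}$, and \eqref{AsyBMbound} gives $|B_M|\lesssim \Rho\,\varepsilon_0^{3\alpha}t^{-1+8\delta}\lesssim\Rho^{7/4}$ when $t\gtrsim\rho_0^{-1}$ and $\delta$ is small.

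A second point is replacing $|W|^2W$ by $|X|^2X$ in the gauge computation. I will handle it directly: $\frac{d}{dt}\big(e^{ic_2\int_{t_0}^t|W|^2}\big) = ic_2|W|^2e^{ic_2\int_{t_0}^t|W|^2}$. This exactly cancels the $-ic_2|W|^2W$ term of \eqref{W'}, and the damping term becomes $-\frac{\Gamma}{2\lambda}|W|^2We^{ic_2\int_{t_0}^t|W|^2} = -\frac{\Gamma}{2\lambda}|X|^2X$ exactly. No additional error arises, so \eqref{X'} follows from \eqref{W'}.
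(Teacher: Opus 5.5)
Your proof is correct and follows the paper's argument. You get \eqref{AsyY-A} from $B-A=O(|A|^2)$ and $N(A)=O(|A|^3)$, and you obtain \eqref{theta'} and \eqref{Y'} as the imaginary and real parts of $e^{-i\theta}\dot X$ computed from \eqref{X'}, using $Y\gtrsim \Rho^{1/2}$ to bound the remainder in the phase equation. Your added checks — that $Y>0$ gives a $C^1$ phase lift, and that the remainder in \eqref{X'} is $O(\Rho^{7/4})$ (which the paper settles just before the lemma) — are consistent with the paper.
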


\begin{proof}
From the definitions we immediately have $Y=|X|=|W| = |B - N(A)| \in [(1/2)|B|,2|B|]$,
in view of \eqref{AsyB} and our a priori assumptions. 
Similarly, $|Y - |A|| = ||B-N(A)|-|A|| \lesssim O(|A|^2)=O(\Rho)$. 
The equations \eqref{theta'} and \eqref{Y'} are derived from \eqref{X'} 
after taking the imaginary and real parts, respectively and using the lower 
bound $|Y|\gtrsim |A|$ to handle the remainder in \eqref{theta'}
\end{proof}

\smallskip
Next, we derive asymptotics from the equations \eqref{theta'} and \eqref{Y'}
by re-using some of the bounds from Section \ref{sec:ODE} 
and sharpening some others.

\begin{lemma}\label{lem:asymptoticsY}
Let $z(t):=Y^2(t)$, $t\geq t_0$. Then
\begin{align}\label{ODE|Y|^2}
\begin{split}
& 
\dot{z}(t) = -\frac{\Gamma}{\lambda} z^2(t) + 2
  \,\Re \big( (R_E(t)+R_Q(t) ) \overline{A}(t) \big) + O(\Rho^{9/4}(t)),\quad
\\
& z(t_0)=Y^2(t_0) := \wt z_0
\end{split}
\end{align}
where $A$ is the original profile of the internal mode's amplitude
and the terms $R_E$ and $R_Q$ are as in Lemma \ref{lemevolA}
(see also the splitting of $R_E$ in \eqref{R_E0-defn}-\eqref{R_E4-defn})

If we define for $t\geq t_0$ the function
\begin{align}\label{wtz}
  \wt z_{\Gamma_0}(t)=\frac{\wt z_0}{1+\Gamma_0\wt z_0(t-t_0)},
\end{align}
then, for $\wt{z}_0$ sufficiently small, $z(t)$ satisfies the estimate 
\begin{align}\label{asymptoticsYconc}
\Big|\frac{z(t)}{\wt z_{\Gamma/\lambda}(t)}-1\Big| \lesssim \wt z_{\Gamma/\lambda}^\gamma(t) \lesssim t^{-\gamma},  \qquad t\geq t_0,
\end{align}
for some $\gamma >0$.
\end{lemma}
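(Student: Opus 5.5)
The plan has two parts: first derive the ODE \eqref{ODE|Y|^2} for $z=Y^2$, then apply the quantitative comparison statement in Remark \ref{rem:lemODE} with $\Gamma_0=\Gamma/\lambda$.

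\emph{Step 1: deriving \eqref{ODE|Y|^2}.} We multiply \eqref{Y'} by $2Y$, which gives
\[
\dot z=-\tfrac{\Gamma}{\lambda}z^2+2Y\,\Re\big[(R_E+R_Q)e^{i\Theta}\big]+O(\Rho^{9/4}).
\]
Since $Y e^{i\Theta}$ is, up to the phase bookkeeping in \eqref{X} and \eqref{AsyTheta}, equal to $\overline{X}e^{ic_2\int|W|^2}=\overline{W}$, the middle term equals $2\Re[(R_E+R_Q)\overline W]$. By \eqref{AsyY-A} we have $W-A=O(\Rho)$. Combined with the bounds $|R_E|\lesssim \Rho^{3/2}$ and $|R_Q|\lesssim \varepsilon_0^{3\alpha}\Rho^{1/2}\lr{t}^{-1+8\delta}$ from Remark \ref{rmk:controlR}, replacing $\overline W$ by $\overline A$ costs $O(\Rho^{5/2})+O(\Rho^{3/2}t^{-1+8\delta})$. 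For $t\ge t_0\approx\rho_0^{-1}$ this is $O(\Rho^{9/4})$ once $\delta$ is small.

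\emph{Step 2: verifying the hypotheses of Remark \ref{rem:lemODE}.} We apply the remark with $R_h=0$ and
\[
R_a=2\Re\big((R_E+R_Q)\overline A\big)+O(\Rho^{9/4}).
\]
The pointwise error integrates to $\int_{t_1}^\infty \Rho^{9/4}\lesssim \Rho^{5/4}(t_1)$. For the $R_E$ and $R_Q$ pieces we reuse the integrated bounds from Section \ref{sec:ODE}:
\begin{itemize}
\item \eqref{ODEdampestRQ} handles $R_Q$ and gives $\Rho^{1+\delta_0}$;
\item \eqref{ODEdampestRE0}, \eqref{ODEdampestRE1}, \eqref{ODEdampestRE2} and \eqref{ODEdampestRE3} give powers $\Rho^{5/4}$ or better;
\item \eqref{ODEdampestRE4} is the one bound that does not upgrade as it stands, since it was stated only as $c_0\Rho^{5/2}$ pointwise.
\end{itemize}
Integrating the $R_{E,4}$ bound gives $\Rho^{3/2}(t_1)$, which is fine. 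So every piece satisfies \eqref{lemODEdampRaasy} with $\gamma=\min(\delta_0,1/4)$.

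We also need a few bookkeeping facts. Since $z\approx\Rho$, by Lemma \ref{lemODEdamp} applied from $t_0$ with initial value $\wt z_0\approx \Rho(t_0)$, the functions $\Rho$, $z_{\Gamma_0}$ and $\wt z_{\Gamma/\lambda}$ are comparable for $t\ge t_0$. This lets us convert the powers of $\Rho(t_1)$ into $\wt z^{1+\gamma}_{\Gamma/\lambda}(t_1)$. The remark then yields \eqref{asymptoticsYconc}. The final inequality $\wt z^\gamma\lesssim t^{-\gamma}$ follows from $\wt z_{\Gamma/\lambda}(t)\lesssim t^{-1}$.

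\emph{Main obstacle: the real part in \eqref{ODEdampestRE2}.} That bound only controls the real part, and the imaginary term $J_0=i\beta_1|A|^4$ was discarded there. This term really is not integrable to order $\Rho^{1+\gamma}$. It is harmless here only because \eqref{ODE|Y|^2} involves $\Re((R_E+R_Q)\overline A)$, exactly matching the real part controlled in \eqref{ODEdampestRE2}. I would check carefully that Step 1 produces precisely $\overline A$ and not $\overline W$ times an extra phase. In particular, the phase $e^{i\Theta}$ combined with $Y$ must reproduce $\overline{W}$ up to the gauge factor, and that factor then cancels against the one in \eqref{X'}. If that cancellation failed, the $J_0$ contribution would enter at order $\Rho^{2}$ and destroy the $\gamma$-gain.
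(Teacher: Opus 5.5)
Your proposal is correct and follows the paper's proof essentially step for step. You multiply \eqref{Y'} by $2Y$, use $Ye^{i\Theta}=\overline W$ and $W=A+O(\Rho)$ together with Remark \ref{rmk:controlR} to replace $\overline W$ by $\overline A$, and then apply Remark \ref{rem:lemODE} using \eqref{ODEdampestRQ} and \eqref{ODEdampestRE0}--\eqref{ODEdampestRE4}. The worry in your last paragraph is already settled by your Step 1: since $X=W\exp\big(ic_2\int_{t_0}^t|W|^2\,ds\big)=Ye^{i\theta}$ gives $W=Ye^{-i\Theta}$ exactly, only $\Re\big((R_E+R_Q)\overline A\big)$ enters, and the purely imaginary $J_0=i\beta_1|A|^4$ drops out just as in \eqref{ODEdampestRE2}.
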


\begin{proof}
{\it Proof of \eqref{ODE|Y|^2}.} From the equation for $Y$ in \eqref{Y'} we see that
\begin{align}\label{prY^21}
\begin{split}
& \dot{z}(t) = -\frac{\Gamma}{\lambda} z^2(t) + 2 
  \,\Re\big((R_E(t)+R_Q(t)) \, Y(t) e^{i\Theta(t)}\big)+O(\Rho^{9/4}(t)),\quad 
\end{split}
\end{align}
According to the definitions $Y=Xe^{-i\theta}$, \eqref{X} and \eqref{AsyTheta},
we see that $Ye^{i\Theta} = \overline{W}$. 
Then, to see that the right-hand side of \eqref{prY^21} is of the desired form 
it suffices to check that 
\begin{align}
\Re\big( (R_E(t)+R_Q(t)) (\overline{W}-\overline{A})(t) \big) = O(\Rho^{9/4}(t)).
\end{align}
This follows from \eqref{rmk:controlRE} and \eqref{rmk:controlRQ} 
combined with \eqref{AsyB} and \eqref{AsyW} which give $W = B - N(A) = A + O(|A|^2)$.

\smallskip
{\it Proof of \eqref{asymptoticsYconc}.} 
Note that for $\rho_0$ sufficiently small, $\wt z_0 \approx \rho_0$ in view of \eqref{AsyY-A}.
Then, to prove the asymptotics for $z$ we invoke Remark \ref{rem:lemODE}
to obtain that \eqref{asymptoticsYconc} holds provided we prove the following estimate on the remainders
on the right-hand side of \eqref{ODE|Y|^2}:
\begin{align}
\label{prasyY1}
& \int_{t_1}^{t_2} \Re \big( R_E(s) \overline{A}(s) \big) \, ds \lesssim t_1^{-1-\gamma},
\\
\label{prasyY2}
& \int_{t_1}^{t_2} \Re \big( R_Q(s) \overline{A}(s) \big) \, ds \lesssim t_1^{-1-\gamma}.
\end{align}
The estimate \eqref{prasyY2} is already guaranteed by \eqref{ODEdampestRQ}
and the estimate \eqref{prasyY1} follows from the bounds 
\eqref{ODEdampestRE0} -\eqref{ODEdampestRE4} for the contributions
involving the terms $R_{E,0}, \cdots, R_{E,4}$ (see \eqref{Rsplit'} and \eqref{R_E0-defn}-\eqref{R_E4-defn}). 
\end{proof}

\smallskip
Next, we study the asymptotics of the phase.

\begin{lemma}\label{lem:RHStheta'}
Let $\theta = \mathrm{arg}(X)$ be the phase defined in Lemma \ref{lem:evolY&theta},
satisfying \eqref{theta'}. Then for any $t_0 \leq t_1 < t_2$ we have
\begin{equation}\label{RHStheta'conc1}
\Big| \int_{t_1}^{t_2} \dot{\theta}(s) \, ds 
  - \beta_1 \int_{t_1}^{t_2} |A(s)|^2 \, ds \Big| \lesssim t_1^{-\gamma}
\end{equation}
for some $\gamma>0$, where $\beta_1 \in \R$ is the constant defined in \eqref{extracorrection1}. 
It follows that we can define
\begin{align}\label{RHStheta'conc2}
 \theta_{\infty} := \int^\infty_{t_0} \big( \dot{\theta}(s) - \beta_1 |A(s)|^2 
 \big) \, ds
\end{align}
with 
\begin{equation}\label{RHStheta'conc3}
\Big| \theta(t) - \theta(t_0) - \theta_\infty 
- \beta_1 \int_{t_0}^{t} |A(s)|^2 \, ds \Big| \lesssim 	t^{-\gamma},  
\end{equation}
where $\theta(t_0) = \mathrm{arg}(X(t_0))= \mathrm{arg}(B-N(A))(t_0)$.
\end{lemma}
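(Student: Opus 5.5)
The plan is to integrate \eqref{theta'} on $[t_1,t_2]$ and show that, apart from the resonant piece producing $\beta_1|A|^2$, every contribution has integral $O(t_1^{-\gamma})$.

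\textbf{Step 1: reduce to the radiation feedback terms.} The remainder $O(\Rho^{5/4})$ in \eqref{theta'} integrates to $O(t_1^{-1/4})$, since $\Rho(s)\lesssim s^{-1}$ for $s\geq t_0\approx\rho_0^{-1}$. It remains to treat $Y^{-1}\Im[(R_E+R_Q)e^{i\Theta}]$. As in the proof of Lemma \ref{lem:asymptoticsY}, we have $Ye^{i\Theta}=\overline W$, hence
\[
Y^{-1}e^{i\Theta}=\frac{\overline W}{|W|^2},\qquad \overline W=\overline A+O(\Rho).
\]
By \eqref{rmk:controlRE}--\eqref{rmk:controlRQ}, replacing $\overline W$ by $\overline A$ costs $|R_E+R_Q|\cdot\Rho\cdot\Rho^{-1}$. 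This does not gain decay by itself, since $\Rho^{3/2}$ times $O(1)$ is not integrable. To fix this, I would expand $1/|W|^2=1/|A|^2\,(1+O(\Rho^{1/2}))$ and write $\overline W-\overline A$ explicitly as a combination of oscillating quadratic monomials in $A$, coming from $B-A$ and $N(A)$. Each resulting term $R\cdot(\text{quadratic, oscillating})/|A|^2$ is then handled by integration by parts in $s$, after expanding $R_E$ into its own oscillating structure. Morally the errors are $\Rho^{2}/\Rho$ times an oscillation.

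The main term then becomes $\int |A|^{-2}\,\Im(R_E\overline A+R_Q\overline A)\,ds$. I would control it by a weighted version of the estimates \eqref{ODEdampestRQ}, \eqref{ODEdampestRE0}--\eqref{ODEdampestRE4}. Since $|A|^{-2}\approx\Rho^{-1}$ and $\frac{d}{ds}\Rho^{-1}=O(1)$ by $|\dot\rho|\lesssim\Rho^2$, the weight can be carried through every integration by parts in $s$. Those estimates controlled the full complex value, not only the real part, with gains $\Rho^{1+\delta_0}$ or $\Rho^{5/4}$; after division by $\Rho$ these give $t_1^{-\gamma}$.

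\textbf{Step 2: the exceptional term $R_{E,2}$.} Here only the real part was estimated in \eqref{ODEdampestRE2}. By \eqref{J0toref}, $T[III]\overline A=i\beta_1|A|^4+J_R$. Dividing by $|A|^2$ gives exactly $\beta_1|A|^2$ in $\Im$, and $J_R/|A|^2$ is oscillatory. The pieces $T[I]$, $T[II]$ and $T[R_{w,1}]$ are $O(\Rho^{9/4})$ or $O(\Rho^{5/2})$ pointwise, so after division they are integrable.

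\textbf{Step 3: the integrals $J_1,J_2,J_3$.} These must be redone with the weight $|A|^{-2}$, integrating by parts using $e^{-2i\lambda s}$ and $e^{-4i\lambda s}$. This is where I expect the main obstacle: checking that the weighted versions of the $R_Q$ and $R_{E,0}$ bounds, together with the $W$-versus-$A$ replacement, still produce a power gain $t_1^{-\gamma}$ rather than merely $O(1)$. I would first try to redo \eqref{ODEdampestRQ} directly with the weight $\Rho^{-1}$, since the dyadic-in-time argument there has slack ($2^{-m/8}$ gains versus $2^{8\delta m}$ losses).

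\textbf{Step 4: conclusions.} Once \eqref{RHStheta'conc1} is established, the Cauchy criterion shows that $\theta_\infty$ in \eqref{RHStheta'conc2} exists. Writing
\[
\theta(t)-\theta(t_0)=\int_{t_0}^t\dot\theta\,ds
\]
and subtracting the tail $\int_t^\infty$ bounded by \eqref{RHStheta'conc1} with $t_1=t$ yields \eqref{RHStheta'conc3}.
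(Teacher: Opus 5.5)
Your overall architecture matches the paper. You integrate \eqref{theta'}, reduce to $\int Y^{-2}\Im[(R_E+R_Q)\overline{A}]$, and use the integrated bounds from Section~\ref{sec:ODE} that were stated for the full complex value. You also isolate $J_0=i\beta_1|A|^4$ inside $T[III]\overline{A}$ as the only non-remainder contribution, and your Step~4 is fine.

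However, the step you flag as the main obstacle rests on a false claim. You replace the weight $Y^{-2}=|W|^{-2}$ by $|A|^{-2}$ and assert $\frac{d}{ds}|A|^{-2}=O(1)$ ``by $|\dot\rho|\lesssim\Rho^2$''. This is wrong: $\dot\rho$ contains $2\Re(P\overline{A})=\frac{c}{6\lambda^2}\frac{d}{dt}a^3$, which has size $\Rho^{3/2}$ (cf.\ \eqref{ODEdampestP}). Hence $\frac{d}{ds}|A|^{-2}$ is only $O(\Rho^{-1/2})$. With that weight, summation by parts against a bound $|\int_{t_1}^{t_2}F|\lesssim\Rho^{1+\delta_0}(t_1)$ produces $\int_{t_1}^{t_2}\Rho^{1/2+\delta_0}\,ds$, which diverges. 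That is why you end up needing to re-derive every estimate of Section~\ref{sec:ODE} with the weight inserted, which you leave unverified.

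Separately, your claim that replacing $\overline W$ by $\overline A$ gives a non-integrable error is an arithmetic slip. The cost is $Y^{-2}|R_E+R_Q|\,|W-A|\lesssim \Rho^{3/2}+\varepsilon_0^{3\alpha}\Rho^{1/2}\lr{s}^{-1+8\delta}$, which integrates to $O(t_1^{-1/2+8\delta})$. So no oscillatory expansion of $\overline W-\overline A$ is needed; the paper does this replacement in one line.

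The missing idea is to keep the renormalized weight $q:=Y^{-2}=|W|^{-2}$. The normal forms $B$ and $N(A)$ have removed the oscillating quadratic and cubic terms. So the equation for $z=Y^2$ in Lemma~\ref{lem:asymptoticsY}, together with Remark~\ref{rmk:controlR}, gives $\dot z=O(\Rho^{2-8\delta})$, hence $q'=O(\Rho^{-8\delta})$. For any $F$ with $|\int_{t_1}^{t_2}F|\lesssim\Rho^{1+\delta_0}(t_1)$ uniformly in $t_2$, the identity
\[
\int_{t_1}^{t_2}qF\,ds=q(t_1)\int_{t_1}^{t_2}F\,d\tau+\int_{t_1}^{t_2}q'(s)\int_s^{t_2}F(\tau)\,d\tau\,ds
\]
then gives $O(t_1^{-(\delta_0-8\delta)})$. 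This lets you use \eqref{ODEdampestRQ}, the bounds for $R_{E,j}$ with $j\neq 2$, and the bound on $J_R$ in \eqref{J0toref} as black boxes. Nothing from Section~\ref{sec:ODE} has to be redone, and your Step~3 disappears.

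A weighted re-derivation with $|A|^{-2}$ could probably be made to work. The relative derivative of $|A|^{-2}$ is $O(\Rho^{1/2})$, the same as that of the $A$-factors already present in each term. But that is not the justification you give.
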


\begin{proof}
Arguing as in the proof of Lemma \ref{lem:asymptoticsY} above,
by using the equation \eqref{theta'} 
and the fact that $Ye^{i\Theta} = \overline{W} = \overline{A} + O(\Rho)$, we have
\begin{align}\label{theta''}
\frac{d}{dt}\theta = Y^{-2}(t) \Im \big[(R_E(t)+R_Q(t)) \overline{A}(t) \big] + O(\Rho^{5/4}(t)),
\end{align}
and, therefore, \eqref{RHStheta'conc1} would follow provided we show the estimates
\begin{align}
\label{RHStheta'pr1}
& \Big| \int_{t_1}^{t_2} Y^{-2}(s) 
  \, \Im\big( R_E(s) \overline{A}(s) \big) \, ds 
  - \beta_1 \int_{t_1}^{t_2} |A(s)|^2 \, ds \Big| \lesssim t_1^{-\gamma},
\\
\label{RHStheta'pr2}
& \Big| \int_{t_1}^{t_2} Y^{-2}(s) 
  \,  R_Q(s) \overline{A}(s) \, ds \Big| \lesssim t_1^{-\gamma}.
\end{align}

We first prove \eqref{RHStheta'pr2}. 
Let
\begin{align}
F(s) := R_Q(s) \overline{A}(s),  \qquad \Big| \int_{t_1}^{t_2} F(s) \, ds \Big| \lesssim \Rho^{1+\delta_0}(t_1)
\end{align}
(see \eqref{ODEdampestRQ}, where  we may let $\delta_0=1/20$, for example) and let
$q(s) := Y^{-2}(s) \approx \Rho^{-1}(s),$ 
with
$$
q' = -Y^{-4} \frac{d}{dt} z = O(\Rho^{-8\delta}),$$ 
having used the remainder bounds from Remark \ref{rmk:controlR} for the last equality.
Then we can integrate by parts to obtain
\begin{align}
\Big| \int_{t_1}^{t_2} Y^{-2}(s)  \,  R_Q(s) \overline{A}(s) \, ds \Big| 
	& \lesssim \Big| q(t_1)  \, \int_{t_1}^{t_2} F(\tau) \, d\tau  \Big|
  + \Big| \int_{t_1}^{t_2} q'(s)  \int_{s}^{t_2} F(\tau) \, d\tau \, ds \Big| 
  \lesssim t_1^{-\gamma}
\end{align}
provided $\gamma < \delta_0 - 8 \delta$.

For \eqref{RHStheta'pr1} we can proceed similarly: recalling that 
$R_E = R_{E,0} + \cdots + R_{E,4}$ and that we have the bounds 
\eqref{ODEdampestRE0}-\eqref{ODEdampestRE4} we see that all the terms, with the exception of $R_{E,2}$
can be handled as $R_Q$ above and satisfy
\begin{align}
\Big| \int_{t_1}^{t_2} Y^{-2} \Im\big( R_{E,j} \overline{A} \big) \, ds \Big| \lesssim t_1^{-\gamma},
\end{align}
for $j=0,1,3,4$, and $\gamma < 1/4 - 8\delta$.
For the remaining contribution involving $R_{E,2}$ we go back to the 
proof of the estimate \eqref{ODEdampestRE2} that follows \eqref{R_E2-defn'}
and recall that we write 
\begin{align}
R_{E,2}(t) = T[R_w](t) = T[R_{w,1}](t) + T[I](t) + T[II](t) + T[III](t)
\end{align}
where the terms are defined in \eqref{Rw1} and \eqref{Rw2split}.
Since we have already proven that 
\begin{align}
|T[R_{w,1}](t) \overline{A}(t)| + |T[I](t) \overline{A}(t)| + |T[II](t) \overline{A}(t)| 
  \lesssim t^{-9/4},
\end{align}
we only need to treat the term $T[III](t) \overline{A}(t)$ which involves $III$.
We then invoke \eqref{J0toref} 
and the same integration by parts argument used above for $R_Q$, to see that all the terms in $T[III](t) \overline{A}(t)$ 
give acceptable remainders with the exception of the term $J_0$;
this latter contributes, see \eqref{extracorrection1},
\begin{align}
\label{RHStheta'pr5}
\int_{t_1}^{t_2} |Y(s)|^{-2}
  \, \Im\big( J_0(s) \big) \, ds
  = \beta_1 \int_{t_1}^{t_2} |A(s)|^2 \, ds + O(\Rho^{1/2}(t_1))
\end{align}
having used \eqref{AsyY-A}.
Equation \eqref{RHStheta'pr5} gives the additional shift in the claim \eqref{RHStheta'conc1} up to acceptable remainders.
The conclusion \eqref{RHStheta'conc1} is proved, and \eqref{RHStheta'conc2} and \eqref{RHStheta'conc3} follow.
\end{proof}


\smallskip
We can now conclude  the proof of  Theorem \ref{thm:asympA}.

\begin{proof}[Proof of Theorem \ref{thm:asympA}]
We want to obtain the asymptotic behavior of $A$ as in \eqref{Asymp-A}.
Recall our previous definitions \eqref{derB}, \eqref{AsyW} and \eqref{X}:
\begin{align}\label{asympApr0}
\begin{split}
& B =  A +O(|A|^2), \qquad W = B- N(A), \qquad  W = X e^{-ic_2\int_{t_0}^t|W(s)|^2ds} ,
\\
& Y = |X| = |W|, \qquad \theta = \arg(X), 
\end{split}
\end{align}
where $\Theta$ is defined in \eqref{AsyTheta}.
Defining $y(t) > 0$ by
\begin{align}
y^2(t) := \frac{Y^2(t_0)}{1+ (\Gamma/\lambda) Y^2(t_0)(t-t_0)}, 
\end{align}
according to the notation \eqref{wtz},
and applying the conclusion \eqref{asymptoticsYconc} from Lemma \ref{lem:asymptoticsY} we have 
\begin{align}\label{Y-y}
\Big|Y^2(t) - y^2(t) \Big| \lesssim t^{-1-\gamma},
\qquad \Big|Y(t) - y(t) \Big| \lesssim t^{-1/2-\gamma}, \qquad t\geq t_0.
\end{align}
From \eqref{Y-y} and \eqref{asympApr0}, which in particular gives $Y = |A| + O(|A|^2)$, 
we deduce that 
\begin{align}
\int_{t_0}^{t} \big( |A(s)|^2 - y^2(s) \big) \, ds, \qquad 
\int_{t_0}^{t} \big( |W(s)|^2 - y^2(s) \big) \, ds
\end{align}
are Cauchy sequences as $t\rightarrow \infty$; we can then define their respective limits $\Phi_A$ and $\Phi_W$ in $\R$, and have
\begin{align}\label{asympApr2}
\Big| \int_{t_0}^{t} \big( |A(s)|^2 - y^2(s) \big) \, ds - \Phi_A\Big| + \quad 
\Big| \int_{t_0}^{t} \big( |W(s)|^2 - y^2(s) \big) \, ds - \Phi_W\Big| \lesssim t^{-\gamma}.
\end{align}
Combining this with Lemma \ref{lem:RHStheta'} we obtain that 
\begin{equation}\label{asympApr3}
\Big| \theta(t) - \theta(t_0) - \theta_\infty 
- \beta_1 \int_{t_0}^{t} y(s)^2 \, ds -  \beta_1\Phi_A\Big| \lesssim t^{-\gamma}, \qquad t\geq t_0.
\end{equation}
It also follows that 
\begin{align}
\Big|  W(t) - e^{i(\beta_1 - c_2)\int_{t_0}^t y^2(s) ds + i\theta(t_0) + i\theta_\infty +  i \beta_1\Phi_A -  i c_2\Phi_W} y(t)  
  \Big| \lesssim t^{-1/2-\gamma}, \qquad t\geq t_0 
\end{align} 
and the same holds with $A$ replacing $W$.
That is the desired conclusion in Theorem \ref{thm:asympA}  since
\begin{align}
& \int_{t_0}^t y^2(s) ds = \frac{\lambda}{\Gamma} \log\big( 1+ (\Gamma/\lambda) Y^2(t_0)(t-t_0) \big)
 = \frac{\lambda}{\Gamma} \log\big( 1+ (\Gamma/\lambda) \rho_0 t \big) + 
  \frac{\lambda}{\Gamma} \log\big( Y^2(t_0)/ \rho_0 \big) + O(t^{-1}),
\\
& y(t) = \sqrt{\frac{\lambda}{\Gamma}} t^{-1/2} + O(t^{-3/2}), \quad t \gg t_0,
\end{align}
and letting
\begin{align}\label{psiinftyformula}
\psi_\infty := \theta(t_0) + \theta_\infty + \beta_1\Phi_A -  c_2\Phi_W +  (\beta_1-c_2) \frac{\lambda}{\Gamma} \log\big( Y^2(t_0)/ \rho_0 \big)  .
\end{align}
\end{proof}

\medskip
\subsection{Growth of the weighted norm}

In this last subsection we show how the resonant component has a growing weighted norm.

\begin{lemma}[Growth of the resonant component]\label{app:growth-f}
Assume the hypotheses of Theorem \ref{thm:mainthmdetail}, and let
$\widetilde g$ be defined by \eqref{bad-g-defn}. 
Then, there exist constants $c_\ast,C_\ast>0$, independent of $\varepsilon_0$, such that
\begin{equation}\label{growth-bad}
{\|\partial_k\widetilde g(t)\|}_{L^2_k} \geq c_\ast t^{1/2}, \qquad t\geq C_\ast\rho_0^{-1}.
\end{equation}
\end{lemma}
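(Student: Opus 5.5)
We differentiate \eqref{bad-g-defn} in $k$ exactly as in the proof of Lemma \ref{lem:bad-part-bds}, writing $\partial_k\wt g = -(I+II+III)$ with $I,II,III$ as in \eqref{partial_kg-tilde}. The terms $II$ and $III$ contain no factor of $s$. For them we use Lemma \ref{lem:smoothing-effect} with $p=2$ and the bound $|B|\lesssim \Rho^{1/2}$. This gives
\[
\|II\|_{L^2_k}+\|III\|_{L^2_k}\lesssim \|B^2\|_{L^2_s[0,t]}\lesssim \Rho^{1/2}(0)\lesssim \varepsilon_0 ,
\]
which is negligible compared to $t^{1/2}$. It therefore suffices to prove the lower bound $\|I(t)\|_{L^2_k}\geq 2c_\ast t^{1/2}$ for $t\geq C_\ast\rho_0^{-1}$.

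\textbf{Change of variables.} On the support of $\phi_{\leq -C}(\lr{k}-2\lambda)$ we have $|k|\approx k_0=\sqrt{4\lambda^2-1}$. We restrict to $k>0$ and substitute $\xi=\lr{k}-2\lambda$, so that $dk = (\lr{k}/k)\,d\xi$ and the Jacobian is bounded above and below. Then $\|I\|_{L^2_k}^2$ is comparable to
\[
\int |a(\xi)|^2\,\Big|\int_0^t sB^2(s)e^{-is\xi}\,ds\Big|^2 d\xi,
\]
where $a(\xi)$ collects $\phi_{\leq -C}(\xi)$, the smooth factor $k/\lr{k}$, and $\widetilde{\beta\varphi^2}(k)$. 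By the Fermi Golden Rule \eqref{Fermi-Gordon}, $|\widetilde{\theta}(k_0)|^2\approx \Gamma>0$. By continuity, $|a(\xi)|\geq c>0$ for $|\xi|\leq 2^{-C'}$, with $C'$ fixed and independent of $\varepsilon_0$. Setting $G(s):=sB^2(s)\mathbbm 1_{[0,t]}(s)$, we get
\[
\|I\|_{L^2}^2 \gtrsim \int_{|\xi|\le 2^{-C'}}|\widehat G(\xi)|^2\,d\xi .
\]

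\textbf{Main step: the low-frequency Plancherel bound.} This is where the obstacle lies. Plancherel gives $\|\widehat G\|_{L^2}^2=\|G\|_{L^2}^2=\int_0^t s^2|B|^4\,ds$. By Proposition \ref{prop:bootstrap-on-rho} (now global) and $B=A+O(|A|^2)$, we have $|B|^2\approx \Rho\approx (\rho_0^{-1}+t)^{-1}$. Hence $\|G\|_{L^2}^2\gtrsim t$ once $t\geq C_\ast\rho_0^{-1}$. The difficulty is that the mass of $\widehat G$ must be shown to lie at $|\xi|\lesssim 1$ rather than escaping to high frequency. Since $G$ is not smooth, we control $\widehat G$ on $|\xi|>2^{-C'}$ by integrating by parts in $s$:
\[
\widehat G(\xi)=\frac{1}{i\xi}\Big[\,sB^2e^{-is\xi}\Big]_0^t-\frac{1}{i\xi}\int_0^t \partial_s\big(sB^2\big)e^{-is\xi}\,ds .
\]
We estimate $\partial_s(sB^2)=B^2+2sB\dot B$ using \eqref{Bdot}:
\[
\partial_s(sB^2)=O\big(\Rho+s\Rho^2+\varepsilon_0^{3\alpha}s^{8\delta}\Rho\big).
\]
Applying Plancherel to the integral term on $|\xi|>2^{-C'}$ gives $2^{2C'}\|\partial_s(sB^2)\|_{L^2_s}^2\lesssim 2^{2C'}\rho_0^{1-}$, which is $O(1)$.

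The boundary term is $t|B(t)|^2\approx 1$ in absolute value. Its square integrates against $\xi^{-2}$ on $|\xi|>2^{-C'}$ to give $O(2^{C'})$, which is bounded in $t$. Both contributions are therefore $o(t)$, and we conclude
\[
\int_{|\xi|\le 2^{-C'}}|\widehat G|^2 \;\geq\; \|G\|_{L^2}^2-O(2^{2C'})\;\gtrsim\; t
\]
for $t\geq C_\ast\rho_0^{-1}$, with $C_\ast$ large. If the $\varepsilon_0$-dependence of the remainders turns out to be awkward, a fallback is to test $I$ against a fixed bump $\chi(\xi)$ and use duality. Finally, taking square roots and absorbing the contributions of $II$ and $III$ yields \eqref{growth-bad}.
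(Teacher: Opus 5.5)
Your argument is correct, and it takes a genuinely different route from the paper's.

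\textbf{The paper's route.} The paper works pointwise in frequency. It invokes the full asymptotics of Theorem \ref{thm:asympA}, writing $B=B_\infty+O(s^{-1/2-\gamma})$, where $B_\infty$ carries the explicit logarithmic phase. It then computes $\int_{t_0}^t sB_\infty^2\,ds$ exactly and finds modulus $\gtrsim t$. This step uses the fact that the phase of $B_\infty^2$ varies only logarithmically, so no cancellation occurs. The bound is extended to the thin set $|\lr{k}-2\lambda|\le \delta t^{-1}$, of measure $\approx t^{-1}$. Finally, $B^2-B_\infty^2$ and the interval $[0,t_0]$ are treated as errors via Plancherel.

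\textbf{Your route.} You work in $L^2$ of the variable $\xi=\lr{k}-2\lambda$.
\begin{itemize}
\item Plancherel for $G=sB^2\mathbb{1}_{[0,t]}$ gives total mass $\gtrsim t$ from the two-sided bound $|B|^2\approx\rho$ alone, and this bound is blind to the phase.
\item Integration by parts in $s$ shows that only $O(1)$ of this mass lies at $|\xi|\gtrsim 1$. This uses $\|\partial_s(sB^2)\|_{L^2_s}^2\lesssim\rho_0^{1-}$ from \eqref{Bdot} and $t|B(t)|^2\lesssim 1$ for the boundary term.
\end{itemize}
The phase enters only through the bound on $\dot B$.

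\textbf{Comparison.} Your route needs only bootstrap-level information, namely Proposition \ref{prop:bootstrap-on-rho} and Lemma \ref{lem:decay-B'}, rather than the asymptotic profile. It is therefore more elementary. It also makes the independence of $c_\ast,C_\ast$ from $\varepsilon_0$ transparent, with no need to track uniformity of the remainders in Theorem \ref{thm:asympA}. The paper's route gives sharper structural information: the growth comes from an $O(t^{-1})$-neighbourhood of the resonant frequencies, and the constant $\lambda/(\Gamma\sqrt{1+4\kappa^2})$ is explicit.

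\textbf{One small correction.} The Fermi Golden Rule \eqref{Fermi-Gordon} controls a weighted sum of $|\widetilde\theta(\pm\sqrt{4\lambda^2-1})|^2$. It therefore only guarantees $\widetilde\theta(k_\ast)\neq0$ for one of $k_\ast=\pm\sqrt{4\lambda^2-1}$, not that $|\widetilde\theta(k_0)|^2\approx\Gamma$ at the positive root. You should restrict to the half-line containing that $k_\ast$ rather than to $k>0$. The rest of your argument is unchanged.
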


\begin{proof}
The argument is the one-dimensional analogue of the resonant lower-bound
argument in \cite{LP21}; for completeness, we include here a short proof 
based on our specific asymptotics from Theorem \ref{thm:asympA}.
Set
\begin{align*}
\omega(k):=\langle k\rangle-2\lambda,\qquad \theta:=P_c(\beta\varphi^2),\qquad
  \chi(k):=\phi_{\leq-C}(\omega(k)).
\end{align*}
By the Fermi Golden Rule \eqref{Fermi-Gordon}, there exists
$k_\ast\in\{-\sqrt{4\lambda^2-1},\sqrt{4\lambda^2-1}\}$
such that $\widetilde\theta(k_\ast)\neq0$. 
Moreover, $\omega'(k_\ast)\neq 0$. Hence, there exists a small neighborhood $U$ of $k_\ast$,
such that
\begin{equation}\label{nondegenerate-coefficient}
\big| \omega'(k)\chi(k) \widetilde{\theta}(k) \big| \geq c_0>0,\qquad k\in U.
\end{equation}
Theorem \ref{thm:asympA}, together with $B-A=O(\Rho)$, gives, for $s\geq t_0:=C_0\rho_0^{-1}$,
\begin{equation}\label{B-asymptotic-for-growth}
B(s)=B_\infty(s)+r_B(s),\qquad |r_B(s)|\lesssim s^{-1/2-\gamma},
\end{equation}
with $\gamma \in (0,1/2)$, and where
\begin{align}\label{Binfty}
& B_\infty(s) :=
 \exp\Big[ i \Big(\psi_\infty 
  - \kappa \log \big(1 + \mathfrak{b} s\big) \Big) \Big]
  \sqrt{\frac{\lambda}{\Gamma \cdot s}}
  \qquad \mathfrak{b} :=\frac{\Gamma}{\lambda}\rho_0,  \quad 
  \kappa := (c_2-\beta_1)\frac{\lambda}{\Gamma}.
\end{align}

We then want to look at the weighted norm of the bad Fermi component $g$ in \eqref{bad-g-defn}.
Differentiating $\wt{g}$ in \eqref{bad-g-defn} gives the leading contribution 
\begin{align}
\mathcal{M} (t, k) := 
  i\omega'(k)\chi(k)\widetilde{\theta}(k)
  \int_0^t s\, e^{-is(\lr{k}-2\lambda)} B^2(s) \, ds, 
\end{align}
where we are disregarding the lower orders term where $\partial_k$ hits 
the smooth function $\wt{\theta}$ or the cutoff $\chi$.
At the resonant frequency $k=k_\ast$, using \eqref{Binfty} we have
\begin{align*}
\int_{t_0}^t s \, B_\infty^2(s) ds
& =
\frac{\lambda}{\Gamma}e^{2i\psi_\infty}
\int_{t_0}^t(1+\mathfrak bs)^{-2i\kappa} ds 
\\
& = \frac{\lambda}{\Gamma}e^{2i\psi_\infty}
  \frac{(1+\mathfrak bt)^{1-2i\kappa} -(1+\mathfrak bt_0)^{1-2i\kappa}} {\mathfrak b(1-2i\kappa)}.
\end{align*}
Since $|(1+\mathfrak bs)^{1-2i\kappa}|=1+\mathfrak bs$, it follows that
\begin{equation}\label{resonant-time-integral}
\left| \int_{t_0}^t sB_\infty^2(s) ds \right|
 \geq \frac{\lambda}{\Gamma\sqrt{1+4\kappa^2}} (t-t_0).
\end{equation}
Furthermore,
\begin{align}
\left| \int_{t_0}^t sB_\infty^2(s) \big(e^{-is\omega(k)}-1\big) ds \right| \lesssim |\omega(k)|t^2. 
\end{align}
Consequently, for a sufficiently small fixed $\delta>0$ and all
$t\geq2t_0$,
\begin{align}
\Big| \int_{t_0}^t sB_\infty^2(s)e^{-is\omega(k)} \,ds \Big| \gtrsim t
\end{align}
whenever $k\in U$ and $|\omega(k)|\leq \delta t^{-1}$. Since
$\omega'(k_\ast)\neq0$, the set $E_t:=\{k\in U \, :\, |\omega(k)|\leq\delta t^{-1}\}$
has measure $|E_t|\gtrsim t^{-1}$. Then, using \eqref{nondegenerate-coefficient}, 
we obtain
\begin{equation}\label{main-growth-term}
{\|\mathcal M(t)\|}_{L^2_k} \gtrsim t \, |E_t|^{1/2} \gtrsim t^{1/2}.
\end{equation}

To complete the proof of the lemma it remains to estimate the error
between $\widetilde{g}$ and the expression just lower bounded above. 
On the support of $\chi$, the map $k\mapsto\omega(k)$ is a smooth change of variables on each of the two
components. 
Thus, using Plancherel we get
\begin{align}
{\Big\| a(k)\int_I e^{-is\omega(k)}F(s) \, ds \Big\|}_{L^2_k}
\lesssim_a {\|F\|}_{L^2_s(I)}
\end{align}
for every smooth coefficient $a$. 
From \eqref{B-asymptotic-for-growth} we know that
$\big| B^2(s) - B_\infty^2(s) \big| \lesssim s^{-1-\gamma},$
and, therefore,
\begin{equation}\label{B-asymptotic-error-growth}
{\Big\| \omega'(k) \chi(k) \widetilde{\theta}(k)
 \int_{t_0}^t s (B^2-B_\infty^2)(s)e^{-is\omega} \, ds \Big\|}_{L^2_k}
  \lesssim t^{1/2-\gamma}.
\end{equation}
The contribution over $0\leq s\leq t_0$, with $t_0 \lesssim \rho_0^{-1}$, is bounded by
\begin{align}
{\| s \, B^2(s) \|}_{L^2_s([0,t_0])} \lesssim \rho_0 t_0^{3/2} \lesssim \rho_0^{-1/2},
 \end{align}
Combining these estimates with \eqref{main-growth-term}, we obtain
\begin{align}
{\| \partial_k \widetilde{g}(t) \|}_{L^2_k} \geq ct^{1/2} -C t^{1/2-\gamma} -C\rho_0^{-1/2}.
\end{align}
For $\rho_0$ sufficiently small, the last two terms are absorbed by the
first whenever $t \geq C_\ast \rho_0^{-1}$, after choosing $C_\ast$ sufficiently large. 
This proves \eqref{growth-bad}.
\end{proof}

\bigskip
\section{Applications in scalar field models}\label{sec:application}
In this section we apply the preceding analysis to obtain new results on the asymptotic stability
of kinks in scalar field models.

\subsection{Basic setting}
We consider scalar field theories on the line governed by the Lagrangian action functionals 
\begin{equation} \label{eq:lagrangian}
    \iint_{\mathbb{R}^{1+1}} \biggl( \frac12 (\partial_t \phi)^2 - \frac12 ( \partial_x\phi)^2 - W(\phi) \biggr) \, dx \, dt,
\end{equation}
where $\phi \colon \mathbb{R} \times \mathbb{R} \to \mathbb{R}$ is the scalar field and $W \colon \mathbb{R} \to [0,\infty)$ is a double-well self-interaction potential such that
\begin{itemize}
    \item (Regularity) $W \in \mathcal{C}^\infty(\mathbb{R})$,\\
    \item (Existence of minima) $W$ has (at least) two consecutive global 
    minima denoted by $\phi_- <\phi_+$ with $W(\phi_\pm) = W'(\phi_\pm) = 0$ and $m^2 := W''(\phi_\pm) > 0$.
\end{itemize}
The minima $\phi_-$ and $\phi_+$ are also referred to as the vacuum states of the model.

The Euler-Lagrange equations associated with \eqref{eq:lagrangian} are the wave-type equations
\begin{equation} \label{eq:waveeq}
 \begin{aligned}
  (\partial_t^2 - \partial_x^2) \phi + W'(\phi) = 0, \quad (t,x) \in \mathbb{R} \times \mathbb{R}.
 \end{aligned}
\end{equation}
 The equation \eqref{eq:waveeq}  admits a static solution $K(x)$ called kink  that connects the two distinct vacuum states
\begin{equation} \label{eq:kink_ode}
 \left\{ \begin{aligned}
  -\partial_x^2 K + W'(K(x)) &= 0, \quad x \in \mathbb{R}, 
  \\
  \lim_{x\to\pm\infty} K(x) &= \phi_\pm.
 \end{aligned} \right.
\end{equation}

Our analysis for the quartic equation can be applied to study the asymptotic stability of such kink solutions under the influence of internal modes satisfying the nonlinear Fermi Golden Rule. The models we considered below are constructive and obtained as  perturbations of the double-well potential for the classical $\phi^4$ model.

More precisely, we consider $W$ satisfying several conditions:

\begin{itemize}
\smallskip
\item  (Symmetry)  the potential $W(\phi)$ is a smooth even function, \\

\smallskip
\item      (Positivity)
$W(\phi) > 0 \quad \text{for } \phi \neq \pm \varphi$,\\

\smallskip
\item (Asymptotics) For some integer $M>4$ and $m>0$, $W$ satisfies
\begin{equation}\label{Wpm}
 W(\phi) = \frac{m^{2}}{2}(\phi \mp \varphi)^{2} + O \left(|\phi \mp \varphi|^{M}\right), 
\qquad \phi \to \pm \varphi.
\end{equation}
\end{itemize}
One can directly check 
$-\phi_-=\phi_+=\varphi>0$ and that the kink connecting $-\varphi$ and $\varphi$,
with $K(0)=0$, is an \emph{odd} function.

Notice that under the symmetry assumption of $W$, clearly, Equation \eqref{eq:waveeq} preserves the oddness.

\medskip
In what follows, we address the asymptotic stability of the kink $K(x)$ under odd perturbations. Inserting  the odd function $\phi(t,x)=K(x)+u(t,x)$ into \eqref{eq:waveeq} results in the equation
\begin{equation}
    \partial_t^2 u-\partial_x^2 u+m^2u+V(x)u =N(K,u)
\end{equation}where $N$ collects higher order terms and 
\begin{equation}\label{eq:potential}
V(x)=W''(K(x))-m^2.
\end{equation}

Consider the linear operator 
\begin{equation}\label{eq:klinear}
 H := -\partial_x^2 + V(x), \qquad L := \sqrt{H+ m^2}.
\end{equation}
The following properties can readily be verified using the standard spectral theory:

\smallskip
\begin{itemize}

\item The continuous spectrum of $H+m^2$ is given by $[m^2,\infty)$,

\smallskip
\item $\lambda_0=0$ is in the discrete spectrum of $H+m^2$ with eigenfunction given by $K'(x)$;
since $K'$ is even, this is not present when restricting to odd perturbations. 
Moreover,  all other discrete eigenvalues are located on $(0,m^2)$. 
\end{itemize}

\smallskip
We further assume that $H$  satisfies the following assumptions,
 when restricted to odd functions:

\begin{itemize}
\smallskip
\item The bottom of the continuous spectrum 
  is neither an eigenvalue (with associated odd eigenfunction)
  nor a resonance  (with associated odd threshold eigenfunction).

\smallskip
\item  The discrete spectrum of $H+m^2$ (restricted to odd functions) 
  contains only one point $\omega^2 \in(m^2/4,m^2)$,
  with corresponding eigenfunction $\varphi_{\omega}$
\begin{equation}\label{varphiomega}
(H+m^2)\varphi_{\omega} = \omega^2\varphi_\omega, \qquad  4\omega^2 > m^2.
\end{equation}

\smallskip
\item The nonlinear Fermi Golden Rule holds:
\begin{equation}\label{nFGRW}
  \int \varphi_{4\omega^2}(x) W^{'''}(K(x)) \varphi_{\omega}^2(x) \, dx \neq 0,
\end{equation}
where $\varphi_{4\omega^2}$ is the nonzero odd solution to 
  $(H+m^2)\varphi_{4\omega^2} = 4\omega^2 \, \varphi_{4\omega^2}$
  with $\varphi_{4\omega^2}'(0)=1$.
\end{itemize}

\smallskip
\begin{remark}
The double-well potential from the classical $\phi^4$ model, 
  $W_{\phi^4}(\phi)= (1/4) (\phi^{2} - 1^{2})^{2}$, 
  satisfies all the conditions above, except the flatness condition \eqref{Wpm}, which
  it satisfies with $M=3$;
  indeed, all spectral assumptions are satisfied by the linearized operator 
  $H_{\phi^4}+ 2$ arising from $W_{\phi^4}$
  when restricted to odd functions.
\end{remark}

The construction of double-well potentials 
satisfying all the properties above as perturbations of $W_{\phi^4}(\phi)$ can be found  in \cite{KK11,KKK}.

\subsection{An example}
For the sake of completeness, we provide here an explicit 
construction of double-well potential so that all desired properties hold. 
The idea is to construct a perturbation of $W_{\phi^4}$ that slightly flattens 
the graph of $W_{\phi^4}$ around the vacuum states $\pm \varphi$.

We start by recalling the basic information for the $\phi^4$ model. Without loss of generality, we set $\varphi=1$. In other words, the double-well potential is given by
\begin{equation}
    W_{\phi^4}(\phi)=\frac{(\phi^{2} - 1)^2}{4}.
\end{equation}
In this setting, the kink solution is given by
\begin{equation}
    K_{\phi^4}(x)=\tanh\big({\tfrac{x}{\sqrt{2}}}\big)
\end{equation}and
\begin{equation}
    m^2= W''_{\phi^4}(\pm 1)=2.
\end{equation}
Linearizing the equation around the solution $K$, the corresponding linearized operator is given by
\begin{equation}
H_{\phi^4} + 2 :=-\partial_x^2+2+V_{\phi^4}
\end{equation}
where
\begin{equation}
V_{\phi^4} = W''_{\phi^4}\big(K_{\phi^4}(x)\big)-2
= -3\,\cosh^{-2} \left(\frac{x}{\sqrt{2}}\right).
\end{equation}
It is a  well-known fact that
the continuous spectrum of the operator
$H_{\phi^4} + 2$ is  the interval $[2,\infty)$. 
Moreover, restricting to odd functions, all the above spectral conditions are met: 

\begin{itemize}
\smallskip
\item The discrete spectrum of $ H_{\phi^4}+2$ consists only of one point
$\lambda = 3/2$ with corresponding eigenfunction
\begin{align}
\varphi_1(x) = \sinh(x/\sqrt{2}) \cosh^{-2}(x/\sqrt{2}).
\end{align}

\smallskip
\item The bottom of the continuous spectrum is neither an eigenvalue nor a resonance.

\smallskip
\item For the nonlinear Fermi Golden Rule, one has (see, for example, \cite{KMM17,DM22})
\begin{align}\label{phi4FGR}
\int \varphi_{6}(x) \, W'''(K(x)) \, \varphi_1^2(x) \,dx = 
6  \int \varphi_{6}(x)\,\frac{\sinh^{3}(x/\sqrt{2})}{\cosh^{5}(x/\sqrt{2})}\,dx \neq 0   
\end{align}
where \(\varphi_{6}(x)\) is the nonzero odd solution to
\[
(H_{\phi^4}+2)\varphi_{6}(x)=6\varphi_{6}(x), \qquad \varphi_6'(0)=1.
\]
\end{itemize}

From the standard spectral theory, the three preceding conditions are stable under small 
perturbations (in a sufficiently strong topology). 
Therefore, it will suffice to construct  a small perturbation of $ W_{\phi^4}(\phi)$ 
such that \eqref{Wpm} is satisfied.  
We first look at the Taylor expansion of $W_{\phi^4}(\phi)$ near $1$,
\begin{align}
   W_{\phi^4}(\phi)=\frac14\,(1-\phi^{2})^{2}
= (\phi-1)^{2}+(\phi-1)^{3}+\frac14(\phi-1)^{4}. 
\end{align}
Note that the cubic and quartic terms produce non-localized quadratic 
and cubic terms in nonlinear equations for the error term in the stability analysis of the kink for the $\phi^4$ model. Thus, the modified potential must cancel those terms which we cannot 
handle with our analysis.

For any $c_5\in \mathbb{R}$, we define
\begin{align}\label{eq:Wdelta}
W_\eta (\phi) = W(\phi)
:= W_{\phi^4}(\phi)
- \Bigl[\bigl(|\phi|-1\bigr)^{3}+\tfrac{1}{4}\bigl(|\phi|-1\bigr)^{4}
  -\frac{c_5}{5!} (|\phi|-1)^5\Bigr]\chi_{\eta}\bigl(|\phi|-1\bigr).
\end{align}
where $\chi_{\eta}(z)=\chi(z/\eta)$, 
for $0<\eta\ll 1$. Here  $\chi (z) $ is a smooth bump function  such that 
$\chi(z)=1$ for $|z|<\tfrac12$ and $\chi(z)=0$ for $|z|>1$. 
In what follows we will often omit the dependence on $\eta$ of $W_\eta$, and 
related quantities (e.g. the associated effective potential $V_\eta=V$ and kink $K_\eta = K$).

\begin{lemma}
  The potential $W_\eta$ is smooth, even, satisfies \eqref{Wpm} with $M\geq 5$ 
  ($M=5$ when $c_5\neq 0$), 
  and can be written as 
  \begin{align}\label{Wm}
W_\eta(\phi)=\begin{cases}
    (|\phi|-1)^2+q_5(|\phi|-1)^5\quad \mathrm{if}\quad \big||\phi|-1\big| < \eta/2
    \\    
    W_{\phi^4}(\phi)\qquad \qquad \qquad \qquad\mathrm{if}\quad \big||\phi|-1\big| > \eta
\end{cases}      
\end{align}
with $q_5 := c_5/5!$.

Moreover, the equation 
  \begin{equation}\label{kink}
     -\partial_x^2K + W_\eta'(K(x))=0,
     \qquad \lim_{x\rightarrow \pm \infty}K(x)=\pm 1
  \end{equation}
  has a unique odd solution $K_\eta$,
  and the linearized operator $H_\eta+2 =-\partial_x^2+2+V_\eta$,
  with $V_\eta :=  W''_{\eta}(K_\eta(x)) - 2$
  restricted to the space of odd functions, satisfies the desired spectral conditions \eqref{eq:klinear}-\eqref{nFGRW} provided that $\eta$ is chosen sufficiently small.
\end{lemma}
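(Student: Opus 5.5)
The plan is to verify the listed properties one at a time, using only the explicit formula \eqref{eq:Wdelta} and a perturbative argument around the $\phi^4$ model.

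\textbf{Structure of $W_\eta$.} Smoothness and evenness come first. $W_\eta$ depends on $\phi$ only through $W_{\phi^4}$, which is even, and through $|\phi|$. The cutoff $\chi_\eta(|\phi|-1)$ vanishes unless $\big||\phi|-1\big|<\eta<1$, so on its support $|\phi|>0$ and $|\phi|$ is smooth there; near $\phi=0$ the correction is identically zero. Hence $W_\eta\in C^\infty$ and is even.

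For \eqref{Wm} we use the Taylor identity $W_{\phi^4}(\phi)=(|\phi|-1)^2+(|\phi|-1)^3+\tfrac14(|\phi|-1)^4$, which holds for all $\phi$ by evenness. On $\big||\phi|-1\big|<\eta/2$ we have $\chi_\eta=1$, so the bracket cancels the cubic and quartic terms and leaves $q_5(|\phi|-1)^5$. For $\big||\phi|-1\big|>\eta$ the cutoff vanishes. This gives \eqref{Wm}, and with it \eqref{Wpm} with $m^2=2$ and $M\ge5$.

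Next we check positivity away from $\pm1$. On $\big||\phi|-1\big|\le\eta$ write $z=|\phi|-1$. The perturbation is $O(|z|^3)$ uniformly in $\eta$, since $\chi_\eta$ is bounded, so $W_\eta\ge z^2-C|z|^3>0$ for $z\neq0$ when $\eta$ is small. Outside this region $W_\eta=W_{\phi^4}>0$.

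\textbf{Existence and uniqueness of the odd kink.} We use the energy identity $\tfrac12(K')^2=W_\eta(K)$, so $K'=\sqrt{2W_\eta(K)}$, and set $K(x)=G^{-1}(x)$ with $G(y)=\int_0^y\frac{ds}{\sqrt{2W_\eta(s)}}$ on $(-1,1)$. Since $W_\eta(s)\sim(1-|s|)^2$ near $\pm1$, we get $G(y)\to\pm\infty$ as $y\to\pm1$. Oddness follows from evenness of $W_\eta$ together with $K(0)=0$. Uniqueness holds because any odd solution must have $K(0)=0$ and obey the same first-order conserved-energy ODE, which is locally Lipschitz away from $\pm1$.

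\textbf{Spectral conditions: the main step.} The key estimate to prove is
\[
\|V_\eta-V_{\phi^4}\|_{L^1\cap L^\infty}\to0 \quad\text{and}\quad \sup_x\langle x\rangle^{N}|V_\eta-V_{\phi^4}|\to0 .
\]
First, $\|W_\eta-W_{\phi^4}\|_{C^3}\lesssim\eta^{0}\sup_{|z|<\eta}\sum_j\eta^{-j}|z|^{3-j}\lesssim\eta^{0}$, which is not small. We must therefore be careful: $W''_\eta-W''_{\phi^4}=O(|z|)$ is small only because $z=K-1$ is small there. Two facts handle this.
\begin{enumerate}
\item $K_\eta\to K_{\phi^4}$ uniformly, and the region where $K_\eta$ differs is exactly $\{|K_\eta|\in(1-\eta,1)\}$, which is an $x$-region $|x|\gtrsim\log(1/\eta)$.
\item On that region $|V_\eta-V_{\phi^4}|\lesssim|1-K|\lesssim\eta$. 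Both potentials also decay like $e^{-c|x|}$ with uniform constants, since $1-|K_\eta|$ decays exponentially with rate $\sqrt2$ and $W''_\eta(\pm1)=2$.
\end{enumerate}
Together these give $\|\langle x\rangle^2(V_\eta-V_{\phi^4})\|_{L^1}\to0$ as $\eta\to0$.

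With this in hand, standard perturbation theory for one-dimensional Schrödinger operators applies: the Jost functions are continuous in $V$ in weighted $L^1$, and Lemma \ref{lem:T-R-prop}-type estimates hold. Restricted to odd functions, this yields the following.
\begin{itemize}
\item The simple eigenvalue $3/2$ perturbs to a single eigenvalue $\omega^2\in(1/2,2)$, with no new eigenvalues. The count of eigenvalues is stable because zero-energy non-resonance persists, and $1/T$ at $0$ stays nonzero.
\item Genericity persists.
\item The Fermi Golden Rule quantity \eqref{nFGRW} depends continuously on $\eta$. The functions $\varphi_\omega$ and $\varphi_{4\omega^2}$ vary continuously in weighted spaces, the latter via ODE dependence on parameters. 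The factor $W'''_\eta(K_\eta)$ differs from $6K_{\phi^4}$ only where $\varphi_\omega^2\lesssim e^{-c\log(1/\eta)}$, and there it is bounded. So \eqref{nFGRW} is a small perturbation of the nonzero value \eqref{phi4FGR}.
\end{itemize}

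We expect the main obstacle to be this lack of $C^3$-smallness of the perturbation. It has to be overcome by exploiting the spatial localization at large $|x|$ combined with the exponential smallness of $\varphi_\omega$ there.
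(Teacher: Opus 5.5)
Your proposal is correct and follows essentially the same route as the paper: the form of $W_\eta$ by construction, the kink by quadrature, the coincidence $K_\eta\equiv K_{\phi^4}$ on $|x|\le R_\eta\approx|\log\eta|$ together with the tail bound $|V_\eta-V_{\phi^4}|\lesssim \eta\, e^{-c(|x|-R_\eta)}$ (which gives smallness in weighted $L^1$ even though $W_\eta-W_{\phi^4}$ is not small in $C^3$), standard perturbation theory in the odd sector, and continuity of the Fermi Golden Rule integral, since $W_\eta'''(K_\eta)=6K_{\phi^4}$ on $|x|\le R_\eta$ while $\varphi_\omega^2$ has $O(\eta)$ mass beyond. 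One minor slip: ``$1/T(0)\neq 0$'' is not the right justification for persistence of the odd threshold non-resonance, because it holds trivially (as $|T|\le 1$) and the $\phi^4$ potential in fact has an even threshold resonance on the full line; the relevant open condition is $f_+(0,0)\neq 0$ for the zero-energy Jost solution of the even potential.
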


\begin{proof}
The claims on $W$ including \eqref{Wm} follow by construction. 
In particular, one has $W^{(5)}(1)=-W^{(5)}(-1)=c_5.$
The existence of a unique odd kink solution $K(x)$ to \eqref{kink} is standard. Using
\begin{equation}
    \int_0^{K(x)} \frac{1}{\sqrt{2W(s)}}\,ds = x
\end{equation} and $W(s)\gtrsim  (s-1)^2$ for $0\leq s<1$, 
one can show that $K(x)\rightarrow 1$ exponentially as $x\rightarrow \infty$. Similarly, one can also obtain $K(x)\rightarrow -1$ as $x\rightarrow -\infty$ exponentially. By construction, one also verifies
\begin{equation}
    \sup_{x\in\mathbb{R}}\left|K_{\phi^4}(x)-K(x)\right|\lesssim \eta.
\end{equation}
It remains to show that when $\eta$ is small enough,  the linearized operator
satisfies the desired spectral conditions.  
By standard spectral theory, 
it will suffice to show that $V$ is a small perturbation of $V_{\phi^4}$ in a sufficiently strong topology.
First note that from \eqref{Wm}, we have 
\begin{equation}
    \sup_{\phi\in \mathbb{R}}\left(|W(\phi)-W_{\phi^4}(\phi)|
    +|W'(\phi)-W'_{\phi^4}(\phi)|+|W''(\phi)-W''_{\phi^4}(\phi)|\right)\lesssim \eta.
\end{equation}
Fix a $0< \eta \ll 1$ and define the set 
\begin{align}
\mathcal{U}_\eta := \left\{x\in\mathbb{R} \, : \,\, \big||K_{\phi^4}(x)|-1\big|<\eta\right\}
= \big\{ |x| > \sqrt{2} \,\mathrm{arctanh}(1-\eta)=:R_\eta \big\}.
\end{align}
From \eqref{Wm}, 
we know that $V_\eta \equiv V_{\phi^4}$ for $x\in \R\setminus \mathcal{U}_\eta$  
since in this region $K_\eta \equiv K_{\phi^4}$.  
Then,
\begin{align}
\sup_{x\in\mathcal{U}_\eta}\left| V(x)-V_{\phi^4}(x)\right|&=\sup_{x\in\mathcal{U}_\eta}
  \left| W''(K(x))-W''_{\phi^4}(K_{\phi^4}(x))\right|
  \\
&\lesssim  \sup_{x\in\mathcal{U}_\eta}\left| W''(K(x))-W''_{\phi^4}(K(x))\right|
  +\sup_{x\in\mathcal{U}_\eta}\left| W''_{\phi^4}(K(x))-W''_{\phi^4}(K_{\phi^4}(x))\right|
  \\
&\lesssim {\|W'''_{\phi^4}\|}_{L^\infty([-1,1])}
  \sup_{x\in\mathcal{U}_\eta}
  \left|K(x)-K_{\phi^4}(x)\right|+\sup_{x\in\mathcal{U}_\eta}
  \left| W''(K(x))-W''_{\phi^4}(K(x))\right|
  \\
&\lesssim \eta.
\end{align}
Similarly, if $x\in \mathcal{U}_\eta$, i.e. $|x| \gtrsim |\log\eta|$,
we can derive weighted estimates on the difference of the potentials.
Indeed, for $|x| \geq R_\eta \approx |\log\eta|$ we have
\begin{align}
|V(x) - V_{\phi^4}(x)| \lesssim \eta \cdot e^{-c(|x| - R_\eta)}
\end{align}
and integrating this gives
\begin{align}\label{weights}
{\big\| \lr{x}^q \big( V - V_{\phi^4} \big) \big\|}_{L^1} \lesssim \eta \cdot (1+|\log\eta|)^{q}.
\end{align}
Thanks to the above bounds 
one can see that the desired spectral properties for $H_\eta+2$ 
follow from the analogous properties for the spectrum of $H_{\phi^4}+2$.
In particular, the structure of the continuous and discrete spectrum and the absence of 
(odd) threshold resonance follow from resolvent identities.

For the validity of the FGR property \eqref{nFGRW} it suffices to show that
\begin{align}\label{FGRconvergence}
\int \varphi_{4\omega^2}(x) W_\eta^{'''}(K_\eta(x)) \varphi_{\omega}^2(x) \, dx 
  \, \stackrel{\eta \rightarrow 0}{\longrightarrow} \,
  \int \varphi_{6}(x) \, W'''_{\phi^4}(K_{\phi^4}(x)) \, \varphi_1^2(x) \,dx
\end{align}
and then use \eqref{phi4FGR}.
To verify that \eqref{FGRconvergence} holds we 
observe the following: (a) $W'''_{\phi^4}(K_{\phi^4})$ and $W'''_{\eta}(K_\eta)$ are bounded for $x\in\R$
uniformly in $\eta$,
and agree for $|x| \leq R_\eta$,
(b) the bound states $\varphi_1$ and $\varphi_{\omega}$ are exponentially decaying,
$\omega^2 \rightarrow 3/2$ and $\varphi_\omega \rightarrow \varphi_1$ in $L^2$
as $\eta\rightarrow 0$,
(c) the generalized eigenfunctions $\varphi_{4\omega^2}$ and $\varphi_6$ 
are uniformly bounded, and convergence locally uniformly as $\eta\rightarrow0$, 
and (d) we have
\begin{align}
\int_{|x| \gtrsim R_\eta} \varphi_1^2(x)\,dx \lesssim \eta.
\end{align}
Putting all these together gives us \eqref{FGRconvergence} and concludes the proof of the lemma.
\end{proof}


Finally, we comment that similar higher order perturbations of
the $\phi^4$ model can be constructed in a similar way as above.

\subsection{Asymptotic stability of kinks}
With the same definitions used above, let us consider odd perturbation 
of $K(x)$ in the form $$\phi(t,x)=K(x)+u(t,x),$$
so that
\begin{equation}
    \partial_t^2 u-\partial_x^2 u+m^2u+V(x)u =N(K,u)
\end{equation}
where the nonlinear term is
\begin{equation}
  N(K,u) = - \big[ W'(u+K)-W'(K) - W''(K) u \big] . 
\end{equation}
Note that \begin{equation}
    -W'(\phi)=-m^2(\phi \mp 1)+O(|\phi \mp 1|^{M-1}).
\end{equation}

 By using Taylor's expansion, we can rewrite the nonlinear term 
 to extract the localized terms. The equation of the perturbation $u$
reads
\begin{equation}\label{eq:Kinkerrormodel}
\partial_t^2 u-\partial_x^2 u + m^2u + V(x)u 
  = \beta_2(x) u^2+\beta_3(x) u^3+\beta_4(x) u^4+\mathfrak{N}(K,u)
\end{equation}
where $\beta_2(x) = (-1/2) W'''(K(x))$ is odd,  
$\beta_3(x) = (-1/6)W^{(4)}(K(x))$ is even and $\beta_4(x) = -(1/4!)W^{(5)}(K(x))$ is odd.  
Both $\beta_2(x)$ and $\beta_3(x)$ decay to $0$ exponentially as $x\rightarrow\pm \infty$, 
but 
\begin{align}
\beta_4(x) \stackrel{x\rightarrow\pm \infty}{\longrightarrow} b_{\pm} := \mp \frac{c_5}{4!} = \mp 5q_5
\end{align}
exponentially fast ($\beta_4$ is actually constant outside a sufficiently large interval, see \eqref{Wm}).
Finally, $\mathfrak{N}(K,u)$ collects quintic and higher powers  in $u$.

Similarly to the general model \eqref{nlkg-quartic}, 
in order to study \eqref{eq:Kinkerrormodel} and prove our main final
result (see Theorem \ref{thm:Kink}), we focus our attention on the 
contribution of the quartic term $\beta_4(x) u^4$ to Duhamel's formula, as in Section \ref{sec:PDE-part2}. 
 This is the most important contribution to the PDE,
and remains the main difficulty in the analysis of the internal mode's ODE. 
The analysis for all the other terms can be carried out
similarly to Sections \ref{sec:Linear}-\ref{sec:PDE-part2}.
In particular, one should observe the following:

\begin{itemize}

\item[(1)] The linear estimates established in Section \ref{sec:Linear}
also hold in this context, where we are restricting to the odd spectral subspace
of the linear operator; 
we refer the reader to \cite{GPZ23} for some details on how to obtain linear dispersive 
estimates and local decay (on the continuous spectral subspace) in our current framework.
As in Propositions 3.1 and 3.2 in \cite{GPZ23} one can use the oddness of the spectral kernels
to compensate for the assumption $T(0)=0$ which may not hold.
The other estimates needed in our argument can be obtained by small adaptations
of the proofs given in Sections \ref{sec:Linear} and \ref{sec:linear-decay} along these same lines.

\smallskip
\item[(2)] Localized cubic terms of the form $\beta_3(x)u^3$ are more perturbative than
the quadratic ones $\beta_2(x)u^2$ in the context of the PDE estimates from 
Sections \ref{sec:PDE-part0}-\ref{sec:PDE-part2}.
They only contribute remainders to the equation for the amplitude $|A|^2$.
However, they contribute to the overall logarithmic phase corrections in the asymptotic,
where one needs to replace the constant $c_2$ in Theorem \ref{thm:asympA} by
\begin{align}
c_3 = c_2  + \frac{3}{2\omega} \int \beta_3(x) \varphi_\omega(x)^4 \, dx.
\end{align}

\smallskip
\item[(3)] The nonlinear terms with homogeneity at least $5$ (which may be localized or not)
can be treated perturbatively and do not need the interpolation and commutation
identities from Section \ref{sec:PDE-part2}.

\end{itemize}

Recall the notation of the partition $\{\tau_m\}_{m=0,\dots,L+1}$ 
of the time interval $[0,t]$ introduced earlier in \eqref{timeloc}. We denote by
$\mathsf{Q}_{m}$ the quartic term 
\begin{equation}
\label{Qbeta}
\begin{aligned}
\mathsf{Q}_{m}(x)=\int_0^\infty e^{-isL}P_c(\beta_4 v^4)(s,x)\tau_m(s)ds,\qquad \mathsf{Q} (t):= \sum_{m=0}^{L+1} \mathsf{Q}_{m}(x).
   \end{aligned}
\end{equation}
where 
\begin{equation}
  u(t)=a(t)\varphi_\omega+v,\quad (v,\varphi_\omega) = 0  .
\end{equation}
We also slightly abuse notation by assuming without loss of generality that the mass $m^2 = 2$
and denoting $\lr{x} := \sqrt{x^2+2}$ in what follows. 
Using the relation \eqref{relation-fwv}, we write (omitting the time arguments in the profiles),
\begin{align}
2^4e^{-is \langle k\rangle}\wt{\mathcal{F}}(\beta_4 v^4)
& = \Suml{\ell_1}{\ell_2}{\ell_3}{\ell_4}\ell_1\ell_2\ell_3\ell_4
\iiiint \frac{e^{is\Phi_{\ell_1\ell_2\ell_3\ell_4}}}{\prod_{j=1}^4
\langle \xi_j\rangle} \\ & \times \threef{f}{\ell_1}{\xi_1}{f}{\ell_2}{\xi_2}{f}{\ell_3}{\xi_3}
\onef{f}{\ell_4}{\xi_4}\nu_{\ell_1\ell_2\ell_3\ell_4}d\xi_{1234}
\end{align}
where $d\xi_{1234}$ is a shorthand for $d\xi_1d\xi_2d\xi_3d\xi_4$ 
and 
\begin{equation}\nu_{\ell_1\ell_2\ell_3,\ell_4}(k,\xi_1,\xi_2,\xi_3,\xi_4)=\int \beta_4(x)\firstpsi{\psi}{x}{k}{\psi}{\ell_1}{x}{\xi_1} \scdpsi{\psi}{\ell_2}{x}{\xi_2}{\psi}{\ell_3}{x}{\xi_3}\psi^{(\ell_4)}(x,\xi_4)dx\end{equation}
and
\begin{equation}\Phi_{\ell_1\ell_2\ell_3\ell_4}(k,\xi_1,\xi_2,\xi_3,\xi_4)=-\langle k\rangle+\ell_1\langle \xi_1\rangle+\ell_2\langle \xi_2\rangle+\ell_3\langle \xi_3\rangle+\ell_4\langle \xi_4\rangle.\end{equation}


According to \eqref{decomposition-sing-part}, we decompose the nonlinear spectral distribution  into two parts
\begin{equation}\label{NDFbeta}(2\pi)^{\frac{5}{2}}\nu_{\ell_1\ell_2\ell_3\ell_4}(k,\xi_1,\xi_2,\xi_3,\xi_4):=\nu_{\ell_1\ell_2\ell_3\ell_4}^{S}(k,\xi_1,\xi_2,\xi_3,\xi_4)+\nu_{\ell_1\ell_2\ell_3\ell_4}^{R}(k,\xi_1,\xi_2,\xi_3,\xi_4).\end{equation}
The singular part collects all singular terms in the decomposition of the generalized eigenfunctions
\begin{equation}
\nu^{S}_{\ell_1\ell_2\ell_3\ell_4}(k,\xi_1,\xi_2,\xi_3,\xi_4) = \nu^{+}_{\ell_1\ell_2\ell_3\ell_4}(k,\xi_1,\xi_2,\xi_3,\xi_4)+\nu^{-}_{\ell_1\ell_2\ell_3\ell_4}  (k,\xi_1,\xi_2,\xi_3,\xi_4)
\end{equation}
with 
\begin{align}\label{sing-mubeta}
& \nu^{\ast}_{\ell_1\ell_2\ell_3\ell_4}(k,\xi_1,\xi_2,\xi_3,\xi_4)
 \\
 & = b_\ast \int \Lambda_{\ast}(x)\overline{\psi_{S,\ast}(x,k)}\psi^{(\ell_1)}_{S,\ast}(x,\xi_1)\psi^{(\ell_2)}_{S,\ast}(x,\xi_2)\psi^{(\ell_3)}_{S,\ast}(x,\xi_3)\psi^{(\ell_4)}_{S,\ast}(x,\xi_4)dx,\,\, \ast\in \setsign\end{align}
where $\Lambda_{\ast} =\sigma_{\ast}^5$. 

Notice that compared with the regular part of our model problem, additional terms appearing here are
\begin{align}\label{regularadd-mubeta}
\begin{split}
 & \nu^{R,\ast}_{\ell_1\ell_2\ell_3\ell_4}(k,\xi_1,\xi_2,\xi_3,\xi_4)
 \\
 & = \int \Lambda_{\ast}(x)(\beta_4(x)- b_\ast 
 )
 \overline{\psi_{S,\ast}(x,k)}\psi^{(\ell_1)}_{S,\ast}(x,\xi_1)
 \psi^{(\ell_2)}_{S,\ast}(x,\xi_2)\psi^{(\ell_3)}_{S,\ast}(x,\xi_3)\psi^{(\ell_4)}_{S,\ast}(x,\xi_4)dx.
\end{split}
\end{align}

Now, define the set
\[Y_R=\{\mathbf{E}=(E_0,...,E_4):\exists \,k\in \{0,...,4\}\,\,\mbox{with}\,\, E_k=R\}\subset \{S,R\}^5.\]
 The regular part can also be decomposed as 
\begin{align}
 \nu^{R}_{\ell_1\ell_2\ell_3\ell_4} 
 & =
 \nu^{R,1}_{\ell_1\ell_2\ell_3\ell_4} 
 +\nu^{R,2}_{\ell_1\ell_2\ell_3\ell_4} 
 +\nu^{R,+}_{\ell_1\ell_2\ell_3\ell_4} 
 +\nu^{R,-}_{\ell_1\ell_2\ell_3\ell_4} 
\end{align}
where 
\begin{align}\label{mur1beta}
\nu^{R,1}_{\ell_1\ell_2\ell_3\ell_4}(k,\xi_1,\xi_2,\xi_3,\xi_4)=\sum_{\mathbf{E}\in Y_R}\int \beta_4(x)\overline{\psi_{E_0}(x,k)}\psi_{E_1}^{(\ell_1)}(x,\xi_1)\psi_{E_2}^{(\ell_2)}(x,\xi_2)\psi_{E_3}^{(\ell_3)}(x,\xi_3)\psi_{E_4}^{(\ell_4)}(x,\xi_4)dx
\end{align}
and
\begin{align}\label{mur2beta}
\nu^{R,2}_{\ell_1\ell_2\ell_3\ell_4}(k,\xi_1,\xi_2,\xi_3,\xi_4)
=\int\beta_4(x)\overline{\psi_{S}(x,k)}
\psi_{S}^{(\ell_1)}(x,\xi_1)\psi_{S}^{(\ell_2)}(x,\xi_2)\psi_{S}^{(\ell_3)}(x,\xi_3)
\psi_{S}^{(\ell_4)}(x,\xi_4)dx
\\
-  \nu^{S}_{\ell_1\ell_2\ell_3\ell_4}(k,\xi_1,\xi_2,\xi_3,\xi_4)-  \nu^{ R,+}_{\ell_1\ell_2\ell_3\ell_4}(k,\xi_1,\xi_2,\xi_3,\xi_4)-   \nu^{ R,-}_{\ell_1\ell_2\ell_3\ell_4}(k,\xi_1,\xi_2,\xi_3,\xi_4).
\end{align}
Observe that the integrand in $\nu^{R,1}$ contains at least one $\psi_R$ which decays faster  to zero as $|x|\rightarrow \infty$, while the integrand in $\nu^{R,2}$
contains terms with compact support.  Thus, \eqref{NDFbeta} implies the following decomposition
\begin{align}\label{NDFbeta'}
\begin{split}
&(2\pi)^{5/2}\nu = 
     \nu^{-}_{\ell_1,\ell_2,\ell_3,\ell_4}+\nu^{+}_{\ell_1,\ell_2,\ell_3,\ell_4}+\nu^{R,1}_{\ell_1,\ell_2,\ell_3,\ell_4}+\nu^{R,2}_{\ell_1,\ell_2,\ell_3,\ell_4}+\nu^{R,+}_{\ell_1,\ell_2,\ell_3,\ell_4}+\nu^{R,-}_{\ell_1,\ell_2,\ell_3,\ell_4}.
\end{split}
\end{align}

From \eqref{NDFbeta'}, we also decompose 
\begin{align}\label{decomposed-Qbeta}
\begin{split}
 & 2^4e^{-is \langle k\rangle}\wt{\mathcal{F}}(\beta_4 v^4)(k)
 \\
  & =(2\pi)^{-5/2}[I_{-}(s,k)+I_{+}(s,k)+I_{R,1}(s,k)+I_{ R,2}(s,k)+I_{R,+}(s,k)+I_{R,-}(s,k)],   
\end{split}
\end{align}
where each term on the right-hand side is defined with $\ast\in \{-,+,(R,1),(R,2),(R,-),(R,+)\}$ by
\begin{align}
 I_{\ast}(s,k) :=\Suml{}{}{}{}  \ell_1\ell_2\ell_3\ell_4 \iiiint \frac{e^{is\Phi_{\ell_1\ell_2\ell_3\ell_4}}}{\prod_{j=1}^4\lr{\xi_j}}\wt{f}^{(\ell_1)}(\xi_1)\wt{f}^{(\ell_2)}(\xi_2)\wt{f}^{(\ell_3)}(\xi_3)\wt{f}^{(\ell_4)}(\xi_4)
 \\
 \times \nu^\ast_{\ell_1\ell_2\ell_3\ell_4}(k,\xi_1,\xi_2,\xi_3,\xi_4)d\xi_{1,2,3,4}.
\end{align}
Next, define 
\begin{align}
\mathsf{Q}_{m,\ast}(k) :=\Suml{}{}{}{}  \ell_1\ell_2\ell_3\ell_4 \int_0^\infty\iiiint \frac{e^{is\Phi_{\ell_1\ell_2\ell_3\ell_4}}}{\prod_{j=1}^4\lr{\xi_j}}\wt{f}^{(\ell_1)}(\xi_1)\wt{f}^{(\ell_2)}(\xi_2)\wt{f}^{(\ell_3)}(\xi_3)\wt{f}^{(\ell_4)}(\xi_4)\\
 \times \nu^\ast_{\ell_1\ell_2\ell_3\ell_4}(k,\xi_1,\xi_2,\xi_3,\xi_4)\tau_m(s)d\xi_{1,2,3,4}ds.
\end{align}
From \eqref{decomposed-Qbeta} and taking the distorted Fourier transform of $\mathsf{Q}_m$ in \eqref{Qbeta}, we can write
\begin{equation}\label{Qm-decomposedbeta}
C \, \wt{\mathsf{Q}_{m}}(k) 
 = \mathsf{Q}_{m,-}(k)+\mathsf{Q}_{m,+}(k)
  +\mathsf{Q}_{m,R,1}(k) +\mathsf{Q}_{m,R,2}(k)+\mathsf{Q}_{m,R,+}(k) +\mathsf{Q}_{m,R,-}(k).   
\end{equation}
for some unimportant constant $C>0$.
The analysis of the terms $\mathsf{Q}_{m,\ast}(k)$ 
can be carried out  in the same way as in Section \ref{sec:PDE-part2} resulting in the following:

\begin{lemma}
Adopting the same notation as in the previous sections, assume that 
the bootstrap conditions \eqref{rho-assumption-bootstrap-pde}
and \eqref{w-assumptions-bootstrap-pde} hold.
Then, for all $m\in \mathbb Z\cap [0,L+1]$, 
\begin{equation}
\|  \partial_k\wt{\mathsf{Q}_{m}}(\cdot)\|_{L^2_k}\lesssim \varepsilon_0^{3\al}2^{8\delta m}.  
\end{equation}

\end{lemma}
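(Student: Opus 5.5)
The plan is to reduce to the proof of Lemma \ref{lemreduction}, treating the six pieces in \eqref{Qm-decomposedbeta} separately. For $\mathsf{Q}_{m,\pm}$ the distribution $\nu^\pm$ is just $b_\pm$ times $\mu^\pm$ from \eqref{sing-mu}. Since $b_\pm$ are constants, every step of Subsection \ref{sec:Qpm} carries over unchanged. These steps are:
\begin{itemize}
\item the multiplier representation of Lemma \ref{lem:quadriform};
\item the rearrangement \eqref{rearange-I+} handling the jumps of $A_{\ast,s_j}$ at $k=0$;
\item the commutation identity of Lemma \ref{partial-k-I};
\item the transference Lemma \ref{lem:transference}.
\end{itemize}
Consequently $\mathsf{Q}_{m,\pm}$ obey the bounds \eqref{Qm1}, \eqref{Q0m2}, \eqref{Q12m}, \eqref{N11}, \eqref{K13}, \eqref{L1}, \eqref{M3}. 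The decisive term is again the analogue of $Q^{(0)}_{m,2}$. There the factor $s$ is absorbed by $\zeta\widehat{\Lambda_\pm}=\widehat{\Theta}_\pm$ with $\Theta_\pm$ Schwartz, and the four $L^\infty$ decay bounds \eqref{wjLinfty} give $\varepsilon_0^{4\alpha}2^{8\delta m}$. The linear inputs, namely Lemma \ref{lem:BoundednessW} and the local decay bounds of Remark \ref{rmk:r-s-local-decay}, are available on the odd subspace by point (1) above, so these bounds hold verbatim.

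The regular parts $\mathsf{Q}_{m,R,1}$ and $\mathsf{Q}_{m,R,2}$ carry the extra bounded coefficient $\beta_4(x)$ inside the $x$-integral. I would argue exactly as in Subsection \ref{QmRl}. In $\nu^{R,1}$ at least one $\psi_R$ is present, which supplies the $\lr{x}^{-10}$ localization, and $\beta_4\in L^\infty$ is harmless. In $\nu^{R,2}$ the integrand contains $\sigma_+\sigma_-$, hence has compact support. The estimates of $J_\ell,H_\ell$ and $I^{(1)}_{R,2,1},I^{(2)}_{R,2,1}$ then go through using \eqref{local-l-infty-decay-reg}, Lemma \ref{lem:nonconv-bds}, \eqref{partphiSL2} and the a priori local decay. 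This yields bounds of order $2^{-m(1-)}$, far better than required.

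The genuinely new terms are $\mathsf{Q}_{m,R,\pm}$, whose kernels \eqref{regularadd-mubeta} carry the weight $\Lambda_\ast(\beta_4-b_\ast)$. By construction (see \eqref{Wm}) $\beta_4-b_\pm$ vanishes identically for $\pm x$ large, so $\Lambda_\pm(\beta_4-b_\pm)$ is compactly supported and smooth. I would therefore treat $\mathsf{Q}_{m,R,\pm}$ exactly like $\mathsf{Q}_{m,R,2}$. Writing $I_{R,\pm}(s,k)=e^{-is\lr{k}}\int \Lambda_\pm(\beta_4-b_\pm)\overline{\psi_{S,\pm}(x,k)}\,U_{S,\pm}^4\,dx$, the $\partial_k$ derivative falls either on the phase, producing a factor $s$, or on $\overline{\psi_{S,\pm}}$, producing a factor $x$. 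The compact support absorbs all weights, and \eqref{local-l-infty-decay-sing'} gives
\[
\Big\|\int_0^\infty \partial_k I_{R,\pm}\,\tau_m\,ds\Big\|_{L^2_k}\lesssim 2^{2m}\big(\varepsilon_0^{\alpha}2^{-m+\delta_2 m}\big)^4 ,
\]
which is acceptable.

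The main obstacle, I expect, is checking that $\beta_4\to b_\pm$ is compatible with the singular decomposition. Splitting $\beta_4=b_\ast+(\beta_4-b_\ast)$ on the supports of $\sigma_\ast$ must be done consistently with the definition of $\nu^{R,2}$ in \eqref{mur2beta}. If $\beta_4-b_\ast$ were only exponentially decaying rather than compactly supported, one would need the slightly stronger weighted local decay. I would handle this by trading exponential decay for $\lr{x}^{-10}$ and proceeding as for $\nu^{R,1}$. Summing all pieces gives the claimed $\varepsilon_0^{3\alpha}2^{8\delta m}$.
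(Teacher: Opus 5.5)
Your route is the paper's. Its proof of this lemma is only the remark that, after the splitting \eqref{NDFbeta'}, every piece is handled as in Section~\ref{sec:PDE-part2}, and your treatment of $\mathsf{Q}_{m,\pm}$ (via $\nu^\pm=b_\pm\mu^\pm$) and of $\mathsf{Q}_{m,R,1},\mathsf{Q}_{m,R,2}$ matches that.

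\textbf{Gap.} The one estimate you make explicit, for the new pieces $\mathsf{Q}_{m,R,\pm}$, is not justified as written.
\begin{itemize}
\item The kernel \eqref{regularadd-mubeta} contains only half-line pieces $\psi_{S,\pm}$, so all four inputs are $U_{S,\pm}=(L^{-1}e^{isL}f)_{S,\pm}$.
\item The a priori bound \eqref{w-assumptions-bootstrap-pde} gives the rate $\varepsilon_0^{\alpha}\lr{s}^{-1+\delta_2}$ only for $(\cdot)_S=\sigma_+(\cdot)_{S,+}+\sigma_-(\cdot)_{S,-}$ and for $(\cdot)_R$. These differ from $(\cdot)_{S,\pm}$ on the transition region $|x|\le 2$, which meets $\mathrm{supp}\,\Lambda_\pm(\beta_4-b_\pm)$.
\item The estimate \eqref{local-l-infty-decay-sing'} that you cite costs $\lr{s}^{-1}\|\wt{f}(s)\|_{H^1_k}$. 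But $\|\partial_k\wt{f}(s)\|_{L^2_k}$ grows like $s^{1/2}$ because of the Fermi component $g$ (Lemma~\ref{lem:bad-part-bds}, sharp by Lemma~\ref{app:growth-f}).
\end{itemize}
So each factor is only $O(\lr{s}^{-1/2})$ locally. Your right-hand side is then essentially $\min(\varepsilon_0^{8}2^{4m},1)$, not $2^{2m}(\varepsilon_0^\alpha 2^{-m+\delta_2 m})^4$, with no gain in $\varepsilon_0$ near $2^m\approx\varepsilon_0^{-2}$.

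\textbf{Repair.} A tool you already invoke fixes this. Since $\psi_{S,\pm}(x,k)=A_{\pm,+}(k)e^{ikx}+A_{\pm,-}(k)e^{-ikx}$, you have $U_{S,\pm}(s,x)=W_{\pm,+}(s,x)+W_{\pm,-}(s,-x)$ with $W_{\ast,\dagger}$ as in \eqref{lemdecaytousew}. Lemma~\ref{lem:BoundednessW} then gives $\|U_{S,\pm}(s)\|_{L^\infty_x}\lesssim\varepsilon_0^\alpha\lr{s}^{-1/2+2\delta}$.
\begin{itemize}
\item Use the compact support of $\Lambda_\pm(\beta_4-b_\pm)$ only to place the product in $L^2_x$. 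For the term where $\partial_k$ falls on $\overline{\psi_{S,\pm}}$, weight it by $\lr{x}$ and use \eqref{partphiSL2}.
\item The term where $\partial_k$ hits the phase is then bounded by $2^{2m}(\varepsilon_0^\alpha2^{-m/2+2\delta m})^4=\varepsilon_0^{4\alpha}2^{8\delta m}$, which is acceptable for \eqref{reductionest}.
\item Equivalently, rerun Subsection~\ref{sec:Qpm} with $\Lambda_\pm$ replaced by the Schwartz weight $\Lambda_\pm(\beta_4-b_\pm)$.
\end{itemize}
These terms are therefore borderline, exactly like $Q^{(0)}_{m,2}$ in \eqref{Q0m2}, not ``far better''.

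The same caveat and fix apply to pieces of $\nu^{R,2}$ carrying several half-line factors, for example $\sigma_+^4\sigma_-\,\overline{\psi_{S,+}(x,k)}\,\psi_{S,+}(x,\xi_1)\psi_{S,+}(x,\xi_2)\psi_{S,+}(x,\xi_3)\psi_{S,-}(x,\xi_4)$. The representative $I_{R,2,1}$, with one $U_{S,+}$ and three $U_S$ factors, does not cover these.
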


Using this lemma, we conclude this part with the main result  
on the asymptotic stability of the kink solution \eqref{eq:kink_ode} under odd perturbations.

\begin{theorem}\label{thm:Kink}
Consider the wave equation \eqref{eq:waveeq} with an odd kink solution $K(x)$ 
that solves \eqref{eq:kink_ode} with a double-well potential $W$ satisfying \eqref{Wpm},
and the associated linearized operator as in \eqref{eq:potential}-\eqref{eq:klinear} satisfying the spectral conditions stated there, until \eqref{nFGRW}. 


Consider odd perturbations of $K(x)$ in the form $$\phi(t,x)=K(x)+u(t,x)$$ 
so that $u$ satisfies the equation 
\begin{equation}\label{eq:asyu}
    \partial_t^2 u-\partial_x^2 u+m^2u+V(x)u =N(K,u)
\end{equation}
where $m$  and $V$ are given by \eqref{Wpm} and \eqref{eq:potential}. 

There exists a sufficiently small $\varepsilon\in (0,1)$ 
such that for all $0<\varepsilon_0\leq \varepsilon$, if the initial data $(u_0, u_1)$ satisfies 
\begin{align}
& |(\varphi_\omega,u_0)| + |(\varphi_\omega,u_1)| = \varepsilon_0,
\\
& \left\| (L 
  P_c u_0, P_c u_1) \right\|_{H^{N-1}_x}
+ \left\| (\lr{x} L P_c u_0, \lr{x} P_c u_1)\right\|_{H^2_x} \leq \varepsilon_0,
\end{align}
for $N$ sufficiently large
(see \eqref{choice-of-csts})
then the equation \eqref{eq:asyu} admits a unique 
global solution $u\in C(\R,H^N) \cap C^1(\R,H^{N-1})$ with the following properties:

\begin{enumerate}

\smallskip
\item $u$ can be decomposed
into discrete and radiation components
 \begin{equation}\label{eq:decompu-asy}
u(t,x) = a(t) \varphi_\omega (x) + v(t,x), \quad (v(t), \varphi_\omega)=0
\end{equation}
with
 \begin{equation}
\label{main-result-bound-a-asy}
\sqrt{\dot{a}^2(t) + \omega^2 a^2(t)} \approx \varepsilon_0 (1+\varepsilon_0^2 t)^{-1/2},\quad t>0
\end{equation}
 and 
 \begin{equation}\label{v-est-mainthm-asy}
 \|v(t)\|_{L^\infty_x}\lesssim \frac {\log(2+\varepsilon_0^2 t)}{\lr{t}^{1/2}}, 
 \quad t\geq\varepsilon_0^{-2}.   
 \end{equation}
 
\smallskip
\item  The field component $w=\partial_tv+iLv$ scatters to a linear 
 Klein-Gordon wave in $H^{N-1}_x$ in the sense that there exists an odd $w_{\infty}\in P_cH^{N-1}_x$ 
 such that
\begin{equation}\label{scattering-w-asy}
\|w(t) - e^{iLt}w_{\infty}\|_{H^{N-1}_x} \lesssim  
  \varepsilon_0 (1+\varepsilon_0^2 t)^{-\sigma}, \quad t>0
\end{equation}
for some $\sigma>0$.
\end{enumerate}

In particular, this kink solution $K(x)$ is asymptotically stable under odd perturbations 
on the whole line. 
More refined information on $u$ solving \eqref{eq:asyu} 
can also be derived analogously to Theorem \ref{thm:mainthmdetail} and Theorem \ref{thm:asympA}. 
\end{theorem}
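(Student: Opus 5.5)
The proof runs the same continuity (bootstrap) argument as for Theorem \ref{thm:mainthmdetail}, now on the odd spectral subspace of $L=\sqrt{H+m^2}$, with $V$ given by \eqref{eq:potential}.

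\textbf{Setup.} First, local well-posedness in $H^N\times H^{N-1}$ and the preservation of oddness by \eqref{eq:waveeq} give a local odd solution. Decompose $u=a\varphi_\omega+v$ as in \eqref{eq:decompu-asy}, with $\lambda$ replaced by $\omega$. Define the profile $A$ by \eqref{a-in-A-defn}, and write $w=(\partial_t+iL)v$ and $f=e^{-itL}w$.

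\textbf{Linear theory.} Next I would check that every linear estimate in Sections \ref{sec:Linear}--\ref{sec:linear-decay} holds on odd functions in $P_cL^2$:
\begin{itemize}
\item the dispersive bounds \eqref{dispersive-estimates}, \eqref{lininterpest};
\item the local decay bounds \eqref{local-l-infty-decay}, \eqref{local-decay-faster} and Remark \ref{rmk:r-s-local-decay};
\item the resolvent bound of Lemma \ref{lemlocal-decay-0-plus};
\item the refined resonant local decay of Lemma \ref{lem:local-estimate-time-cut-off};
\item the smoothing estimate of Lemma \ref{lem:smoothing-effect}.
\end{itemize}
For odd data the distorted Fourier transform is an odd function of $k$, so $\wt f(0)=0$ automatically. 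This is what replaces $T(0)=0$ in the integrations by parts, as in \cite{GPZ23}.

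\textbf{Discrete mode.} The FGR constant is $\Gamma=\frac{\pi}{2\omega}(\theta,\delta(L-2\omega)\theta)$ with $\theta=P_c(\beta_2\varphi_\omega^2)$ and $\beta_2=-\tfrac12 W'''(K)$. On the resonant energy $4\omega^2$ only the odd generalized eigenfunction $\varphi_{4\omega^2}$ survives, and $\wt\theta(k_0)$ is a nonzero multiple of $\int \varphi_{4\omega^2}W'''(K)\varphi_\omega^2$. Hence \eqref{nFGRW} gives $\Gamma>0$. The ODE bootstrap of Proposition \ref{prop:bootstrap-on-rho} then goes through with the following new contributions:
\begin{itemize}
\item the localized cubic term $\beta_3u^3$, which adds only non-resonant or remainder terms together with a real frequency shift;
\item the non-localized quartic term $\beta_4u^4$, which is handled exactly like $R_Q$, $R_{E,4}$, since $\beta_4$ is bounded and tends to $b_\pm$;
\item $\mathfrak{N}$, which is at least quintic and therefore perturbative.
\end{itemize}
With these, Lemma \ref{lemODEdamp} yields \eqref{main-result-bound-a-asy}.

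\textbf{Radiation.} For the continuous component, Propositions \ref{prop:decay_boostrap}, \ref{lem:local-decay-bds} and \ref{prop:Sobolev} transfer verbatim to all localized terms. Cubic localized terms behave better than quadratic ones. For the quintic and higher terms, $\|v\|_{L^\infty}^4$ decays like $t^{-2+}$, so I would use the crude bound \eqref{fW} integrated in time. The good/bad splitting \eqref{bad-g-defn} is unchanged. The only genuinely new piece is the quartic term $\mathsf Q_m$ in \eqref{Qbeta}, and the plan is to prove the preceding lemma $\|\partial_k\wt{\mathsf Q_m}\|_{L^2}\lesssim\varepsilon_0^{3\alpha}2^{8\delta m}$ via the decomposition \eqref{Qm-decomposedbeta}:
\begin{itemize}
\item The terms $\mathsf Q_{m,\pm}$ are the old $Q_{m,\pm}$ multiplied by the constants $b_\pm$, so Subsection \ref{sec:Qpm} applies directly (Lemmas \ref{lem:FT-varLambda}, \ref{partial-k-I}, \ref{lem:transference}).
\item The terms $\mathsf Q_{m,R,1}$, $\mathsf Q_{m,R,2}$ contain either a $\psi_R$ or a compactly supported cutoff, with bounded $\beta_4$, and are handled as in Subsection \ref{QmRl}.
\item The new terms $\mathsf Q_{m,R,\pm}$ carry the weight $\Lambda_\pm(\beta_4-b_\pm)$, which decays exponentially. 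They are therefore localized and can be bounded like $J_2$, $H_2$ using the local decay bootstrap and \eqref{partphiSL2}.
\end{itemize}
Lemma \ref{lemreduction} then closes the weighted, $L^\infty$ and local decay norms as in the proof of Lemmas \ref{lem:Q-tilde}--\ref{lem:local-Q-in-Linfty}.

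\textbf{Conclusion.} Closing the bootstrap gives global existence. The proofs of Propositions \ref{prop:scattering} and \ref{improved-asmp} then give \eqref{scattering-w-asy} and \eqref{v-est-mainthm-asy}. The scattering state $w_\infty$ is odd because the whole flow preserves oddness.

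\textbf{Main obstacle.} I expect the hardest step to be checking the linear estimates in the odd setting when $T(0)\neq0$, that is, when the full operator has a threshold resonance that is removed only by parity. The integrations by parts in Lemmas \ref{lem:stronger-local-decay} and \ref{lem:local-estimate-time-cut-off} must then exploit oddness of the kernel $\psi(x,k)-\psi(x,-k)$ near $k=0$ rather than vanishing of $T$. I would handle this by symmetrizing the kernel as in \cite{GPZ23}, Propositions 3.1--3.2.
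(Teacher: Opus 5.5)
Your proposal is correct and follows essentially the paper's route. Both run the same bootstrap on the odd subspace and recover the linear estimates through the parity/symmetrization argument of \cite{GPZ23} instead of using $T(0)=0$. Both treat $\beta_3u^3$ as producing only a real phase correction, and handle the quintic and higher terms perturbatively. Both reduce everything to the bound on $\partial_k\wt{\mathsf Q_m}$ via the splitting \eqref{Qm-decomposedbeta}, where $\mathsf Q_{m,\pm}$ are $b_\pm$ times the old singular terms and the new pieces $\mathsf Q_{m,R,\pm}$ are localized by $\Lambda_\pm(\beta_4-b_\pm)$.

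One caveat, which your last paragraph already recognizes: $\wt f(0)=0$ alone does not replace $T(0)=0$, since the factor $T(k)/k$ is unbounded when $T(0)\neq 0$. What makes the integrations by parts work is the symmetrized odd kernel $\psi(x,k)-\psi(x,-k)$, and this is exactly what the paper invokes.
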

 





\begin{thebibliography}{100}
\normalsize

\bibitem{A75} Shmuel Agmon, Spectral properties of Schr\"{o}dinger operators and scattering theory. Ann. Scuola Norm.
Sup. Pisa Cl. Sci. 4 (1975) no. 2, pp. 151-218.



\bibitem{GY}  Galtbayar Artbazar, and Kenji Yajima.  $L^p$-continuity of wave operators for one dimensional Schr\"odinger operators. Journal of Mathematical Sciences-University of Tokyo 7.2 (2000): 221-240.
 
\bibitem{BC11} Dario Bambusi and Scipio Cuccagna, On dispersion of small energy solutions to the nonlinear Klein-Gordon equation with a potential. Amer. J. Math. 133 (2011), no. 5, pp. 1421-1468.

\bibitem{BS03} Vladimir S. Buslaev and Catherine Sulem, On asymptotic stability of solitary waves for nonlinear Schr\"odinger equations. Annales de l’Institut Henri Poincar\'e C 20 (2003), no. 3, pp. 419–475.

\bibitem{C85} Thierry Cazenave, Uniform estimates for solutions of nonlinear Klein-Gordon equations. Journal of Functional Analysis 60 (1985) pp. 36-55.

\bibitem{CL24} Gong Chen and Jonas Luhrmann, Asymptotic stability of the sine-Gordon kink. arXiv preprint arXiv:2411.07004 (2024).

\bibitem{CP22} Gong Chen and Fabio Pusateri, The 1d nonlinear Schr\"odinger equation with a weighted $L^1$ potential. Analysis \& PDE 15 (2022), no. 4, pp. 937-982.

\bibitem{CP24} Gong Chen and Fabio Pusateri, On the 1d cubic NLS with a non-generic potential. Communications in Mathematical Physics 405  (2024), 35.

\bibitem{GC21} Gong Chen, Long-time dynamics of small solutions to 1$ d $ cubic nonlinear Schr\" odinger equations with a trapping potential. arXiv preprint arXiv:2106.10106 (2021).

\bibitem{C08} Scipio Cuccagna, On asymptotic stability in 3D of kinks for the $\phi^4$ model. Trans. Amer. Math. Soc. 360 (2008), no. 5, pp. 2581–2614.

\bibitem{CM20} Scipio Cuccagna and Masaya Maeda, A survey on asymptotic stability of ground states of nonlinear Schr\"odinger equations II. Discrete Contin. Dyn. Syst. Ser. S 14 (2021), no. 5, pp. 1693-1716.

\bibitem{CM25} Scipio Cuccagna and Masaya Maeda, The asymptotic stability on the line of ground states of the pure power NLS with $0 <|p-3| \ll 1$. J. Functional Analysis 288 (2025), no. 11, 110861.


\bibitem{DT} Percy Deift and  Eugene Trubowitz, Inverse scattering on the line.Comm. Pure Appl. Math.32(1979), no.2, 121–251.
\bibitem{DM22} Jean-Mart Delort and Nader Masmoudi, Long time dispersive estimates for perturbations of a kink solutions of one dimensional cubic wave equations. Mem. Eur. Math. Soc. 1, EMS Press, Berlin (2022), ix+280pp.

\bibitem{DS88} 
Nelson J. Dunford and Jacob T. Schwartz, 
Linear operators. Part II. Spectral theory. Selfadjoint operators in
Hilbert space. Reprint of the 1963 original. \textit{Wiley Classics Library}. 
A Wiley-Interscience Publication. John Wiley \& Sons, Inc., New York (1988).



\bibitem{G25} Pierre Germain, A review on asymptotic stability of solitary waves in nonlinear dispersive problems in dimension one. Journ\'ees \'equations aux d\'eriv\'ees partielles (2025), no. 6, 22 pages. 

\bibitem{GP22-mathpi} Pierre Germain and Fabio Pusateri, Quadratic Klein-Gordon equations with a potential in one dimension. Forum Math. Pi. 10 (2022), no. e17, 172 pages.

\bibitem{GPR18} Pierre Germain, Fabio Pusateri and Frédéric Roussett, The nonlinear Schr\"odinger equation with a potential in dimension 1. Ann. Inst. H. Poincar\'e - AN 35 (2018), no. 6, pp. 1477-1530.

\bibitem{GPZ23} Pierre Germain, Fabio Pusateri and Katherine Zhang, On 1d quadratic Klein-Gordon equation with a potential and symmetries. Arch. Ration. Mech. Anal. 247, art. 17 (2023), 39 pages.

\bibitem{KP23} Adilbek Kairzhan, Fabio Pusateri, Asymptotic stability near the soliton for quartic Klein-Gordon in 1D. Pure Appl. Anal. 5 (2023), no. 4, pp. 795-832.

\bibitem{KMM17} Michal Kowalczyk, Yvan Martel and Claudio Mu\~noz, Kink dynamics in the $\phi^4$ model: Asymptotic stability for odd perturbations in the energy space. J. Amer. Math. Soc. 30 (2017) no. 3, pp. 769--798.

\bibitem{KMM17-review} Michal Kowalczyk, Yvan Martel and Claudio Mu\~noz, On asymptotic stability of nonlinear waves. S\'eminaire Laurent Schwartz—EDP et applications (2017) pp. 1-27.

\bibitem{KKK} Alexander I. Komech, Elena A. Kopylova and Sergey A. Kopylov,  On nonlinear wave equations with parabolic potentials. Journal of Spectral Theory 3 (2013) no. 4, pp. 485--503.

\bibitem{KK11}
Alexander I. Komech, Elena A. Kopylova, On asymptotic stability of kink for relativistic Ginzburg–Landau equations. ARMA 202 (2011) no. 1, pp. 213--245.

\bibitem{KNS12}
Joachim Krieger, Kenji Nakanishi and Wilhelm Schlag, 
Global dynamics above the ground state energy for the one-dimensional NLKG equation. 
\textit{Math. Z.} 272 (2012) no. 1-2, pp. 297-316.

\bibitem{LPPSS1} J. L\"{u}hrmann, J. Palacios, F. Pusateri, W. Schlag, S. Shahshahani, 
 Asymptotic stability of the degree-one vortex in the abelian Yang-Mills-Higgs model. arXiv preprint arXiv:2608.10610 (2026).
 
\bibitem{LPPSS2} J. L\"{u}hrmann, J. Palacios, F. Pusateri, W. Schlag, S. Shahshahani,  Asymptotic stability of the degree-one vortex in the abelian Yang-Mills-Higgs model: Linear Theory. arXiv Preprint arXiv:2608.10609 (2026).

\bibitem{LPPSS3} J. L\"{u}hrmann, J. Palacios, F. Pusateri, W. Schlag, S. Shahshahani, Asymptotic stability of the degree-one vortex in the abelian Yang-Mills-Higgs model: Spectral Theory and Numerics. arXiv  Preprint arXiv:2608.10608 (2026).

\bibitem{M24} Yvan Martel, Asymptotic stability of small standing solitary waves of the one-dimensional cubic-quintic Schr\"odinger equation. Inventiones mathematicae 237 (2024), no. 3, pp. 1253-1328.

\bibitem{PKA98} Dmitry E. Pelinovsky, Yuri S. Kivshar and Vsevolod V. Afanasjev, Internal Modes of Envelope Solitons. Physica D 116 (1998) pp. 121-142.

\bibitem{LP21} Tristan L\'eger and Fabio Pusateri, Internal modes and radiation damping for quadratic Klein-Gordon in 3d. arXiv preprint arXiv:2112.13163 (2021).

\bibitem{LP22} Tristan L\'eger and Fabio Pusateri, Internal mode-induced growth in 3d nonlinear Klein–Gordon equations. Atti Accad. Naz. Lincei Cl. Sci. Fis. Mat. Natur. 33 (2022), no. 3, pp. 695-727.

\bibitem{LLY22} Z. Lei, J. Liu, and Z. Yang, Energy Transfer, Weak Resonance, and Fermi’s Golden Rule in Hamiltonian Nonlinear Klein-Gordon Equations. arXiv reprint arXiv:2201.06490 (2022).

\bibitem{LLY23} Z. Lei, J. Liu, and Z. Yang, Energy transfer and radiation in Hamiltonian nonlinear Klein-Gordon equations: general case. arXiv preprint arXiv:2307.16191. (2023).

\bibitem{R25} Guillaume Rialland, Asymptotic stability of solitons for near-cubic NLS equation with an internal mode. Discrete Contin. Dyn. Syst. 45 (2025), no. 5, pp. 1591-1652.

\bibitem{S93} Israel M. Sigal, Nonlinear wave and Schr\"odinger equations I: Instability of periodic and quasiperiodic solutions. Commun. Math. Phys. 153 (1993), no. 2, pp. 297-320.

\bibitem{SW99} Avy Soffer and Michael I. Weinstein, Resonances, radiation damping and instability in Hamiltonian nonlinear wave equations. Invent. Math. 136 (1999), no. 1, pp. 9-74.


\bibitem{TY02} T.-P. Tsai and H.-T. Yau, Asymptotic dynamics of nonlinear Schr\"odinger equations: resonance-dominated and dispersion-dominated solutions. Comm. Pure Appl. Math. 55 (2002), no. 2, pp. 153-216.

\bibitem{Y10} Dmitri Yafaev, Mathematical scattering theory. Analytic theory. Mathematical Surveys and Monographs
158, AMS, Providence, RI (2010), xiv+444 pp.

\bibitem{Wed} Ricardo Weder, The $W^{k,p}$-continuity of the Schr\"odinger
wave operators on the line. \emph{Comm. Math. Phys.} 208 (1999), no.
2, 507\textendash 520. 


\end{thebibliography}
\end{document}